\newcounter{qcounter}
\newcommand\define{\newcommand}
\define\isoto{\xrightarrow{\sim}}
\define\onto{\twoheadrightarrow}
\DeclareMathOperator{\Spec}{Spec}
\define\cQ{\mathcal{Q}}
\newcommand{\dia}[1]{{\langle #1 \rangle}}
\newcommand{\ttmat}[4]{\left( \begin{array}{cc}
#1 & #2 \\
#3 & #4
\end{array}
\right)}
\newcommand{\Z}{\mathbb{Z}}
\newcommand{\Q}{\mathbb{Q}}
\newcommand{\F}{\mathbb{F}}
\newcommand{\sO}{\mathcal{O}}
\newcommand{\m}{\mathfrak{m}}
\newcommand{\frt}{\mathfrak{t}}
\newcommand{\cP}{\mathcal{P}}
\newcommand{\varep}{\varepsilon}
\newcommand{\Hom}{\mathrm{Hom}}
\newcommand{\Gal}{\mathrm{Gal}}
\newcommand{\Ext}{\mathrm{Ext}}
\newcommand{\End}{\mathrm{End}}
\newcommand{\Fr}{\mathrm{Fr}}
\newcommand{\lb}{{[\![}}
\newcommand{\rb}{{]\!]}}
\newcommand{\red}{\mathrm{red}}
\define\cT{\mathcal{T}}
\define\ord{{\mathrm{ord}}}
\define\GL{{\mathrm{GL}}}
\define\kcyc{\kappa_{\mathrm{cyc}}}
\define{\Fitt}{\mathrm{Fitt}}
\define{\Ann}{\mathrm{Ann}}
\newtheorem{thm}{Theorem}[subsection] 
\newtheorem*{thm*}{Theorem}
\newtheorem{cor}[thm]{Corollary}
\newtheorem{prop}[thm]{Proposition}
\newtheorem{lem}[thm]{Lemma}
\theoremstyle{definition}
\newtheorem{defn}[thm]{Definition}
\newtheorem{eg}[thm]{Example}
\newtheorem{constr}[thm]{Construction}
\newtheorem{obs}[thm]{Observation}
\theoremstyle{remark}
\newtheorem{rem}[thm]{Remark}
\newcommand{\ra}{\rightarrow}
\newcommand{\lra}{\longrightarrow}
\newcommand{\lrisom}{\buildrel\sim\over\lra}
\newcommand{\risom}{\buildrel\sim\over\ra}
\newcommand{\rinj}{\hookrightarrow}
\newcommand{\rsurj}{\twoheadrightarrow}
\newcommand{\bQ}{\mathbb{Q}}
\newcommand{\bT}{\mathbb{T}}
\newcommand{\cE}{\mathcal{E}}
\newcommand{\cO}{\mathcal{O}}
\newcommand{\Db}{{\bar D}}
\newcommand{\cG}{{\mathcal{G}}}
\newcommand{\Jm}{{J^{\min{}}}}
\newcommand{\fl}{{\mathrm{flat}}}
\newcommand{\tf}{\tilde{f}}
\newcommand{\Tr}{\mathrm{Tr}}
\newcommand{\sm}[4]{\ensuremath{\big(\begin{smallmatrix}#1 & #2 \\ #3 & #4\end{smallmatrix}\big)}}
\newcommand{\Soc}{\mathrm{Soc}}
\let\c@equation\c@thm
\numberwithin{equation}{subsection}
\newcommand{\odia}[1]{\overline{\dia{#1}}}
\title{The Eisenstein ideal with squarefree level}
\author{Preston Wake}
\address{Department of Mathematics, Michigan State University \\
East Lansing, MI 48824}
\email{wakepres@msu.edu}
\author{Carl Wang-Erickson}
\address{Department of Mathematics, University of Pittsburgh \\
Pittsburgh, PA 15260}
\email{carl.wang-erickson@pitt.edu}
\begin{document}

\begin{abstract}
We use deformation theory of pseudorepresentations to study the analogue of Mazur's Eisenstein ideal with squarefree level. Given a prime number $p>3$ and a squarefree number $N$ satisfying certain conditions, we study the Eisenstein part of the $p$-adic Hecke algebra for $\Gamma_0(N)$, and show that it is a local complete intersection and isomorphic to a pseudodeformation ring. We also show that, in certain cases, the Eisenstein ideal is not principal and that the cuspidal quotient of the Hecke algebra is not Gorenstein. As a corollary, we prove that ``multiplicity one'' fails for the modular Jacobian $J_0(N)$ in these cases. In a particular case, this proves a conjecture of Ribet.
\end{abstract}

\maketitle

\tableofcontents

\section{Introduction}

In his landmark study \cite{mazur1978} of the Eisenstein ideal with prime level, Mazur named five ``special settings'' in which ``it would be interesting to develop the theory of the Eisenstein ideal in a broader context'' [pg.\ 39, \textit{loc.\ cit.}], the first of which is the setting of squarefree level. In this paper, we develop such a theory in certain cases. 

\subsection{Mazur's results and their squarefree analogues} 
\label{subsec:Mazur results}

Let $p \geq 3$ and $\ell$ be primes, and let $\bT_\ell$ be the $p$-adic Eisenstein completion of the Hecke algebra acting on modular forms of weight 2 and level $\ell$, and let $\bT_\ell\onto \bT_\ell^0$ be the cuspidal quotient. Let $I_\ell^0 \subset \bT_\ell^0$ be the Eisenstein ideal, and let $\m_\ell^0 = (p,I_\ell^0)$ be the maximal ideal. Mazur proved the following results \cite{mazur1978}: 
\begin{enumerate}[leftmargin=2em]
\item $\bT^0_\ell/I_\ell^0 \cong \Z_p/(\frac{\ell-1}{12})\Z_p$,
\item $I_\ell^0$ is principal,
\item $\bT^0_\ell$ is Gorenstein,
\item if $q\ne \ell$ is a prime such that $q \not \equiv 1 \pmod{p}$ and such that $q$ is not a $p$-th power modulo $\ell$, then $T_q-(q+1)$ generates $I_\ell^0$.
\end{enumerate}
Mazur calls a prime $q$ as in (4) a \emph{good prime for $(\ell,p)$}. We note that, of course, (4) implies (2) implies (3). We also note that (2) implies that $\bT_\ell$ is Gorenstein also.

The analogue of (1) has been proven for squarefree levels by Ohta \cite{ohta2014}. However, as has been noted by many authors, notably Ribet and Yoo \cite{ribet2015,yoo2015}, the statements (2)-(4) are not true in the squarefree setting. Still, in this paper, we prove, in certain cases, analogues of (2)-(4). Namely, we count the minimal number of generators of the Eisenstein ideal, count the dimension of the Eisenstein kernel of the Jacobian, and give sufficient (and sometimes also necessary) conditions for a list of elements $T_q-(q+1)$ to generate the Eisenstein ideal. As a corollary, we produce new level-raising results for modular forms congruent to Eisenstein series. 

\subsection{Motivation and applications} As applications of his results on the structure of the prime level Hecke algebra $\bT_\ell$, Mazur proves the following arithmetic results:
\begin{enumerate}[label=(\roman*), leftmargin=2em]
\item $J_0(\ell)(\Q)_{\mathrm{tors}}$ is a cyclic group of order $n$, where $n$ is the numerator of $\frac{\ell-1}{12}$, generated by the class of the divisor $(0)-(\infty)$.
\item The dimension of $J_0(\ell)[\m_\ell^0]$ over $\F_p$ is $2$.
\end{enumerate}
Part (i) was conjectured by Ogg. As Mazur points out \cite[Remark, pg.\ 143]{mazur1978}, if one ignores the 2-torsion, part (i) is much easier and does not require the results (1)-(4) on the Hecke algebra. Indeed, Ohta has proven the squarefree analog of (i) (ignoring 2-torsion) \cite{ohta2014}. When we pass to squarefree level, the dimension in (ii) is no longer 2 in general;  Ribet and Yoo \cite{ribet2015,yoo2015} have partial results and conjectures as to what the dimension is. We count this dimension exactly, using our results on the Hecke algebra. 

Just as Mazur's results on $\bT_\ell$ have had many arithmetic applications, we expect that our results about the structure of $\bT_N$ for squarefree level $N$ will find more applications. We mention a few directions that are of particular interest to us:
\begin{itemize}[leftmargin=2em]
\item Connecting the rank of $\bT_N$ with Massey products, class groups, and Mazur-Tate $L$-functions, in analogy to our previous work \cite{WWE3} and the works of Merel \cite{merel1996} and Lecouturier \cite{lecouturier2020} in the prime level case. This should have application to Venkatesh's conjectures for derived Hecke algebras in the case of weight 1 forms with squarefree level, just as Merel's work is applied in the prime level case by Harris and Venkatesh \cite{HV2019}.
\item Implications of the Gorenstein property of $\bT_N$ for the arithmetic of cyclotomic fields and Iwasawa theory, as in the works of Ohta \cite{ohta2005} and Sharifi \cite{sharifi2011}.
\item Applications to the Iwasawa theory of residually reducible modular forms, esspecially conjectures of Greenberg \cite[Conj.~1.11]{greenberg1999} and Vatsal \cite[Conj.~1.14]{vatsal2005} on $\mu$-invariants.
\end{itemize}
It is also interesting to consider applications of our results in the setting of Hida theory (see \S \ref{subsec:hida} for a discussion of this). We hope to return to these applications in future work.

\subsection{Techniques of pseudomodularity} Our main technical result is an $R=\bT$ theorem, where $R$ is a deformation ring for Galois pseudorepresentations and $\bT$ is the Eisenstein part of the Hecke algebra. Although we consider this result to be secondary to our results on the structure of the Hecke algebra, we believe that the proof techniques we develop may be of independent interest, and are a step toward integral refinement of the modularity results of Skinner--Wiles \cite{SW1999}. Therefore we describe them here. The strategy is similar to that of our previous works \cite{WWE3, WWE4}, where we gave new proofs and refinements of Mazur's results. However, there are several points of interest that are new in this setting.
\begin{enumerate}[label=(\alph*), leftmargin=2em]
\item In the case of prime level $\ell$, Calegari and Emerton \cite{CE2005} have already applied deformation theory to study Mazur's Eisenstein ideal. Their method is to rigidify the deformation theory of Galois representations using auxiliary data coming from the prime level $\ell$. In the case of squarefree level, a similar strategy will not work: the deformation problem at prime level is already rigid, and cannot be further rigidified to account for the additional primes. 
\item
In the case of squarefree level, there are multiple Eisenstein series, and one has to account for the possibility of congruences among them. 
\item At squarefree level, unlike prime level, the Tate module of the Jacobian may not be free over the Hecke algebra. Since this Tate module is the natural way to construct Galois representations, it is really necessary to work with pseudorepresentations. 
\item We prove $R=\bT$ even in some cases where the Galois cohomology groups controlling the tangent space of $R$ are all non-cyclic (see Remark \ref{rem:no cyclicity assumption}). In this case, the universal pseudodeformation cannot arise from a representation. 
\end{enumerate}
To address issue (a), we have to develop a theory of Cayley--Hamilton representations and pseudorepresentations with squarefree level, which has the required flexibility; for this, we drew inspiration from our previous joint works \cite{WWE1,WWE3,WWE4} and the work of Calegari--Specter \cite{CS2016}. The ideas are discussed later in this introduction in \S \ref{subsec:psdef method}. To address issue (b), we make extensive use of an idea of Ohta \cite{ohta2014}: we use the Atkin--Lehner involutions at $\ell\mid N$ to define $\bT$, rather than the usual Hecke operators $U_\ell$.

\subsection{Setup} 
We introduce notation in order to state our main results precisely. Throughout the paper we fix a prime $p$ and let $N$ denote a squarefree integer with distinct prime factors $\ell_0, \ell_1, \dots, \ell_r$. The case $p \mid N$ is not excluded.

\subsubsection{Assumption on $p$} 
Throughout the paper we assume that $p>3$. The assumption that $p \ne 2$ is used crucially throughout the paper in several ways. First, we use the fact that there is no primitive $p$th root of unity in $\Q$, so the mod-$p$ cyclotomic character is non-trivial. Second, we use the fact that a local ring with residue characteristic $p$ cannot have a non-trivial involution, so $p$-adic modules with a Hecke action admit a direct sum decomposition according to the Aktin--Lehner eigenvalues. Finally -- and this is the only place where we also need $p \ne 3$ -- we use the fact that $\zeta(-1) = \frac{-1}{12}$ is a $p$-adic unit. This is reflected in the Galois cohomology computation that we quote from \cite{WWE3} as the fact that $K_i(\Z) \otimes \Z_p =0$ for $i=2,3$. It is also used to say that a non-zero constant cannot be a mod-$p$ modular form of weight 2. Because these do not seem to be crucial points, it is plausible that our techniques could be adapted to include the case $p=3$. However, we do not pursue this here.

\subsubsection{Eisenstein series and Hecke algebras} 
\label{sssec:eisen series and hecke alg defn}
The Eisenstein series of weight two and level $\Gamma_0(N)$ have a basis $\{E_{2,N}^\epsilon\}$, labeled by elements $\epsilon= (\epsilon_0, \dots, \epsilon_r)$ in the set $\cE =\{\pm 1\}^{r+1} \setminus \{(1,1, \dots, 1)\}$. The $E_{2,N}^\epsilon$ are characterized in terms of Hecke eigenvalues by the properties that
\begin{enumerate}[leftmargin=2em]
\label{eq:eisenstein eigenvalues}
\item $\displaystyle T_n E_{2,N}^\epsilon =\left (\sum_{0<t\mid n} t \right) E_{2,N}^\epsilon$ for all $n$ with $\gcd(n,N)=1$, and
\item $w_{\ell_i} E_{2,N}^\epsilon = \epsilon_i E_{2,N}^\epsilon$ for the Atkin--Lehner involutions $w_{\ell_0}, \dots, w_{\ell_r}$,
\end{enumerate}
together with the normalization $a_1(E_{2,N}^\epsilon)=1$. The constant coefficients satisfy
\begin{equation}
\label{eq:constant term}
a_0(E_{2,N}^\epsilon) = -\frac{1}{24} \prod_{i=0}^r (\epsilon_i\ell_i+1).
\end{equation}
(See \S\ref{subsec:eisenstein series} for more about these Eisenstein series.)
Based on the philosophy that congruences between Eisenstein series and cusp forms should happen when the constant term is divisible by $p$, we expect the most interesting congruences to occur when $\ell_i \equiv -\epsilon_i \pmod{p}$ for many $i$. (Note that we do not have to consider constant terms at other cusps: if a modular form $f$ of level $\Gamma_0(N)$ is an eigenform for all the Atkin--Lehner involutions, and $a_0(f)=0$, then $f$ is a cusp form.)

Consider the Hecke algebra of weight $2$ and level $\Gamma_0(N)$ generated by all $T_n$ with $\gcd(n,N)=1$ and by all Atkin--Lehner involutions $w_{\ell_0}, \dots, w_{\ell_r}$. Let $\bT_N^\epsilon$ denote the completion of this algebra at the maximal ideal generated by $p$ together with the annihilator of $E_{2,N}^\epsilon$. 

Let $I^\epsilon$ denote the annihilator of $E_{2,N}^\epsilon$ in $\bT_N^\epsilon$, so $\bT_N^\epsilon/I^\epsilon = \Z_p$, and let $\m^\epsilon =(I^\epsilon, p)$ be the maximal ideal of $\bT_N^\epsilon$. For a Hecke module $M$, let $M_\mathrm{Eis}^\epsilon$ denote the tensor product of $M$ with $\bT_N^\epsilon$ over the Hecke algebra. In particular, let $M_2(N)_\mathrm{Eis}^\epsilon$ (resp.\ $S_2(N)_\mathrm{Eis}^\epsilon$) denote the resulting module of modular forms (resp.\ cuspidal forms). Let $\bT_N^{\epsilon,0}$ denote the cuspidal quotient of $\bT_N^\epsilon$, and let $I^{\epsilon,0}$ be the image of $I^\epsilon$ in $\bT_N^{\epsilon,0}$.

\subsubsection{Another Hecke algebra}
\label{subsubsec:T_U}

In contrast with our approach, one often studies a different Hecke algebra $\bT^\epsilon_{N,U}$, containing the operators $U_\ell$ instead of $w_\ell$, and with Eisenstein ideal $I_U^\epsilon$ generated by $T_q-(q+1)$ for $q\nmid N$ and $U_{\ell_i}-\ell_i^\frac{\epsilon_i+1}{2}$ for $i=0,\dots, r$. We prove that $\bT_{N,U}^\epsilon = \bT^\epsilon_N$ in some of the cases that we consider --- see Appendix \ref{appendix:U and w}. Our main results together with Appendix \ref{appendix:U and w} can be used to prove results about $\bT^{\epsilon}_{N,U}$ that are closely related to the results of authors including Ribet \cite{ribet2010, ribet2015}, Yoo (\cite{yoo2015, yoo2017, yoo2017b} and others) and Hsu \cite{hsu2019}. 

We take the point of view that the reason to consider Hecke operators at primes dividing $N$ is to distinguish various oldforms modulo $p$. When $\bT_N^\epsilon \ne \bT_{U,N}^\epsilon$, it is because there are multiple oldforms that have congruent $U_\ell$-eigenvalues for some $\ell \mid N$. Because this multiplicity does not occur among $w_\ell$-eigenvalues, such multiplicity causes $\bT_{U,N}^\epsilon$ to have larger rank than $\bT_N^\epsilon$. Therefore, we think of $\bT^\epsilon_N$ as a superior to $\bT^\epsilon_{U,N}$ as a superior desingularization of the unramified Hecke algebra (that is, the Hecke algebra generated by $T_n$ for $(N,n) = 1$). We mostly consider $\bT^\epsilon_N$, but see Appendix \ref{appendix:U and w} for a comparison of $\bT^\epsilon_N$ and $\bT^\epsilon_{U,N}$. 

\subsubsection{The number fields $K_i$}
\label{subsubsec:defn of K_i}
Let $\ell$ be a prime such that $\ell \equiv \pm 1 \pmod{p}$. Then there is a unique degree $p$ Galois extension $K_\ell/\Q(\zeta_p)$ such that
\begin{enumerate}
\item $\Gal(\Q(\zeta_p)/\Q)$ acts on $\Gal(K_\ell/\Q(\zeta_p))$ via the character $\omega^{-1}$,
\item the prime $(1-\zeta_p)$ of $\Q(\zeta_p)$ splits completely in $K_\ell$, and
\item only the primes above $\ell$ ramify in $K_\ell/\Q(\zeta_p)$.
\end{enumerate}
For each $i$ such that $\ell_i \equiv \pm 1 \pmod{p}$, let $K_i=K_{\ell_i}$ (see also Definition \ref{defn:K_i}).

\subsection{Structure of the Hecke algebra}
\label{subsec:main thms}
Our main results concern the structure of the Hecke algebra $\bT_N^\epsilon$.
\begin{thm}
\label{thm:main r primes}
Assume that $\epsilon=(-1,1,\dots,1)$. Let 
\[
\mathcal{S}=\{i\in\{1,\dots, r\} \mid \ell_i \equiv -1 \pmod{p}\}
\]
and let $s=\#\mathcal{S}$. Then
\begin{enumerate}
\item $\bT_N^\epsilon$ is a complete intersection ring.
\item $\bT_N^{0,\epsilon}$ is Gorenstein if and only if $I^{\epsilon}$ is principal.
\item There is a short exact sequence 
\begin{equation}
\label{eq:main SES}
0 \to  \bigoplus_{i=1}^r \Z_p/(\ell_i+1)\Z_p \to I^\epsilon/{I^\epsilon}^2 \to \Z_p/(\ell_0-1)\Z_p \to 0.
\end{equation}
\item The minimal number of generators of $I^\epsilon$ is $s+\delta$ where 
\[
\delta =\left\{
\begin{array}{lc}
1 & \text{ if } \ell_0 \text{ splits completely in } K_i \text{ for all } i \in \mathcal{S}, \text{ or} \\
0 & \text{ otherwise.}
\end{array}\right.
\]
\end{enumerate}
\end{thm}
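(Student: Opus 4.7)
The plan is to exploit the $R=\bT$ theorem promised in Section 1.2: I identify $\bT_N^\epsilon$ with a pseudodeformation ring $R^\epsilon$ parametrizing pseudorepresentations of $G_{\Q, Np}$ lifting the residual pseudorepresentation $1+\omega$, subject to the appropriate ordinary (or finite-flat) condition at $p$ and, at each $\ell_i \mid N$, a Steinberg-type local condition whose ``twist'' is governed by the sign $\epsilon_i$. Under this identification, $I^\epsilon \subset \bT_N^\epsilon$ is the reducibility ideal of the universal pseudodeformation, and $\bT_N^\epsilon/I^\epsilon = \Z_p$ corresponds to the universal reducible quotient. Given the pseudomodularity theorem, claims (1)--(4) reduce to analogous statements about $R^\epsilon$ that are computable via Galois cohomology.

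For (1), I would present $R^\epsilon$ as $\Z_p\lb X_1, \dots, X_n \rb/(f_1, \dots, f_m)$ with $n = \dim_{\F_p} \m^\epsilon/(p, (\m^\epsilon)^2)$ (the relative tangent dimension over $\Z_p$). Both $n$ and the minimal number of relations $m$ are controlled by Galois cohomology groups $H^i(\Z[1/Np], \F_p(\chi))$ for $\chi \in \{1, \omega, \omega^{-1}\}$ with suitable local conditions. A global Euler characteristic (Greenberg--Wiles/Poitou--Tate) computation then gives $n = m$, so $R^\epsilon$ is a complete intersection. The key subtlety, flagged in Remark \ref{rem:no cyclicity assumption}, is that the tangent space need not be cyclic, so the universal pseudodeformation is genuinely not representable and one must use the Cayley--Hamilton/pseudorepresentation formalism throughout.

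For (3), I would identify $I^\epsilon/(I^\epsilon)^2$, as a $\Z_p$-module, with the Pontryagin dual of a Selmer group of reducible first-order pseudodeformations---equivalently, of extension classes of $\omega$ by the trivial character satisfying the prescribed local conditions. Local analysis contributes the claimed terms: at each prime $\ell_i$ with $i \geq 1$ (where $\epsilon_i = +1$), the local Steinberg condition constrains the extension class to be annihilated by $\ell_i + 1$, giving the summand $\Z_p/(\ell_i + 1)\Z_p$; at the twisted prime $\ell_0$ (where $\epsilon_0 = -1$), the dual condition yields the quotient $\Z_p/(\ell_0 - 1)\Z_p$. The short exact sequence \eqref{eq:main SES} then falls out of the Poitou--Tate long exact sequence comparing the global cohomology to the sum of local cohomologies.

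Part (4) is then derived by reducing (3) modulo $p$ and computing when the resulting sequence of $\F_p$-vector spaces splits. The splitting obstruction is a class in a Galois cohomology group that, via class field theory, vanishes exactly when the Frobenius at $\ell_0$ is trivial on each degree-$p$ extension $K_i/\Q(\zeta_p)$ for $i \in \mathcal{S}$---equivalently, when $\ell_0$ splits completely in every $K_i$, recovering the definition of $\delta$. Finally, (2) is a formal consequence of (1): $\bT_N^\epsilon$ is Gorenstein because it is a complete intersection, $\bT_N^{\epsilon,0} = \bT_N^\epsilon/\Ann_{\bT_N^\epsilon}(I^\epsilon)$, and the standard self-duality argument shows $\bT_N^{\epsilon,0}$ is Gorenstein iff $I^\epsilon$ is principal. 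The main obstacle throughout is setting up the pseudodeformation framework with the correct local conditions at each $\ell_i$ and proving $R = \bT$ itself; once that is in place, the cohomological calculations are intricate but routine.
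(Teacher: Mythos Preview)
Your overall framework is right, but there are two substantive problems with the logic and one misidentification worth correcting.

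First, $I^\epsilon$ is not the reducibility ideal. Under $R^\epsilon_N \cong \bT^\epsilon_N$, the Eisenstein ideal corresponds to the \emph{augmentation ideal} $J^{\min} = \ker(R^\epsilon_N \to \Z_p)$ for the particular reducible pseudodeformation $\psi(\kcyc \oplus 1)$. The reducibility ideal $J^{\red}$ is strictly smaller: the universal reducible quotient $(R^\epsilon_N)^{\red}$ is $\Z_p[Y_0]/((\ell_0-1)Y_0, Y_0^2)$, not $\Z_p$. The two-step filtration $J^{\min} \supset J^{\red} \supset (J^{\min})^2$ is precisely what produces the short exact sequence in (3): the quotient $J^{\min}/J^{\red} \cong \Z_p/(\ell_0-1)\Z_p$ comes from deformations of the diagonal characters, while the sub $J^{\red}/(J^{\min})^2$ is generated by products $b_{\gamma_0} c_{\gamma_i}$ coming from the GMA off-diagonal modules $B^{\epsilon,\min}_N \otimes C^{\epsilon,\min}_N$. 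Your description of (3) as the dual of a single Selmer group of extensions of $\omega$ by $1$ misses this structure; the $\Z_p/(\ell_i+1)$ pieces are not local constraints on a single extension class but rather arise from the image of the GMA multiplication map.

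Second, and more seriously, the paper does not prove $R = \bT$ first and then deduce (1). Rather, (1), (3), and $R = \bT$ are proved \emph{simultaneously} via Wiles's numerical criterion: one bounds $\#J^{\min}/(J^{\min})^2$ from above by the expected product $p^{v_p(\ell_0-1)} \prod_i p^{v_p(\ell_i+1)}$ using the GMA analysis just described, and compares this to $\#(\bT^{0,\epsilon}_N/I^{0,\epsilon})$, which equals the same number by Ohta's congruence theorem on $a_0(E^\epsilon_{2,N})$. The numerical criterion then yields $R^\epsilon_N \cong \bT^\epsilon_N$, complete intersection, and equality in the bound (hence the exact sequence (3)) all at once. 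Your proposed route---assume $R = \bT$, then count generators and relations of $R$ by a Greenberg--Wiles formula---would require an independent proof of $R = \bT$ that you do not supply, and moreover there is no off-the-shelf obstruction theory for pseudodeformation rings that lets you read off the number of relations as an $H^2$. Ohta's congruence number is the essential arithmetic input you are missing.

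For (4) your idea is morally correct (the obstruction is the cup product $b_0 \cup c_i$, and its vanishing is equivalent to $\ell_0$ splitting in $K_i$), but the paper's execution is different: rather than analyzing the extension class of \eqref{eq:main SES}, it explicitly constructs a first-order Cayley--Hamilton representation $\rho_M$ valued in a GMA over $\F_p[M]$ with $\dim M = s + \delta$, verifies by hand that $\rho_M$ satisfies $\mathrm{US}^\epsilon_N$, and shows it induces $R^\epsilon_N/(p,\m^2) \cong \F_p[M]$. This construction is the technical heart of the argument and does not fall out of a Poitou--Tate sequence. Part (2) is indeed a formal consequence of (1), as you say.
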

\begin{proof}
Parts (1) and (3) are proved in \S\ref{sec:case1} (see especially Theorem \ref{thm:star main}). It is known to experts that Part (2) follows from (1) (see Lemma \ref{lem:goren and I principal}). Part (4) is Theorem \ref{thm:star split}.
\end{proof}

\begin{rem}
In fact, we show that, unless $s=r$, there are no newforms in $M_2(N)_\mathrm{Eis}^\epsilon$, so we can easily reduce to the case $s=r$ (i.e.~the case that $\ell_i \equiv -1 \pmod{p}$ for all $i>0$). When $s=r$, one could use this theorem to prove that there are newforms in $M_2(N)_\mathrm{Eis}^\epsilon$, but this is known (see \cite{ribet2015}, \cite[Thm.\ 1.3(3)]{yoo2017}).
\end{rem}

\begin{rem}
The criterion of Part (4) determines whether or not the extension class defined by the sequence \eqref{eq:main SES} is $p$-cotorsion. In fact, one can describe this extension class exactly in terms of algebraic number theory, but we content ourselves with the simpler statement (4).
\end{rem}

\begin{thm}
\label{thm:main 2 primes no new}
Assume $r=1$ and $\epsilon=(-1,-1)$ and that $\ell_0 \equiv 1 \pmod{p}$ but $\ell_1 \not \equiv 1 \pmod{p}$. If $\ell_1$ is not a $p$-th power modulo $\ell_0$, then there are no newforms in $M_2(N)_\mathrm{Eis}^\epsilon$. In particular, $I^\epsilon$ is principal, and generated by $T_q-(q+1)$ where $q$ is a good prime (of Mazur) for $(\ell_0,p)$.
\end{thm}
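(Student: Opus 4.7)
The plan is to prove $M_2(N)_{\m^\epsilon}^{\mathrm{new}} = 0$ first, and then derive the principality statement from Mazur's item (5) of \S\ref{subsec:Mazur results} applied at prime level $\ell_0$. The two hypotheses on $\ell_1$ are precisely Mazur's conditions for $\ell_1$ to be a good prime for $(\ell_0,p)$, so other good primes $q$ exist by Chebotarev; fix one such $q$.

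Assuming $M_2(N)_{\m^\epsilon}^{\mathrm{new}} = 0$, the two $\ell_1$-degeneracy maps together with the condition $w_{\ell_1}=-1$ produce a Hecke-equivariant surjection $\bT_{\ell_0}^{(-1)} \onto \bT_N^\epsilon$ from Mazur's Hecke algebra at prime level $\ell_0$. Both sides have Eisenstein quotient $\Z_p$, and both have congruence modulus $\ell_0 - 1$: on the $N$ side this uses that $1 - \ell_1 \in \Z_p^\times$ by hypothesis, together with \eqref{eq:constant term}. It follows that this surjection is an isomorphism, and Mazur's theorem then gives that $T_q - (q+1)$ generates $I^\epsilon$.

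The main step is the vanishing of newforms. By the pseudomodularity theorem announced in \S\ref{subsec:psdef method}, $\bT_N^\epsilon$ is isomorphic to a pseudodeformation ring $R_N^\epsilon$ classifying deformations of the Eisenstein pseudorepresentation $1 \oplus \omega$ with Steinberg-type local conditions of Atkin-Lehner sign $-1$ at both $\ell_0$ and $\ell_1$. An $\ell_1$-new eigenform would correspond to a pseudodeformation genuinely ramified at $\ell_1$ (as opposed to coming from an unramified twist at $\ell_1$). The aim is to show, via a local-global Galois cohomology computation, that no such ramified-at-$\ell_1$ deformation exists, so that $R_N^\epsilon = R_{\ell_0}^{(-1)}$.

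Concretely, since $\ell_1 \not\equiv 1 \pmod p$, the character $\omega$ is nontrivial on $G_{\Q_{\ell_1}}$ and the $\ell_1$-local tangent space for Steinberg-type deformations reduces to a single class, which via Kummer theory lies in the $\omega^{-1}$-part of the ideal class group of $\Q(\zeta_p)$ cut out by the field $K_{\ell_0}$ of \S\ref{subsubsec:defn of K_i}. The hypothesis that $\ell_1$ is not a $p$-th power modulo $\ell_0$ says exactly that $\Fr_{\ell_1}$ acts nontrivially on $\Gal(K_{\ell_0}/\Q(\zeta_p))$, i.e., that $\ell_1$ does not split completely in $K_{\ell_0}$. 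Via Poitou-Tate duality, this nonvanishing forces the global Selmer group controlling the $\ell_1$-ramified part of the tangent space of $R_N^\epsilon$ to vanish. The main obstacle is making this Poitou-Tate reflection precise in the pseudorepresentation framework: one must track which local condition at $\ell_1$ corresponds to Atkin-Lehner sign $-1$ (versus the purely unramified deformation corresponding to $\epsilon_1 = +1$), and verify that the resulting Selmer condition is dual to the one detected by the $p$-th power residue symbol. Once this is in place, the argument parallels the cohomological reductions behind Theorem \ref{thm:main r primes}.
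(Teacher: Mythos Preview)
Your outline has the logical dependency backwards and misidentifies the relevant Galois cohomology.

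First, you invoke ``the pseudomodularity theorem announced in \S\ref{subsec:psdef method}'' as an input, but there is no ambient $R=\bT$ result to quote: the isomorphism $R_N^\epsilon \cong \bT_N^\epsilon$ in this case is exactly what is being proved here, and it is obtained via Wiles's numerical criterion, not assumed. The paper's argument (Theorem~\ref{thm:one interesting prime}) runs as follows: compute $R_N^{\epsilon,\red} \cong \Z_p[Y_0]/(Y_0^2,(\ell_0-1)Y_0)$ from Lemma~\ref{lem:computation of Rred}, so $\Jm^2 \subset J^\red$ and $\Jm/J^\red \cong \Z_p/(\ell_0-1)$; then use Proposition~\ref{prop:bcs in Jm^2} to show that the generators $b_{\gamma_0}c_{\gamma_0}, b_{\gamma_1}c_{\gamma_0}$ of $J^\red$ already lie in $\Jm^2$, forcing $J^\red = \Jm^2$; hence $\#\Jm/\Jm^2 = p^{v_p(\ell_0-1)}$, and the numerical criterion (Proposition~\ref{prop:numerical crit}) yields $R_N^\epsilon \risom \bT_{\ell_0}$. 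Since $R_N^\epsilon \rsurj \bT_N^\epsilon \rsurj \bT_{\ell_0}$, the equality $\bT_N^\epsilon = \bT_{\ell_0}$ (and hence ``no newforms'') is a \emph{consequence}, not a hypothesis.

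Second, your translation of ``$\ell_1$ is not a $p$-th power modulo $\ell_0$'' into the language of $K_{\ell_0}$ is wrong. The field $K_{\ell_0}$ is cut out by the class $c_0 \in H^1_{(p)}(\Z[1/Np],\F_p(-1))$, and splitting of $\ell_1$ in $K_{\ell_0}$ concerns $c_0\vert_{G_{\ell_1}}$; but since $\ell_1 \not\equiv 1 \pmod p$, $\ell_1$ does not even split in $\Q(\zeta_p)$, so this condition is vacuous. What the hypothesis actually controls is the Kummer class $b_1 \in H^1(\Z[1/Np],\F_p(1))$ of $\ell_1$: by Kummer theory, $\tilde b_1(\sigma_0) \neq 0$ exactly when $\ell_1$ is not a $p$-th power mod $\ell_0$. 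This is the input to Lemma~\ref{lem:b_sig and b_gam generate}, which shows that $\{b_{\gamma_0}, b_{\sigma_0}\}$ generates $B_N^\epsilon$, and this in turn feeds into Proposition~\ref{prop:bcs in Jm^2} (via the $(1,1)$-entry of $V^{\epsilon_0}_{\rho^\epsilon_N}(\sigma_0,\gamma_0)=0$, which gives $b_{\sigma_0}c_{\gamma_0} \in \Jm^2$). There is no Poitou--Tate reflection or Selmer-group vanishing argument here; the mechanism is purely about the GMA structure and which products $b_\sigma c_\tau$ land in $\Jm^2$.
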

\begin{proof}
This is Theorem \ref{thm:one interesting prime}. 
\end{proof}
\begin{rem}
In the case $\ell_1 \ne p$, this is a theorem of Ribet \cite{ribet2010} and Yoo \cite[Thm.\ 2.3]{yoo2017}. Yoo has informed us that the method should work for the case $\ell_1 =p$ as well. In any case, our method is completely different.
\end{rem}

\begin{thm}
\label{thm:main 2 primes new}
Assume $r=1$ and $\epsilon=(-1,-1)$ and that $\ell_0 \equiv \ell_1 \equiv 1 \pmod{p}$. Assume further that
\begin{center}
$\ell_i$ is not a $p$-th power modulo $\ell_j$ for $(i,j) \in \{(0,1),(1,0)\}$.
\end{center}
 Then
\begin{enumerate}
\item there are newforms in $M_2(N)_\mathrm{Eis}^\epsilon$.
\item $\bT_N^\epsilon$ is a complete intersection ring.
\item $\bT_N^{\epsilon,0}$ is not a Gorenstein ring.
\item $I^{\epsilon,0}/{I^{\epsilon,0}}^2 \cong \Z_p/(\ell_0-1)\Z_p \oplus \Z_p/(\ell_1-1)\Z_p$.
\end{enumerate}
\end{thm}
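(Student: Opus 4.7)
My approach is to apply the $R=\bT$ theorem that is the main technical achievement of the paper, specialized to the signature $\epsilon = (-1,-1)$. The relevant pseudodeformation ring $R^\epsilon$ parametrizes deformations of the residual pseudorepresentation of $1 \oplus \omega^{-1}$ (ordinary at $p$) subject to local conditions at $\ell_0,\ell_1$ that encode Atkin--Lehner eigenvalue $-1$ at each prime. With the $R = \bT$ isomorphism in hand, the structural statements (2)--(4) become computations inside $R^\epsilon$.

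First I would carry out the Galois-cohomological computation of the cotangent space of $R^\epsilon$. Under the hypothesis $\ell_0 \equiv \ell_1 \equiv 1 \pmod p$, both primes contribute generators to the deformation problem, and the hypothesis that neither $\ell_i$ is a $p$-th power modulo $\ell_j$ is precisely what forces the two would-be extra cohomology classes --- the ones cut out by the fields $K_{\ell_0}$ and $K_{\ell_1}$ --- to fail the local conditions defining $R^\epsilon$, so they do not kill the corresponding tangent directions. This should yield a cotangent space isomorphic to $\Z_p/(\ell_0-1)\Z_p \oplus \Z_p/(\ell_1-1)\Z_p$. Combined with a presentation of $R^\epsilon$ as a quotient of a power series ring in two variables by two relations (obtained in parallel to the proof of Theorem \ref{thm:main r primes}), this gives (2), that $\bT^\epsilon_N$ is a complete intersection.

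For (4), the tangent-space computation descends to the cuspidal quotient: the Eisenstein quotient map $\bT^\epsilon_N \to \Z_p$ accounts for one generator/relation pair, so the cuspidal cotangent $I^{\epsilon,0}/(I^{\epsilon,0})^2$ inherits the rank-$2$ structure computed above, giving (4). Part (3) then follows by Lemma \ref{lem:goren and I principal}: if $\bT^{\epsilon,0}$ were Gorenstein then $I^{\epsilon,0}$ would be principal, contradicting the fact that its cotangent is of minimal rank $2$. Finally, (1) follows by a rank count: the $\Z_p$-rank of $\bT^\epsilon_N$ forced by $R^\epsilon$ strictly exceeds the contribution coming from Eisenstein series and from oldforms induced up from levels $\ell_0$ and $\ell_1$ (whose Atkin--Lehner signatures at the missing prime do not match $\epsilon$), so genuine newforms must exist in $M_2(N)_{\m^\epsilon}$.

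The main obstacle is the cotangent computation. In contrast to Theorem \ref{thm:main r primes}, here the tangent space of $R^\epsilon$ is genuinely rank $2$ rather than cyclic; this is precisely issue (d) of the introduction, meaning the universal pseudodeformation cannot arise from a true representation. Working entirely within the pseudodeformation framework --- and verifying that the non-$p$-th-power conditions do exactly the cohomological work of ensuring that the local restriction maps at $\ell_0$ and $\ell_1$ land in the proper subspaces, so that no pair of directions in the global $H^1$ can be simultaneously absorbed into a single generator --- is the delicate step.
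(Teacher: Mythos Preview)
Your overall architecture --- compute the cotangent of the pseudodeformation ring, feed this into an $R=\bT$ result, then read off (2)--(4) --- is the right shape, but two central points are off.

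First, you have misidentified the mechanism by which the non-$p$-th-power hypothesis enters. The fields $K_{\ell_i}$ (equivalently, the classes $c_i \in H^1_{(p)}(\Z[1/Np],\F_p(-1))$) play no role here; they belong to the analysis for $\epsilon = (-1,1,\dots,1)$. In the present case the hypothesis concerns \emph{Kummer} classes: ``$\ell_1$ is not a $p$-th power modulo $\ell_0$'' says exactly that the Kummer class $b_1$ of $\ell_1$ does not vanish at $\sigma_0$, and the paper uses this (Lemma~\ref{lem:b_sig and b_gam generate}) to show that $\{b_{\gamma_0}, b_{\sigma_0}\}$ already generates the $B$-module of the universal GMA. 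From that, Proposition~\ref{prop:bcs in Jm^2} shows the generators $b_{\gamma_i}c_{\gamma_j}$ of the reducibility ideal $J^{\mathrm{red}}$ all lie in $(J^{\min})^2$; by symmetry in $(\ell_0,\ell_1)$ the same holds for the remaining product. Hence $J^{\mathrm{red}} = (J^{\min})^2$, and the cotangent reduces to $J^{\min}/J^{\mathrm{red}}$, which is read off from the explicit presentation of $(R^\epsilon_N)^{\mathrm{red}}$ in Lemma~\ref{lem:computation of Rred}. Your sentence about classes ``cut out by $K_{\ell_0}$ and $K_{\ell_1}$ failing local conditions'' is the mechanism from the wrong theorem.

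Second, there is no prior $R=\bT$ theorem to invoke as a black box for this $\epsilon$. The isomorphism $R^\epsilon_N \cong \bT^\epsilon_N$ and the complete-intersection property are proved \emph{simultaneously} via Wiles's numerical criterion (Proposition~\ref{prop:numerical crit}): the cotangent bound $\#J^{\min}/(J^{\min})^2 \le p^{v_p(\ell_0-1)+v_p(\ell_1-1)}$ is exactly the input the criterion needs. There is no separate step of presenting $R^\epsilon_N$ by two generators and two relations.

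Your deduction of (3) from (2) and (4) via Lemma~\ref{lem:goren and I principal} matches the paper. For (1), the paper does not argue by a rank count but by contraposition (Theorem~\ref{thm:newforms in case 2}): if there were no newforms, one shows $\bT^\epsilon_N \cong \bT_{\ell_0} \times_{\Z_p} \bT_{\ell_1}$ and computes the socle of this fiber product directly to see it is not Gorenstein, contradicting (2).
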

\begin{proof}
Parts (2) and (4) are proven in Theorem \ref{thm:thm2}. Part (1), the precise meaning of which is given in Definition \ref{defn:no newforms}, follows from Part (2) by Theorem \ref{thm:newforms in case 2}. Part (3) follows from (2) and (4) by Lemma \ref{lem:goren and I principal}.
\end{proof}

\begin{rem}
\label{rem:no cyclicity assumption}
The proof of this theorem may be of particular interest for experts in the deformation theory of Galois representations. The proof is the first (as far as we are aware) example of an $R=\bT$ theorem, where $R$ is a universal pseudodeformation ring, and where we do not rely on certain Galois cohomology groups being cyclic. (This cyclicity ensures that the pseudorepresentations come from true representations.) In fact, with the assumptions of the theorem, the relevant cohomology groups are not cyclic. However, see \cite[Thm.\ 8.2]{BerKlo2015}, where $R' = \bT$ is proved, where $R'$ is a certain quotient of a universal pseudodeformation ring. 
\end{rem}

\begin{rem} 
Outside of the cases considered in these theorems, we cannot expect that $\bT_N^\epsilon$ is a complete intersection ring, as Remark \ref{rem: counting relations} and the examples in \S \ref{subsec:examples} below illustrate. Our method, which applies Wiles's numerical criterion \cite{wiles1995}, proves that $\bT_N^\epsilon$ is a complete intersection ring as a byproduct. A new idea is needed to proceed beyond these cases. The authors along with C.\ Hsu are currently working out such an idea \cite{HWWE2021}. 
\end{rem}

\begin{rem} 
\label{rem: counting relations}
Consider the case $\epsilon=(-1,-1,\dots,-1)$ with $\ell_i \equiv 1 \pmod{p}$ for $i=0,\dots, r$. There is a numerological reason why our arguments work for $r=1$, but not for $r>1$.  To see that $\bT_N^\epsilon$ satisfies the numerical criterion, its cotangent module $I^\epsilon/{I^\epsilon}^2$ must not be any bigger than its reducible quotient contributed by Lemma \ref{lem:computation of Rred}. In order for the irreducible submodule of $I^\epsilon/{I^\epsilon}^2$ to vanish, we have to show that there are $(r+1)^2$ relations which kill off all of the $(r+1)^2$ generators. We can always see that $(r+1)$ of them hold, and when certain additional conditions (like the assumptions in Theorem \ref{thm:main 2 primes new}) on the $\ell_i$ hold, we show that another $(r+1)$ relations hold (see Lemma \ref{lem:b_sig and b_gam generate}). This gives a total of $2(r+1)$, and only when $r=1$ do we have $(r+1)^2=2(r+1)$.
\end{rem}

\subsection{Applications to multiplicity one}
For an application of the main result, we let $J_0(N)$ be the Jacobian of the modular curve $X_0(N)$.
\begin{cor}
\label{cor:mult one fails}
In the following cases, we can compute $\dim_{\F_p} J_0(N)(\overline{\Q}_p)[\m^\epsilon]$: 
\begin{enumerate}[leftmargin=2em]
\item With the assumptions of Theorem \ref{thm:main r primes}, we have 
\[
\dim_{\F_p} J_0(N)(\overline{\Q}_p)[\m^\epsilon] =1+s+\delta,
\]
where $s$ and $\delta$ are as in Theorem \ref{thm:main r primes}.
\item With the assumptions of Theorem \ref{thm:main 2 primes no new}, we have $\dim_{\F_p} J_0(N)(\overline{\Q}_p)[\m^\epsilon] =2$.
\item With the assumptions of Theorem \ref{thm:main 2 primes new}, we have $\dim_{\F_p} J_0(N)(\overline{\Q}_p)[\m^\epsilon] =3$.
\end{enumerate}
\end{cor}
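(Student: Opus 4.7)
The starting point is the Pontryagin-duality identification
\[
\dim_{\F_p} J_0(N)(\overline{\Q}_p)[\m^\epsilon] = \mu_{\bT_N^{\epsilon,0}}(T),
\]
where $T := T_pJ_0(N)_{\m^\epsilon}$ and $\mu$ denotes the minimal number of generators over the cuspidal Hecke algebra. I would then apply the complex-conjugation decomposition $T = T^+ \oplus T^-$. Each $T^\pm$ is a faithful $\bT_N^{\epsilon,0}$-module of generic rank one, and the Weil pairing makes $T^-$ Matlis-dual to $T^+$. The key modular-forms input---an Ohta-style argument using the modular-symbols interpretation of the $+1$-eigenspace, crucially adapted to the paper's Hecke algebra $\bT_N^\epsilon$ built from the Atkin--Lehner involutions $w_{\ell_i}$ rather than the operators $U_{\ell_i}$ (cf.~Appendix \ref{appendix:U and w})---is that $T^+$ is free of rank one over $\bT_N^{\epsilon,0}$. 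Granting this, $T^- \cong \omega_{\bT_N^{\epsilon,0}}$ (the dualizing module) and the multiplicity formula becomes
\[
\dim_{\F_p} J_0(N)(\overline{\Q}_p)[\m^\epsilon] = 1 + \mu\bigl(\omega_{\bT_N^{\epsilon,0}}\bigr).
\]

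\textbf{Computation of $\omega$.} For case (2), Theorem \ref{thm:main 2 primes no new} gives $I^\epsilon$ principal, hence $\bT_N^{\epsilon,0}$ is Gorenstein by Lemma \ref{lem:goren and I principal}, $\omega \cong \bT_N^{\epsilon,0}$, and $\mu(\omega)=1$, yielding dimension $2$. For cases (1) and (3), the main theorems show that $\bT_N^\epsilon$ is a complete intersection (Theorems \ref{thm:main r primes}(1), \ref{thm:main 2 primes new}(2)), so $\omega_{\bT_N^\epsilon} \cong \bT_N^\epsilon$. The canonical-module transfer formula for a finite-length-ideal quotient of a one-dimensional Cohen--Macaulay local ring then gives
\[
\omega_{\bT_N^{\epsilon,0}} \cong \Hom_{\bT_N^\epsilon}\bigl(\bT_N^{\epsilon,0},\bT_N^\epsilon\bigr) = \Ann_{\bT_N^\epsilon}(K),
\]
where $K := \ker(\bT_N^\epsilon \twoheadrightarrow \bT_N^{\epsilon,0})$ is the annihilator of $S_2(N)_{\m^\epsilon}$. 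The Eisenstein decomposition $M_2(N)_{\m^\epsilon}\otimes\Q_p = S_2(N)_{\m^\epsilon}\otimes\Q_p \oplus \Q_p\cdot E_{2,N}^\epsilon$ embeds $\bT_N^\epsilon$ into $\bT_N^{\epsilon,0}\times\Z_p$, and a direct check on this embedding identifies $\Ann_{\bT_N^\epsilon}(K)$ with the Eisenstein ideal $I^\epsilon$. Since $K$ acts as zero on $I^\epsilon$, one obtains $\mu(\omega_{\bT_N^{\epsilon,0}}) = \mu_{\bT_N^\epsilon}(I^\epsilon)$.

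\textbf{Plugging in.} For case (1), Theorem \ref{thm:main r primes}(4) gives $\mu(I^\epsilon) = s + \delta$, so the dimension is $1 + s + \delta$. For case (3), Theorem \ref{thm:main 2 primes new}(4) gives $I^{\epsilon,0}/(I^{\epsilon,0})^2 \cong \Z_p/(\ell_0-1)\Z_p \oplus \Z_p/(\ell_1-1)\Z_p$; the hypothesis $\ell_0 \equiv \ell_1 \equiv 1 \pmod p$ makes both summands $\F_p$ after further reducing modulo $\m^\epsilon$, so $\mu(I^\epsilon)=2$ and the dimension is $3$. The main obstacle in this outline is establishing the freeness of $T^+$ over $\bT_N^{\epsilon,0}$ in the Eisenstein squarefree setting; this is where the paper's Atkin--Lehner-based Hecke algebra and its pseudodeformation-theoretic payoff really do the work, since without it one only gets inequalities of the form $\mu(T) \leq 2\mu(\omega)$ that do not distinguish the exact dimensions claimed.
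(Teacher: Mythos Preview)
Your outline is essentially correct and arrives at the same formula $\dim_{\F_p} J_0(N)(\overline{\Q}_p)[\m^\epsilon] = 1 + \mu(I^\epsilon)$ that the paper uses (packaged as Lemma~\ref{lem:I and Jacobian kernel}). The paper's proof of the corollary is literally one line: apply the named theorems together with that lemma.

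The difference lies in how the structural result on $\cT_{\m^\epsilon}$ is obtained. You decompose via complex conjugation as $T = T^+ \oplus T^-$ and want $T^+$ free of rank one over $\bT_N^{0,\epsilon}$, after which Weil-pairing duality gives $T^- \cong (\bT_N^{0,\epsilon})^\vee$. The paper instead uses the $I_p$-filtration: Lemma~\ref{lem:failure of mult one = gorenstein defect} produces an exact sequence $0 \to \bT_N^{0,\epsilon}(1) \to \cT_{\m^\epsilon} \to (\bT_N^{0,\epsilon})^\vee \to 0$ of $\bT_N^{0,\epsilon}[I_p]$-modules, split over $\bT_N^{0,\epsilon}$, coming from Ohta's bound $\dim_{\F_p} J_0(N)_{/\Z_p}(\overline{\F}_p)[\m^\epsilon] \leq 1$ \cite[Prop.~3.5.9]{ohta2014} via Mazur's argument. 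Both routes yield $\cT_{\m^\epsilon} \cong \bT_N^{0,\epsilon} \oplus (\bT_N^{0,\epsilon})^\vee$ as modules, hence $\dim = 2 + \delta(\bT_N^{0,\epsilon})$. Your acknowledged ``main obstacle''---freeness of $T^+$---is precisely what the paper sidesteps by citing Ohta's special-fiber result; it is not something the pseudodeformation theory of the paper itself supplies.

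For the second half, your canonical-module transfer computation $\omega_{\bT_N^{0,\epsilon}} \cong \Ann_{\bT_N^\epsilon}(K) = I^\epsilon$ is exactly Lemma~\ref{lem:goren and I principal}, which the paper proves via the duality \eqref{eq:M and T duality} with modular forms rather than the abstract transfer formula; the two arguments are equivalent.
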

\begin{proof}
This follows from the named theorems together with Lemma \ref{lem:I and Jacobian kernel} (which is known to experts).
\end{proof}
One says that ``multiplicity one holds'' if $\dim_{\F_p} J_0(N)(\overline{\Q}_p)[\m^\epsilon] =2$. This corollary implies that multiplicity one holds in case (1) if and only if $s+\delta=1$, always holds in case (2), and always fails in case (3).

\subsubsection{Ribet's Conjecture} Previous works on multiplicity one have used a different Hecke algebra $\bT_{N,U}^\epsilon$, defined in \S \ref{subsubsec:T_U} (see, for example, \cite{yoo2015}). Let $\m_U^\epsilon=(I_U^\epsilon,p) \subset \bT^\epsilon_{N,U}$ be its maximal ideal. The previous corollary together with Proposition \ref{prop:T=T_U first case} give the following.
\begin{cor}[Generalized Ribet's Conjecture]
\label{cor: gen Ribet conj}
With the assumptions of Theorem \ref{thm:main r primes}, assume in addition that $\ell_i \not \equiv 1 \pmod{p}$ for $i>0$. Then 
\[
\dim_{\F_p} J_0(N)(\overline{\Q}_p)[\m_U^\epsilon]=1+s+\delta,
\]
where $s$ and $\delta$ are as in Theorem \ref{thm:main r primes}.
\end{cor}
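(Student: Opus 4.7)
The plan is to deduce the result immediately from Corollary \ref{cor:mult one fails}(1) via an identification of the two Hecke algebras. Under the assumptions of Theorem \ref{thm:main r primes} together with the extra hypothesis $\ell_i \not\equiv 1 \pmod{p}$ for $i > 0$, Proposition \ref{prop:T=T_U first case} should supply an isomorphism $\bT_{N,U}^\epsilon \isoto \bT_N^\epsilon$ compatible with the action on $J_0(N)$ and carrying $I_U^\epsilon$ to $I^\epsilon$, and hence $\m_U^\epsilon$ to $\m^\epsilon$.

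First I would invoke that proposition to transport $\m_U^\epsilon$ to $\m^\epsilon$ inside $\End(J_0(N)) \otimes \Z_p$. Since the two ideals then annihilate the same subgroup of the Jacobian, this gives
$$J_0(N)(\overline{\Q}_p)[\m_U^\epsilon] = J_0(N)(\overline{\Q}_p)[\m^\epsilon].$$
Then I would read off $\dim_{\F_p} J_0(N)(\overline{\Q}_p)[\m^\epsilon] = 1 + s + \delta$ directly from Corollary \ref{cor:mult one fails}(1), which yields the stated formula.

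The main obstacle is entirely packaged in Proposition \ref{prop:T=T_U first case}: one must verify that the hypothesis $\ell_i \not\equiv 1 \pmod{p}$ for $i > 0$ forces $U_{\ell_i}$ and $w_{\ell_i}$ to generate the same complete local subring of $\End(J_0(N)) \otimes \Z_p$ on the Eisenstein component. Heuristically, the Hecke polynomial $x^2 - a_{\ell_i} x + \ell_i$ relating the two operators degenerates in a controlled way on the Eisenstein component, and the congruence condition ensures that the relevant Euler factor at $\ell_i$ is a $p$-adic unit, so each of $U_{\ell_i}$, $w_{\ell_i}$ can be expressed as a power series in the other. Granting this identification (established in Appendix \ref{appendix:U and w}), the present corollary requires no further input beyond combining the two cited results.
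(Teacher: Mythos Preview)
Your proposal is correct and follows the same route as the paper: combine Proposition \ref{prop:T=T_U first case} with Corollary \ref{cor:mult one fails}(1). One small imprecision: Proposition \ref{prop:T=T_U first case} actually identifies the \emph{cuspidal} Hecke algebras $\bT_U^{\epsilon,0} = \bT^{\epsilon,0}$ rather than the full $\bT_{N,U}^\epsilon$ and $\bT_N^\epsilon$, but since the Hecke action on $J_0(N)$ factors through the cuspidal quotient this is exactly what is needed, and your conclusion $J_0(N)(\overline{\Q}_p)[\m_U^\epsilon] = J_0(N)(\overline{\Q}_p)[\m^\epsilon]$ goes through unchanged.
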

The case $s=r=1$ of Corollary \ref{cor: gen Ribet conj} was conjectured by Ribet \cite{ribet2015} (see also \cite[pg.~4]{yoo2017b}).
\begin{rem}
After we told Yoo about the results of this paper, he found an alternate proof of this corollary in the case $s=r=1$. His proof involves a delicate study of the geometry of $J_0(N)$, and relies on the following particular results of this paper:
\begin{enumerate}[label=(\roman*), leftmargin=2em]
\item  $I^\epsilon$ is principal if and only if $\bT^{0,\epsilon}_{N}$ is Gorenstein, from Theorem \ref{thm:main r primes}(2), and
\item $\bT^{0,\epsilon}_{N,U}=\mathbb{T}_N^{0,\epsilon}$ under the assumption $s=r=1$, from Proposition \ref{prop:T=T_U first case}, so that the conclusion of (i) can be applied to the ideal $I_U^\epsilon \subset \bT^{0,\epsilon}_{N,U}$. 
\end{enumerate}
In particular, Yoo's proof does not make use of our formula for the number of generators for $I^\epsilon$ in Theorem \ref{thm:main r primes}(4), and we believe that his methods could be used to give a new proof of that result in this case.

In contrast, our proof is immediate from the ring-theoretic properties given in Theorem \ref{thm:main r primes} and a standard argument (found in \cite{mazur1978}, for example), and no additional geometric argument is needed. The fact that our proof is almost completely ring-theoretic demonstrates the power of the Gorenstein property and is a reason for our interest in using $\mathbb{T}_N^\epsilon$ rather than $\mathbb{T}_{N,U}^\epsilon$.
\end{rem}

\subsubsection{Gorensteinness, and multiplicity one for the generalized Jacobian} The following observations are not used (nor proven) in this paper (although they are familiar to experts), but we include them to illustrate the the arithmetic significance of the Gorenstein property for $\bT_N^\epsilon$ proved in Theorems \ref{thm:main r primes}, \ref{thm:main 2 primes no new} and \ref{thm:main 2 primes new}. We learned this point of view from papers of Ohta, especially \cite{ohta2005}. 

As is well-known, and as we explain in \S \ref{subsec:gorenstein and jacobian}, multiplicity one holds if and only if $\bT_N^{0,\epsilon}$ is Gorenstein. The nomenclature ``multiplicity one" comes from representation theory. It is related to the question of whether $H^1_{\mathrm{\acute{e}t}}(X_0(N)_{\overline{\Q}},\Z_p(1))_\mathrm{Eis}^\epsilon$ is a free $\bT_N^{0,\epsilon}$-lattice in the free $\bT_N^{0,\epsilon}[\frac{1}{p}]$-module $H^1_{\mathrm{\acute{e}t}}(X_0(N)_{\overline{\Q}},\Q_p(1))_\mathrm{Eis}^\epsilon$. 

There is another natural lattice to consider, namely $H^1_{{\mathrm{\acute{e}t}}}(Y_0(N)_{\overline{\Q}},\Z_p(1))_{\m^\epsilon,\mathrm{DM}}$, the image of $H^1_{{\mathrm{\acute{e}t}}}(Y_0(N)_{\overline{\Q}},\Z_p(1))_\mathrm{Eis}^\epsilon$ under the Drinfeld-Manin splitting
\[
H^1_{{\mathrm{\acute{e}t}}}(Y_0(N)_{\overline{\Q}},\Q_p(1))_\mathrm{Eis}^\epsilon \lra H^1_{{\mathrm{\acute{e}t}}}(X_0(N)_{\overline{\Q}},\Q_p(1))_\mathrm{Eis}^\epsilon.
\]
In a similar manner to the proof of Lemma \ref{lem:failure of mult one = gorenstein defect}, one can show that $\bT_N^\epsilon$ is Gorenstein if and only if $H^1_{{\mathrm{\acute{e}t}}}(Y_0(N)_{\overline{\Q}},\Z_p(1))_{\m^\epsilon,\mathrm{DM}}$ is a free $\bT_N^{0,\epsilon}$-module, if and only if 
\[
\dim_{\F_p} GJ_0(N)(\overline{\Q}_p)[\m^\epsilon]=2,
\]
where $GJ_0(N)$ is the generalized Jacobian of $J_0(N)$ relative to the cusps (see e.g.\ \cite[\S3]{ohta1999} for a discussion of generalized Jacobians). Hence our result that $\bT_N^\epsilon$ is Gorenstein can be thought of as a multiplicity one result for $GJ_0(N)$.

Finally, we note that these ideas illustrate why the failure of multiplicity one in Corollary \ref{cor:mult one fails} is related to the failure of $I^\epsilon$ to be principal: if $\bT_N^\epsilon$ is Gorenstein, 
\[
H^1_{{\mathrm{\acute{e}t}}}(X_0(N)_{\overline{\Q}},\Z_p(1))_\mathrm{Eis}^\epsilon \rinj H^1_{{\mathrm{\acute{e}t}}}(Y_0(N)_{\overline{\Q}},\Z_p(1))_{\m^\epsilon,\mathrm{DM}}
\]
has the form, as $\bT^{0, \epsilon}_N$-modules, of
\[
\bT_N^{0,\epsilon} \oplus I^{0,\epsilon} \rinj \bT_N^{0,\epsilon} \oplus \bT_N^{0,\epsilon}.
\]
Hence $H^1_{{\mathrm{\acute{e}t}}}(X_0(N)_{\overline{\Q}},\Z_p(1))_\mathrm{Eis}^\epsilon$ is free if and only if $I^{0,\epsilon}$ is principal.

\subsection{Good primes} We also prove analogues of Mazur's good prime criterion (statement (5) of \S \ref{subsec:Mazur results}).

In the situation of Theorem \ref{thm:main 2 primes new}, our good prime criterion is necessary and sufficient, exactly analogous to Mazur's. To state it, we let
\[
\log_\ell:(\Z/\ell\Z)^\times \rsurj \F_p
\]
denote an arbitrary surjective homomorphism, for any prime $\ell$ that is congruent to $1$ modulo $p$ (the statement below will not depend on the choice).
\begin{thm}
\label{thm:main good primes 2}
With the assumptions of Theorem \ref{thm:main 2 primes new}, fix primes $q_0,q_1$ not dividing $N$ (but possibly dividing $p$). Then the elements $T_{q_0}-(q_0+1)$ and $T_{q_1}-(q_1+1)$ together generate $I^\epsilon$ if and only if
\[
(q_0-1)(q_1-1)\det\ttmat{\log_{\ell_0}(q_0)}{\log_{\ell_0}(q_1)}{\log_{\ell_1}(q_0)}{\log_{\ell_1}(q_1)} \in \F_p^\times.
\]
\end{thm}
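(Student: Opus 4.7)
The plan is a Nakayama-lemma argument combined with an explicit computation in the cotangent space using the pseudomodularity theorem $R = \bT_N^\epsilon$ of Theorem \ref{thm:main 2 primes new}. First, by Theorem \ref{thm:main 2 primes new}(4), $I^{\epsilon,0}/(I^{\epsilon,0})^2$ requires exactly two generators, and the same holds for $I^\epsilon$ itself (by lifting through the complete intersection $\bT_N^\epsilon \onto \bT_N^{\epsilon,0}$, whose kernel is controlled by the Gorenstein-defect formalism of \S\ref{subsec:gorenstein and jacobian}). So $I^\epsilon/\m^\epsilon I^\epsilon \cong \F_p^{\oplus 2}$, and by Nakayama the pair $T_{q_0}-(q_0+1), T_{q_1}-(q_1+1)$ generates $I^\epsilon$ if and only if their images in $I^\epsilon/\m^\epsilon I^\epsilon$ are $\F_p$-linearly independent. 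Fixing a basis, this becomes a $2\times 2$ determinant condition.

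Next, I would identify $I^\epsilon/\m^\epsilon I^\epsilon$ explicitly via $R = \bT_N^\epsilon$. The cotangent space corresponds to the reducibility-cotangent space of the universal pseudodeformation ring $R$ at the Eisenstein point. Under the assumption $\epsilon=(-1,-1)$, the prescribed local conditions at $\ell_0, \ell_1$ are of Steinberg type, which in Galois-cohomological terms force the relevant upper-triangular extension classes to be cocycles with values in $\F_p(1)$ (or its dual) ramified only at $\ell_i$; these are precisely the Kummer classes $c_0, c_1$ corresponding to the degree-$p$ cyclic extensions $K_{\ell_0}, K_{\ell_1}/\Q(\zeta_p)$ from \S\ref{subsubsec:defn of K_i}. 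The hypothesis that $\ell_i$ is not a $p$-th power mod $\ell_j$ ensures that $c_0, c_1$ remain independent after imposing the Selmer conditions at the other prime, so they span a 2-dimensional space matching the cotangent rank. Let $c_0^\vee, c_1^\vee$ be the dual basis of $I^\epsilon/\m^\epsilon I^\epsilon$.

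Then I would compute the image of $T_q-(q+1)$ in this basis. Here $T_q$ is the trace of Frobenius at $q$ in the universal pseudodeformation, with Eisenstein value $1+q$, so $T_q-(q+1)$ is first-order in $I^\epsilon$ and its image is governed by evaluations of the cocycles $c_0, c_1$ on $\Fr_q$. Computing the trace of a reducible pseudodeformation with fixed determinant $\chi_{\mathrm{cyc}}$ — where the determinant constraint forces the two character pieces to pair against one another via the $(1-q)$ contribution of $\chi_{\mathrm{cyc}}(\Fr_q)-1$ — yields a congruence of the shape
\[
T_q-(q+1) \equiv (q-1)\bigl(\log_{\ell_0}(q)\,c_0^\vee + \log_{\ell_1}(q)\,c_1^\vee\bigr) \pmod{\m^\epsilon I^\epsilon},
\]
where $c_i(\Fr_q) = \log_{\ell_i}(q)\in \F_p$ by Kummer-theoretic class field theory applied to $K_{\ell_i}$. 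The determinant of the resulting $2\times 2$ coefficient matrix is $(q_0-1)(q_1-1)\det(\log_{\ell_i}(q_j))_{i,j}$, which is nonzero in $\F_p$ precisely under the criterion in the statement.

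The main obstacle is the third step: the precise pseudodeformation-theoretic computation of the image of $T_q-(q+1)$ in the chosen Galois-cohomological basis, including both the weight-two factor $(q-1)$ arising from the determinant condition and the identification of the cocycle-Frobenius value with $\log_{\ell_i}(q)$. This relies on the Cayley-Hamilton/pseudorepresentation machinery developed in the body of the paper to transfer the computation faithfully to the Hecke side, and on the explicit description of $K_{\ell_i}$ via Kummer theory to evaluate $c_i$ at $\Fr_q$.
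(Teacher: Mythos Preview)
Your overall strategy is right and matches the paper: reduce by Nakayama to linear independence in $I^\epsilon/\m^\epsilon I^\epsilon \cong \m/(p,\m^2)$, identify this space via $R_N^\epsilon \cong \bT_N^\epsilon$, and compute the image of $T_q-(q+1)$ as $\Tr_D(\Fr_q)-(q+1)$ for the first-order universal pseudodeformation $D$.

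However, your identification of the cotangent space is off, and this matters. You describe the basis as the extension classes $c_0,c_1 \in H^1_{(p)}(\Z[1/Np],\F_p(-1))$ attached to the fields $K_{\ell_0},K_{\ell_1}$. Those classes control the \emph{off-diagonal} (reducibility) ideal $J^\red$, not $\Jm/\Jm^2$. The key structural fact here, proved inside Theorem~\ref{thm:thm2} via Proposition~\ref{prop:bcs in Jm^2}, is that in the $\epsilon=(-1,-1)$ case one has $J^\red = \Jm^2$. Consequently $R_N^\epsilon/(p,\m^2) \cong R_N^{\epsilon,\red}/(p) \cong \F_p[Y_0,Y_1]/(Y_0^2,Y_0Y_1,Y_1^2)$, and the universal first-order pseudodeformation is the \emph{reducible} one $D=\psi(\omega\odia{-}^{-1}\oplus\odia{-})$ with $\odia{\tau}=1+\log_{\ell_0}(\tau)Y_0+\log_{\ell_1}(\tau)Y_1$. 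The basis of $\m/(p,\m^2)$ is $\{Y_0,Y_1\}$, which corresponds to the tame characters $\log_{\ell_i}:G_{\Q,S}\to\Gal(\Q(\zeta_{\ell_i})/\Q)\otimes\F_p$, \emph{not} to the $c_i$. Your asserted identity $c_i(\Fr_q)=\log_{\ell_i}(q)$ is not correct in general: the former detects splitting of $q$ in $K_{\ell_i}$, the latter detects whether $q$ is a $p$-th power modulo $\ell_i$, and these are genuinely different (compare conditions (2) and (5)--(6) in Definition~\ref{defn:good primes}). Once you use the correct reducible description, the trace computation is immediate: for $q\neq p$, $\Tr_D(\Fr_q)-(q+1)=(q-1)\log_{\ell_0}(q)Y_0+(q-1)\log_{\ell_1}(q)Y_1$, yielding the stated determinant.

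You also do not treat the case $q_i=p$, which is allowed since $p\nmid N$ here. There $\Fr_p$ makes no sense; the paper computes $\Upsilon(p)$ by tracking $U_p-1$ instead of $T_p-(p+1)$ (these agree in $I^\epsilon/\m^\epsilon I^\epsilon$) and using the formula for $U_p$ in terms of traces from Step~3 of Proposition~\ref{prop:R to T}, obtaining $\Upsilon(p)=\log_{\ell_0}(p)Y_0+\log_{\ell_1}(p)Y_1$. Note the absence of the factor $(p-1)$, which is consistent with the theorem since $p-1\in\F_p^\times$.
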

\begin{rem}
For a single prime $\ell$, Mazur's criterion for $q$ to be a good prime can be written as $(q-1)\log_\ell(q) \in \F_p^\times$, so this is a natural generalization.
\end{rem}

In the situation of Theorem \ref{thm:main r primes}, we only give a sufficient condition, and even this is cumbersome to state. 

\begin{defn}
\label{defn:good primes}
Assume that $\epsilon=(-1,1,\dots,1)$, and order the primes $\ell_i$ so that $\ell_i \equiv -1\pmod{p}$ for $i=1,\dots,s$ and $\ell_i \not \equiv -1 \pmod{p}$ for $s < i \leq r$. We use the number fields $K_i$ set up in \S \ref{subsubsec:defn of K_i}.

Consider an ordered set of primes $\cQ'=\{q_0,q_1,\dots,q_s\}$ disjoint from the primes dividing $N$ and satisfying the following conditions:
\begin{enumerate}
\item $q_0 \not \equiv 1 \pmod{p}$, and 
\item $q_0$ not a $p$-th power modulo $\ell_0$;
\end{enumerate}
and, for $i=1,\dots,s$,
\begin{enumerate}[resume]
\item $q_i \equiv 1 \pmod{p}$, 
\item $\ell_0$ is not a $p$-th power modulo $q_i$,
\item $q_i$ does not split completely in $K_i$, and
\item $q_i$ does split completely in each $K_j$ for $j=1,\dots,s$ with $j \ne i$. 
\end{enumerate}
In the following cases, the described ordered subset $\cQ$ of $\cQ'$ is called a \emph{good set of primes for $(N,p,\epsilon)$}:
\begin{itemize}
\item if $\delta=1$,  $\cQ:=\cQ'$,
\item if $\delta=0$ and $\ell_0 \equiv 1 \pmod{p}$, then $\cQ :=\cQ'\setminus\{q_j\}$ for an index $j>0$ such that $b_0 \cup c_j \ne  0$,
\item if $\ell_0 \not \equiv 1 \pmod{p}$, then $\cQ :=\cQ'\setminus\{q_0\}$.
\end{itemize}
\end{defn}

\begin{rem}Note that, by Chebotarev density, there is an infinite set of primes $q_0$ satisfying (1)-(2), and, for each $i$, there is an infinite set of primes $q_i$ satisfying (3)-(6). Note that when $p \nmid N$ and $\ell_0 \equiv 1 \pmod{p}$, it is possible that $p \in \cQ$. 
\end{rem}
\begin{thm}
\label{thm:main good primes r}
Let $\cQ$ be a good set of primes for $(N,p,\epsilon)$. Then $\{T_q - (q+1) \mid q \in \cQ\} \subset \bT^\epsilon_N$ is a minimal set of generators for $I^\epsilon$.
\end{thm}

\begin{rem}
We can also write down a necessary and sufficient condition, but cannot compute with it, so we doubt its practical use.
\end{rem}

\subsection{Relation to Hida Hecke algebras}
\label{subsec:hida} The reader will note that we have allowed for the possibility that $p\mid N$. When $p\mid N$, in Appendix \ref{appendix:U and w}, we also consider a related Hecke algebra $\bT_{N,H}^\epsilon$ that contains $U_p$ instead of $w_p$ (but still has all other $w_\ell$ for $\ell \mid  \frac{N}{p}$) and show that, in many cases we consider, $\bT_{N,H}^\epsilon= \bT_{N}^\epsilon$.

This is related to Hida theory assuming that (as is well-known for the Hecke algebra $\bT_{N,U}^\epsilon$) there is a Hida-theoretic Hecke algebra $\bT_\Lambda^\epsilon$ that is a free module of finite rank over $\Lambda \simeq \Z_p \lb T \rb$ that satisfies a control theorem with respect to $\bT_{N,H}^\epsilon$: there is an element $\omega_2 \in \Lambda$ such that $\bT_{N,H}^\epsilon = \bT_\Lambda^\epsilon/\omega_2 \bT_\Lambda^\epsilon$.

Then our results about $\bT_N^\epsilon$ (including its Gorensteinness and the number of generators of its Eisenstein ideal) translate directly to $\bT_\Lambda^\epsilon$. Subsequently, these results can be specialized into higher weights, as is usual in Hida theory.

\subsection{Method of pseudodeformation theory}
\label{subsec:psdef method}

Like our previous work \cite{WWE3}, the method of proof of the theorems in \S \ref{subsec:main thms} is to construct a pseudodeformation ring $R$ and prove that $R =\bT$ using the numerical criterion. The ring $R$ is the deformation ring of the residual pseudorepresentation $\Db = \psi(\omega \oplus 1)$ associated to $E^\epsilon_{2,N}$ that is universal subject to certain conditions (here $\psi$ is the functor associating a pseudorepresentation to a representation, and $\omega$ is the mod $p$ cyclotomic character). These conditions include the conditions considered in our previous works \cite{WWE1,WWE3} (having cyclotomic determinant, being flat at $p$, being ordinary at $p$), but they also include new conditions at $\ell$ dividing $N$ that are of a different flavor, as we now explain.

\subsubsection{The Steinberg at $\ell$ condition}
Fix $\ell=\ell_i \mid  N$, assume $\ell \neq p$, and let $G_\ell\subset G_\Q$ be a decomposition group at $\ell$. Let $f$ be a normalized cuspidal eigenform of weight 2 and level $\Gamma_0(N)$. Let $\rho_f:G_\Q \to \GL_2(\sO_f)$ be the associated Galois representation, where $\sO_f$ is a finite extension of $\Z_p$.

If $f$ is old at $\ell$, then $\rho_f|_{G_\ell}$ is unramified. If $f$ is new at $\ell$, we have
\begin{equation}
\label{eq:f on G_ell}
\rho_f|_{G_\ell} \sim \ttmat{ \lambda(a_\ell(f))\kcyc}{*}{0}{\lambda(a_\ell(f))}
\end{equation}
where $\lambda(x)$ is the unramified character of $G_\ell$ sending a Frobenius element $\sigma_\ell$ to $x$, and $a_\ell(f)$ is the coefficient of $q^\ell$ in the $q$-expansion of $f$ (see Lemma \ref{lem:normalization of bT0}). Note that since $\det(\rho_f)=\kcyc$, we have $\lambda(a_\ell(f))^2=1$. In fact, $a_\ell(f)$ is the negative of the $w_\ell$-eigenvalue of $f$. We call such representations \eqref{eq:f on G_ell} ``$\pm1$-Steinberg at $\ell$", where $\pm 1=\mp a_\ell(f)$ is the $w_\ell$-eigenvalue of $f$.

Now assume in addition that $f \in S_2(N)_\mathrm{Eis}^\epsilon$, so that the semi-simplification of the residual representation of $\rho_f$ is $\omega \oplus 1$ and $w_\ell f=\epsilon f$, where $\epsilon=\epsilon_i$. We want to impose a condition on pseudorepresentations that encapsulates the condition that $\rho_f|_{G_\ell}$ is either unramified or $\epsilon$-Steinberg. The main observation is the following, and is inspired by the work of Calegari--Specter \cite{CS2016}.

\begin{obs}
Suppose that $\rho: G_\ell \to  \GL_2(\sO)$ is either unramified or $\epsilon$-Steinberg. Then
\begin{equation}
\label{eq:unram-or-stein}
(\rho(\sigma)-\lambda(-\epsilon)\kcyc(\sigma))(\rho(\tau)-\lambda(-\epsilon)(\tau))=0
\end{equation}
for all $\sigma, \tau \in G_\ell$ with at least one of $\sigma$ or $\tau$ in the inertia group $I_\ell$.
\end{obs}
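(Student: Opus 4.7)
The plan is to verify identity \eqref{eq:unram-or-stein} by splitting into the two cases supplied by the hypothesis, namely $\rho$ unramified and $\rho$ being $\epsilon$-Steinberg. Since the identity is invariant under conjugation in $\GL_2(\sO)$, in each case I can pass to a convenient basis. The whole argument is then a matrix manipulation, so I expect no serious obstacle; the substance of the observation lies in its formulation, not its verification.

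First I would handle the unramified case. Because $\ell \neq p$, both $\lambda(-\epsilon)$ and $\kcyc$ are unramified at $\ell$, so for any $\gamma \in I_\ell$ we have $\rho(\gamma) = I$, $\lambda(-\epsilon)(\gamma) = 1$, and $\kcyc(\gamma) = 1$. Hence both $\rho(\gamma) - \lambda(-\epsilon)\kcyc(\gamma) \cdot I$ and $\rho(\gamma) - \lambda(-\epsilon)(\gamma) \cdot I$ vanish. So whichever of $\sigma$ or $\tau$ lies in $I_\ell$, the corresponding factor in \eqref{eq:unram-or-stein} is already zero, killing the product. This is the only step of the proof where the hypothesis that one of $\sigma, \tau$ lie in inertia is actually used.

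Next I would handle the $\epsilon$-Steinberg case, choosing a basis so that
\[
\rho(g) = \ttmat{\lambda(-\epsilon)(g)\kcyc(g)}{\ast(g)}{0}{\lambda(-\epsilon)(g)}
\]
for all $g \in G_\ell$. A direct $2 \times 2$ computation shows that $\rho(\sigma) - \lambda(-\epsilon)\kcyc(\sigma) \cdot I$ has vanishing first column (its $(1,1)$-entry cancels and its $(2,1)$-entry was already zero), while $\rho(\tau) - \lambda(-\epsilon)(\tau) \cdot I$ has vanishing second row. The product of such two matrices in that order is zero, so \eqref{eq:unram-or-stein} holds for \emph{all} $\sigma, \tau \in G_\ell$ in this case, with no inertia hypothesis needed.

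Conceptually, \eqref{eq:unram-or-stein} is the Cayley--Hamilton-type factorization $(X - \lambda(-\epsilon)\kcyc)(X - \lambda(-\epsilon))$ applied to $\rho$, but split across two group elements so as to detect the common upper-triangular structure shared by the unramified and $\epsilon$-Steinberg shapes of a representation whose residual semisimplification is $\omega \oplus 1$. The two case computations above, combined, give the observation.
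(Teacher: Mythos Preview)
Your proof is correct and follows essentially the same approach as the paper: the paper handles the unramified case by noting that the factor corresponding to the element of $I_\ell$ vanishes, and handles the $\epsilon$-Steinberg case by observing that the product has the shape $\sm{0}{*}{0}{*}\sm{*}{*}{0}{0} = 0$. Your added remarks that conjugation-invariance justifies the choice of basis and that the inertia hypothesis is unnecessary in the Steinberg case are correct elaborations, but the core argument is identical.
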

This is clear if $\rho$ is unramified: the factor involving the one of $\sigma$ or $\tau$ that is in $I_\ell$ will be zero. If $\rho$ is $\epsilon$-Steinberg, then the given product \eqref{eq:unram-or-stein} will have the form
\[
\ttmat{0}{*}{0}{*} \ttmat{*}{*}{0}{0}
\]
and any such product is zero (note that the order is important!).

To impose the unramified-or-$\epsilon$-Steinberg condition on the pseudodeformation ring $R$, we impose the condition \eqref{eq:unram-or-stein} on the universal Cayley--Hamilton algebra, using the theory of \cite{WWE4} (see \S \ref{sec:pseudodef ring}).

\subsubsection{The ordinary at $p$ condition} When $p \mid N$ and $f \in S_2(N)_\mathrm{Eis}^\epsilon$ is a newform, then $\epsilon_p = -1$ and the representation $\rho_f\vert_{G_p}$ is ordinary. In this paper, we define ``ordinary pseudorepresentation" exactly as we define the unramified-or-$\epsilon$-Steinberg, following ideas of Calegari--Specter. In our previous paper \cite{WWE1}, we gave a different definition of ordinary, and we prove in this paper that the two definitions coincide (see Lemma \ref{lem:WWE ord = CS ord}). This gives an answer to a question of Calegari--Specter \cite[pg.\ 2]{CS2016}.

\subsection{Examples} 
\label{subsec:examples}
We conclude this introduction with examples that illustrate the theorems and show that the hypotheses are necessary. For examples where we show that $\bT_N^\epsilon$ is not Gorenstein, it is helpful to note that $\bT_N^\epsilon$ is Gorenstein if and only if $\Soc(\bT_N^\epsilon/p\bT_N^\epsilon)$ is 1-dimensional, where $\Soc(\bT_N^\epsilon/p\bT_N^\epsilon)$ is the annihilator of the maximal ideal (see \S\ref{subsec:Gorenstein defect}).

All computations are using algorithms we have written for the Sage computer algebra software \cite{SAGE}. 

\subsubsection{Examples illustrating Theorem \ref{thm:main r primes}} 
\label{subsubsec:r primes examples}

\begin{eg}
Let $p=5$, $\ell_0=41$, $\ell_1=19$, so $N=19\cdot 41$, and let $\epsilon=(-1,1)$. In this case, we compute that $K_{19}$ is the field cut out by 
\begin{align*}
x^{20} - x^{19} & - 7x^{18} + 21x^{17} + 22x^{16} + 223x^{15} - 226x^{14} - 1587x^{13} + 4621x^{12}  \\
& + 5202x^{11} - 91x^{10} - 3142x^9 - 439x^8 - 2143x^7 - 2156x^6 - 58x^5 \\
&  + 1237x^4 + 414x^3 + 148x^2 + 56x + 16
\end{align*}
and that $41$ splits completely in $K_{19}$. The theorem says that $I^\epsilon$ has 2 generators. Moreover, Theorem \ref{thm:main good primes r} says, in this case, that $I^\epsilon$ is generated by $T_{q_0}-(q_0+1)$ and $T_{q_1}-(q_1+1)$ where $q_0$ is a good prime for $(41,5)$ and where $q_1$ satisfies
\begin{enumerate}[label=(\alph*)]
\item $q_1$ is a prime such that $q_1 \equiv 1 \pmod{5}$, 
\item $41$ is not a $5$-th power modulo $q_1$, and
\item $q_1$ does not split completely in $K_{19}$.
\end{enumerate}
A quick search yields that $q_0=2$ and $q_1=11$ satisfy these criteria. And indeed, we compute that there is an isomorphism 
\[
\frac{\F_5[x,y]}{(y^2-2x^2,xy)} \lrisom \bT_N^\epsilon/5\bT_N^\epsilon, \quad (x,y) \mapsto (T_{2}-3, T_{11}-12).
\]
\end{eg}

\begin{eg}
Let $p=5$, $\ell_0=11$, $\ell_1=19$, $\ell_2=29$, so $N=11\cdot 19 \cdot 29$, and let $\epsilon=(-1,1,1)$. In this case, $11$ does not split completely in either of the fields $K_{19},K_{29}$, and the theorem says that $I^\epsilon$ has 2 generators. Moreover, Theorem \ref{thm:main good primes r} says, in this case, that $I^\epsilon$ is generated by $T_{q_0}-(q_0+1)$ and $T_{q_1}-(q_1+1)$ where $q_0$ is a good prime for $(11,5)$ (for example $q_0=2$) and where the prime $q_1$ satisfies:
\begin{enumerate}[label=(\alph*)]
\item $q_1 \equiv 1 \pmod{5}$,
\item $11$ is not a $5$-th power modulo $q_1$,
\item $q_1$ does not split completely in $K_{19}$, and
\item $q_1$ does split completely in $K_{29}$.
\end{enumerate}
In this case, $K_{19}$ is the field computed in the previous example and $K_{29}$ is the field cut out by
\begin{align*}
x^{20} - x^{19}& - 11x^{18} + 9x^{17} + 124x^{16} - 223x^{15} - 1244x^{14} + 2111x^{13} + 14291x^{12}  \\
&- 19804x^{11} + 7169x^{10} + 7938x^9 - 10937x^8 + 15603x^7 - 9472x^6  \\
&- 2582x^5 + 8233x^4 - 3732x^3 + 1808x^2 - 832x + 256.
\end{align*}
A quick search finds that $q_1=181$ satisfies the conditions (a)-(d). And indeed, we compute that there is an isomorphism
\[
\frac{\F_5[x,y]}{(x^3+2x^2, y^3, xy+y^2)} \lrisom \bT_N^\epsilon/5\bT_N^\epsilon, \quad (x,y) \mapsto (T_2-3,T_{181}-182).
\]

Note that these conditions are far from necessary. For example $T_2-3$ and $T_7-8$ also generate the Eisenstein ideal.
\end{eg}

\subsubsection{Examples related to Theorem \ref{thm:main 2 primes no new}}
\label{subsubsec:2 primes no new examples}
 We give examples illustrating that the assumption is necessary. 
In fact, it seems that the assumption is necessary even for the Gorensteinness of $\bT_N^\epsilon$.

\begin{eg}
Let $p=5$, $\ell_0=11$, $\ell_1=23$, so $N=11\cdot 23$, and let $\epsilon=(-1,-1)$. Then $\ell_1 \equiv 1 \pmod{11}$ is a $5$-th power so the theorem does not apply. We can compute that
\[
\frac{\F_5[x,y]}{(x^2,xy,y^2)} \lrisom \bT_N^\epsilon/5\bT_N^\epsilon, \quad (x,y) \mapsto (T_2 - 3, T_3-4)
\]
has dimension 3. Since $\bT_{11}^0=\Z_5$, we see that the space of oldforms has dimension 2, so there must be a newform at level $N$. Moreover, $\Soc(\bT_N^\epsilon/5\bT_N^\epsilon)=x\F_5\oplus y \F_5$, so $\bT_N^\epsilon$ is not Gorenstein.
\end{eg}

\begin{eg}
Let $p=5$, $\ell_0=31$, $\ell_1=5$, so $N=5 \cdot 31$, and let $\epsilon=(-1,-1)$. Then note that $\ell_1=5 \equiv 7^5 \pmod{31}$, so the theorem does not apply. We can compute that
\[
\frac{\F_5[x,y]}{(x^3,xy,y^2)} \lrisom \bT_N^\epsilon/5\bT_N^\epsilon, \quad (x,y) \mapsto (T_2-3, 2T_2+T_3)
\]
has dimension 4. Since $\mathrm{rank}_{\Z_5}(\bT_{31}^0)=2$, we see that the space of oldforms has dimension 3, and there must be a newform at level $N$. Moreover, $\Soc(\bT_N^\epsilon/5\bT_N^\epsilon)=x^2\F_5\oplus y \F_5$, so $\bT_N^\epsilon$ is not Gorenstein.
\end{eg}

In this last example, the reader may think that $\ell_0=31$ is special because the rank of $\bT_{31}^0$ is 2. However, we can take $p=\ell_1=5$ and $\ell_0=191$ (note that $\bT^0_{191}=\Z_p$). Noting that $5 \equiv 18^5 \pmod{191}$, we again see that the theorem does not apply, and we can compute that $\bT_N^\epsilon$ is also not Gorenstein in this case.

\subsubsection{Examples related to Theorem \ref{thm:main 2 primes new}}
\label{subsubsec:2 primes new examples} First, we give examples illustrating that the assumption is necessary. Again, it seems that the assumption is necessary even for the Gorenstein property of $\bT_N^\epsilon$.
\begin{eg}
Let $p=5$, $\ell_0=11$, $\ell_1=61$, so $N=11 \cdot 61$, and let $\epsilon=(-1,-1)$. Then note that $11 \equiv 8^5 \pmod{61}$ so the theorem does not apply (but note that $61$ is not a $5$-th power modulo $11$). We can compute that
\[
\frac{\F_5[x,y]}{(x^2,xy,y^3)} \isoto \bT_N^\epsilon/5\bT_N^\epsilon, \quad  (x,y) \mapsto (T_3-T_2-1, T_2-3).
\]
We see that $\mathrm{Soc}(\bT_N^\epsilon/5\bT_N^\epsilon)=x\F_5 \oplus y^2\F_5$, so $\bT_N^\epsilon$ is not Gorenstein.
\end{eg}

\begin{eg}
Let $p=5$, $\ell_0=31$, $\ell_1=191$, so $N=31 \cdot 191$, and let $\epsilon=(-1,-1)$. We have $191 \equiv 7^5 \pmod{31}$ and $31 \equiv 61^5 \pmod{191}$, so the assumption of the theorem fails most spectacularly. We can compute that
\begin{align*}
\frac{\F_5[x,y]}{((x,y)^4,2x^3+xy^2+3y^3,x^3-x^2y+2y^3)} &\isoto \bT_N^\epsilon/5\bT_N^\epsilon, \\  (x,y) &\mapsto (T_2-3, T_7-8).
\end{align*}
Letting $\bar{\m}^\epsilon$ denote the maximal ideal of $\bT_N^\epsilon/5\bT_N^\epsilon$, we see that $(\bar{\m}^\epsilon)^4=0$ but that $(\bar{\m}^\epsilon)^3$ is 2-dimensional, so $\dim_{\F_5} \mathrm{Soc}(\bT_N^\epsilon/5\bT_N^\epsilon) >1$ and $\bT_N^\epsilon$ is not Gorenstein.
\end{eg}

Finally, we give an example illustrating Theorem \ref{thm:main good primes 2}.

\begin{eg}
Let $p=5$, $\ell_0=11$, $\ell_1=41$, so $N=11\cdot 41$, and let $\epsilon=(-1,-1)$. We see that neither of $11$ or $41$ is a $5$-th power modulo the other, so Theorem \ref{thm:main good primes 2} applies. We consider the primes $2,3,7$ and $13$, none of which are congruent to 1 modulo $5$. 
\begin{center}
\begin{tabular}{c|c|c}
$q$ & Is $5$-th power modulo $11$? & Is $5$-th power modulo $41$?\\ \hline 
$2$ & No  & No \\ 
$3$ & No & Yes \\
$7$ & No  & No \\ 
$13$ & No  & No \\ 
\end{tabular}
\end{center}
From this we see that
\[
\det\ttmat{\log_{11}(3)}{\log_{11}(q)}{\log_{41}(3)}{\log_{41}(q)}=\log_{11}(3)\cdot \log_{41}(q) \ne 0.
\]
for any $q\in\{2,7,13\}$. By Theorem \ref{thm:main good primes 2}, $\{T_3-4,T_q-(q+1)\}$ generates $I^\epsilon$ for any $q\in\{2,7,13\}$, and we can see by direct computation that this is true.

More subtly, we can compute that
\[
\det\ttmat{\log_{11}(2)}{\log_{11}(7)}{\log_{41}(2)}{\log_{41}(7)} \ne 0, \quad \det\ttmat{\log_{11}(2)}{\log_{11}(13)}{\log_{41}(2)}{\log_{41}(13)} = 0.
\]
By Theorem \ref{thm:main good primes 2}, this implies that $\{T_2-3,T_7-8\}$ generates $I^\epsilon$, but that $\{T_2-3,T_{13}-14\}$ does not, and we again verify this by direct computation.
\end{eg}

\subsection{Acknowledgements}
We thank Akshay Venkatesh for interesting questions that stimulated this work, and Ken Ribet for his inspiring talk \cite{ribet2015}. We thank Hwajong Yoo for bringing our attention to his work on the subject, and Frank Calegari for clarifying the provenance of Ribet's Conjecture. We thank Shekhar Khare for helpful discussions about the Steinberg condition, Matt Emerton for encouragement and for asking us about implications for newforms, and Rob Pollack for asking us about the case $p \mid N$. 

We thank Jo\"el Bella\"iche, Tobias Berger, Frank Calegari, K\c{e}stutis \v{C}esnavi\v{c}ius, Emmanuel Lecouturier, Barry Mazur, Rob Pollack, Ken Ribet, and Hwajong Yoo for comments on and corrections to an early draft. We also thank the referee for helpful comments and suggestions.

 P.W.\ was supported by the National Science Foundation under the Mathematical Sciences Postdoctoral Research Fellowship No.~1606255, as well as the grants DMS-1638352 and DMS-1901867. C.W.E.\ was supported by Engineering and Physical Sciences Research Council grant EP/L025485/1. 

\subsection{Notation and Conventions} 
\label{subsec:notation}
We let $\partial_{ij}$ denote the Kronecker symbol, which is 1 if $i=j$ and $0$ otherwise.

For each prime $\ell\mid Np$, we fix $G_\ell \subset G_\Q$, a decomposition group at $\ell$, and let $I_\ell \subset G_\ell$ denote the inertia subgroup. We fix elements $\sigma_\ell \in G_{\ell}$ whose image in $G_{\ell}/I_{\ell} \cong \Gal(\overline{\F}_{\ell}/\F_{\ell})$ is the Frobenius. For $\ell \ne p$, we fix elements $\gamma_\ell \in I_{\ell}$ such that the image in the maximal pro-$p$-quotient $I_{\ell}^{\mathrm{pro}-p}$ (which is well-known to be pro-cyclic) is a topological generator. Let $\gamma_p \in \Gal(\overline{\Q}_p/\Q_p^\mathrm{nr}(\zeta_p)) \subset I_p$ be an element such that the image of $\gamma_p$ in $\Gal(\Q_p^\mathrm{nr}(\zeta_p,\sqrt[p]{p})/\Q_p^\mathrm{nr}(\zeta_p))$ is non-trivial. When $\ell = \ell_i$ for $i \in \{0,\dotsc, r\}$ (i.e.\ $\ell \mid N$), we also write $\sigma_i := \sigma_{\ell_i}$ and $\gamma_i := \gamma_{\ell_i}$ for these elements. We write $G_{\Q,S}$ for the Galois group of the maximal extension of $\Q$ unramified outside of the set places $S$ of $\Q$ supporting $Np \infty$, and use the induced maps $G_\ell \ra G_{\Q,S}$. For primes $q \nmid Np$, we write $\Fr_q \in G_{\Q,S}$ for a Frobenius element at $q$. 

As in the theory of representations, Cayley--Hamilton representations, actions on modules, pseudorepresentations, and cochains/cocycles/cohomology of profinite groups $G$ discussed in \cite{WWE4}, these objects and categories are implicitly meant to be continuous without further comment. Here all of the targets are finitely generated $A$-modules for some Noetherian local (continuous) $\Z_p$-algebra $A$ with ideal of definition $I$, and the $I$-adic topology is used on the target. Profinite groups used in the sequel satisfy the $\Phi_p$-finiteness condition (i.e.\ the maximal pro-$p$ quotient of every finite-index subgroup is topologically finitely generated), which allows the theory of \cite{WWE4} to be applied.

We write 
\[
H^i(\Z[1/Np], M)=H^i(C^\bullet(\Z[1/Np],M))=\frac{Z^i(\Z[1/Np],M)}{B^i(\Z[1/Np],M)}
\]
for (continuous) cohomology of a $G_{\Q,S}$-module $M$, together with this notation for cochains, cocycles, and coboundaries. We write $x_1 \smile x_2 \in C^*(\Z[1/Np], M_1 \otimes M_2)$ for the cup product of $x_i \in C^*(\Z[1/Np],M_i)$, and $a_1 \cup a_2 \in H^*(\Z[1/Np], M_1 \otimes M_2)$ for cup product of cohomology classes $a_i \in H^*(\Z[1/Np],M_i)$.

\section{Modular forms}
\label{sec:modular}

In this section, we recall some results about modular curves and modular forms.  Our reference is the paper of Ohta \cite{ohta2014}. 

\subsection{Modular curves, modular forms, and Hecke algebras} The statements given here are all well-known. We review them here to fix notation.

\subsubsection{Modular curves} 
Let $Y_0(N)_{/\Z_p}$ be the coarse moduli space of pairs $(E,C)$, where $E$ is an elliptic curve over $S$ and $C \subset E[N]$ is a finite-flat subgroup scheme of rank $N$ and cyclic (in the sense of Katz-Mazur \cite{KM1985}). Let $X_0(N)_{/\Z_p}$ be the usual compactification of $Y_0(N)_{/\Z_p}$, and let $\{\mathrm{cusps}\}$ denote the complement of $Y_0(N)_{/\Z_p}$ in $X_0(N)_{/\Z_p}$, considered as an effective Cartier divisor on $X_0(N)_{/\Z_p}$. Finally, let $X_0(N)=X_0(N)_{/\Z_p} \otimes \Q_p$.

\subsubsection{Modular forms and Hecke algebras} 
\label{subsubsec:mf}
The map $X_0(N)_{/\Z_p} \to \Spec(\Z_p)$ is known to be LCI, and we let $\Omega$ be the sheaf of regular differentials. Let 
\[
S_2(N;\Z_p) = H^0(X_0(N)_{/\Z_p},\Omega), \quad M_2(N;\Z_p) = H^0(X_0(N)_{/\Z_p},\Omega(\{\mathrm{cusps}\}))
\]
Let $\bT_N'$ and $\bT_N'^0$ be the subalgebras of 
\[
\End_{\Z_p}(M_2(N; \Z_p)), \quad \End_{\Z_p}(S_2(N; \Z_p)),
\]
respectively, generated by the standard Hecke operators $T_n$ with $(N,n)=1$, and all Atkin--Lehner operators $w_\ell$ for $\ell\mid N$ (we do not include any $U_\ell$ for $\ell \mid N$). These are semi-simple commutative $\Z_p$-algebras (see e.g.\ \cite{AL1970}).
\subsubsection{Eisenstein series and Eisenstein parts}
\label{subsec:eisenstein series}
For each $\epsilon \in \{\pm 1\}^{r+1} \setminus \{(1,1,\dots,1)\}$, there is a element $E^\epsilon_{2,N} \in M_2(N;\Z_p)$ that is an eigenform for all $T_n$ with $(N,n)=1$, and has $q$-expansion 
\begin{equation}
\label{eq:eisenstein series}
E^\epsilon_{2,N} = -\frac{1}{24} \prod_{i=0}^r (\epsilon_i\ell_i +1)+ \sum_{n=1}^\infty a_n q^n
\end{equation}
where $a_n = \sum_{0<d\mid n} t$ when $\gcd(n,N)=1$ (in particular, $a_1=1$), and $w_{\ell_i}E^\epsilon_{2,N} = \epsilon_i E^\epsilon_{2,N}$ (see \cite[Lem.\ 2.3.4]{ohta2014}).

Let ${I'}^\epsilon=\Ann_{\bT_N'}(E^\epsilon_{2,N})$, and let $\bT_N^\epsilon$ be the completion of ${\bT_N'}$ at the maximal ideal $({I'}^\epsilon,p)$, and let $\bT_N^{0,\epsilon}=\bT_N'^0\otimes_{\bT_N'}\bT_N^\epsilon$. Let $I^\epsilon = {I'}^\epsilon\bT_N^\epsilon$ and let $I^{0,\epsilon}$ be the image of $I^\epsilon$ in $\bT_N^{0,\epsilon}$.  For a $\bT_N'$-module $M$, let $M^\epsilon_\mathrm{Eis}=M \otimes_{\bT_N'} \bT_N^\epsilon$. The map $\bT_N^\epsilon \rsurj \Z_p$ induced by $E^\epsilon_{2,N}$ is a surjective ring homomorphism with kernel $I^\epsilon$. We refer to this as the \emph{augmentation map} for $\bT_N^\epsilon$.

Note that we have $w_{\ell_i} = \epsilon_i$ as elements of $\bT_N^\epsilon$. Indeed, this follows from $w_{\ell_i}^2=1$, $w_{\ell_i} - \epsilon_i \in I^\epsilon$, and $p \ne 2$: consider $(w_{\ell_i}-\epsilon_i)(w_{\ell_i}+\epsilon_i)=0$ and observe that $w_{\ell_i}+\epsilon_i \in (\bT_N^\epsilon)^\times$. Consequently, $\bT_N^\epsilon$ is generated as a $\Z_p$-algebra by $T_q$ for $q \nmid N$.

If $p\nmid N$, let $U_p \in \bT_N^\epsilon$ denote the unit root of the polynomial
\[
X^2-T_pX+p=0,
\]
which exists and is unique by Hensel's lemma. Since $T_p-(p+1) \in I^\epsilon$, we see that $U_p - 1 \in I^\epsilon$. Moreover, we see that $T_p = U_p + p U_p^{-1}$.

\subsubsection{Duality}
As in  \cite[Thm.\ 2.4.6]{ohta2014}, there are perfect pairings of free $\Z_p$-modules
\begin{equation}
\label{eq:M and T duality}
M_2(N;\Z_p)^\epsilon_\mathrm{Eis} \times \bT_N^\epsilon \lra \Z_p, \quad S_2(N;\Z_p)^\epsilon_\mathrm{Eis} \times \bT_N^{0,\epsilon} \lra \Z_p
\end{equation}
given by $(f,t) \mapsto a_1(t\cdot f)$, where $a_1(-)$ refers to the coefficient of $q$ in the $q$-expansion. In particular, $M_2(N;\Z_p)^\epsilon_\mathrm{Eis}$ (resp.\ $S_2(N;\Z_p)_\mathrm{Eis}^\epsilon$) is a dualizing $\bT_N^\epsilon$-module (resp.\ $\bT_N^{0,\epsilon}$-module). 

\subsubsection{Oldforms and stabilizations}
\label{subsub:stabilizations} 
If $\ell \mid N$ is a prime and $f \in S_2(N/\ell; \Z_p)$ is an eigenform for all $T_n$ with $(n,N/\ell)=1$, then the subspace
\[
\{g \in S_2(N;\Z_p) \ : \ a_n(g)=a_n(f) \text{ for all } (n,N/\ell)=1\}
\]
has rank two, with basis $f(z),f(\ell z)$. If we let $f_\pm(z) = f(z) \pm \ell f(\ell z)$, then $w_\ell f_\pm(z)=\pm f_\pm (z)$. Note that, since $p \ne 2$, we have $f_+ \not \equiv f_- \pmod{p}$. In particular, if $\epsilon'\in \{\pm 1\}^r$  is the tuple obtained from $\epsilon$ by deleting the entry corresponding to $\ell$, then there are injective homomorphisms given by $f \mapsto f_{\epsilon_\ell}$, 
\[
M_2(N/\ell;\Z_p)^{\epsilon'}_\mathrm{Eis} \rinj M_2(N;\Z_p)^{\epsilon}_\mathrm{Eis} \quad \text{and} \quad S_2(N/\ell;\Z_p)^{\epsilon'}_\mathrm{Eis} \rinj S_2(N;\Z_p)^{\epsilon}_\mathrm{Eis}.
\]

\subsection{Congruence number} We recall this theorem of Ohta, and related results.

\begin{thm}[Ohta]
\label{thm:congruence number}
There is an isomorphism $\bT_N^{\epsilon,0}/I^{\epsilon,0} \cong \Z_p/a_0(E^\epsilon_{2,N})\Z_p$.
\end{thm}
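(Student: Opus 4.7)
The strategy is to use Ohta's perfect duality \eqref{eq:M and T duality} to translate this into a statement about the integral constant-term map on modular forms.

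First, we reformulate the quotient. Since $I^{0,\epsilon}$ is by definition the image of $I^\epsilon$ in $\bT_N^{0,\epsilon}$, writing $K := \ker(\bT_N^\epsilon \twoheadrightarrow \bT_N^{0,\epsilon})$ gives
\[
\bT_N^{0,\epsilon}/I^{0,\epsilon} \;=\; \bT_N^\epsilon/(I^\epsilon + K) \;=\; \Z_p/\mathrm{aug}(K),
\]
where $\mathrm{aug}: \bT_N^\epsilon \twoheadrightarrow \bT_N^\epsilon/I^\epsilon = \Z_p$ is the augmentation arising from the eigenform $E^\epsilon_{2,N}$. So we are reduced to computing $\mathrm{aug}(K) \subseteq \Z_p$.

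Next, we dualize the short exact sequence $0 \to S_2(N;\Z_p)^\epsilon_\mathrm{Eis} \to M_2(N;\Z_p)^\epsilon_\mathrm{Eis} \to Q \to 0$, where $Q$ denotes the Eisenstein quotient. This inclusion is saturated (if $pf \in S_2^\epsilon_\mathrm{Eis}$, then $p\, a_0(f) = 0$ forces $f \in S_2^\epsilon_\mathrm{Eis}$), so $Q$ is free of rank one over $\Z_p$. Applying $\Hom_{\Z_p}(-, \Z_p)$ and using \eqref{eq:M and T duality} to identify the resulting duals with the Hecke algebras, we obtain the short exact sequence
\[
0 \to \Hom_{\Z_p}(Q, \Z_p) \to \bT_N^\epsilon \to \bT_N^{0,\epsilon} \to 0,
\]
identifying $K \cong \Hom_{\Z_p}(Q, \Z_p) \cong \Z_p$. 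Unwinding the duality, the composite $K \hookrightarrow \bT_N^\epsilon \xrightarrow{\mathrm{aug}} \Z_p$ is precisely evaluation of functionals at $\bar E \in Q$, the image of $E^\epsilon_{2,N}$.

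The theorem thus reduces to showing that $\bar E$ equals $a_0(E^\epsilon_{2,N})$ times a $\Z_p$-generator of $Q$; then the evaluation-at-$\bar E$ map $\Hom_{\Z_p}(Q, \Z_p) \cong \Z_p \to \Z_p$ is multiplication by $a_0(E^\epsilon_{2,N})$, yielding $\mathrm{aug}(K) = a_0(E^\epsilon_{2,N})\Z_p$. This last statement is equivalent to the surjectivity of the constant-term map $c \colon M_2(N;\Z_p)^\epsilon_\mathrm{Eis} \to \Z_p$, i.e., to the existence of an integral form $g \in M_2(N;\Z_p)^\epsilon_\mathrm{Eis}$ with $a_0(g) = 1$ (so that $E^\epsilon_{2,N} \equiv a_0(E^\epsilon_{2,N})\cdot g \pmod{S_2(N;\Z_p)^\epsilon_\mathrm{Eis}}$, and the image of $g$ in $Q$ is a generator). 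This is the main obstacle: it amounts to a Manin--Drinfeld-type statement for the $\epsilon$-isotypic component of the cuspidal subgroup of $J_0(N)$, namely that the relevant Eisenstein cuspidal divisor has order exactly $a_0(E^\epsilon_{2,N})$ in $J_0(N)^\epsilon$. Granting this geometric input (proved by Ohta in the squarefree setting), the desired isomorphism $\bT_N^{0,\epsilon}/I^{0,\epsilon} \cong \Z_p/a_0(E^\epsilon_{2,N})\Z_p$ follows.
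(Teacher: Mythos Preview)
The paper does not give a proof of this theorem; it simply attributes it to Ohta, citing \cite[Thm.\ 3.1.3]{ohta2014}. So there is no in-paper argument to compare against.

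Your outline is a faithful reconstruction of how Ohta's argument is structured, and it is essentially correct. The reduction via the duality \eqref{eq:M and T duality} to computing $\mathrm{aug}(K)$, the identification $K \cong \Hom_{\Z_p}(Q,\Z_p)$, and the observation that the composite is evaluation at $\bar E$ are all valid. Your claim that $S_2(N;\Z_p)^\epsilon_\mathrm{Eis}$ is saturated in $M_2(N;\Z_p)^\epsilon_\mathrm{Eis}$ is justified because on the $\epsilon$-eigenspace the Atkin--Lehner group acts transitively on cusps, so vanishing of one constant term forces vanishing of all; hence $Q$ embeds in $\Z_p$ via $a_0$. The remaining point---surjectivity of $a_0 : M_2(N;\Z_p)^\epsilon_\mathrm{Eis} \to \Z_p$---is exactly the nontrivial geometric input you correctly flag and attribute to Ohta. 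Indeed, the paper itself invokes this same exact sequence $0 \to S_2 \to M_2 \xrightarrow{\mathrm{Res}} \Z_p \to 0$ later (in the proof of Lemma~\ref{lem:goren and I principal}), again citing Ohta. The sentence following the theorem statement, ``His method of proof actually can be used to give the following stronger result,'' and the derivation of Lemma~\ref{lem:bT is a pull-back}, confirm that your framework is the one the authors have in mind.
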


This is \cite[Thm.\ 3.1.3]{ohta2014}. His method of proof actually can be used to give the  following stronger result, exactly as in \cite[Lem.\ 3.2.2]{WWE3}. See Lemma \ref{lem:fiber prods} for a discussion of fiber products of rings.

\begin{lem}
\label{lem:bT is a pull-back}
The composition of the augmentation map $\bT_N^\epsilon \to \Z_p$ with the quotient map $\Z_p \to \Z_p/a_0(E^\epsilon_{2,N})\Z_p$ factors through $\bT_N^{0,\epsilon}$ and induces an isomorphism
\[
\bT_N^\epsilon \lrisom  \bT_N^{0,\epsilon} \times_{\Z_p/a_0(E^\epsilon_{2,N})\Z_p} \Z_p.
\]
In particular, $\ker(\bT_N^{\epsilon} \to \bT_N^{0,\epsilon})=\Ann_{\bT_N^\epsilon}(I^\epsilon)$.
\end{lem}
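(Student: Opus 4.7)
The plan is to imitate the strategy of \cite[Lem.\ 3.2.2]{WWE3}, the prime-level analogue cited in the statement, whose key ingredients are the perfect duality \eqref{eq:M and T duality} and the congruence number theorem (Theorem \ref{thm:congruence number}). The factorization through $\bT_N^{0,\epsilon}$ is the easy part: by Theorem \ref{thm:congruence number} we have $\bT_N^{0,\epsilon}/I^{0,\epsilon} \cong \Z_p/a_0(E^\epsilon_{2,N})$, so the composite $\bT_N^{\epsilon} \to \bT_N^{0,\epsilon} \to \Z_p/a_0$ vanishes on $I^\epsilon$ and hence factors through the augmentation $\bT_N^\epsilon/I^\epsilon = \Z_p$, yielding the reduction $\Z_p \onto \Z_p/a_0$. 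This produces the canonical map $\Phi : \bT_N^\epsilon \to \bT_N^{0,\epsilon} \times_{\Z_p/a_0} \Z_p$.

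For surjectivity, given a pair $(t^0, z)$ in the fiber product, first lift $z$ to the scalar $z \cdot 1 \in \bT_N^\epsilon$. By definition $I^{0,\epsilon}$ is the image of $I^\epsilon$, so the element $t^0 - z \cdot 1_{\bT_N^{0,\epsilon}}$, which lies in $I^{0,\epsilon}$ by the fiber-product condition, lifts to some $x \in I^\epsilon$; then $z + x$ maps to $(t^0, z)$. For injectivity I would use the duality \eqref{eq:M and T duality}: if $t \in \ker(\Phi)$, then $t$ annihilates $S_2(N;\Z_p)^\epsilon_\mathrm{Eis}$ (since $\bT_N^{0,\epsilon}$ acts faithfully on it) and $t \cdot E^\epsilon_{2,N} = 0$ (since $t \in I^\epsilon$ and $T_nE^\epsilon = \chi(t) E^\epsilon$). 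Because $M_2(N;\Z_p)^\epsilon_{\mathrm{Eis}} \otimes \Q_p = (S_2(N;\Z_p)^\epsilon_\mathrm{Eis} \otimes \Q_p) \oplus \Q_p \cdot E^\epsilon_{2,N}$ (only one Eisenstein system of eigenvalues contributes to the Eisenstein completion), $t$ annihilates all of $M_2(N;\Z_p)^\epsilon_\mathrm{Eis}$, and hence $t=0$ by the perfection of the pairing.

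The ``in particular'' clause is then purely ring-theoretic given the fiber-product description. Writing elements of $\bT_N^\epsilon$ as compatible pairs $(t^0, z)$, we have $I^\epsilon = I^{0,\epsilon} \times_{\Z_p/a_0} 0$ and $\ker(\bT_N^\epsilon \to \bT_N^{0,\epsilon}) = \{(0,z) : z \in a_0\Z_p\}$. A direct calculation shows $\Ann_{\bT_N^\epsilon}(I^\epsilon) = \{(t^0,z) \in \bT_N^\epsilon : t^0 \in \Ann_{\bT_N^{0,\epsilon}}(I^{0,\epsilon})\}$, so it suffices to verify $\Ann_{\bT_N^{0,\epsilon}}(I^{0,\epsilon})=0$. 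This holds because the Eisenstein series $E^\epsilon_{2,N}$ is non-cuspidal, so $I^{0,\epsilon} \otimes \Q_p = \bT_N^{0,\epsilon} \otimes \Q_p$, forcing the annihilator to be $\Z_p$-torsion; flatness of $\bT_N^{0,\epsilon}$ over $\Z_p$ then gives vanishing.

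The main delicacy is the injectivity argument, where one must confirm the rational decomposition of $M_2(N;\Z_p)^\epsilon_\mathrm{Eis}$ into cuspidal and Eisenstein parts and hence that annihilating $S_2^\epsilon$ and $E^\epsilon_{2,N}$ simultaneously forces the action on all of $M_2^\epsilon$ to vanish; given the Eisenstein completion setup in \S\ref{sssec:eisen series and hecke alg defn} this is essentially immediate, so I expect the proof to be quite short.
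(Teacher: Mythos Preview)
Your proposal is correct and follows the same approach as the paper, which does not give a detailed argument but simply asserts that the proof is ``exactly as in \cite[Lem.\ 3.2.2]{WWE3}'' using Ohta's method; you have spelled out that argument explicitly via the duality \eqref{eq:M and T duality} and Theorem \ref{thm:congruence number}. One could also package the surjectivity/injectivity check using the paper's Lemma \ref{lem:fiber prods}, but your direct verification is equivalent.
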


\subsection{Eigenforms and associated Galois representations}

Let $\nu: \bT_N^{0,\epsilon} \rinj \tilde{\bT}_N^{0,\epsilon}$ denote the normalization of $\bT^{0, \epsilon}_N$. 
\begin{lem}
\label{lem:normalization of bT0}
We record facts about $\tilde{\bT}_N^{0,\epsilon}$ and associated Galois representations. 
\begin{enumerate}[leftmargin=2em]
\item Letting $q$ vary over primes $q \nmid Np$, there is an isomorphism 
\[
h: \tilde{\bT}_N^{0,\epsilon} \lrisom \bigoplus_{f\in \Sigma} \sO_f, \quad \nu(T_q) \mapsto (a_q(f))_{f \in \Sigma}, 
\]
where $\Sigma \subset S_2(N;\overline{\Q}_p)^\epsilon_\mathrm{Eis}$ is the set of normalized eigenforms, and $\sO_f$ is the valuation ring of the finite extension $\Q_p(a_q(f)_{q \nmid Np})/\Q_p$. 

\item For each $f \in \Sigma$, there is an absolutely irreducible representation $\rho_f:G_{\Q,S} \to \GL_2(\sO_f[1/p])$ such that the characteristic polynomial of $\rho_f(\Fr_q)$ is $X^2-a_q(f) X +q$ for any $q \nmid Np$.

\item Assume $\ell_i \neq p$. The representation $\rho_f|_{G_{\ell_i}}$ is unramified if $f$ is old at $\ell_i$. Otherwise, $f$ is new at $\ell_i$ and there is an isomorphism
\begin{equation}
\label{eq:Steinberg G_ell rep}
\rho_f|_{G_{\ell_i}} \simeq \ttmat{\lambda(a_{\ell_i}(f))\kcyc}{*}{0}{\lambda(a_{\ell_i}(f))},
\end{equation}
where $a_{\ell_i}(f)=-\epsilon_i$.
\item There is an isomorphism 
\begin{equation}
\label{eq:Steinberg G_p rep}
\rho_f|_{G_{p}} \simeq \ttmat{\lambda(a_p(f)^{-1})\kcyc}{*}{0}{\lambda(a_p(f))}.
\end{equation}
Moreover, 
\begin{enumerate}
\item $\rho_f\vert_{G_p}$ is finite-flat if and only if either 
\begin{enumerate}
	\item $p \nmid N$, in which case $h: \nu(U_p) \mapsto (a_p(f))_{f \in \Sigma}$, or
	\item $p \mid N$ and $f$ is old at $p$.
\end{enumerate}
\item If $p \mid N$ and $f$ is new at $p$, then $a_p(f) = -\epsilon_p=+1$, i.e.\ $\epsilon_p = -1$.
\end{enumerate}

\end{enumerate}
\end{lem}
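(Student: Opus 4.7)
The plan is to assemble classical results about modular forms and their Galois representations, specialized to our Eisenstein setting and to our choice of $\bT_N'^0$ (generated by $T_n$ for $(n,N)=1$ together with the Atkin--Lehner involutions $w_{\ell_i}$, and with no $U_\ell$ for $\ell \mid N$). For part (1), I would use simultaneous diagonalizability of this commuting family of semisimple operators on $S_2(N;\overline{\Q}_p)^\epsilon_\mathrm{Eis}$ to identify $\bT_N^{0,\epsilon} \otimes_{\Z_p} \overline{\Q}_p$ with a product of copies of $\overline{\Q}_p$ indexed by systems of eigenvalues, and pass to integral closures to get the product of the $\sO_f$. For part (2), each $f \in \Sigma$ is the Atkin--Lehner stabilization of a unique newform $f^\#$ of some level $M \mid N$ sharing all Hecke eigenvalues $a_q$ for $q \nmid Np$, and I would take $\rho_f := \rho_{f^\#}$ from the Eichler--Shimura--Deligne construction; absolute irreducibility is Ribet's theorem for newforms.

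For part (3), at primes $\ell = \ell_i \neq p$ dividing $N$, I would invoke local--global compatibility (Carayol, Saito): if $f$ is old at $\ell$, then $f^\#$ has level prime to $\ell$, so $\rho_f|_{G_\ell}$ is unramified; if $f$ is new at $\ell$, then $f^\#$ has level exactly divisible by $\ell$ and the local automorphic factor is an unramified twist of the Steinberg representation, producing the stated upper-triangular shape with diagonal ratio $\kcyc$. The identification $a_\ell(f) = -\epsilon_i$ then follows from the classical identity $w_\ell f^\# = -a_\ell(f^\#) f^\#$ for a newform at a prime exactly dividing the level, combined with $w_\ell f = \epsilon_i f$ from the Eisenstein normalization.

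For part (4), the upper-triangular form \eqref{eq:Steinberg G_p rep} is the Mazur--Wiles/Deligne ordinarity theorem, and I would verify its applicability by noting that the augmentation $\bT_N^\epsilon \to \Z_p$ sends $T_p \mapsto p+1$ when $p \nmid N$, forcing $a_p(f) \in \sO_f^\times$ (and when $p \mid N$ with $f$ new at $p$, $a_p(f) = \pm 1$ is already a unit). For (4a)(i), $\rho_f|_{G_p}$ is crystalline of Hodge--Tate weights $(0,1)$, hence finite-flat by Fontaine--Laffaille, and the identification $h(\nu(U_p)) = (a_p(f))_f$ is forced by the Hensel-unique unit root of $X^2 - T_pX + p$. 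For (4a)(ii), the same argument applies to the underlying newform $f^\#$ of level $N/p$. If instead $p \mid N$ and $f$ is new at $p$, then the argument of part (3) applied at $\ell = p$ yields an extension of unramified characters by $\kcyc$, which is patently not finite-flat.

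The one step that requires genuine (rather than purely bibliographic) work is the Eisenstein-specific sign determination in (4b): a newform $f$ at $p$ with $p \| N$ has $a_p(f) = \pm 1$, and I would rule out $a_p(f) = -1$ by reducing the Steinberg-at-$p$ diagonal characters mod $p$ and observing that the semisimplification of $\bar\rho_f$ must be $\omega \oplus 1$, the residual Eisenstein representation; this forces $a_p(f) \equiv 1 \pmod p$, hence $a_p(f) = +1$. The identity $a_p(f) = -\epsilon_p$ (from $w_p f = -a_p(f) f$ and $w_p f = \epsilon_p f$) then yields $\epsilon_p = -1$. This is the only subtle point in the argument.
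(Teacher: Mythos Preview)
Your proposal is correct and follows the same approach as the paper. The paper's own proof is terse: it cites \cite[Thm.\ 3.1]{DDT1994} for parts (1)--(3) and (4a), invokes \cite[Thm.\ 3]{AL1970} for the identity $a_p(f) = -\epsilon_p$ in (4b), and then gives exactly your residual-comparison argument (matching $\omega \oplus 1$ against $\lambda(-\epsilon_p)\omega \oplus \lambda(-\epsilon_p)$ on $G_p$ and using that $\omega\vert_{G_p}$ is ramified) to force $\epsilon_p = -1$. Your write-up simply unpacks what those references contain, and your handling of (4b) is identical to the paper's, differing only in the order of deductions (you show $a_p(f) = +1$ first, then deduce $\epsilon_p = -1$; the paper substitutes $a_p(f) = -\epsilon_p$ first and solves for $\epsilon_p$).
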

\begin{proof}
For (1)-(3) and (4a) see, for example, \cite[Thm.\ 3.1]{DDT1994}. In (4b), the fact that $a_p(f) = -\epsilon_p$ is \cite[Thm.~3]{AL1970}. To see that $\epsilon_p=-1$, note that the semi-simple residual representation $\bar{\rho}^\mathrm{ss}_f$ is $\omega \oplus 1$, but \eqref{eq:Steinberg G_p rep} implies $\bar{\rho}^\mathrm{ss}_f|_{G_p} = \lambda(-\epsilon_p)\omega \oplus \lambda(-\epsilon_p)$. Since $\omega|_{G_p}$ is ramified, this implies that $\lambda(-\epsilon_p)=1$, so $\epsilon_p=-1$.
\end{proof}

Combining Lemmas \ref{lem:bT is a pull-back} and \ref{lem:normalization of bT0}, we obtain an injective homomorphism
\begin{equation}
\label{eq:bT in product}
\bT_N^\epsilon \to \Z_p \oplus \bT_N^{0,\epsilon} \to \Z_p \oplus \bigoplus_{f\in \Sigma} \sO_f
\end{equation}
determined by sending $T_q$ to $(q+1,a_q(f)_{f \in \Sigma})$ for $q \nmid Np$ and, if $p\nmid N$, sending $U_p$ to $(1,a_p(f)_{f \in \Sigma})$.

\subsection{The kernel of $\m^\epsilon$ on the modular Jacobian and the Gorenstein condition}
\label{subsec:gorenstein and jacobian} 

In this section, we use some results of Ohta (following ideas of Mazur) to relate the structure of the rings $\bT_N^{\epsilon}$ and $\bT_N^{0,\epsilon}$ to the geometry of the N\'{e}ron model $J_0(N)_{/\Z_p}$ of the Jacobian of $X_0(N)$. Let $J_0(N) = J_0(N)_{/\Z_p} \otimes \Q_p$.

For a $\Z_p$-module $M$, let $\mathrm{Ta}_p(M)=\Hom(\Q_p/\Z_p,M)$ be the Tate module of $M$, let $M^*=\Hom_{\Z_p}(M,\Q_p/\Z_p)$ be the Pontrjagin dual, and let $M^\vee=\Hom_{\Z_p}(M,\Z_p)$ be the $\Z_p$-dual. If $M$ is $p$-divisible, then there is an identification $M^*\cong \mathrm{Ta}_p(M)^\vee$.

Let $\cT=H^1_{{\mathrm{\acute{e}t}}}(X_0(N)_{\overline{\Q}},\Z_p(1)) \cong \mathrm{Ta}_p(J_0(N)(\overline{\Q}_p))$. 
\begin{lem}
\label{lem:failure of mult one = gorenstein defect}
There is an exact sequence of $\bT_N^{0,\epsilon}[I_p]$-modules
\[
0 \lra \bT_N^{0,\epsilon}(1) \lra \cT_\mathrm{Eis}^\epsilon \lra (\bT_N^{0,\epsilon})^\vee \lra 0.
\]
The sequence splits as $\bT_N^{0,\epsilon}$-modules. In particular, we have
\[
\dim_{\F_p} J_0(N)[\m^\epsilon](\overline{\Q}_p) = \dim_{\F_p}( \cT/{\m^\epsilon}\cT) = 2+ \delta(\bT_N^{0,\epsilon})
\]
where $\delta(\bT_N^{0,\epsilon})$ is the Gorenstein defect of $\bT_N^{0,\epsilon}$. (See \S\ref{subsec:Gorenstein defect} for a discussion of Gorenstein defect.)
\end{lem}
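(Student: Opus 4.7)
The plan is to prove the lemma in three stages: establishing the exact sequence via Ohta's ordinary filtration, splitting it as $\bT_N^{0,\epsilon}$-modules using a well-chosen inertia element, and deducing the dimension formula by direct computation. The main technical work, and the hardest step, is the identification of the graded pieces of the ordinary filtration in terms of the Hecke algebra and its dualizing module; this is what secretly encodes the duality between modular forms and Hecke algebras underlying the connection between multiplicity one and the Gorenstein condition.

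\emph{The exact sequence.} The Eisenstein-localized Tate module $\cT_{\m^\epsilon}$ lies in the ordinary part at $p$: every normalized eigenform $f$ contributing to it satisfies $a_p(f) \equiv 1 \pmod{\m^\epsilon}$, as one checks case-by-case from Lemma \ref{lem:normalization of bT0} (separating $p \nmid N$, $p \mid N$ with $f$ old at $p$, and $p \mid N$ with $f$ new at $p$, where $\epsilon_p=-1$ forces $a_p(f)=1$). The ordinary Tate module carries a canonical $I_p$-equivariant filtration whose sub has $I_p$-action by $\kcyc$ and whose quotient is unramified; Ohta's theory \cite{ohta2014} identifies these graded pieces as $\bT_N^{0,\epsilon}$-modules with $\bT_N^{0,\epsilon}(1)$ and $(\bT_N^{0,\epsilon})^\vee$ respectively, via the duality pairing \eqref{eq:M and T duality}.

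\emph{Splitting as $\bT_N^{0,\epsilon}$-modules.} Since $p > 3$, pick $\gamma \in I_p$ with $\omega(\gamma) \neq 1$. Then $\gamma$ acts on $\bT_N^{0,\epsilon}(1)$ by the scalar $\kcyc(\gamma) \in \Z_p^\times$ and trivially on $(\bT_N^{0,\epsilon})^\vee$, so $(\gamma - 1)(\gamma - \kcyc(\gamma))$ annihilates $\cT_{\m^\epsilon}$. Because $\kcyc(\gamma) - 1 \equiv \omega(\gamma) - 1 \pmod{\m^\epsilon}$ is a unit of $\bT_N^{0,\epsilon}$, the Chinese Remainder Theorem produces orthogonal idempotents in $\bT_N^{0,\epsilon}[\gamma]$ whose eigenspace decomposition of $\cT_{\m^\epsilon}$ splits the short exact sequence as $\bT_N^{0,\epsilon}$-modules.

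\emph{The dimension count.} Set $R := \bT_N^{0,\epsilon}/p$, a finite local $\F_p$-algebra; its Matlis dual $R^\vee := \Hom_{\F_p}(R, \F_p)$ is the injective hull of $\F_p$, so $\Soc(R^\vee) = \F_p$ and $\dim_{\F_p} R^\vee/\m^\epsilon R^\vee = \dim_{\F_p} \Soc(R) = 1 + \delta(\bT_N^{0,\epsilon})$ (see \S\ref{subsec:Gorenstein defect}). By the splitting, $\cT_{\m^\epsilon} \cong \bT_N^{0,\epsilon} \oplus (\bT_N^{0,\epsilon})^\vee$ as $\bT_N^{0,\epsilon}$-modules, so $\cT_{\m^\epsilon}/p\cT_{\m^\epsilon} \cong R \oplus R^\vee$. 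We then have
\[
\cT_{\m^\epsilon}/\m^\epsilon\cT_{\m^\epsilon} \cong \F_p \oplus R^\vee/\m^\epsilon R^\vee \qquad \text{and} \qquad J_0(N)[\m^\epsilon](\overline{\Q}_p) = \Soc(R) \oplus \Soc(R^\vee),
\]
both of $\F_p$-dimension $2 + \delta(\bT_N^{0,\epsilon})$, as required.
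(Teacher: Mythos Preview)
Your argument is essentially correct and follows the same overall route as the paper, which also defers the construction of the exact sequence to the literature and then reads off the dimension formula. Your explicit treatment of the splitting (via an element $\gamma \in I_p$ with $\omega(\gamma) \neq 1$) and of the dimension count is more detailed than what the paper writes down, but it is the standard unpacking of the claim.

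One point of precision is worth flagging. You attribute the identification of the graded pieces of the ordinary filtration with $\bT_N^{0,\epsilon}(1)$ and $(\bT_N^{0,\epsilon})^\vee$ directly to ``Ohta's theory.'' The paper's proof is more specific about the logical structure: the key input from Ohta \cite[Prop.\ 3.5.9]{ohta2014} is the bound
\[
\dim_{\F_p} J_0(N)_{/\Z_p}(\overline{\F}_p)[\m^\epsilon] \le 1
\]
on the special fiber, and it is \emph{Mazur's} argument \cite[\S\S II.7--II.8]{mazur1978} (see also \cite{mazur1997}) that converts this multiplicity bound into the freeness of the sub over $\bT_N^{0,\epsilon}$ and hence the claimed exact sequence. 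The existence of an $I_p$-stable filtration with the correct characters on the graded pieces is formal from ordinarity; what is not formal is that the sub is \emph{free of rank one} rather than merely some $\bT_N^{0,\epsilon}$-module, and this is exactly where the special-fiber input enters. Your sentence ``Ohta's theory identifies these graded pieces'' compresses these two steps into one citation and obscures where the real content lies.
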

\begin{proof}
Ohta has shown in \cite[Prop.\ 3.5.4 and Prop.\ 3.5.9]{ohta2014} that
\[
\dim_{\F_p} J_0(N)_{/\Z_p}(\overline{\F}_p)[\m^\epsilon] \le 1.
\]
This implies the result, following \cite[\S\S II.7-II.8]{mazur1978} (see also \cite{mazur1997}). 
\end{proof}

\begin{lem}
\label{lem:goren and I principal}
Suppose that $\bT_N^{\epsilon}$ is Gorenstein. Then there is an isomorphism of $\bT_N^\epsilon$-modules
\[
I^\epsilon \lrisom (\bT_N^{0,\epsilon})^\vee.
\]
In particular, the minimal number of generators of $I^\epsilon$ is $\delta(\bT_N^{0,\epsilon})+1$, and $I^\epsilon$ is principal if and only if $\bT_N^{0,\epsilon}$ is Gorenstein.
\end{lem}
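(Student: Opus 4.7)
The plan is to derive the isomorphism $I^\epsilon \cong (\bT_N^{0,\epsilon})^\vee$ formally from Gorenstein duality together with the explicit pullback presentation of $\bT_N^\epsilon$ from Lemma \ref{lem:bT is a pull-back}. Set $R := \bT_N^\epsilon$, $R^0 := \bT_N^{0,\epsilon}$, and $J := \ker(R \twoheadrightarrow R^0)$. The Gorenstein hypothesis means, via the perfect pairing \eqref{eq:M and T duality}, that $M_2(N;\Z_p)^\epsilon_{\mathrm{Eis}} \cong R^\vee$ is free of rank one over $R$; equivalently, $R^\vee \cong R$ as $R$-modules. A standard tensor-hom adjunction then provides, for every $\Z_p$-free $R$-module $N$, an $R$-linear isomorphism $N^\vee \cong \Hom_R(N, R)$.

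First I would apply this adjunction to $N = R^0$ to obtain
\[
(R^0)^\vee \;\cong\; \Hom_R(R^0, R) \;=\; \Hom_R(R/J,R) \;=\; \Ann_R(J).
\]
Establishing the lemma's main isomorphism then reduces to showing $\Ann_R(J) = I^\epsilon$. The containment $I^\epsilon \subseteq \Ann_R(J)$ is immediate from Lemma \ref{lem:bT is a pull-back}, which says $J = \Ann_R(I^\epsilon)$. For the reverse, I would use the fiber-product description $R \cong R^0 \times_{\Z_p/a_0\Z_p} \Z_p$ with $a_0 := a_0(E^\epsilon_{2,N})$ supplied by the same lemma: here $J$ is generated as a $\Z_p$-module by the pair $(0, a_0)$, and a direct check gives $\Ann_R((0,a_0)) = \{(t, 0) : t \in I^{0,\epsilon}\}$, which is exactly $I^\epsilon$.

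For the remaining claims, Nakayama's lemma gives that the minimum number of $R$-generators of $I^\epsilon$ equals $\dim_{\F_p}(I^\epsilon/\mathfrak{m}I^\epsilon)$; by the isomorphism just constructed this agrees with the minimum number of $R^0$-generators of the dualizing module $(R^0)^\vee$, which by the definition of the Gorenstein defect recalled in \S\ref{subsec:Gorenstein defect} equals $\delta(R^0)+1$. In particular, $I^\epsilon$ is principal if and only if $\delta(R^0)=0$, i.e., $R^0$ is Gorenstein.

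The only step of substance is the annihilator identity $\Ann_R(J) = I^\epsilon$, which is where the pullback structure enters decisively; everything else is a formal unwinding of Gorenstein duality, with no real obstacle.
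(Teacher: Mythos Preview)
Your argument is correct, but it proceeds by a different (and arguably more algebraic) route than the paper. The paper works on the ``modular forms'' side: it uses the exact sequence
\[
0 \lra S_2(N;\Z_p)^\epsilon_{\mathrm{Eis}} \lra M_2(N;\Z_p)^\epsilon_{\mathrm{Eis}} \xrightarrow{\ \mathrm{Res}\ } \Z_p \lra 0,
\]
identifies $M_2(N;\Z_p)^\epsilon_{\mathrm{Eis}}$ with $\bT_N^\epsilon$ via a chosen free generator, observes that the resulting map $\bT_N^\epsilon \to \Z_p$ is the augmentation, and thereby obtains $I^\epsilon \cong S_2(N;\Z_p)^\epsilon_{\mathrm{Eis}} \cong (\bT_N^{0,\epsilon})^\vee$. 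You instead stay entirely on the Hecke-algebra side: you use the Gorenstein isomorphism $R^\vee \cong R$ to rewrite $(R^0)^\vee$ as $\Hom_R(R^0,R) = \Ann_R(J)$, and then compute $\Ann_R(J) = I^\epsilon$ directly from the fiber-product description of Lemma~\ref{lem:bT is a pull-back}. The two arguments are in a sense dual to one another: the paper first identifies $I^\epsilon$ with $S_2$ and then dualizes, whereas you first dualize and then identify the annihilator. Your approach has the advantage of making the role of Lemma~\ref{lem:bT is a pull-back} completely explicit and of avoiding any reference to the modular-forms exact sequence; the paper's approach has the advantage of making transparent \emph{why} the Gorenstein condition matters, namely because it makes $M_2(N;\Z_p)^\epsilon_{\mathrm{Eis}}$ free of rank one.
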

\begin{proof}
Like the proof of \cite[Lem.\ 3.2.5]{ohta2014}, there is an exact sequence of $\bT_N^\epsilon$-modules 
\[
0 \lra S_2(N;\Z_p)^\epsilon_\mathrm{Eis} \lra M_2(N;\Z_p)^\epsilon_\mathrm{Eis} \xrightarrow{\mathrm{Res}} \Z_p \lra 0
\]
where $\bT_N^\epsilon$ acts on $\Z_p$ via the augmentation map $\bT_N^\epsilon \to \bT_N^\epsilon/I^\epsilon=\Z_p$. Since we assume that $\bT_N^{\epsilon}$ is Gorenstein, we see by the duality \eqref{eq:M and T duality} that $M_2(N;\Z_p)^\epsilon_\mathrm{Eis}$ is a free $\bT_N^\epsilon$-module of rank $1$. We may choose a generator $f$ of $M_2(N;\Z_p)^\epsilon_\mathrm{Eis}$ such that $\mathrm{Res}(f)=1$. Then we obtain a surjective $\bT_N^\epsilon$-module homomorphism
\[
\bT_N^\epsilon \rsurj \Z_p, \quad T \mapsto \mathrm{Res}(Tf)
\]
whose kernel is isomorphic to $S_2(N;\Z_p)^\epsilon_\mathrm{Eis}$. This is a $\bT_N^\epsilon$-module homomorphism that sends $1$ to $1$, so it is the augmentation map $\bT_N^\epsilon \rsurj \Z_p$. Thus $I^\epsilon \cong S_2(N;\Z_p)^\epsilon_\mathrm{Eis}$, so duality \eqref{eq:M and T duality} yields the isomorphism of the lemma. The remaining parts follow from \S \ref{subsec:Gorenstein defect}.
\end{proof}

Combining the preceding two lemmas, we obtain the following
\begin{lem}
\label{lem:I and Jacobian kernel}
Assume that $\bT_N^\epsilon$ is Gorenstein. Then
\[
\dim_{\F_p}J_0(N)[\m^\epsilon](\overline{\Q}_p) = 1 + \dim_{\F_p}(I^\epsilon/\m^\epsilon I^\epsilon).
\]
\end{lem}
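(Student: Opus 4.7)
The plan is a direct bookkeeping combination of the two preceding lemmas. By Lemma \ref{lem:failure of mult one = gorenstein defect}, we may rewrite the left-hand side as
\[
\dim_{\F_p} J_0(N)[\m^\epsilon](\overline{\Q}_p) = 2 + \delta(\bT_N^{0,\epsilon}).
\]
The Gorenstein hypothesis on $\bT_N^\epsilon$ is not needed at this step, only the general Ohta bound used in that lemma.

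Next, I would invoke Lemma \ref{lem:goren and I principal}, which under the hypothesis that $\bT_N^\epsilon$ is Gorenstein yields $I^\epsilon \cong (\bT_N^{0,\epsilon})^\vee$ as $\bT_N^\epsilon$-modules and tells us that the minimal number of generators of $I^\epsilon$ (as a $\bT_N^\epsilon$-module) equals $\delta(\bT_N^{0,\epsilon}) + 1$. By Nakayama's lemma applied over the local ring $\bT_N^\epsilon$ with residue field $\F_p = \bT_N^\epsilon/\m^\epsilon$, this minimal number of generators coincides with
\[
\dim_{\F_p}(I^\epsilon/\m^\epsilon I^\epsilon).
\]

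Combining the two equalities gives
\[
\dim_{\F_p} J_0(N)[\m^\epsilon](\overline{\Q}_p) = 2 + \delta(\bT_N^{0,\epsilon}) = 1 + \bigl(\delta(\bT_N^{0,\epsilon}) + 1\bigr) = 1 + \dim_{\F_p}(I^\epsilon/\m^\epsilon I^\epsilon),
\]
as required. There is no real obstacle here; the proof is purely formal once one has Lemmas \ref{lem:failure of mult one = gorenstein defect} and \ref{lem:goren and I principal} and the standard Nakayama identification of minimal generator counts with $\F_p$-dimension of the cotangent space. The only small point worth flagging is to be careful that Nakayama is applied to $I^\epsilon$ as a module over the local ring $\bT_N^\epsilon$ (so that $I^\epsilon/\m^\epsilon I^\epsilon$ is the correct cotangent-type quotient), rather than as an ideal which would give the different quotient $I^\epsilon/{I^\epsilon}^2$.
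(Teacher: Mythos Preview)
Your proposal is correct and matches the paper's own proof, which simply says ``Combining the preceding two lemmas, we obtain the following'' --- exactly the bookkeeping you carry out. Your explicit invocation of Nakayama to identify the minimal number of generators with $\dim_{\F_p}(I^\epsilon/\m^\epsilon I^\epsilon)$ is the one detail the paper leaves implicit.
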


\section{The pseudodeformation ring}
\label{sec:pseudodef ring}

In this section, we set up the deformation theory of Galois pseudorepresentations modeling those that arise from Hecke eigenforms of weight 2 and level $N$ that are congruent to the Eisenstein series $E^\epsilon_{2,N}$. These are the Galois representations of Lemma \ref{lem:normalization of bT0}. See \S\ref{subsec:psdef method} for further introduction. 

\subsection{Theory of Cayley--Hamilton representations} 
\label{subsec:CH setup} 

This section is a summary of \cite{WWE4}. Only for this section, we work with a general profinite group $G$ satisfying condition $\Phi_p$ (of \S\ref{subsec:notation}). All pseudorepresentations are assumed to have dimension $2$, for simplicity. 

\subsubsection{Pseudorepresentations}
A pseudorepresentation $D:E \to A$ is the data of an associative $A$-algebra $E$ along with a homogeneous multiplicative polynomial law $D$ from $E$ to $A$. This definition is due to Chenevier \cite{chen2014}; see \cite{WWE4} and the references therein. Despite the notation, the pseudorepresentation $D$ includes the data of a multiplicative function $D:E\to A$, but is not characterized by this function alone. It is characterized by the pair of functions $\Tr_D,D:E \to A$, where $\Tr_D$ is defined by the \emph{characteristic polynomial}:
\begin{equation}
\label{eq:x+1 identity}
D(x-t)=t^2 - \Tr_D(x)t + D(x) \in A[t].
\end{equation}
A pseudorepresentation $D:E \to A$ is said to be \emph{Cayley--Hamilton} if, for every commutative $A$-algebra $B$, every element $x\in E \otimes_A B$ satisfies its characteristic polynomial. We also denote by $D : G \ra A$ a pseudorepresentation $D : A[G] \ra A$. 

\subsubsection{Cayley--Hamilton representations}
In the category of \emph{Cayley--Hamilton representations of a profinite group $G$}, an object is a triple
\[
(\rho: G \ra E^\times, E, D : E\ra A),
\]
and sometimes referred to more briefly as ``$\rho$.'' Here $\rho$ is a homomorphism (continuous, as always), $E$ is an associative $A$-algebra that is finitely generated as an $A$-module, $(A,\m_A)$ is a Noetherian local $\Z_p$-algebra, and $D$ is a Cayley--Hamilton pseudorepresentation. We call $A$ the \emph{scalar ring} of $E$. The \emph{induced pseudorepresentation of $\rho$} is $D \circ \rho : G \ra A$, also denoted $\psi(\rho)$. The functor $\psi$ is essentially surjective. The Cayley--Hamilton representation $\rho$ is said to be \emph{over} $\psi(\rho)\otimes_A A/\m_A$, and $\psi(\rho)$ is said to be a \emph{pseudodeformation} of $\psi(\rho)\otimes_A A/\m_A$. If $(\rho,E,D)$ is a Cayley--Hamilton representation of $G$ and $x \in A[G]$, then we abuse notation and write $D(x)$ for $D(\rho(x))$ (where we also abuse notation and write $\rho:A[G] \to E$ for the linearization of $\rho$).

Given a pseudorepresentation $\Db:G \to \F$ for a field $\F$, there is a universal object in the category of Cayley--Hamilton representations over $\Db$. This is denoted by 
\[
(\rho^u_\Db : G \lra (E^u_\Db)^\times, E^u_\Db, D_{E^u_\Db} : E^u_\Db \ra R^u_\Db), 
\]
and the induced pseudorepresentation $D^u_\Db := \psi(\rho^u_\Db)$ is the universal pseudodeformation of $\Db$.

\subsubsection{Generalized matrix algebras (GMA)} An important example of a Cayley--Hamilton algebra is a \emph{generalized matrix algebra (GMA)}. An $A$-GMA $E$ is given by the data $(B,C,m)$ where $B$ and $C$ are finitely-generated $R$-modules, $m:B \otimes_R C \to R$ is an $R$-module homomorphism satisfying certain conditions, and $E =\sm{R}{B}{C}{R}$ (see \cite[Example 3.1.7]{WWE4}). There is a Cayley--Hamilton pseudorepresentation $D:E \to A$ given by the usual formula for characteristic polynomial. We write a homomorphism $\rho: G \to E^\times$ as $\rho = \sm{\rho_{1,1}}{\rho_{1,2}}{\rho_{2,1}}{\rho_{2,2}}$.

If $\Db$ is multiplicity-free (see \cite[Defn.\ 3.2.1]{WWE4}), then $E^u_{\Db}$ has a GMA structure whose associated pseudorepresentation is $D_{E^u_\Db}$ \cite[Thm.\ 3.2.2]{WWE4}.

\subsubsection{Reducibility}
We will refer to the condition that a Cayley--Hamilton representation or a pseudorepresentation is \emph{reducible}. We also refer to the \emph{reducibility ideal} in rings receiving a pseudorepresentations. For these definitions, see \cite[\S4.2]{WWE4} or \cite[\S5.7]{WWE1}. The important case for this paper is that, if $(\rho, E, D:E\to A)$ is a Cayley--Hamilton representation where $E$ is the GMA associated to $(B,C,m)$, then the reducibility ideal of $D$ is the image of $m$. There are also universal objects, denoted $\rho^\red$, etc. 

\subsubsection{Conditions on Cayley--Hamilton representations}
\label{sssec:cond on CH}
We consider two flavors of conditions $\cP$ imposed on Cayley--Hamilton representations of $G$:
\begin{enumerate}
	\item $\cP$ is a condition that certain elements vanish, e.g.\ Definition \ref{defn:US-CH ell}. 
	\item $\cP$ is a property applying to finite-length $\Z_p[G]$-modules and satisfying a basic stability condition, e.g.\ \S\ref{subsec:flat case}.
\end{enumerate}

In case (1), one produces a universal Cayley--Hamilton $\rho_\Db^\cP$ representation of $G$ satisfying $\cP$ by taking the quotient by the two-sided ideal of $E_\Db$ generated by the relevant elements, and then taking a further quotient so that a pseudorepresentation exists. This final quotient is known as the \emph{Cayley--Hamilton quotient of $\rho^u_\Db$ for $\cP$}. See \cite[Defn.\ 2.4.7]{WWE4} for details; cf.\ also \cite[Defn.\ 5.9.5]{WWE1}. 

In case (2), we consider $E^u_\Db$ as a $G$-module using its left action on itself by multiplication, and find in \cite[\S2.4]{WWE4} that the maximal left quotient module satisfying $\cP$ can be defined and is an algebra quotient. The subsequent Cayley--Hamilton quotient is then shown to satisfy the desired properties of $\rho_\Db^\cP$.

\subsubsection{Conditions on pseudorepresentations}

As discussed in \cite[\S2.5]{WWE4}, one says that a pseudorepresentation $D$ of $G$ satisfies $\cP$ if there exists a Cayley--Hamilton representation $\rho$ of $G$ such that $\psi(\rho) = D$ and $\rho$ satisfies $\cP$. Then the universal pseudodeformation of $\Db$ with property $\cP$ turns out to be $\psi(\rho^\cP_\Db)$.

\subsection{Universal Cayley--Hamilton representations of Galois groups}
\label{subsec:CH Gal setup}
Let $\ell \mid Np$ be a prime. Recall from \S\ref{subsec:notation} the decomposition group $G_\ell \ra G_{\Q,S}$. Let $\Db: G_{\Q,S} \to \F_p$ denote the pseudorepresentation $\psi(\F_p(1)\oplus \F_p)$. 

We denote by 
\[
(\rho_\Db : G_{\Q,S} \lra E_\Db^\times, E_\Db, D_{E_\Db} : E_\Db \ra R_\Db)
\]
the universal Cayley--Hamilton representation of $G_{\Q,S}$ over $\Db$. The scalar ring $R_\Db$ is the universal pseudodeformation ring of $\Db$, with universal pseudorepresentation $D_\Db := \psi(\rho_\Db)$. Similarly, we let the triple
\[
(\rho_\ell : G_\ell \ra E_\ell^\times, E_\ell, D_{E_\ell} : E_\ell \ra R_\ell)
\]
denote the universal Cayley--Hamilton representation of $G_\ell$ over $\Db\vert_{G_\ell}$, so that $D_\ell := \psi(\rho_\ell) : G_\ell \ra R_\ell$ is the universal pseudodeformation of $\Db\vert_{G_\ell}$. 

\begin{defn}
\label{def:GMA structure}
Note that $\Db$ is multiplicity-free, and that, if $\ell \not \equiv 1 \pmod{p}$, then $\Db|_{G_\ell}$ is multiplicity-free. In this case, $E_\ell$ and $E_\Db$ have the structure of a GMA. In this paper, whenever we fix such a structure, we assume that $(\rho_\ell)_{1,1} \otimes_{R_\ell} \F_p \cong \omega|_{G_\ell}$ (resp.\ $(\rho_\Db)_{1,1}\otimes_{R_\Db} \F_p \cong \omega$).
\end{defn}

\subsection{Case $\ell \nmid Np$: unramified} 
\label{subsec:unram}
For $\ell \nmid Np$, we want Galois representations to be unramified at $\ell$. We impose this by considering representations of $G_{\Q,S}$, as opposed to $\Gal(\overline{\Q}/\Q)$. 

\subsection{Case $\ell \neq p$ and $\ell \mid N$: the unramified-or-Steinberg condition}
\label{subsec:US ell}

In this subsection, we write $\ell$ for one of the factors of $N$ referred to elsewhere in this manuscript as $\ell_i$. Likewise, we write $\epsilon_\ell$ for $\epsilon_i$. 

\begin{defn}
\label{defn:US-CH ell}
Let $(\rho : G_\ell \ra E, E, D_E: E \ra A)$ be a Cayley--Hamilton representation of $G_\ell$ over $\Db\vert_{G_\ell}$. We call $\rho$ \emph{unramified-or-$\epsilon_\ell$-Steinberg} (or $\mathrm{US}^{\epsilon_\ell}_\ell$) if 
\begin{equation}
\label{eq:US test elts}
V^{\epsilon_\ell}_\rho(\sigma,\tau) := (\rho(\sigma)-\lambda(-\epsilon_\ell)(\sigma)\kcyc(\sigma))(\rho(\tau)-\lambda(-\epsilon_\ell)(\tau)) \in E
\end{equation}
is equal to $0$ for all $(\sigma, \tau)$ ranging over the set 
\[
I_{\ell} \times G_{\ell}  \cup G_{\ell} \times I_{\ell} \ \subset \ G_{\ell} \times G_{\ell}.
\]
Write $V^{\epsilon_\ell}_\rho$ for the set of all elements $V^{\epsilon_\ell}_\rho(\sigma,\tau)$ over this range.

A pseudodeformation $D : G_\ell \ra A$ of $\Db\vert_{G_\ell}$ is called $\mathrm{US}^\epsilon_\ell$ if there exists a $\mathrm{US}^\epsilon_\ell$ Cayley--Hamilton representation $\rho$ of $G_\ell$ such that $\psi(\rho) = D$.
\end{defn}

\begin{defn}
\label{defn:US-CH ell UO}
Let $(E^{\epsilon_\ell}_\ell, D_{E^{\epsilon_\ell}_\ell}: E^{\epsilon_\ell}_\ell \ra R^{\epsilon_\ell}_\ell)$ be the Cayley--Hamilton quotient of $(E_\ell, D_\ell)$ by $V^{\epsilon_\ell}_{\rho_\ell}$. Let
\[
(\rho^{\epsilon_\ell}_\ell : G_\ell \ra (E^{\epsilon_\ell}_\ell)^\times, E^{\epsilon_\ell}_\ell, D_{E^{\epsilon_\ell}_\ell} : E^{\epsilon_\ell}_\ell \ra R^{\epsilon_\ell}_\ell),
\]
be the corresponding Cayley--Hamilton representation, with induced pseudorepresentation of $G_\ell$ denoted $D^{\epsilon_\ell}_\ell := \psi(\rho^{\epsilon_\ell}_\ell) : G_\ell \ra R^{\epsilon_\ell}_\ell$. 
\end{defn}

By the theory of \S\ref{sssec:cond on CH}, $\rho^{\epsilon_\ell}_\ell$ is the universal $\mathrm{US}_\ell^{\epsilon_\ell}$ Cayley--Hamilton representation over $\Db\vert_{G_\ell}$, and $D^{\epsilon_\ell}_\ell$ is the universal $\mathrm{US}_\ell^{\epsilon_\ell}$ pseudodeformation of $\Db\vert_{G_\ell}$. 

\begin{lem}
	\label{lem:USell implies pseudo-unram}
If $\ell \neq p$, then, for any $\epsilon_\ell$, we have $D_\ell^{\epsilon_\ell}(\tau)=1$ and $\Tr_{D_\ell^{\epsilon_\ell}}(\tau)=2$ for all $\tau \in I_\ell$. That is, $(D^{\epsilon_\ell}_\ell)\vert_{I_\ell} = \psi(1 \oplus 1)$. 
\end{lem}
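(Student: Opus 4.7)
The plan is to combine the Cayley--Hamilton identity with the specialization of the US relation to inertia, and then leverage the action of Frobenius on tame inertia. I abbreviate $\rho := \rho_\ell^{\epsilon_\ell}$ and $D := D_\ell^{\epsilon_\ell}$ throughout.

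Because $\ell \neq p$, both $\kcyc$ and $\lambda(-\epsilon_\ell)$ are unramified at $\ell$ and hence trivial on $I_\ell$. Thus for $\sigma,\tau \in I_\ell$ the defining relation $V^{\epsilon_\ell}_\rho(\sigma,\tau) = 0$ specializes to $(\rho(\sigma)-1)(\rho(\tau)-1) = 0$ in $E_\ell^{\epsilon_\ell}$. Rearranging gives $\rho(\sigma\tau) = \rho(\sigma)+\rho(\tau)-1$, so $\tau \mapsto \rho(\tau)-1$ is continuous and additive; composing with the $R_\ell^{\epsilon_\ell}$-linear trace produces a continuous additive character $a: I_\ell \to R_\ell^{\epsilon_\ell}$ defined by $a(\tau) := \Tr_D(\tau) - 2$. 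Since $R_\ell^{\epsilon_\ell}$ is a pro-$p$ ring, $a$ factors through the maximal pro-$p$ tame abelianization of $I_\ell$, which (as $\ell \neq p$) is topologically cyclic on the image of $\gamma_\ell$; set $a_0 := a(\gamma_\ell)$. Setting $\sigma=\tau$ in the US relation yields $(\rho(\tau)-1)^2 = 0$, which combined with the Cayley--Hamilton identity for $\rho(\tau)$ produces
\[
(\Tr_D(\tau)-2)\,\rho(\tau) \;=\; D(\tau) - 1 \quad \text{in } E_\ell^{\epsilon_\ell}.
\]
Once $\Tr_D(\tau) = 2$ is established, this equation forces $D(\tau) = 1$, so the lemma reduces to showing $a_0 = 0$ in $R_\ell^{\epsilon_\ell}$, and by additivity of $a$ this handles all of $I_\ell$.

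The crucial input is Galois equivariance. Conjugation by $\sigma_\ell$ acts on the pro-$p$ tame abelianization of $I_\ell$ by multiplication by $\kcyc(\sigma_\ell) = \ell$, so the image of $\sigma_\ell\gamma_\ell\sigma_\ell^{-1}$ there equals $\ell\cdot\gamma_\ell$, and additivity gives $a(\sigma_\ell\gamma_\ell\sigma_\ell^{-1}) = \ell a_0$. Cyclic invariance of $\Tr_D$ simultaneously forces $a(\sigma_\ell\gamma_\ell\sigma_\ell^{-1}) = a(\gamma_\ell) = a_0$. Therefore $(\ell-1)\,a_0 = 0$ in $R_\ell^{\epsilon_\ell}$. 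When $\ell \not\equiv 1 \pmod{p}$, the element $\ell-1$ is a unit in $\Z_p \subset R_\ell^{\epsilon_\ell}$, and we conclude $a_0 = 0$.

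The main obstacle is the case $\ell \equiv 1 \pmod p$, where $\ell-1$ lies in the maximal ideal and the argument above only gives $(\ell-1)a_0 = 0$. To complete it I would further exploit the two-sided US relations involving a Frobenius lift, namely $\rho(\sigma_\ell)x_0 = -\epsilon_\ell\ell\,x_0$ and $x_0\rho(\sigma_\ell) = -\epsilon_\ell\,x_0$ for $x_0 := \rho(\gamma_\ell)-1$. Computing $\rho(\sigma_\ell)^2 x_0$ in two ways (by iterating the first identity versus by substituting the Cayley--Hamilton relation for $\rho(\sigma_\ell)$) yields the scalar annihilator $(\ell^2 + \epsilon_\ell\ell\Tr_D(\sigma_\ell) + D(\sigma_\ell))\,x_0 = 0$, which is residually a unit when $\epsilon_\ell = +1$ and thus forces $x_0 = 0$ outright. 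The remaining Steinberg sub-case $\epsilon_\ell = -1$ is the most delicate; here one must combine an auxiliary identity $a_0^2 = 0$ (obtained by comparing direct and Cayley--Hamilton expansions of $(2x_0-a_0)^2$ using $x_0^2 = 0$) with the defining universal property of the Cayley--Hamilton quotient in \cite{WWE4}, which imposes all pseudorepresentation-compatibility relations on $R_\ell^{\epsilon_\ell}$, to reach the conclusion $a_0 = 0$.
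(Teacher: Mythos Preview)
Your route is quite different from the paper's, and while most of it is correct, the final case is not completed.

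The paper's proof is two lines: since $\tau\in I_\ell$ gives $V^{\epsilon_\ell}_\rho(\tau,\tau)=(\rho(\tau)-1)^2=0$, a general fact about Cayley--Hamilton pseudorepresentations \cite[Lem.\ 2.7(iv)]{chen2014} yields $\Tr_D(\rho(\tau)-1)=D(\rho(\tau)-1)=0$ directly, and the rest is bookkeeping with \eqref{eq:x+1 identity}. There is no Frobenius conjugation and no case split on $\ell\bmod p$ or on $\epsilon_\ell$.

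Your approach instead tries to pin down $a_0:=\Tr_D(\gamma_\ell)-2$ by hand. The additivity of $\tau\mapsto\rho(\tau)-1$ on $I_\ell$, the factorization of $a$ through the pro-$p$ tame quotient, and the Frobenius identity $(\ell-1)a_0=0$ are all valid, and they dispose of $\ell\not\equiv1\pmod p$. For $\ell\equiv1\pmod p$ with $\epsilon_\ell=+1$, your annihilator $\ell^2+\epsilon_\ell\ell\,\Tr_D(\sigma_\ell)+D(\sigma_\ell)$ is residually $4\ne0$ and forces $x_0=0$; this is fine (it essentially reproves the relevant instance of Lemma \ref{lem:epsilon=1 and unram}). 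The gap is the sub-case $\ell\equiv1\pmod p$, $\epsilon_\ell=-1$. First, your sketched expansion of $(2x_0-a_0)^2$ only recovers the identity $a_0x_0=D(x_0)$ you already had; taking $\Tr_D$ gives $a_0^2=2D(x_0)$ and then multiplicativity of $D$ gives $a_0^4=4D(x_0)^2=4D(x_0^2)=0$, but not $a_0^2=0$. Second, and more seriously, no nilpotence relation $a_0^n=0$ alone can force $a_0=0$, since $R_\ell^{\epsilon_\ell}$ has no reason to be reduced. Your closing appeal to ``the defining universal property of the Cayley--Hamilton quotient'' is precisely the point where a nontrivial structural input is required---this is what the paper's citation of Chenevier provides---but you have not identified which relation in $R_\ell^{\epsilon_\ell}$ it supplies or why it annihilates $a_0$. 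As written, that step is not a proof.
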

\begin{proof}
	Let $\tau \in I_\ell$. We see in \eqref{eq:US test elts} that $V^{\epsilon_\ell}_{\rho^{\epsilon_\ell}_\ell}(\tau,\tau)=(\rho^{\epsilon_\ell}_\ell(\tau)-1)^2 = 0$. Thus by \cite[Lem.\ 2.7(iv)]{chen2014}, we see $\Tr_{D^{\epsilon_\ell}_\ell}(\tau-1) = D^{\epsilon_\ell}_\ell(\tau-1) = 0$. As traces are additive, we have $\Tr_{D^{\epsilon_\ell}_\ell}(\tau) = \Tr_{D^{\epsilon_\ell}_\ell}(1)=2$. Applying \eqref{eq:x+1 identity} with $x = \tau$ and using the naturality of $D^{\epsilon_\ell}_\ell$ with respect to the morphism $R_\ell^{\epsilon_\ell}[t] \to R_\ell^{\epsilon_\ell}$ given by $t \mapsto 1$, we find that $D^{\epsilon_\ell}_\ell(\tau) = 1$. 
\end{proof}

\begin{lem}
\label{lem:epsilon=1 and unram}
Suppose that $\epsilon_\ell = +1$ and $\ell \not \equiv -1,0 \pmod{p}$. Then $\rho_\ell^{\epsilon_\ell}$ is unramified (i.e. $\rho_\ell^{\epsilon_\ell}|_{I_{\ell}} =1$).
\end{lem}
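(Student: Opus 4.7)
The plan is to exploit a single instance of the $\mathrm{US}^{+1}_\ell$ relation \eqref{eq:US test elts} and show that one of its two factors is a unit in $E^{+1}_\ell$. First I would specialize \eqref{eq:US test elts} to the pair $(\sigma_\ell, \tau)$ with $\tau \in I_\ell$: since $\lambda(-1)(\sigma_\ell) = -1$, $\kcyc(\sigma_\ell) = \ell$ (using $\ell \neq p$), and $\lambda(-1)(\tau) = 1$, this reduces to the single relation
\[
\bigl(\rho_\ell^{+1}(\sigma_\ell) + \ell\bigr)\bigl(\rho_\ell^{+1}(\tau) - 1\bigr) = 0 \quad \text{in } E_\ell^{+1}.
\]
It then suffices to prove that $x := \rho_\ell^{+1}(\sigma_\ell) + \ell$ is a unit in $E_\ell^{+1}$; left-multiplying the above by $x^{-1}$ will give $\rho_\ell^{+1}(\tau) = 1$ for all $\tau \in I_\ell$, which is the desired conclusion.

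To show $x$ is a unit, I would invoke the Cayley-Hamilton identity $x^2 - \Tr_{D_\ell^{+1}}(x)\,x + D_\ell^{+1}(x) = 0$ and verify that the constant term $D_\ell^{+1}(x)$ lies in $(R_\ell^{+1})^\times$. As $R_\ell^{+1}$ is a complete Noetherian local ring with residue field $\F_p$, this reduces to checking that the residue of $D_\ell^{+1}(x)$ modulo $\m_{R_\ell^{+1}}$ is nonzero. Using \eqref{eq:x+1 identity} with $y = \rho_\ell^{+1}(\sigma_\ell)$ and $t = -\ell$, one computes
\[
D_\ell^{+1}(x) = \ell^2 + \Tr_{D_\ell^{+1}}(\rho_\ell^{+1}(\sigma_\ell))\,\ell + D_\ell^{+1}(\rho_\ell^{+1}(\sigma_\ell));
\]
reducing modulo $\m_{R_\ell^{+1}}$ and using that the residual pseudorepresentation is $\psi(\omega \oplus 1)$ (so $\Tr \mapsto \ell + 1$ and $D \mapsto \ell$ at $\sigma_\ell$), this residue is $\ell^2 + (\ell+1)\ell + \ell = 2\ell(\ell+1)$. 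By hypothesis $\ell \not\equiv 0, -1 \pmod p$, and since $p > 3$ this is a unit in $\F_p$. Rearranging Cayley-Hamilton as $x \cdot (\Tr_{D_\ell^{+1}}(x) - x) = D_\ell^{+1}(x)$ (and symmetrically on the other side) then produces an explicit two-sided inverse $D_\ell^{+1}(x)^{-1}(\Tr_{D_\ell^{+1}}(x) - x)$ for $x$.

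The one conceptual point to verify carefully — and what I expect to be the only genuine obstacle — is the passage from ``$D_\ell^{+1}(x)$ is a unit in the scalar ring'' to ``$x$ is a unit in $E_\ell^{+1}$.'' This is precisely what the Cayley-Hamilton relation provides, and notably it avoids imposing any multiplicity-freeness hypothesis on $\Db|_{G_\ell}$: the argument works uniformly, including in the potentially delicate case $\ell \equiv 1 \pmod p$, where no GMA structure on $E_\ell^{+1}$ is available. Everything else is a direct substitution into \eqref{eq:US test elts} and \eqref{eq:x+1 identity}, so the lemma reduces to the arithmetic inequality $2\ell(\ell+1) \not\equiv 0 \pmod p$, which is exactly the stated hypothesis.
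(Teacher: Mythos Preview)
Your proof is correct and follows essentially the same strategy as the paper: extract a single $\mathrm{US}^{+1}_\ell$ relation involving $\sigma_\ell$ and an arbitrary $\tau \in I_\ell$, then show the $\sigma_\ell$-factor is a unit via Cayley--Hamilton and a residual computation. The only cosmetic difference is that you specialize $V^{+1}_{\rho}(\sigma_\ell,\tau)$, obtaining the factor $\rho(\sigma_\ell)+\ell$ with residual determinant $2\ell(\ell+1)$, whereas the paper specializes $V^{+1}_{\rho}(\tau,\sigma_\ell)$, obtaining the factor $\rho(\sigma_\ell)+1$ with residual determinant $2(\ell+1)$; both are units under the stated hypotheses, so either choice works.
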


\begin{proof}
Let $\sigma \in G_\ell$ be the element $\sigma_\ell$ defined in \S\ref{subsec:notation}. By definition of $E_\ell^{\epsilon_\ell}$, 
\[
V^{\epsilon_\ell}_{\rho^{\epsilon_\ell}_\ell}(\tau,\sigma) = (\rho_\ell^{\epsilon_\ell}(\tau)-1)(\rho_\ell^{\epsilon_\ell}(\sigma)+1)= 0,
\]
for any $\tau \in I_\ell$. To prove the lemma, it suffices to show that $(\rho_\ell^{\epsilon_\ell}(\sigma)+1) \in (E_\ell^{\epsilon_\ell})^\times$. 

By the Cayley--Hamilton property, we know that any element $x \in E_\ell^{\epsilon_\ell}$ satisfies  $x^2-\Tr_{D^{\epsilon_\ell}_\ell}(x)x+D^{\epsilon_\ell}_\ell(x)=0$. In particular, we see that $x \in (E_\ell^{\epsilon_\ell})^\times$ if $D^{\epsilon_\ell}_\ell(x) \in (R_\ell^{\epsilon_\ell})^\times$. Hence it will suffice to show that $D^{\epsilon_\ell}_\ell(\sigma + 1) \in (R_\ell^{\epsilon_\ell})^\times$. 

Writing $\m \subset R^{\epsilon_\ell}_\ell$ for the maximal ideal, we know that $D^{\epsilon_\ell}_\ell \equiv \Db \pmod{\m}$, so it will suffice to show that $\Db(\sigma+1) \in \F_p^\times$. 
Because $\ell \neq p$ and $\Db = \psi(\omega \oplus 1)$, we apply \eqref{eq:x+1 identity} with $x = \sigma$ and $t=-1$, calculating that $\Db(\sigma + 1)= 2(\ell+1) \in \F_p$. This is a unit because $p$ is odd and $\ell \not\equiv -1 \pmod{p}$. 
\end{proof}

\subsection{The finite-flat case: $\ell = p$ and $p \nmid N$}
\label{subsec:flat case}

A finite-length $\Z_p[G_p]$-module $V$ is said to be \emph{finite-flat} when it arises as $\cG(\overline{\Q}_p)$, where $\cG$ is a finite flat group scheme over $\Z_p$. In \cite[\S5.2]{WWE4} we check that the theory of \S\ref{sssec:cond on CH} can be applied to the finite-flat condition. This theory gives us 
\[
(\rho_p^\fl: G_p \to (E_p^\fl)^\times, E_p^\fl, D_{E^\fl_p} : E_p^\fl \ra R_p^\fl),
\]
the universal finite-flat Cayley--Hamilton representation of $G_p$ over $\Db \vert_{G_p}$. The pseudorepresentation $D^\fl_p := \psi(\rho^\fl_p) : G_p \ra R^\fl_p$ is the universal finite-flat pseudodeformation of $\Db \vert_{G_p}$.

Consider a GMA structure on $E_p^\fl$ as in Definition \ref{def:GMA structure}, which we write as
\[
\rho_p^\fl = \ttmat{\rho^\fl_{p,1,1}}{\rho^\fl_{p,1,2}}{\rho^\fl_{p,2,1}}{\rho^\fl_{p,2,2}} : G_p \lra 
\ttmat{R_p^\fl}{B_p^\fl}{C_p^\fl}{R_p^\fl}^\times.
\]

\begin{lem}
\label{lem:flat implies upper tri}
For any such GMA structure on $E_p$, $C_p^\fl = 0$. 
\end{lem}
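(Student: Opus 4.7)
The plan is to analyze the lower-left matrix coefficient $c := \rho^\fl_{p,2,1} : G_p \to C_p^\fl$ provided by the given GMA structure, show that its residue modulo the maximal ideal is cohomologically forced to vanish by the finite-flat condition, and conclude via Nakayama. The homomorphism property of $\rho_p^\fl$ yields the twisted cocycle relation
\[
c(gh) = c(g)\rho^\fl_{p,1,1}(h) + \rho^\fl_{p,2,2}(g)\,c(h),
\]
and because the GMA is normalized so that $\rho^\fl_{p,1,1}\equiv \omega|_{G_p}$ and $\rho^\fl_{p,2,2}\equiv 1$ modulo $\m := \m_{R_p^\fl}$ (Definition \ref{def:GMA structure}), its reduction $\bar c : G_p \to C_p^\fl/\m C_p^\fl$ satisfies $\bar c(gh) = \bar c(g)\omega(h) + \bar c(h)$. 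Equivalently, $\bar c$ is a $1$-cocycle valued in the $G_p$-module $(C_p^\fl/\m C_p^\fl) \otimes_{\F_p} \F_p(-1)$ (with trivial action on the first factor).

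Next I would invoke the structure theory of multiplicity-free universal Cayley-Hamilton representations from \cite{WWE4} (see also \cite{WWE1}); combined with the universal property of $E_p^\fl$ as the universal finite-flat Cayley-Hamilton quotient, this produces a natural injection
\[
(C_p^\fl/\m C_p^\fl)^\vee \rinj H^1_\fl(G_p, \F_p(-1)),\qquad \lambda \longmapsto [\lambda \circ \bar c],
\]
where $H^1_\fl \subset H^1$ denotes the subspace of classes whose corresponding extension $0 \to \F_p \to V \to \F_p(1) \to 0$ is finite-flat as a $G_p$-module. The mechanism is that any such $\lambda$ produces a $G_p$-stable finite-length quotient of the left $E_p^\fl$-module $E_p^\fl \cdot e_1$ whose extension structure realises $\lambda \circ \bar c$, and this quotient is finite-flat by the very construction of $E_p^\fl$.

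The concluding step is to verify $H^1_\fl(G_p, \F_p(-1)) = 0$ for $p > 2$. Such a nonzero class would correspond to a non-split extension $0 \to \Z/p \to V \to \mu_p \to 0$ of $G_p$-modules with $V \cong \cG(\bQp)$ for some finite flat group scheme $\cG/\Z_p$; but inspection of the connected-\'etale sequence rules this out, as none of the three sub-$G_p$-modules $0, \Z/p, V$ of the generic fibre admits a connected finite-flat prolongation playing the role of $\cG^0$, given Raynaud's uniqueness of finite-flat prolongations for $p>2$ together with the fact that $\mu_p$ is connected while $\Z/p$ is \'etale. Granted this, $(C_p^\fl/\m C_p^\fl)^\vee = 0$, and Nakayama applied to the finitely generated $R_p^\fl$-module $C_p^\fl$ yields $C_p^\fl = 0$.

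I expect the main obstacle to be establishing the injection $(C_p^\fl/\m C_p^\fl)^\vee \rinj H^1_\fl(G_p, \F_p(-1))$ from the formalism of \cite{WWE4}: this requires carefully matching the universal property of $E_p^\fl$---which constrains the $G_p$-action on all finite-length quotients of the left $E_p^\fl$-module---against the single extension of $\omega$ by $1$ visible through the chosen GMA. Once this identification is in place, the vanishing of $H^1_\fl(G_p, \F_p(-1))$ is a standard Raynaud-type computation, and the conclusion is a one-line application of Nakayama.
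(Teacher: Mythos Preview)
Your proposal is correct and takes essentially the same approach as the paper: identify $(C_p^\fl/\m C_p^\fl)^\vee$ with a space of finite-flat extensions of $\mu_p$ by $\Z/p$, show that space vanishes, and conclude by Nakayama. The only difference is packaging: the paper states the identification as $\Hom_{R_p^\fl}(C_p^\fl,\F_p)\cong \Ext^1_{\mathrm{ffgs}/\Z_p}(\mu_p,\Z/p\Z)$ by citing \cite[Thm.~4.3.5]{WWE4}, and the vanishing by citing \cite[Lem.~6.2.1(1)]{WWE3}, whereas you sketch both arguments directly.
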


\begin{proof}
The proof is implicit in \cite{WWE3} but not stated in this form there. One simply combines the following facts. See \cite[\S B.4]{WWE3} for the notation. 
\begin{itemize}
\item As the maximal ideal of $R_p^\fl$ contains the reducibility ideal, we have $\Hom_{R_p^\fl}(C_p^\fl, \F_p) =\Ext^1_{\mathrm{ffgs}/\Z_p}(\mu_p,\Z/p\Z)$, where $\mathrm{ffgs}/\Z_p$ is the category of finite flat groups schemes over $\Z_p$, by \cite[Thm.\ 4.3.5]{WWE4}. 
\item We see in \cite[Lem.\ 6.2.1(1)]{WWE3}  that $\Ext^1_{\mathrm{ffgs}/\Z_p}(\mu_p,\Z/p\Z) = 0$. 
\end{itemize}
As $C_p^\fl$ is a finitely-generated $R_p^\fl$-module, this implies that $C_p^\fl = 0$. 
\end{proof}

Now that we know that $C_p^\fl = 0$, $\rho_{p,i,i}^\fl$ are $R_p^\fl$-valued characters of $G_p$, for $i = 1,2$. Similarly to \cite[\S5.1]{WWE3}, using the fact that $\omega\vert_{G_p} \neq 1$, we see the following 
\begin{lem}
\label{lem:flat psrep form}
A pseudodeformation $D$ of $\Db|_{G_p}$ is finite-flat if and only if $D=\psi(\kcyc \chi_1 \oplus \chi_2)$ where $\chi_1,\chi_2$ are unramified deformations of the trivial character. 
\end{lem}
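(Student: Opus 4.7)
\medskip

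\textbf{Proof proposal.} The plan is to treat the two directions separately. For the ``if'' direction, suppose $\chi_1,\chi_2$ are unramified characters of $G_p$ deforming the trivial character, and consider the direct sum representation $\rho := \kcyc\chi_1 \oplus \chi_2$ on a free rank-$2$ module. Since $\chi_1$ and $\chi_2$ are unramified, they arise from \'etale (hence finite-flat) group schemes over $\Z_p$; twisting $\mu_p$ and $\Z/p\Z$ by these unramified characters respectively, and taking the direct sum of the resulting finite flat group schemes over $\Z_p$, produces a finite flat group scheme whose generic fiber realizes $\rho$. The pseudorepresentation $\psi(\rho) = \psi(\kcyc\chi_1 \oplus \chi_2)$ is therefore finite-flat.

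For the ``only if'' direction, suppose $D$ is a finite-flat pseudodeformation of $\Db\vert_{G_p}$. By the universal property of $R_p^\fl$, it suffices to show the universal finite-flat pseudodeformation $D_p^\fl$ itself has the desired form, i.e.\ $D_p^\fl = \psi(\kcyc\chi_1 \oplus \chi_2)$ for some unramified deformations $\chi_1,\chi_2$ of the trivial character (with values in $R_p^\fl$). Fix a GMA structure on $E_p^\fl$ as in Definition \ref{def:GMA structure}; this is permitted because $\omega\vert_{G_p} \neq 1$ makes $\Db\vert_{G_p}$ multiplicity-free. By Lemma \ref{lem:flat implies upper tri}, we have $C_p^\fl = 0$, so
\[
\rho^\fl_p = \begin{pmatrix} \rho^\fl_{p,1,1} & \rho^\fl_{p,1,2} \\ 0 & \rho^\fl_{p,2,2} \end{pmatrix},
\]
and the diagonal entries are characters $\rho^\fl_{p,i,i}: G_p \to (R_p^\fl)^\times$. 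A direct computation with the characteristic polynomial of an upper triangular matrix (or \cite[Lem.\ 2.7]{chen2014}) gives $D^\fl_p = \psi(\rho^\fl_{p,1,1} \oplus \rho^\fl_{p,2,2})$.

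It remains to show that $\rho^\fl_{p,1,1}$ is $\kcyc$ times an unramified character and $\rho^\fl_{p,2,2}$ is unramified. Since each of these characters is a subquotient of the finite-flat $\Z_p[G_p]$-module $E^\fl_p$, they are themselves finite-flat in the sense of \S\ref{subsec:flat case}. The residual character $\rho^\fl_{p,1,1}\otimes_{R_p^\fl}\F_p = \omega\vert_{G_p}$ is ramified while $\rho^\fl_{p,2,2}\otimes_{R_p^\fl}\F_p = 1$ is trivial; these are distinct because $\omega\vert_{G_p} \neq 1$. By the classification of rank-one finite flat group schemes over $\Z_p$ (Oort--Tate), as exploited in \cite[\S5.1]{WWE3}, a finite-flat character of $G_p$ deforming $\omega\vert_{G_p}$ is necessarily of the form $\kcyc \chi_1$ with $\chi_1$ unramified deforming $1$, and a finite-flat character deforming $1$ is necessarily an unramified deformation of $1$. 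This finishes the identification.

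The main obstacle is verifying the classification of finite-flat rank-one characters invoked at the end: although it is well-known, one has to be careful that it holds over an arbitrary Noetherian local $\Z_p$-algebra scalar ring $R_p^\fl$, not just over $\Z_p$ itself. This is handled exactly as in \cite[\S5.1]{WWE3}, using the theory of finite flat group schemes together with the hypothesis $p > 2$ and $\omega\vert_{G_p} \neq 1$ to rigidify the two characters.
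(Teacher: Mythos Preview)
Your proposal is correct and follows essentially the same approach as the paper, which simply defers to \cite[\S5.1]{WWE3} after invoking Lemma~\ref{lem:flat implies upper tri} and the hypothesis $\omega\vert_{G_p}\neq 1$. You have supplied the details the paper omits---in particular the explicit ``if'' direction via direct sums of unramified twists of $\mu_p$ and $\Z/p\Z$, and the reduction of the ``only if'' direction to the Oort--Tate classification of rank-one finite flat group schemes---so there is no substantive divergence.
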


\subsection{The finite-flat case: $\ell = p$, $p \mid N$, and $\epsilon_p=+1$}
\label{subsec:epsilon_p=1}
By Lemma \ref{lem:normalization of bT0}(4), we see that, if $\epsilon_p=+1$, then the residually Eisenstein cusp forms are old at $p$ with associated $G_{\Q,S}$-representation being finite-flat at $p$. We impose this condition exactly as in \S\ref{subsec:flat case}. Namely, we say that a Cayley--Hamilton representation of $G_p$ is \emph{unramified-or-$(+1)$-Steinberg} (or $\mathrm{US}_p^{+1}$) if it is finite-flat.

\subsection{The ordinary case: $\ell = p$, $p \mid N$, and $\epsilon_p=-1$} 
\label{subsec:ord case}

Based on the form of Galois representations arising from $p$-ordinary eigenforms given in Lemma \ref{lem:normalization of bT0}(4), we proceed exactly as in the case $\ell \neq p$ given in \S\ref{subsec:US ell}. 
\begin{defn}
	\label{defn:US-CH p}
We say that a Cayley--Hamilton representation or a pseudodeformation over $\Db\vert_{G_p}$ is \emph{ordinary} (or $\mathrm{US}_p^{-1}$) when it satisfies Definition \ref{defn:US-CH ell}, simply letting $\ell = p$. 
\end{defn}

Similarly to Definition \ref{defn:US-CH ell UO}, let $(E^\ord_p, D_{E^\ord_p})$ be the Cayley--Hamilton quotient of $(E_p,D_{E_p})$ by $V_{\rho_p}^{-1}$, and let $(\rho^\ord_p, E^\ord_p, D_{E^\ord_p} : E^\ord_p \ra R^\ord_p)$ be the corresponding Cayley--Hamilton representation. As per \S\ref{sssec:cond on CH}, $\rho^\ord_p$ is the universal ordinary Cayley--Hamilton representation over $\Db\vert_{G_p}$, and $D^\ord_p := \psi(\rho^\ord_p) : G_p \ra R^\ord_p$ is the universal ordinary pseudodeformation of $\Db\vert_{G_p}$.

\begin{rem}
If one applies $V^{+1}_{\rho_p} = 0$ in the case $\epsilon_p = +1$, one does not get the the desired finite-flat condition of \S \ref{subsec:epsilon_p=1} that agrees with Lemma \ref{lem:normalization of bT0}(4b). Instead, one finds that $E_p^{+1} = 0$ (i.e.\ no deformations of $\Db$ satisfy this condition). 
\end{rem}

We set up the following notation, which includes all cases: $\epsilon_p=\pm 1$ or $p \nmid N$. 
\begin{defn}
\label{defn:E_p^epsilon}
For any $N$ and $\epsilon$, we establish notation 
\[
	(\rho^{\epsilon_p}_p, E^{\epsilon_p}_p, D_{E^{\epsilon_p}_p}, R^{\epsilon_p}_p, D^{\epsilon_p}_p) := \left\{\begin{array}{ll}
	(\rho^\ord_p, E^\ord_p, D_{E^\ord_p}, R^\ord_p, D^\ord_p) & \text{if } p \mid N, \epsilon_p=-1, \\
	(\rho^\fl_p, E^\fl_p, D_{E^\fl_p}, R^\fl_p, D^\fl_p) & \text{otherwise}.
	\end{array}\right.
\]
\end{defn}

In \cite[\S5]{WWE1}, we developed an alternative definition of ordinary Cayley--Hamilton algebra. (This definition applies to general weight, which we specialize to weight 2 here.) Choose a GMA structure on $E_p$, as in Definition \ref{def:GMA structure}. Let $J_p^\ord \subset E_p$ be the two-sided ideal generated by the subset 
\[
\rho_{p,2,1}(G_p) \bigcup (\rho_{p,1,1}-\kcyc)(I_p) \bigcup (\rho_{p,2,2} - 1)(I_p).
\]
As in \cite[Lem.\ 5.9.3]{WWE1}, $J^{\ord}_p$ is independent of the choice of GMA-structure.

\begin{lem}
\label{lem:WWE ord = CS ord}
The Cayley--Hamilton quotient of $E_p$ by $J^{\ord}_p$ is equal to $E^\ord_p$. 
\end{lem}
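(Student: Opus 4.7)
The plan is to verify directly that each of the two Cayley-Hamilton quotients of $E_p$ satisfies the relations defining the other, giving surjective morphisms in both directions and hence equality. Specifically, I need to show (A) the generators $\rho_{p,2,1}(G_p)$, $(\rho_{p,1,1}-\kcyc)(I_p)$, and $(\rho_{p,2,2}-1)(I_p)$ of $J^\ord_p$ vanish in $E^\ord_p$, which by the universal property of $E^\ord_p$ gives a surjection $E_p/J^\ord_p \twoheadrightarrow E^\ord_p$; and (B) the elements $V^{-1}_{\rho_p}(\sigma,\tau)$ with $(\sigma,\tau) \in I_p \times G_p \cup G_p \times I_p$ vanish in the Cayley-Hamilton quotient of $E_p$ by $J^\ord_p$, which gives the reverse surjection.

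For (A), I would fix a GMA structure on $E^\ord_p$ as in Definition \ref{def:GMA structure}; such a structure exists because $\Db|_{G_p} = \psi(\omega|_{G_p} \oplus 1)$ is multiplicity-free (using $p>2$ so that $\omega|_{G_p} \neq 1$). Applying Cayley-Hamilton to $\tau \in I_p$ and using the defining relation $V^{-1}_{\rho^\ord_p}(\tau,\tau) = 0$ (exactly as in the proof of Lemma \ref{lem:USell implies pseudo-unram}, but without the need for $\kcyc|_{I_\ell} = 1$) shows that the characteristic polynomial of $\rho^\ord_p(\tau)$ factors as $(x-\kcyc(\tau))(x-1)$; by the multiplicity-free property, the diagonal GMA entries must then specialize as $\rho^\ord_{p,1,1}(\tau) = \kcyc(\tau)$ and $\rho^\ord_{p,2,2}(\tau) = 1$, handling two of the three generating families of $J^\ord_p$. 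For (B), I would work modulo $J^\ord_p$, where $\rho$ is upper-triangular with prescribed diagonal on inertia, and compute directly: for $\sigma \in I_p$, $\tau \in G_p$ one obtains $V^{-1}_\rho(\sigma,\tau) = \sm{0}{b(\sigma)(d(\tau)-1)}{0}{(1-\kcyc(\sigma))(d(\tau)-1)}$, with the analogous expression for the pair with swapped roles; the vanishing of these entries must then be extracted from the pseudorepresentation-theoretic constraints built into the Cayley-Hamilton structure on $E_p$.

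The principal obstacle is the remaining assertion in (A), namely that $\rho^\ord_{p,2,1}(G_p) = 0$ in $E^\ord_p$. The strategy is to unpack $V^{-1}_{\rho^\ord_p}(\sigma,\tau) = 0$ entry-by-entry in the GMA form for $\sigma \in I_p$ and $\tau \in G_p$ (and also for the pair with swapped roles), obtaining four relations valued respectively in $R$, $B$, $C$, $R$. The $(2,1)$-entry yields the $C$-valued identity $c(\sigma)(a(\tau)-1) + (1-\kcyc(\sigma))c(\tau) = 0$, which, combined with the GMA product rule $c(\sigma\tau) = c(\sigma)a(\tau) + d(\sigma)c(\tau)$ and the choice $\sigma = \gamma_p$ (so that $\kcyc(\gamma_p)-1$ generates a nonzero principal ideal in $R^\ord_p$), should force $c$ to be expressible on all of $G_p$ in terms of $c(\gamma_p)$; a secondary argument using the $(1,1)$- and $(2,2)$-entry relations (which involve the multiplication pairing $m : B \otimes C \to R$ of the GMA) then pins $c$ down to zero. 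This is the pseudodeformation-theoretic incarnation of the Steinberg-versus-ordinary dichotomy at the level of Cayley-Hamilton algebras, and parallels the reasoning in \cite[\S5.9]{WWE1} and \cite{CS2016}.
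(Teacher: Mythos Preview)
Your bipartite strategy matches the paper's, and you correctly locate the main difficulty in showing $\rho^\ord_{p,2,1}(G_p)=0$ in $E^\ord_p$. However, two of your steps in (A) have genuine gaps, and the paper fills them with arguments you do not mention.

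\textbf{Diagonal entries.} Knowing that $\rho^\ord_p(\tau)$ is annihilated by $(x-\kcyc(\tau))(x-1)$ does not pin down the GMA diagonals $a_\tau,d_\tau$: in a GMA the determinant is $a_\tau d_\tau - m(b_\tau\otimes c_\tau)$, and $a,d$ are not characters until $C=0$ is already known, so ``multiplicity-free'' alone does not separate them. The paper first proves $D^\ord_p|_{I_p}=\psi(\kcyc\oplus 1)|_{I_p}$ (for $\omega(\tau)\ne 1$ this is Hensel; for $\omega(\tau)=1$ one writes $\tau=\sigma_1\sigma_2$ with $\omega(\sigma_i)\ne 1$ and expands the two-variable Cayley--Hamilton identity), then extracts $a_\sigma=\kcyc(\sigma)$ directly from the $(1,1)$-entry of $V^{-1}_{\rho^\ord_p}(\sigma,\tau)=0$ by choosing $\tau\in I_p$ with $\omega(\tau)\ne 1$ so that $a_\tau-1$ is a unit.

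\textbf{Vanishing of $C^\ord_p$.} Your $(2,1)$-relation shows every $c_\tau$ is an $R^\ord_p$-multiple of $c_\sigma$ for one fixed $\sigma\in I_p$, but the ``secondary argument'' does not force $c_\sigma=0$: taking $\tau=\sigma$ collapses to $c_\sigma\cdot(\Tr(\sigma)-\kcyc(\sigma)-1)=0$, which is vacuous once the trace is known. The paper's missing ingredient is cohomological: reduce modulo $\m$ and use the injection $\Hom_{\F_p}(C^\ord_p/\m C^\ord_p,\F_p)\hookrightarrow H^1(G_p,\F_p(-1))$ from \cite[Thm.~1.5.5]{BC2009}, sending $\phi$ to the class of $\phi\circ\bar\rho^\ord_{2,1}$. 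The $(2,1)$-entry of $V^{-1}_{\rho^\ord_p}(\sigma,\tau)=0$ with $\sigma\in\ker(\omega)$ and $\tau\in I_p$, $\omega(\tau)\ne 1$, gives $\bar\rho^\ord_{2,1}(\sigma)=0$ for all such $\sigma$, so this cocycle is a coboundary; injectivity then forces $C^\ord_p/\m C^\ord_p=0$, and Nakayama finishes.

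For (B), your computation is honest in showing that $V^{-1}_\rho(\sigma,\tau)$ does not vanish on the nose modulo $J^\ord_p$; the paper does not attempt this direct route either, instead citing the calculations of \cite[\S5.9]{WWE1} to see that the Cayley--Hamilton quotient by $J^\ord_p$ is ordinary.
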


\begin{proof} 
Let $(V^\ord_{\rho_p})$ denote the kernel of $E_p \rsurj E_p^\ord$, which contains (but may not be generated by) $V^\ord_p$ (see \S\ref{sssec:cond on CH}). It will suffice to show that $(V^\ord_{\rho_p}) = J^\ord_p$. The inclusion  $(V^\ord_{\rho_p})\subset J^\ord_p$ is straightforward: see the calculations in \cite[\S5.9]{WWE1}, from which it is evident that the Cayley--Hamilton quotient of $\rho_p$ by $J^\ord_p$ is a Cayley--Hamilton representation that is ordinary (in the sense of Definition \ref{defn:US-CH ell}). It remains to show that $J^{\ord}_p \subset (V^\ord_{\rho_p})$.

First we will show that $D^\ord_p\vert_{I_p} = \psi(\kcyc \oplus 1)\vert_{I_p} \otimes_{\Z_p} R^\ord_p$. For any $\tau \in I_p$, $\rho^\ord_p(\tau)$ satisfies both polynomials 
\[
T^2 - \Tr_{D^\ord_p}(\tau)T - D^\ord_p(\tau) \quad \text{ and } \quad 
(T -\kcyc(\tau))(T-1),
\]
the first by the Cayley--Hamilton condition and the second by Definition \ref{defn:US-CH p}. If $\omega(\tau) \neq 1$, Hensel's lemma implies that these two polynomials are identical. For such $\tau$, we have $D^\ord_p(\tau)=\kcyc(\tau)$ and $\Tr_{D^\ord_p}(\tau)=\kcyc(\tau)+1$. Now choose an arbitrary element of $I_p$ and write it as $\sigma\tau$ with $\omega(\sigma), \omega(\tau) \neq 1$. We immediately see that $D^\ord_p(\sigma\tau) = \kcyc(\sigma\tau)$, since both sides are multiplicative. Let $r_\sigma =\rho_p^\ord(\sigma)$ and $r_\tau=\rho_p^\ord(\tau)$. Since $E_p^\ord$ is Cayley--Hamilton, we have
\[
(t_\sigma r_\sigma + t_\tau r_\tau)^2-\Tr_{D_p^\ord}(t_\sigma r_\sigma + t_\tau r_\tau)(t_\sigma r_\sigma + t_\tau r_\tau)+D_p^\ord(t_\sigma r_\sigma + t_\tau r_\tau)=0
\]
in the polynomial ring $E_p^\ord[t_\sigma, t_\tau]$. We can expand $D_p^\ord(t_\sigma r_\sigma + t_\tau r_\tau)$ using \cite[Example 1.8]{chen2014}. Taking the coefficient of $t_\sigma t_\tau$ and writing $\Tr = \Tr_{D^\ord_p}$ for brevity,
\[
r_\sigma r_\tau + r_\tau r_\sigma - \Tr(\sigma)r_\tau - \Tr(\tau)r_\sigma - \Tr(\sigma\tau) + \Tr(\sigma)\Tr(\tau) = 0.
\]
Substituting for $r_\sigma r_\tau$ using $V^\ord_{\rho_p}(\sigma, \tau) = 0$ and for $r_\tau r_\sigma$ using $V^\ord_{\rho_p}(\tau, \sigma) = 0$, one obtains the desired conclusion $\Tr(\sigma\tau) = \kcyc(\sigma\tau) + 1$. 

Let $\sigma \in I_p$, and let $\tau \in I_p$ be such that $\omega(\tau) \ne 1$. Using the fact that $\rho^\ord_p\vert_{I_p}$ is reducible, we see that the $(1,1)$-coordinate of $V^\ord_{\rho^\ord_p}(\sigma, \tau)$ is
\[
(\rho_{p,1,1}^\ord(\sigma)-\kcyc(\sigma))(\rho^\ord_{p,1,1}(\tau)-1)=0
\]
Since $\rho_{p,1,1}^{\ord}$ is a deformation of $\omega$, we have $\rho_{p,1,1}^\ord(\tau)-1 \in (R_p^\ord)^\times$, so this implies $\rho_{p,1,1}^\ord(\sigma)-\kcyc(\sigma) =0$. This shows that $(\rho_{p,1,1}-\kcyc)(I_p) \subset (V^\ord_{\rho_p})$, and a similar argument gives $(\rho_{p,2,2}^\ord-1)(I_p) \subset (V^\ord_{\rho_p})$. 

It remains to show that $\rho^\ord_{p,2,1}(G_p) = 0$. Let $\m \subset R^\ord_p$ be the maximal ideal. In fact, we will show that $C^\ord_p/\m C^\ord_p = 0$, which is equivalent because $\rho^\ord_{p,2,1}(G_p)$ generates the finitely generated $R^\ord_p$-module $C^\ord_p$. We work with $\bar{\rho}^\ord := \rho^\ord_p \pmod{\m}$. Since $\bar{\rho}^\ord$ is reducible, we can consider $\bar\rho^{\ord}_{2,1} \in Z^1(G_p, C_p^\ord/\m C_p^\ord \otimes_{\F_p} \F_p(-1))$, and \cite[Thm.\ 1.5.5]{BC2009} implies that there is an injection
\[
\Hom_{\F_p}(C_p^\ord/\m C_p^\ord, \F_p) \rinj H^1(G_p, \F_p(-1))
\]
sending $\phi$ to the class of the cocycle $\phi \circ \bar\rho^{\ord}_{2,1}$. So to show that $C^\ord_p/\m C^\ord_p$ is zero, it is enough to show that $\bar\rho^{\ord}_{2,1}$ is a coboundary, or, equivalently, that $\bar\rho^{\ord}_{2,1}(\sigma)=0$ for all $\sigma \in \ker(\omega) \subset G_p$. However, we compute that the $(2,1)$-entry of $V^\ord_{\rho_p}(\sigma, \tau)$ is 
\[
\rho_{p,2,1}^\ord(\sigma)(\rho_{p,1,1}^\ord(\tau)-1)+(\rho_{p,2,2}^\ord(\sigma)-\kcyc(\sigma))\rho_{p,2,1}^\ord(\tau).
\]
Taking $\sigma \in \ker(\omega)$ and $\tau \in I_p$ such that $\omega(\tau)\ne 1$, we see that $\rho_{p,1,1}^\ord(\tau)-1 \equiv \omega(\tau) -1 \not \equiv 0 \pmod{\m}$ and $\rho_{p,2,2}^\ord(\sigma)-\kcyc(\sigma) \in \m$, so this implies $\bar\rho^{\ord}_{2,1}(\sigma)=0$.
\end{proof}

We have the following consequence, following \cite[\S5.9]{WWE1}.

\begin{prop}
\label{prop:ord C-H form}
A Cayley--Hamilton representation $(\rho: G_p \ra E^\times, E, D: E \ra A)$ over $\Db\vert_{G_p}$ is ordinary if and only if it admits a GMA structure such that 
\begin{enumerate}
\item it is upper triangular, i.e.\ $\rho_{2,1} = 0$, and 
\item the diagonal character $\rho_{1,1}$ (resp.\ $\rho_{2,2}$) is the product of $\kcyc \otimes_{\Z_p} A$ (resp.\ the constant character $A$) and an unramified $A$-valued character. 
\end{enumerate}
\end{prop}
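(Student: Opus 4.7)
The plan is to deduce this proposition directly from Lemma \ref{lem:WWE ord = CS ord}. That lemma identifies the universal ordinary Cayley-Hamilton algebra $E^\ord_p$ (defined by the condition $V^{-1}_{\rho_p}=0$) with the Cayley-Hamilton quotient $E_p/(J^\ord_p)$, where $J^\ord_p$ is the ideal whose vanishing encodes exactly the conditions (1) and (2) of the proposition. So the content of the proposition is essentially a repackaging of that equivalence via the universal property of $\rho^\ord_p$, plus bookkeeping to transfer the GMA structure from $E^\ord_p$ to a general target $E$.

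For the ``if'' direction, I would argue as follows. Suppose $(\rho, E, D : E \to A)$ admits a GMA structure satisfying (1) and (2). Then upper-triangularity is exactly $\rho_{2,1}(G_p)=0$, while condition (2) gives $(\rho_{1,1}-\kcyc)(I_p)=0$ and $(\rho_{2,2}-1)(I_p)=0$; in other words, the image in $E$ of the generating set of $J^\ord_p$ vanishes. Hence the canonical map $E_p \to E$ factors through the Cayley-Hamilton quotient of $E_p$ by $J^\ord_p$, which by Lemma \ref{lem:WWE ord = CS ord} is $E^\ord_p$. Since $V^{-1}_{\rho^\ord_p}(\sigma,\tau)=0$ holds in $E^\ord_p$ by construction, pushing forward along $E^\ord_p \to E$ yields $V^{-1}_\rho(\sigma,\tau)=0$, so $\rho$ is ordinary.

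For the ``only if'' direction, suppose $\rho$ is ordinary. The universal property of $E^\ord_p$ produces a Cayley-Hamilton morphism $E^\ord_p \to E$ through which $\rho$ factors. Fixing a GMA structure on $E_p$ compatible with Definition \ref{def:GMA structure}, the three inclusions proved inside Lemma \ref{lem:WWE ord = CS ord} show that on $E^\ord_p$ one has $\rho^\ord_{p,2,1} = 0$, $\rho^\ord_{p,1,1}|_{I_p} = \kcyc|_{I_p}$, and $\rho^\ord_{p,2,2}|_{I_p} = 1$. Thus the characters $\rho^\ord_{p,1,1} \cdot \kcyc^{-1}$ and $\rho^\ord_{p,2,2}$ are unramified, so $E^\ord_p$ itself is in the form prescribed by (1) and (2). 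Transporting this GMA structure along $E^\ord_p \to E$, using the standard functoriality of GMA structures along morphisms of Cayley-Hamilton algebras with multiplicity-free residual pseudorepresentation (see \cite[\S3]{WWE4}), equips $E$ with a GMA structure satisfying (1) and (2).

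The only delicate step is this last transfer of the GMA structure. One must check that the orthogonal idempotents cutting out the GMA decomposition of $E^\ord_p$ lift to orthogonal idempotents in $E$ whose induced diagonal characters are the images of $\rho^\ord_{p,1,1}$ and $\rho^\ord_{p,2,2}$. This is precisely where the multiplicity-freeness of $\Db|_{G_p} = \psi(\omega \oplus 1)$ (together with $\omega|_{G_p} \neq 1$) is used: Hensel's lemma lifts the residual idempotents uniquely, and the unramified twists transport through the induced decomposition. Once this functoriality is invoked, the proposition follows.
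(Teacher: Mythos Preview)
Your proposal is correct and matches the paper's approach: the paper does not give a standalone proof but simply records the proposition as a consequence of Lemma \ref{lem:WWE ord = CS ord}, following \cite[\S5.9]{WWE1}. Your write-up is a faithful unpacking of that deduction, including the one point the paper leaves implicit (transporting GMA structures along the universal map via multiplicity-freeness of $\Db\vert_{G_p}$).
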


\begin{cor}
\label{cor:ord to flat compare}
Any finite-flat Cayley--Hamilton representation of $G_p$ over $\Db\vert_{G_p}$ is ordinary. The resulting morphism of universal Cayley--Hamilton representations of $G_p$, $(\rho^\ord_p, E^\ord_p, D_{E^\ord_p}) \ra (\rho^\fl_p, E^\fl_p, D_{E^\fl_p})$, induces an isomorphism on universal pseudodeformation rings $R^\ord_p \risom R^\fl_p$. The universal pseudodeformations $D^\ord_p \cong D^\fl_p$ of $\Db\vert_{G_p}$ have the form $\psi(\kcyc \chi_1 \oplus \chi_2)$, where $\chi_1, \chi_2$ are unramified deformations of the trivial character $1 : G_p \ra \F_p^\times$. 
\end{cor}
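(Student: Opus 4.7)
The plan is to deduce the corollary by directly comparing the structural descriptions of finite-flat and ordinary Cayley-Hamilton representations given by Lemmas \ref{lem:flat implies upper tri} and \ref{lem:flat psrep form} with Proposition \ref{prop:ord C-H form}, after which the isomorphism of universal pseudodeformation rings will follow from a standard universal-property argument in both directions.

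First, I would verify that an arbitrary finite-flat Cayley-Hamilton representation $(\rho : G_p \to E^\times, E, D : E \to A)$ over $\Db|_{G_p}$ satisfies the two conditions of Proposition \ref{prop:ord C-H form}. Fixing a GMA structure on $E$ as in Definition \ref{def:GMA structure}, Lemma \ref{lem:flat implies upper tri} gives $C = 0$, so $\rho$ is upper-triangular (condition (1)). Lemma \ref{lem:flat psrep form} identifies the diagonal characters as an unramified twist of $\kcyc$ and an unramified character (condition (2)). Hence $\rho$ is ordinary, and by the universal property of $\rho^\ord_p$ there is a unique morphism of Cayley-Hamilton representations $(\rho^\ord_p, E^\ord_p, D_{E^\ord_p}) \to (\rho^\fl_p, E^\fl_p, D_{E^\fl_p})$ over $\Db|_{G_p}$, inducing a ring map $R^\ord_p \to R^\fl_p$ on scalar rings.

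For the reverse direction, I would show that the universal ordinary pseudodeformation $D^\ord_p$ is itself finite-flat, which will then produce a map $R^\fl_p \to R^\ord_p$ by the universal property of $D^\fl_p$. Applying Proposition \ref{prop:ord C-H form} to $\rho^\ord_p$ itself, I may fix a GMA structure on $E^\ord_p$ in which $\rho^\ord_p$ is upper-triangular with diagonal characters $\kcyc\chi_1$ and $\chi_2$, where $\chi_1,\chi_2$ are unramified deformations of the trivial character to $R^\ord_p$. Taking the induced pseudorepresentation then yields $D^\ord_p = \psi(\kcyc\chi_1 \oplus \chi_2)$, which is finite-flat by Lemma \ref{lem:flat psrep form}. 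The two maps $R^\ord_p \to R^\fl_p$ and $R^\fl_p \to R^\ord_p$ are inverse to each other by the usual uniqueness argument for universal objects, so $R^\ord_p \risom R^\fl_p$, and the stated form of the common universal pseudodeformation is just Lemma \ref{lem:flat psrep form}.

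The only delicate point to verify carefully is that ``finite-flat pseudodeformation'' as characterized in Lemma \ref{lem:flat psrep form} is the same notion as the one arising from the universal property of $\rho^\fl_p$ via \S\ref{sssec:cond on CH} (case (2)); this is already subsumed by the construction of $\rho^\fl_p$ in \S\ref{subsec:flat case} and the framework of \cite{WWE4}, so no further work is needed there. Otherwise the argument consists of compiling the structural descriptions already in hand.
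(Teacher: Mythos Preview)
Your proposal is correct and follows essentially the same route as the paper, which verifies that $\rho^\fl_p$ satisfies conditions (1) and (2) of Proposition \ref{prop:ord C-H form} via Lemmas \ref{lem:flat implies upper tri} and \ref{lem:flat psrep form}, and then reads off the isomorphism of pseudodeformation rings by comparing Lemma \ref{lem:flat psrep form} with Proposition \ref{prop:ord C-H form}(2). One small cleanup: Lemma \ref{lem:flat implies upper tri} is stated only for the universal object $\rho^\fl_p$, so rather than invoking it for an arbitrary finite-flat Cayley-Hamilton representation, it is tidier to check conditions (1)--(2) only for $\rho^\fl_p$ and then deduce the general case from universality (any finite-flat Cayley-Hamilton representation receives a map from $\rho^\fl_p$, and the ordinary condition passes to targets).
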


\begin{proof}
The Cayley--Hamilton representation $\rho_p^\fl$ satisfies conditions (1) and (2) of Proposition \ref{prop:ord C-H form} by Lemmas \ref{lem:flat implies upper tri} and \ref{lem:flat psrep form}, respectively. The isomorphism of universal pseudorepresentations becomes evident by comparing Lemma \ref{lem:flat psrep form} and Proposition \ref{prop:ord C-H form}(2). 
\end{proof}

\subsection{Global formulation}
We now combine the local constructions to define what it means for a global Cayley--Hamilton representation or pseudorepresentation to be unramified-or-Steinberg of level $N$ and type $\epsilon$. 

\begin{defn} 
\label{defn:US global}
Let $(\rho : G_{\Q,S} \ra E^\times, E, D_E : E \ra A)$ be a Cayley--Hamilton representation over $\Db$. We say that $\rho$ is \emph{unramified-or-Steinberg of level $N$ and type $\epsilon$} (or $\mathrm{US}_N^\epsilon$) when $\rho\vert_{G_\ell}$ is $\mathrm{US}_\ell^{\epsilon_\ell}$ for all primes $\ell \mid N$, and, if $p \nmid N$, $\rho|_{G_p}$ is finite-flat. 

Let $D : G_{\Q,S} \ra A$ be a pseudodeformation of $\Db$. We say that $D$ is \emph{unramified-or-Steinberg of level $N$ and type $\epsilon$} (or $\mathrm{US}_N^\epsilon$) when there exists a Cayley--Hamilton representation $(\rho: G_{\Q,S} \ra E^\times, E, D_E : E \ra A)$ such that $D = \psi(\rho)$ and $\rho$ is $\mathrm{US}^\epsilon_N$. 
\end{defn}

Recall the Cayley--Hamilton representation $\rho_\Db$ set up in \S\ref{subsec:CH Gal setup}. There are maps of Cayley--Hamilton algebras $\iota_\ell : (E_\ell, D_{E_\ell}) \ra (E_\Db, D_{E_\Db})$ arising from the fact that $\rho_\Db\vert_{G_\ell}$ is a Cayley--Hamilton representation of $G_\ell$ over $\Db\vert_{G_\ell}$. For any $\ell \mid Np$, write $J^\epsilon_\ell$ for the kernel of $E_\ell \ra E_\ell^{\epsilon_\ell}$ (refer to Definition \ref{defn:E_p^epsilon} for $E_p^{\epsilon_p}$).

\begin{defn}
\label{defn:global US univ objects}
Let $(E^\epsilon_N, D_{E^\epsilon_N})$ denote the Cayley--Hamilton algebra quotient of $E_\Db$ by the union of $\iota_\ell(J^\epsilon_\ell)$ over all primes $\ell \mid Np$. We denote the quotient Cayley--Hamilton representation of $G_{\Q,S}$ by
\[
(\rho^\epsilon_N : G_{\Q,S} \lra (E^\epsilon_N)^\times, E^\epsilon_N, D_{E^\epsilon_N} : E^\epsilon_N \lra R^\epsilon_N)
\]
and its induced pseudorepresentation by $D^\epsilon_N = \psi(\rho^\epsilon_N) : G_{\Q,S} \ra R^\epsilon_N$. 
\end{defn} 

Using \S\ref{sssec:cond on CH}, we see that $\rho_N^\epsilon$ (resp.\ $D^\epsilon_N$) is the universal $\mathrm{US}^\epsilon_N$ Cayley--Hamilton representation (resp.\ pseudodeformation) over $\Db$. In particular, a homomorphism $R_\Db \to A$ factors through $R_N^\epsilon$ if and only if the corresponding pseudodeformation $D:G_{\Q,S} \to A$ of $\Db$ satisfies $\mathrm{US}^\epsilon_N$.

\begin{prop}
	\label{prop:global US det is kcyc}
	Let $D : G_{\Q,S} \ra A$ be a pseudodeformation of $\Db$ satisfying $\mathrm{US}^\epsilon_N$. Then $D(\tau)=\kcyc(\tau)$ for all $\tau \in G_{\Q,S}$. 
\end{prop}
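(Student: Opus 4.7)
The plan is to show that the character $\chi := D \cdot \kcyc^{-1} : G_{\Q,S} \to A^\times$ is trivial. Because $D$ is the determinant attached to a $2$-dimensional pseudorepresentation (arising from the multiplicative polynomial law), it is a genuine continuous multiplicative homomorphism on $G_{\Q,S}$. Its reduction mod $\m_A$ equals the determinant of $\Db = \psi(\omega\oplus 1)$, namely $\omega$, which coincides with $\kcyc$ modulo $p$. Hence $\chi$ takes values in $1 + \m_A$.

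Next, I would verify prime-by-prime that $\chi$ is unramified at every finite prime. At any $\ell \nmid Np$, this is automatic since $G_{\Q,S}$ is unramified outside $S$, and $\kcyc$ is unramified at such $\ell$. For $\ell \mid N$ with $\ell \neq p$, the hypothesis $\mathrm{US}^\epsilon_N$ provides a Cayley-Hamilton lift $\rho$ of $D$, and by construction $\rho\vert_{G_\ell}$ is $\mathrm{US}^{\epsilon_\ell}_\ell$; therefore $D\vert_{G_\ell} = \psi(\rho\vert_{G_\ell})$ is $\mathrm{US}^{\epsilon_\ell}_\ell$ and factors through $D^{\epsilon_\ell}_\ell$. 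Lemma \ref{lem:USell implies pseudo-unram} then gives $D\vert_{I_\ell} = 1$, and since $\kcyc\vert_{I_\ell} = 1$, we obtain $\chi\vert_{I_\ell} = 1$. At $\ell = p$, we use the local classification in all three cases ($p \nmid N$, or $p \mid N$ with $\epsilon_p = +1$, or $p \mid N$ with $\epsilon_p = -1$): by Lemma \ref{lem:flat psrep form} together with Corollary \ref{cor:ord to flat compare} and Proposition \ref{prop:ord C-H form}, any such local pseudodeformation has the form $\psi(\kcyc\chi_1 \oplus \chi_2)$ with $\chi_1,\chi_2$ unramified deformations of the trivial character. In particular $D\vert_{I_p} = \kcyc\vert_{I_p}$, so $\chi\vert_{I_p} = 1$.

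Finally, $\chi$ is a continuous character of $G_{\Q,S}$ trivial on $I_\ell$ for every finite prime $\ell$, so it factors through $\Gal(H/\Q)$ where $H$ is the maximal abelian everywhere-unramified extension of $\Q$. By Minkowski's theorem (equivalently, because $\Q$ has class number one and $\hat\Z^\times$ is topologically generated by the local inertia subgroups), $H = \Q$, so $\chi = 1$ and $D = \kcyc$.

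The main obstacle is essentially organizational: keeping straight the three sub-cases at $p$ and checking that the definition of $\mathrm{US}^\epsilon_N$ for a pseudodeformation (which a priori only asserts the existence of a Cayley-Hamilton lift) does license the use of the local universal properties and hence of Lemmas \ref{lem:USell implies pseudo-unram}, \ref{lem:flat psrep form}, and Proposition \ref{prop:ord C-H form} for the globally-defined $D$ itself. Once that passage from global lift to local universal pseudodeformation is in hand, the argument is formal.
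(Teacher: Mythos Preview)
Your proof is correct and follows essentially the same approach as the paper: form the character $\chi = D \cdot \kcyc^{-1}$, use Lemma~\ref{lem:USell implies pseudo-unram} at $\ell \mid N$ with $\ell \neq p$ and the form $\psi(\kcyc\chi_1 \oplus \chi_2)$ at $p$ (via Corollary~\ref{cor:ord to flat compare}) to see $\chi$ is everywhere unramified, and conclude by Minkowski. The paper's version is terser, citing only Corollary~\ref{cor:ord to flat compare} for all three sub-cases at $p$ rather than separately invoking Lemma~\ref{lem:flat psrep form} and Proposition~\ref{prop:ord C-H form}, but the content is the same.
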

\begin{proof}
It suffices to show that $D(\tau)=\kcyc(\tau)$ for all $\tau \in I_\ell$ and all $\ell \mid Np$, since this will show that $G_{\Q,S} \ni \sigma \mapsto D(\sigma) \kcyc^{-1}(\sigma) \in A^\times$ is a character of $G_{\Q,S}$ that is unramified everywhere and hence trivial. For $\ell \ne p$, this follows from Lemma \ref{lem:USell implies pseudo-unram}, and for $\ell=p$ this follows from Corollary \ref{cor:ord to flat compare}.
\end{proof}

\subsection{Information about $B_N^\epsilon$ and $C_N^\epsilon$} 
Recall that we fixed a GMA structure on $E_p$ in \S \ref{subsec:ord case}. This defines a GMA structure on $E_p^{\epsilon_p}$ and $E^\epsilon_N$ via the Cayley--Hamilton algebra morphisms $E_p \to E_p^{\epsilon_p}$ and $E_p^{\epsilon_p} \ra E^\epsilon_N$ (see \cite[Theorem 3.2.2]{WWE4}). We write this GMA structure as
\begin{equation}
\label{eq:GMA}
E^\epsilon_N= \ttmat{R^\epsilon_N}{B^\epsilon_N}{C^\epsilon_N}{R^\epsilon_N}, \quad \rho_N^\epsilon(\tau) = \ttmat{a_{\tau}}{b_{\tau}}{c_{\tau}}{d_{\tau}}.
\end{equation}

\subsubsection{Computation of $B_\fl^{\min}$ and $C_\fl^{\min}$} 
\label{sssec:comp of Bfl and Cfl}

First we work in the case that either $p \nmid N$ or $\epsilon_p=+1$, so $E^{\epsilon_p}_p = E^\fl_p$, with a GMA structure chosen. Let $(E_\fl, D_{E_\fl})$ represent the Cayley--Hamilton quotient of $E_\Db$ by $\iota_p(J^\epsilon_p)$, with a GMA structure coming from $E^\fl_p \ra E_\fl$. Write this GMA structure as
\begin{equation}
\label{eq:GMA structure}
E_\fl \cong \ttmat{R_\fl}{B_\fl}{C_\fl}{R_\fl}, \quad \rho_\fl(\tau) = \ttmat{a_{\fl,\tau}}{b_{\fl,\tau}}{c_{\fl,\tau}}{d_{\fl,\tau}}.
\end{equation}

Let $J^{\min}_\fl =\ker(R_\fl \to \Z_p)$, where $R_\fl \to \Z_p$ corresponds to $\psi(\Z_p(1) \oplus \Z_p)$, which is obviously finite-flat. Let 
\[
B_\fl ^{\min} =B_\fl/J^{\min}_\fl B_\fl, \quad   C_\fl ^{\min} =C_\fl/J^{\min}_\fl C_\fl .
\]
By \cite[Prop.\ 2.5.1]{WWE3}, we have, for any finitely-generated $\Z_p$-module $M$, isomorphisms
\begin{align}
\label{eq:BC and exts}
\begin{split}
&\Hom_{\Z_p}(B_\fl ^{\min},M) \cong H^1_\fl(\Z[1/Np],M(1)) \\
 &\Hom_{\Z_p}(C_\fl ^{\min},M) \cong H^1_{(p)}(\Z[1/Np],M(-1)) .
 \end{split}
\end{align}
where $H^1_\fl(\Z[1/Np],M(1))$ equals 
\[
 \ker\left(H^1(\Z[1/Np],M(1)) \to \frac{H^1(\Q_p,M(1))}{\Ext_{\mathrm{ffgs}/\Z_p}(\Z_p,M \otimes_{\Z_p} \mathrm{Ta}_p(\mu_{p^\infty}))}\right)
\]
and
\[
H^1_{(p)}(\Z[1/Np],M(-1))=\ker(H^1(\Z[1/Np], M(-1)) \to H^1(\Q_p,M(-1))).
\]
Here $\mathrm{ffgs}/\Z_p$ is the category of locally-free group schemes of finite rank over $\Z_p$, which maps to the category of $G_p$-modules by taking generic fiber. In other words, a class in $H^1_\fl(\Z[1/Np],M(1))$ (resp.~$H^1_{(p)}(\Z[1/Np],M(-1))$) is represented by a Galois representation $\rho$ that is an extension of $\Z_p$ by $M(1)$ (resp.\ $M(-1)$), such that $\rho|_{G_p}$ is isomorphic to the generic fiber of a locally-free group scheme of finite rank over $\Z_p$ (resp.~$\rho|_p$ is a trivial extension).
The Galois cohomology computations of \cite[\S6.3]{WWE3} allow us to compute these.  

\begin{lem}
\label{lem:B_fl and C_fl}
Recall that $N=\ell_0\ell_1 \cdots \ell_r$, and recall the elements $\gamma_i \in I_{\ell_i}$ for $i=0,\dots,r$ defined in \S \ref{subsec:notation}. There are isomorphisms
\[
\Z_p^{\oplus r+1} \isoto B_\fl ^{\min}, \quad  \bigoplus_{i=0}^r \Z_p/(\ell_i^2-1)\Z_p \isoto C_\fl ^{\min}
\]
given by $e_i \mapsto b_{\fl,\gamma_i}$ and $e_i \mapsto c_{\fl,\gamma_i}$, where $e_i \in \Z_p^{\oplus r+1}$ is the $i$-th standard basis vector.
\end{lem}

\subsubsection{Computation of $B_\ord^{\min}$ and $C_\ord^{\min}$}
\label{sssec:comp of Bord and Cord}
Next we compute in the case $p \mid N$ and $\epsilon_p = -1$, so $E^{\epsilon_p}_p = E^\ord_p$. Let $(E_\ord, D_{E_\ord})$ be the Cayley--Hamilton quotient of $E_\Db$ by $\iota_p(J^\epsilon_p)$, receiving a GMA structure via $E^\ord_p \ra E_\ord$.  Write this GMA structure as  
\begin{equation}
\label{eq:GMA structure ord}
E_\ord \cong \ttmat{R_\ord}{B_\ord}{C_\ord}{R_\ord}, \quad \rho_\ord(\tau) = \ttmat{a_{\ord,\tau}}{b_{\ord,\tau}}{c_{\ord,\tau}}{d_{\ord,\tau}}.
\end{equation}

Let $J^{\min}_\ord =\ker(R_\ord \to \Z_p)$, where $R_\ord \to \Z_p$ corresponds to $\psi(\Z_p(1) \oplus \Z_p)$, which is obviously ordinary. Let 
\[
B_\ord^{\min} =B_\ord/J^{\min}_\ord B_\fl, \quad   C_\ord ^{\min} =C_\ord/J^{\min}_\ord C_\ord .
\]
Just as in \cite[Lem.\ 4.1.5]{WWE2}, we have, for any finitely-generated $\Z_p$-module $M$, isomorphisms
\begin{align}
\label{eq:BC and exts ord}
\begin{split}
&\Hom_{\Z_p}(B_\ord ^{\min},M) \cong H^1(\Z[1/Np],M(1)),\\
& \Hom_{\Z_p}(C_\ord^{\min},M) \cong H^1_{(p)}(\Z[1/Np], M(-1)).
\end{split}
\end{align}
The Galois cohomology computations of \cite[\S6.3]{WWE3} allow us to compute these. Recall that $\gamma_i$ is defined in \S\ref{subsec:notation}, even when $\ell_i = p$. 

\begin{lem}
\label{lem:B_ord and C_ord}
There are isomorphisms
\[
\Z_p^{\oplus r+1} \isoto B_\ord ^{\min}, \quad  \bigoplus_{i=0}^r \Z_p/(\ell_i^2-1)\Z_p \isoto C_\ord ^{\min}
\]
given by $e_i \mapsto b_{\ord,\gamma_i}$ and $e_i \mapsto c_{\ord,\gamma_i}$, where $e_i \in \Z_p^{\oplus r+1}$ is the $i$-th standard basis vector.
\end{lem}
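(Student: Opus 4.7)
The plan is to mimic the proof of Lemma~\ref{lem:B_fl and C_fl}, swapping the finite-flat local condition at $p$ for the ordinary one. By the identifications \eqref{eq:BC and exts ord}, valid for every finitely generated $\Z_p$-module $M$, the modules $B_\ord^{\min}$ and $C_\ord^{\min}$ are determined by their duals $M \mapsto H^1(\Z[1/Np], M(1))$ and $M \mapsto H^1_{(p)}(\Z[1/Np], M(-1))$ respectively; since both $B_\ord^{\min}$ and $C_\ord^{\min}$ are finitely generated $\Z_p$-modules, it suffices to write down the candidate maps, verify well-definedness, and check that the induced maps on the dual functors are isomorphisms.

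Well-definedness of $\Z_p^{\oplus r+1} \to B_\ord^{\min}$ requires nothing beyond naming the images. For the map $\bigoplus_{i=0}^r \Z_p/(\ell_i^2 - 1)\Z_p \to C_\ord^{\min}$ I would need to show that $(\ell_i^2 - 1) c_{\ord, \gamma_i} = 0$ in $C_\ord^{\min}$. Here I would use the Cayley--Hamilton relation on $\rho_\ord(\gamma_i)$ combined with the $\mathrm{US}^{\epsilon_i}_{\ell_i}$ relation $V^{\epsilon_i}_{\rho_\ord}(\gamma_i, \gamma_i) = 0$: reading off the $(2,1)$-coordinate of this identity modulo the reducibility ideal $J^{\min}_\ord$, and using Proposition~\ref{prop:global US det is kcyc} to pin down the determinant of $\rho_\ord(\gamma_i)$, the claimed annihilation drops out exactly as in the parallel flat-case argument in \cite[\S 6.3]{WWE3}. (For the slot with $\ell_i = p$ there is nothing to check, since by Proposition~\ref{prop:ord C-H form} the upper-triangular structure forces $c_{\ord, \gamma_p} = 0$, consistently with $p^2 - 1 \in \Z_p^\times$.)

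The substance is then the two Galois cohomology computations, which are the content of \cite[\S 6.3]{WWE3}. On the $B$ side, Kummer theory identifies $H^1(\Z[1/Np], \Z_p(1))$ with the pro-$p$ completion of $\Z[1/Np]^\times$; because $p \mid N$ in the ordinary setting, the inverted primes are exactly $\{\ell_0, \dotsc, \ell_r\}$, so this module is $\Z_p$-free of rank $r+1$, and the dual basis pairs with $\{b_{\ord, \gamma_i}\}$ via the $\ell_i$-adic valuation --- precisely what evaluation at the tame generator $\gamma_i \in I_{\ell_i}^{\mathrm{pro}-p}$ extracts. On the $C$ side, Poitou--Tate duality rewrites $H^1_{(p)}(\Z[1/Np], M(-1))$ as a Selmer quotient for $M(2)$; the $\omega^2$-isotypic component under $\Gal(\Q(\zeta_p)/\Q)$ is controlled by the class-group calculation of \cite[\S 6.3]{WWE3}, giving $\bigoplus_{i=0}^r M/(\ell_i^2 - 1)M$, with the identification of dual generators again realized via tame inertia at $\ell_i$.

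The main obstacle, modest because most of the work is already done in \cite[\S 6.3]{WWE3}, is to confirm that exchanging the flat condition at $p$ for the ordinary condition leaves the numerics unchanged. When $p \nmid N$ the flat condition cuts a naively larger Kummer/Selmer group from rank $r+2$ down to $r+1$ by removing the $p$-contribution; when $p \mid N$, as in the present ordinary case, $p$ is already among the $\ell_i$ and $\Z[1/Np] = \Z[1/N]$, so no local condition at $p$ is needed --- the unrestricted groups already have the correct size. Once this bookkeeping is made explicit, the computations of \cite[\S 6.3]{WWE3} apply essentially verbatim to yield both isomorphisms, matched at the cocycle level via the $\gamma_i$.
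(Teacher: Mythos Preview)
Your overall strategy—pass through the functorial duality \eqref{eq:BC and exts ord} and then invoke the Galois cohomology computations of \cite[\S6.3]{WWE3}—matches the paper, which simply cites that reference without further detail. But your argument for the well-definedness of $e_i \mapsto c_{\ord,\gamma_i}$ is flawed in two respects.

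First, the relation $V^{\epsilon_i}_{\rho_\ord}(\gamma_i,\gamma_i)=0$ is not available in $E_\ord$: by definition (\S\ref{sssec:comp of Bord and Cord}), $E_\ord$ is the Cayley--Hamilton quotient of $E_\Db$ by $\iota_p(J^\epsilon_p)$ alone, so only the ordinary condition at $p$ is imposed there; the $\mathrm{US}^{\epsilon_i}_{\ell_i}$ conditions for $\ell_i\neq p$ enter only when one passes to $E^\epsilon_N$. (The same is true in the flat setting, so whatever argument you have in mind from \cite[\S6.3]{WWE3} does not use $\mathrm{US}$ either.) Likewise, Proposition~\ref{prop:global US det is kcyc} concerns $R^\epsilon_N$, not $R_\ord$; that the diagonal of $\rho_\ord$ modulo $J^{\min}_\ord$ is $(\kcyc,1)$ is simply the definition of $J^{\min}_\ord$. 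Second, even granting $(\rho_\ord(\gamma_i)-1)^2=0$ modulo $J^{\min}_\ord$ (which does follow from Cayley--Hamilton, since the trace and determinant of $\gamma_i$ reduce to $2$ and $1$), its $(2,1)$-coordinate is vacuously zero in the quotient GMA and yields no constraint on $c_{\ord,\gamma_i}$.

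The correct source of $(\ell_i^2-1)\,c_{\ord,\gamma_i}=0$ for $\ell_i\neq p$ is the tame relation $\sigma_i\gamma_i\sigma_i^{-1}=\gamma_i^{\ell_i}$: applying the cocycle identity $c_{\sigma\tau}=\kcyc(\tau)c_\sigma+c_\tau$ in $C_\ord^{\min}$ gives $\ell_i^{-1}c_{\ord,\gamma_i}=\ell_i\,c_{\ord,\gamma_i}$. Alternatively—and this is closer to how the paper proceeds—one does not verify well-definedness in advance: the cohomology computation determines $C_\ord^{\min}$ completely, and the identification of generators with the $c_{\ord,\gamma_i}$ (hence the annihilation) falls out of the duality.
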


\subsubsection{Information about $B_N^{\epsilon,{\min}}$ and $C_N^{\epsilon,{\min}}$} 

Let $J^{\min} := \ker(R^\epsilon_N \ra \Z_p)$, where this homomorphism is induced by the $\mathrm{US}^\epsilon_N$ pseudodeformation $\psi(\Z_p(1) \oplus \Z_p)$ of $\Db$. 

\begin{lem}
\label{lem:info about B and C}
We consider $B_N^{\epsilon,{\min}}=B_N^\epsilon/\Jm B_N^\epsilon$ and $C_N^{\epsilon,{\min}}=C_N^\epsilon/\Jm C_N^\epsilon$.
\begin{enumerate}
\item If $\epsilon_i=1$ and $\ell_i \ne p$, then the image of $b_{\gamma_i}$ in $B_N^{\epsilon,{\min}}$ is $0$.
\item If $\epsilon_i +\ell_i \not \equiv 0\pmod{p}$, then the image of $c_{\gamma_i}$ in $C_N^{\epsilon,{\min}}$ is $0$.
\end{enumerate}
Moreover, there are surjections
\[
\bigoplus_{i=0}^r \Z_p/(\epsilon_i+1)\Z_p \onto B_N^{\epsilon,{\min}}, \qquad \bigoplus_{i=0}^r \Z_p/(\ell_i+\epsilon_i)\Z_p \onto C_N^{\epsilon,{\min}}.
\]
given by $e_i \mapsto b_{\gamma_i}$ and $e_i \mapsto c_{\gamma_i}$, respectively.
\end{lem}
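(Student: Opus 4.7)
The strategy is to combine the explicit descriptions of $B^{\min}_\fl$ and $C^{\min}_\fl$ (or $B^{\min}_\ord$ and $C^{\min}_\ord$, in the ordinary case $p\mid N$, $\epsilon_p=-1$) from Lemmas \ref{lem:B_fl and C_fl} and \ref{lem:B_ord and C_ord} with a direct computation of the relations imposed by the $\mathrm{US}^{\epsilon_i}_{\ell_i}$ condition at each prime $\ell_i \ne p$ dividing $N$. Since $E_N^\epsilon$ is a Cayley-Hamilton quotient of $E_\fl$ (resp.\ $E_\ord$) by Definition \ref{defn:global US univ objects}, with compatible augmentations $R_{\fl/\ord}\to \Z_p \leftarrow R_N^\epsilon$, the kernel $J^{\min}_\fl$ (resp.\ $J^{\min}_\ord$) maps into $\Jm$. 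The quotient therefore descends to surjections $B^{\min}_{\fl/\ord} \onto B_N^{\epsilon,{\min}}$ and $C^{\min}_{\fl/\ord} \onto C_N^{\epsilon,{\min}}$, and composition with the isomorphisms of Lemmas \ref{lem:B_fl and C_fl}, \ref{lem:B_ord and C_ord} gives surjections $\bigoplus_i \Z_p \onto B_N^{\epsilon,{\min}}$ and $\bigoplus_i \Z_p/(\ell_i^2-1)\Z_p \onto C_N^{\epsilon,{\min}}$ sending $e_i$ to $b_{\gamma_i}$ and $c_{\gamma_i}$.

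Next, for each $\ell_i \ne p$, the plan is to extract relations from the identity $V^{\epsilon_i}_{\rho_N^\epsilon}(\gamma_i,\sigma_i) = 0$ in $E_N^\epsilon$. Using $\kcyc(\gamma_i) = 1$, $\kcyc(\sigma_i) = \ell_i$, $\lambda(-\epsilon_i)(\gamma_i) = 1$, and $\lambda(-\epsilon_i)(\sigma_i) = -\epsilon_i$, this reads $(\rho_N^\epsilon(\gamma_i) - 1)(\rho_N^\epsilon(\sigma_i) + \epsilon_i) = 0$. Expanding via the GMA form \eqref{eq:GMA}, the $(1,2)$- and $(2,1)$-entries give
\[
(a_{\gamma_i} - 1) b_{\sigma_i} + b_{\gamma_i}(d_{\sigma_i} + \epsilon_i) = 0, \qquad c_{\gamma_i}(a_{\sigma_i} + \epsilon_i) + (d_{\gamma_i} - 1) c_{\sigma_i} = 0.
\]
Because the augmentation $R_N^\epsilon \to \Z_p$ specializes $D_N^\epsilon$ to the reducible pseudorepresentation $\psi(\kcyc \oplus 1)$, the reducibility ideal of $D_N^\epsilon$ is contained in $\Jm$, and consequently $a_\tau \equiv \kcyc(\tau)$ and $d_\tau \equiv 1 \pmod{\Jm}$ for every $\tau \in G_{\Q,S}$. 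Reducing the above identities modulo $\Jm$ yields $(1+\epsilon_i) b_{\gamma_i} = 0$ in $B_N^{\epsilon,{\min}}$ and $(\ell_i + \epsilon_i) c_{\gamma_i} = 0$ in $C_N^{\epsilon,{\min}}$, precisely the relations needed for the two surjections to factor through the claimed quotients.

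Parts (1) and (2) then follow at once: $(1 + \epsilon_i) = 2 \in \Z_p^\times$ kills $b_{\gamma_i}$ when $\epsilon_i = 1$ and $\ell_i \ne p$, while $(\ell_i + \epsilon_i) = \ell_i - 1 \equiv -2 \pmod{p}$ is a unit when $\epsilon_i = -1$ and $\ell_i \equiv -1 \pmod p$, forcing $c_{\gamma_i} = 0$. For the index $i$ with $\ell_i = p$, which (in the cases of interest) arises only in the ordinary setup with $\epsilon_p = -1$, one has $c_{\gamma_p} = 0$ automatically because $\rho^\ord_p$ is upper-triangular by Proposition \ref{prop:ord C-H form}, consistent with $p-1 \in \Z_p^\times$; meanwhile no new relation is imposed on $b_{\gamma_p}$, consistent with the summand $\Z_p/(\epsilon_p + 1)\Z_p = \Z_p$. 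The main obstacle is the bookkeeping for the GMA matrix multiplication and the careful identification of which terms lie in $\Jm$; once this is in hand, the rest of the argument is formal.
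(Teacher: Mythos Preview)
Your proposal is correct and follows essentially the same approach as the paper: extract the $(1,2)$- and $(2,1)$-entries of the relation $V^{\epsilon_i}_{\rho_N^\epsilon}(\gamma_i,\sigma_i)=0$, reduce modulo $\Jm$ to obtain $(1+\epsilon_i)b_{\gamma_i}=0$ and $(\ell_i+\epsilon_i)c_{\gamma_i}=0$, and combine with the surjections coming from Lemmas~\ref{lem:B_fl and C_fl} and~\ref{lem:B_ord and C_ord}. The only cosmetic difference is that the paper first passes to $\rho_N^{\epsilon,\min}$ (so that $a=\kcyc$ and $d=1$ on the nose) and then multiplies, whereas you multiply in $E_N^\epsilon$ and then reduce; the two orders give the same relations.
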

\begin{proof}
Note that for $\rho_N^{\epsilon,{\min}} = \rho_N^\epsilon \otimes_{R_N^\epsilon} R_N^{\epsilon}/\Jm$, in the GMA structure, we have
\[
\rho_N^{\epsilon,{\min}} = \ttmat{\kcyc}{b}{c}{1}.
\]
Note that we have 
\[
V^{\epsilon_i}_{\rho_N^{\epsilon, {\min}}}(\gamma_i, \sigma_i) = (\rho_N^{\epsilon,{\min}}(\gamma_i)-1)(\rho_N^{\epsilon,{\min}}(\sigma_i)+\epsilon_i)= 0.
\]
In GMA notation, this is
\[
0=\ttmat{0}{b_{\gamma_i}}{c_{\gamma_i}}{0} \ttmat{\ell_i+\epsilon_i}{b_{\sigma_i}}{c_{\sigma_i}}{1+\epsilon_i} = \ttmat{0}{(1+\epsilon_i)b_{\gamma_i}}{(\ell_i+\epsilon_i)c_{\gamma_i}}{0}.
\]
In case (1), $(1+\epsilon_i)$ is invertible, so $b_{\gamma_i}=0$. In case (2), $(\ell_i+\epsilon_i)$ is invertible, so $c_{\gamma_i}=0$.

The final statement follows from (1) and (2) and Lemma \ref{lem:B_ord and C_ord} if $p \mid N$ and $\epsilon_p=-1$; otherwise, it follows from Lemma \ref{lem:B_fl and C_fl}.
\end{proof}

\subsection{Labeling some cohomology classes}
\label{subsec:label}

Later, in \S\ref{sec:GP}, it will be convenient to have notation for the extension classes, taken as Galois cohomology classes, arising from homomorphisms $B_N^{\epsilon,{\min}} \ra \F_p$ and $C_N^{\epsilon,{\min}} \ra \F_p$. 

\begin{defn}
\label{defn:bc in H1}
We call a cohomology class $x \in H^1(\Z[1/Np],M)$ \emph{ramified at a prime $\ell$} when its image in $H^1(I_\ell, M)$ is non-zero. For certain $i$ with $0 \leq i \leq r$, we designate $b_i$ and $c_i$ as follows. 

\begin{itemize}[leftmargin=2em]
\item For $i=0,\dots, r$, let $\tilde{b}_i$ denote the $\F_p^\times$-scaling of the Kummer cocycle of $\ell_i$ such that $\tilde b_i(\gamma_i) = 1$, and let $b_i \in H^1(\Z[1/Np],\F_p(1))$ be the class of $\tilde{b}_i$. 
\item Let $T=\{0\le j \le r \ : \ell_i \equiv \pm 1\pmod{p}\}$. For $i \in T$, let $c_i \in H^1_{(p)}(\Z[1/Np],\F_p(-1))$ be an element that is ramified exactly at $\ell_i$ and such that $\tilde c_i(\gamma_i) = 1$ for any cocycle $\tilde c_i$ representing $c_i$. 
\end{itemize}
\end{defn}

\begin{lem}
The sets $\{b_i\}_{i=0}^r$ and $\{c_i\}_{i \in T}$ are well-defined and satisfy the following properties:
\begin{enumerate}[label=(\roman*)]
\item $b_i$ is characterized up to $\F_p^\times$-scaling by being ramified at $\ell_i$, unramified outside $\{\ell_i, p\}$, and finite-flat at $p$ if $\ell_i \ne p$. 
\item If $p \mid N$, the set $\{b_i\}_{i=0}^r$ is a basis of $H^1(\Z[1/Np],\F_p(1))$. 
\item The subset $\{b_i \ : \ \ell_i \ne p\}$ is a basis of $H^1_\fl(\Z[1/Np],\F_p(1))$.
\item The set $\{c_i\}_{i \in T}$ is a basis of $H^1_{(p)}(\Z[1/Np],\F_p(-1))$. 
\end{enumerate}
\end{lem}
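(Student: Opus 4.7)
The plan is to treat the statements concerning the $b_i$ by Kummer theory and those concerning the $c_i$ by Poitou–Tate duality together with explicit class field theory, in both cases invoking the dimension formulas computed in \cite[\S6.3]{WWE3}. First I would identify $H^1(\Z[1/Np], \F_p(1)) \cong \Z[1/Np]^\times \otimes_{\Z} \F_p$ via Kummer, with $\F_p$-basis given by the primes dividing $Np$ (using $p > 2$ to discard $-1$). The Kummer class $b_i$ of $\ell_i$ is visibly ramified at $\ell_i$ and unramified at $\ell_j$ for $j \neq i$ by comparing $\ell_j$-adic valuations, so the $b_i$ are linearly independent. When $p \mid N$ no extra generator arises beyond $\ell_0, \dotsc, \ell_r$, so (ii) follows from the dimension count $r+1$.

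For (iii), I would check finite-flatness at $p$ generator by generator: for $\ell_i \neq p$, the extension $\Q_p(\zeta_p, \sqrt[p]{\ell_i})/\Q_p(\zeta_p)$ is unramified since $\ell_i \in \Z_p^\times$, hence arises from an \'{e}tale finite-flat group scheme, so $b_i$ is flat; whereas the Kummer class of $p$ (present only when $p \nmid N$) gives a wildly ramified extension whose discriminant violates the Fontaine–Laffaille bound, so it is not flat. For (i), a class unramified at $\ell_j$ for $j \neq i$ must have Kummer representative with $\ell_j$-valuation divisible by $p$, forcing it into the span of $b_i$ together with the Kummer class of $p$ (which is unramified at $\ell_i$ when $\ell_i \neq p$); ramification at $\ell_i$ then forces a nonzero $b_i$-coefficient, which is the content of the characterization.

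For (iv), the approach is first to compute $\dim H^1_{(p)}(\Z[1/Np], \F_p(-1)) = \vert T \vert$ via Poitou–Tate and then exhibit $\vert T \vert$ linearly independent classes. The local Euler characteristic formula together with local duality shows that for $\ell \neq p$, $H^1(\Q_\ell, \F_p(-1))$ supports ramified classes only when $\ell \equiv \pm 1 \pmod p$ (namely $H^0 \neq 0$ iff $\ell \equiv 1$, and $H^2 \cong H^0(\Q_\ell, \F_p(2))^\vee \neq 0$ iff $\ell \equiv \pm 1$), pinning down the dimension after global bookkeeping as in \cite[\S6.3]{WWE3}. To produce each $c_i$ for $i \in T$, I would use the field $K_{\ell_i}$ of \S\ref{subsubsec:defn of K_i}: by construction, $\Gal(K_{\ell_i}/\Q(\zeta_p)) \cong \F_p(-1)$ as a $\Gal(\Q(\zeta_p)/\Q)$-module, and the inflation-restriction sequence (together with $p \nmid \#\Gal(\Q(\zeta_p)/\Q)$) identifies the $\omega^{-1}$-eigenspace of $H^1(G_{\Q(\zeta_p)}, \F_p)$ with $H^1(G_\Q, \F_p(-1))$, giving a canonical class $c_i$. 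Conditions (3) and (2) on $K_{\ell_i}$ ensure respectively that $c_i$ is ramified only at $\ell_i$ and that its restriction to $H^1(\Q_p, \F_p(-1))$ vanishes (via the same inflation-restriction mechanism applied locally at $p$), so $c_i$ lands in the Selmer group $H^1_{(p)}$. The normalization $\tilde{c}_i(\gamma_i) = 1$ fixes the scalar, and linear independence follows from disjoint ramification supports.

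The main obstacle I anticipate is (iv): the dimension computation requires careful Poitou–Tate bookkeeping, and one must verify that the cohomology class constructed from $K_{\ell_i}$ genuinely lies in $H^1_{(p)}$, for which the split-at-$(1-\zeta_p)$ condition in the definition of $K_{\ell_i}$ is precisely what is needed. By contrast, the $b_i$-statements are essentially bookkeeping once Kummer theory is in place, whereas the $c_i$-statements require nontrivial structural input from the auxiliary fields $K_{\ell_i}$ together with the local computation at $p$.
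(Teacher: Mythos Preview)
Your treatment of (i)--(iii) via Kummer theory matches the paper's approach; the paper simply says ``Parts (i), (ii), and (iii) follow from Kummer theory.''

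For (iv), your route differs from the paper's. The paper cites \cite[Prop.~6.3.3]{WWE3} together with Tate duality directly for the existence and characterization of each $c_i$ as the class in $H^1_{(p)}(\Z[1/Np],\F_p(-1))$ ramified exactly at $\ell_i$, and (iv) follows. You instead propose to construct $c_i$ from the field $K_{\ell_i}$ of \S\ref{subsubsec:defn of K_i}. Relative to the paper's logical structure this is circular: in the body of the paper, $K_i$ is \emph{defined} as the fixed field of $\ker(\tilde c_i\vert_{G_{\Q(\zeta_p),S}})$ (Definition~\ref{defn:K_i}, immediately after this lemma), and the claim that it satisfies the properties listed in \S\ref{subsubsec:defn of K_i} is then verified as a consequence. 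The existence assertion in \S\ref{subsubsec:defn of K_i} is introductory and not proved there. To make your approach self-contained you would need to independently establish the existence of $K_{\ell_i}$ by class field theory (computing the $\omega^{-1}$-eigenspace of the relevant ray class group), which amounts to the same computation carried out in \cite[\S6.3]{WWE3}. So the two approaches are equivalent in content; yours is more constructive but requires supplying the existence of $K_{\ell_i}$ before invoking it.

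One point you do not address: the lemma also asserts that the normalizations $\tilde b_i(\gamma_i)=1$ and $\tilde c_i(\gamma_i)=1$ are well-defined, i.e.\ that these values are independent of the choice of representing cocycle. The paper handles this by noting that $\omega\vert_{I_{\ell_i}}=1$, so the restriction of any cocycle to $I_{\ell_i}$ is a homomorphism independent of coboundaries; for $\ell_i = p$ the specific choice of $\gamma_p$ in \S\ref{subsec:notation} (with $\omega(\gamma_p)=1$) is what makes $\tilde b_p(\gamma_p)$ well-defined.
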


\begin{proof}
The value of $\tilde b_i(\gamma_i)$ is well-defined for the same reason when $\ell_i \neq p$, and $b_p(\gamma_p)$ is well-defined by the choice of $\gamma_p$ (in \S\ref{subsec:notation}). Parts (i), (ii), and (iii) follow from Kummer theory (note that the Kummer class of $p$ is not finite-flat at $p$). 

For part (iv), note that the module $C_N^{\epsilon,{\min}}$ is computed in \cite[Prop.\ 6.3.3]{WWE3}. Together with \eqref{eq:BC and exts}, this computation implies the existence of $c_i \in H^1_{(p)}(\F_p(-1))$ characterized up to $\F_p^\times$-scaling by being ramified exactly at $\ell_i$. These statements also imply part (iv). Because $\omega\vert_{I_{\ell_i}} = 1$, $\tilde c_i\vert_{I_{\ell_i}} : I_{\ell_i} \rsurj \F_p$ is a homomorphism not dependent on the choice of $\tilde c_i$.
\end{proof}

The stated bases are almost dual bases, with the exception arising from the possibility that $b_i$ is ramified at $p$ even when $\ell_i \neq p$. 
\begin{lem}
\label{lem:dual B C and H1}
Under the perfect pairings
\begin{enumerate}
\item $B_\fl \otimes_{R_\fl} \F_p \times H^1_\fl(\Z[1/Np],\F_p(1)) \lra \F_p$,
\item $C_\ord \otimes_{R_\ord} \F_p \times H^1_{(p)}(\Z[1/Np],\F_p(-1)) \lra \F_p$
\item $C_\fl \otimes_{R_\fl} \F_p \times H^1_{(p)}(\Z[1/Np],\F_p(-1)) \lra \F_p$,
\end{enumerate}
defined by \eqref{eq:BC and exts} and \eqref{eq:BC and exts ord}, the following are respective dual basis pairs
\begin{enumerate}
\item $\{b_{\fl,\gamma_i} \ : \ i=0,\dots, r \text{ if } \ell_i \ne p\}$ and $\{b_{i} \ : \ i=0,\dots, r  \text{ if } \ell_i \ne p\}$
\item $\{c_{\ord,\gamma_i} \ : \ i\in T\}$ and $\{c_{i} \ : \ i\in T\}$
\item $\{c_{\fl,\gamma_i} \ : \ i\in T\}$ and $\{c_{i} \ : \ i\in T\}$
\end{enumerate}
Also, for $0 \leq i, j \leq r$ such that $\ell_i = p$ or $\ell_j \neq p$, we have $b_i(b_{\ord, \gamma_j}) = \partial_{ij}$. 
\end{lem}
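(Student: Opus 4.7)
The plan is to unwind the pairings and reduce everything to evaluating the specific cocycles $\tilde b_i$, $\tilde c_i$ of Definition \ref{defn:bc in H1} on the topological inertia generators $\gamma_j$ of \S\ref{subsec:notation}. The key input is the explicit construction of the isomorphisms \eqref{eq:BC and exts} and \eqref{eq:BC and exts ord} from \cite[Prop.\ 2.5.1]{WWE3} and its ordinary analogue: a homomorphism $\phi \in \Hom_{\Z_p}(B^{\min}_{*}, \F_p)$ is sent to the cohomology class $[\phi \circ b_{*, -}]$, where $b_{*, -} : G_{\Q,S} \to B^{\min}_{*}$ is the $(1,2)$-entry of the reduced universal GMA representation, and analogously for $C^{\min}_{*}$ via the $(2,1)$-entry. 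Consequently, if $[\tilde z]$ corresponds to $\phi$, then $\phi(b_{*, \gamma_j}) = (\phi \circ b_{*, -})(\gamma_j)$ differs from $\tilde z(\gamma_j)$ by a coboundary $(\omega^{\pm 1}(\gamma_j) - 1)m$, which vanishes because $\omega(\gamma_j) = 1$ for every $j$: by the choice of $\gamma_p$ in \S\ref{subsec:notation} when $\ell_j = p$, and because $\omega$ is unramified at $\ell_j \neq p$ otherwise. Hence the pairing value is simply $\tilde z(\gamma_j)$, independently of the cocycle representative.

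With this formula, parts (1)--(3) reduce to verifying $\tilde b_i(\gamma_j) = \partial_{ij}$ and $\tilde c_i(\gamma_j) = \partial_{ij}$ over the stated ranges of indices. The diagonal case $j = i$ is built into the normalizations in Definition \ref{defn:bc in H1}. For $j \neq i$ we have $\ell_j \neq p$ in all three parts (in (1) by hypothesis; in (2) and (3), the index set $T$ excludes $\ell_j = p$ since $p \not\equiv \pm 1 \pmod p$), so $I_{\ell_j}$ acts trivially on $\F_p(\pm 1)$. The classes $b_i$ and $c_i$ are unramified at $\ell_j$ by their characterizing properties; triviality of the action on $I_{\ell_j}$ then upgrades cohomological vanishing to pointwise cocycle vanishing on $I_{\ell_j}$, in particular at $\gamma_j$.

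The addendum concerns the ordinary pairing, where indices with $\ell_i = p$ or $\ell_j = p$ are included. The preceding argument already handles $\ell_j \neq p$ for any $\ell_i$. For the remaining allowed case $\ell_i = p$, the class $b_p$ is unramified outside $\{p\}$: the evaluation $\tilde b_p(\gamma_p) = 1$ is the normalization, and for any $\ell_j \neq p$ the same unramifiedness argument yields $\tilde b_p(\gamma_j) = 0$. The excluded case $\ell_i \neq p$, $\ell_j = p$ is genuinely obstructed, as the Kummer class $b_i$ need not be unramified at $p$, so the method fails there.

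The main obstacle is the first step: verifying that the isomorphisms \eqref{eq:BC and exts} and \eqref{eq:BC and exts ord} are realized by $\phi \mapsto [\phi \circ b_{*, -}]$ in the form used above. This follows from a direct unwinding of the construction in \cite[Prop.\ 2.5.1]{WWE3} via the GMA structure, or can be re-derived from the universal property of $\rho^\fl_p$ (resp.\ $\rho^\ord_p$) combined with the formalism of reducibility ideals. Once this explicit form is in hand, the rest of the lemma is a bookkeeping computation in local Galois cohomology using the ramification properties of Kummer and twisted classes.
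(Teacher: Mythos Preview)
Your argument is correct and matches the paper's approach: both unwind the pairing of \eqref{eq:BC and exts} and \eqref{eq:BC and exts ord} to cocycle evaluation $x \mapsto (b_\tau \mapsto \tilde x(\tau))$, observe that this value is representative-independent whenever $\omega(\tau)=1$ (in particular at each $\gamma_j$), and then read off $\partial_{ij}$ from the normalization and ramification properties of the $b_i$, $c_i$. Your write-up is more explicit in separating the diagonal and off-diagonal cases and in handling the addendum for the ordinary pairing, but the underlying mechanism is identical to the paper's one-paragraph proof.
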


\begin{proof}
We give the proof for (1), the other parts being similar. The pairing \eqref{eq:BC and exts} sends a class $x \in H^1_\fl(\Z[1/Np],\F_p(1))$ to a homomorphism $B_\fl \to \F_p$ that sends $b_\tau$ to $\tilde{x}(\tau)$, where $\tilde{x}$ is a particular cocycle representing $x$ (the choice is determined by the choice of GMA structure on $E_\fl$). However, if $\omega(\tau)=1$, the value of $\tilde{x}(\tau)$ is independent of the choice of cocycle, and we may write this value as $x(\tau)$. Hence we see that $b_i(b_{\fl,\gamma_j})=b_i(\gamma_j)=\partial_{ij}$.
\end{proof}
\begin{defn}
\label{defn:K_i}
For each $i \in T$, let $K_i$ be the fixed field of $\ker(\tilde c_i \vert_{G_{\Q(\zeta_p),S}})$, where $\tilde c_i$ is any cocycle $\tilde c_i : G_{\bQ,S} \ra \F_p(-1)$ representing $c_i$. 
\end{defn}
One readily verifies that $K_i$ is the unique extension of $\Q(\zeta_p)$ satisfying the properties of \S\ref{subsubsec:defn of K_i}.

\section{Toward $R=\bT$}

\subsection{The map $R_N^\epsilon \to \bT_N^\epsilon$} We prove the following proposition, following the construction technique of Calegari--Emerton \cite[Prop.\ 3.12]{CE2005}.
\begin{prop}
\label{prop:R to T}
There is a surjective homomorphism $R_N^\epsilon \rsurj \bT_N^\epsilon$ of augmented $\Z_p$-algebras. Moreover, $\bT_N^\epsilon$ is generated as a $\Z_p$-algebra by $T_q$ for any cofinite set of primes $q$ not dividing $Np$. 
\end{prop}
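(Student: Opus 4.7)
The plan is to build a $\mathrm{US}^\epsilon_N$ pseudodeformation $D : G_{\Q,S} \to \bT_N^\epsilon$ of $\Db$ in the sense of Definition \ref{defn:US global}, invoke the universal property of Definition \ref{defn:global US univ objects} to obtain the augmented map $R_N^\epsilon \to \bT_N^\epsilon$, and then use a Chebotarev-plus-Nakayama argument to establish both surjectivity and the cofinite generation statement. I would construct $D$ via the closed embedding \eqref{eq:bT in product} of $\bT_N^\epsilon$ into $\Z_p \oplus \bigoplus_{f \in \Sigma} \sO_f$, combining $\psi(\kcyc \oplus 1)$ on the Eisenstein factor with $\psi(\rho_f)$ on each cuspidal factor (with $\rho_f$ from Lemma \ref{lem:normalization of bT0}(2)). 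This yields a continuous pseudorepresentation into $\Z_p \oplus \bigoplus_f \sO_f$ whose trace and determinant at $\Fr_q$ for $q \nmid Np$ equal $T_q$ and $q$ respectively, both lying in $\bT_N^\epsilon$; since $\bT_N^\epsilon$ is closed (being a finitely generated $\Z_p$-submodule) and $\{\Fr_q : q \nmid Np\}$ is dense in $G_{\Q,S}$ by Chebotarev, continuity forces the trace and determinant to take values in $\bT_N^\epsilon$ throughout, and a two-dimensional pseudorepresentation is determined by these data, so $D$ descends.

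To verify $\mathrm{US}^\epsilon_N$, I would check for each $\ell \mid N$ with $\ell \ne p$ that each $\rho_f\vert_{G_\ell}$ is unramified or $\epsilon_\ell$-Steinberg by Lemma \ref{lem:normalization of bT0}(3), whence a direct matrix computation yields the defining relations $V^{\epsilon_\ell}_\rho(\sigma,\tau) = 0$ of Definition \ref{defn:US-CH ell}; the Eisenstein summand is obviously unramified. At $\ell = p$, Lemma \ref{lem:normalization of bT0}(4) combined with Corollary \ref{cor:ord to flat compare} handles the finite-flat or ordinary case. Bundling these local verifications into a single $\bT_N^\epsilon$-integral Cayley-Hamilton representation witnessing $\mathrm{US}^\epsilon_N$ is the main obstacle: the $\rho_f$ take values only in $\sO_f[1/p]$, so one cannot simply take a direct sum and must instead extract integrality from a cohomological lattice --- for instance the Galois module $\cT_{\m^\epsilon}$ of \S\ref{subsec:gorenstein and jacobian} or its Drinfeld-Manin enlargement --- which carries a natural $\bT_N^\epsilon$-linear Galois action with characteristic polynomial matching $D$, and for which the local Steinberg/ordinary/flat conditions follow from classical local-global information for $H^1$ of modular curves. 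Feeding this Cayley-Hamilton lift into Definition \ref{defn:global US univ objects} yields the desired map $R_N^\epsilon \to \bT_N^\epsilon$, with augmentation compatibility coming from the Eisenstein projection.

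For surjectivity and the cofinite generation, fix any cofinite set $\Sigma$ of primes not dividing $Np$ and let $S \subset \bT_N^\epsilon$ be the $\Z_p$-subalgebra generated by $\{T_q : q \in \Sigma\}$. The continuous map $\Tr_D \bmod p : G_{\Q,S} \to \bT_N^\epsilon / p \bT_N^\epsilon$ has finite target, so factors through a finite quotient $\Gamma$ of $G_{\Q,S}$, and Chebotarev applied in $\Gamma$ shows that $\{\Fr_q : q \in \Sigma\}$ covers $\Gamma$, whence every value of $\Tr_D \bmod p$ lies in the image of $S$ modulo $p$. Since $R_N^\epsilon$ is topologically generated over $\Z_p$ by the traces $\Tr_{D^\epsilon_N}(G_{\Q,S})$ (using Proposition \ref{prop:global US det is kcyc} to dispense with determinants), and since \S\ref{subsubsec:mf} asserts that $\bT_N^\epsilon$ is generated as a $\Z_p$-algebra by $\{T_q : q \nmid N\}$ --- with $T_p = U_p + p U_p^{-1}$ for $p \nmid N$ recovered from the unramified characters of $D\vert_{G_p}$ via Lemma \ref{lem:flat psrep form} --- the image of $R_N^\epsilon$ is all of $\bT_N^\epsilon$, and hence $S + p \bT_N^\epsilon = \bT_N^\epsilon$. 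Nakayama applied to the finitely generated $\Z_p$-module $\bT_N^\epsilon / S$ then gives $S = \bT_N^\epsilon$, proving both surjectivity and the cofinite generation statement simultaneously.
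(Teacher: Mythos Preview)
Your surjectivity and cofinite-generation argument is essentially the paper's: both use that $R_N^\epsilon$ is generated by traces of Frobenii (Chebotarev plus \cite[Prop.\ 1.29]{chen2014}), and both recover $U_p$ when $p \nmid N$ from the shape of $D\vert_{G_p}$---the paper does this explicitly as its Step 3, writing down an element of $R_N^\epsilon$ that maps to $U_p$.

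The real divergence is in how the map $R_N^\epsilon \to \bT_N^\epsilon$ is built, and here you have set yourself an unnecessary obstacle. You aim to exhibit a single $\mathrm{US}^\epsilon_N$ Cayley--Hamilton representation over $\bT_N^\epsilon$ witnessing your $\bT_N^\epsilon$-valued pseudodeformation $D$, correctly flag this as the hard step, and gesture at $\cT_{\m^\epsilon}$ or its Drinfeld--Manin enlargement. That route is not hopeless, but it is genuinely delicate: $\cT_{\m^\epsilon}$ is only a $\bT_N^{0,\epsilon}$-module, the Eisenstein component must be spliced in by hand, and verifying the local $\mathrm{US}^{\epsilon_\ell}_\ell$ and finite-flat conditions for this lattice from the geometry of $J_0(N)$ is real work you have not done.

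The paper, following a technique of Calegari--Emerton \cite[Prop.\ 3.12]{CE2005}, avoids this completely by never asking for a Cayley--Hamilton witness over $\bT_N^\epsilon$. Instead it works factor by factor in the normalization \eqref{eq:bT in product}: for each $f \in \Sigma$, the pseudorepresentation $\psi(\rho_f) : G_{\Q,S} \to \sO_f$ is already an $\sO_f$-valued pseudodeformation of $\Db$, and $\rho_f$ itself (realized on any $\sO_f$-stable lattice) is its Cayley--Hamilton witness. The $\mathrm{US}^\epsilon_N$ conditions for $\rho_f$ are then read off directly from Lemma \ref{lem:normalization of bT0}(3)--(4) and Corollary \ref{cor:ord to flat compare}, exactly the local checks you listed. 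This yields homomorphisms $R_N^\epsilon \to \sO_f$ for each $f$, together with the obvious $R_N^\epsilon \to \Z_p$ from $\psi(\kcyc \oplus 1)$; the product map $R_N^\epsilon \to \Z_p \oplus \bigoplus_f \sO_f$ then visibly lands in $\bT_N^\epsilon$ since it sends $\Tr_{D^\epsilon_N}(\Fr_q)$ to the image of $T_q$. Equivalently: once you have your map $R_\Db \to \bT_N^\epsilon$, its factoring through $R_N^\epsilon$ can be checked after composing with the injection $\bT_N^\epsilon \hookrightarrow \Z_p \oplus \bigoplus_f \sO_f$, i.e.\ component by component---so your ``bundling'' step is simply not needed.
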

\begin{proof}
For this proof, it is important to note that the elements $\Tr_{D^\epsilon_N}(\Fr_q)$ for any such set of primes $q$ generate $R_N^\epsilon$ as a $\Z_p$-algebra. This follows the fact that $R^\epsilon_N$ is a quotient of the (unrestricted) universal pseudodeformation ring $R_\Db$, that traces $\{\Tr_{D_\Db}(\sigma) : \sigma \in G_{\Q,S}\}$ of the universal pseudodeformation generate $R_\Db$ (because the residue characteristic is not 2, see \cite[Prop.\ 1.29]{chen2014}), and Chebotarev density. 

In the rest of the proof, we use the notation $\Sigma$, $\rho_f$ and $\sO_f$ established in Lemma \ref{lem:normalization of bT0}. We proceed in three steps:
\begin{itemize}[leftmargin=4em]
\item[{\bf Step 1.}] Construct a homomorphism $R_N^\epsilon \to \sO_f$ for each $f \in \Sigma$ that sends $\Tr_{D^\epsilon_N}(\Fr_q)$ to $a_q(f)$ for each prime $q \nmid Np$.
\item[{\bf Step 2.}] Show that the resulting map $R_N^\epsilon \to \Z_p \oplus \bigoplus_f \sO_f$ sends $\Tr_{D^\epsilon_N}(\Fr_q)$ to the image of $T_q$ under the map $\bT_N^\epsilon \to \Z_p \oplus \bigoplus_f\sO_f$ of \eqref{eq:bT in product}, for each $q \nmid Np$. This gives a homomorphism $R_N^\epsilon \to \bT_N^\epsilon$ whose image is the $\Z_p$-subalgebra generated by the $T_q$ for all $q \nmid Np$. This completes the proof if $p \mid N$.
\item[{\bf Step 3.}] In the case that $p \nmid N$, show that the image of $R_N^\epsilon \to \bT_N^\epsilon$ contains $U_p$ and $U_p^{-1}$. This shows both that $R_N^\epsilon \to \bT_N^\epsilon$ is surjective and that $\bT_N^\epsilon$ is generated as a $\Z_p$-algebra by $T_q$ for $q \nmid Np$.
\end{itemize}

\begin{proof}[Proof of Step 1] 
Let $f \in \Sigma$. Then $\psi(\bar{\rho}_f)=\Db$, so $\psi(\rho_f)$ induces a map $R_\Db \to \sO_f$. For each prime $q \nmid Np$, we have $\Tr(\rho_f(\Fr_q))=a_q(f)$, so $R_\Db \to \sO_f$ sends $\Tr_{D_\Db}(\Fr_q)$ to $a_q(f)$. 

In order to show that $R_\Db \to \sO_f$ factors through $R_N^\epsilon$, we prove that $\psi(\rho_f)$ and $\rho_f$ are $\mathrm{US}^\epsilon_N$ by verifying local conditions, as per Definition \ref{defn:US global}. 
\begin{itemize}[leftmargin=2em] 
\item For $\ell \mid N$ with $\ell \ne p$, $\rho_f\vert_{G_\ell}$ is $\mathrm{US}_\ell^{\epsilon_\ell}$ by Lemma \ref{lem:normalization of bT0}(3).
\item If $p \nmid N$, or if $p \mid N$ and $f$ is old at $p$, then $\rho_f\vert_{G_p}$ is finite-flat by Lemma \ref{lem:normalization of bT0}(4a). Also, when $p\mid N$, this implies that $\rho_f\vert_{G_p}$ is $\mathrm{US}_p^{\epsilon_p}$ by definition if $\epsilon_p=+1$ and by Corollary \ref{cor:ord to flat compare} if $\epsilon_p=-1$. 
\item If $f$ is new at $p$, then $\epsilon_p=-1$ and $\rho_f\vert_{G_p}$ is $\mathrm{US}_p^{-1}$ by Lemma \ref{lem:normalization of bT0}(4b). \qedhere
\end{itemize}
\end{proof}

\begin{proof}[Proof of Step 2]
By construction, the map $R_N^\epsilon \to \Z_p \oplus \bigoplus_f\sO_f$ sends $\Tr_{D^\epsilon_N}(\Fr_q)$ to $(1+q,\bigoplus_f a_q(f))$, which, by \eqref{eq:bT in product}, is the image of $T_q$.
\end{proof}

\begin{proof}[Proof of Step 3]
Let $\tau \in I_p$ be an element such that $\omega(\tau) \ne 1$. Let $x=\kcyc(\tau) \in \Z_p$, so that $1-x \in \Z_p^\times$. Let $\sigma_p \in G_p$ be the element defined in \S\ref{subsec:notation} and let $z=\kcyc(\sigma_p)$. By Lemma \ref{lem:normalization of bT0}(4), we see that $\Tr(\rho_f(\sigma_p))=za_p(f)^{-1}+a_p(f)$ and $\Tr(\rho_f(\tau\sigma_p))=xza_p(f)^{-1}+a_p(f)$. Hence we have
\begin{align*}
&a_p(f)=\frac{1}{x-1}\big( x\Tr(\rho_f(\sigma_p))-\Tr(\rho_f(\tau\sigma_p))\big) \quad \text{and}  \\
&a_p(f)^{-1} = \frac{1}{z-xz} \big(\Tr(\rho_f(\sigma_p)) -\Tr(\rho_f(\tau\sigma_p))\big).
\end{align*}

We see that $U_p$ is the image of $\frac{1}{x-1}(x\Tr_{D^\epsilon_N}(\sigma_p)-\Tr_{D^\epsilon_N}(\tau\sigma_p))$ and $U_p^{-1}$ is the image of $\frac{1}{z-xz}(\Tr_{D^\epsilon_N}(\sigma_p)-\Tr_{D^\epsilon_N}(\tau\sigma_p))$. Since $\bT_N^\epsilon$ is generated by $T_q$ for $q \nmid Np$ along with $T_p = U_p + pU_p^{-1}$, we see that $R_N^\epsilon \to \bT_N^\epsilon$ is surjective. 
\end{proof}
\let\qed\relax 
\end{proof}

\subsection{Computation of $(R_N^\epsilon)^\red$} 
\label{subsec:compute Rred}
In this section, we will frequently make use of the elements $\sigma_i$ and $\gamma_i$ defined in \S \ref{subsec:notation}. We denote by $M^{p\text{-part}}$ the maximal $p$-primary quotient of a finite abelian group $M$.

Consider the group $\Gal(\Q(\zeta_N)/\Q)^{p\text{-part}}$. We have isomorphisms 
\[
\Gal(\Q(\zeta_N)/\Q)^{p\text{-part}} \lrisom \prod_{i=0}^r \Gal(\Q(\zeta_{\ell_i})/\Q)^{p\text{-part}} \lrisom \prod_{i=0}^r \Z_p/(\ell_i-1)\Z_p.
\]
Since $\Q(\zeta_{\ell_i})/\Q$ is totally ramified at $\ell_i$, we can and do choose the second isomorphism so that the image of $\gamma_i$ is $(0,\dots,0,1,0,\dots,0)$ (with $1$ in the $i$-th factor). We define $\alpha_j^i$ to be the $j$-th factor of the image of $\sigma_i$, so that $\sigma_i \mapsto (\alpha_0^i,\alpha_1^i,\dots,\alpha_r^i)$ (we can and do assume that $\alpha_i^i=0$).

\begin{rem}
\label{rem:log tame}
Note that if $\ell_j \equiv 1 \pmod{p}$, we may choose a surjective homomorphism $\log_{\ell_j}:(\Z/\ell_j\Z)^\times \rsurj \F_p$ such that $\log_{\ell_j}(\ell_i) \equiv \alpha_j^i \pmod{p}$. By abuse of notation, we denote by $\log_j = \log_{\ell_j}$ a $\F_p$-valued character of $G_{\Q,S}$ produced by composition with the canonical surjection $G_{\Q,S} \rsurj \Gal(\Q(\zeta_{\ell_j})/\Q) \risom (\Z/\ell_j)^\times$. 
\end{rem}

This isomorphism determines an isomorphism of group rings
\[
\Z_p[\Gal(\Q(\zeta_N)/\Q)^{p\text{-part}}] \isoto \Z_p\left[\prod_{i=0}^r \Z_p/(\ell_i-1)\Z_p\right] \cong \Z_p[y_0,\dots,y_r]/(y_i^{p^{v_i}}-1)
\]
where $v_i=v_p(\ell_i-1)$, and where the second isomorphism sends $y_i$ to the group-like element $(0,\dots,0,1,0,\dots,0)$ (with $1$ in the $i$-th factor). Let
\[
\dia{-}:G_{\Q,S} \to (\Z_p[y_0,\dots,y_r]/(y_i^{p^{v_i}}-1))^\times
\]
be the character obtained by the quotient $G_{\Q,S} \onto \Gal(\Q(\zeta_N)/\Q)^{p\text{-part}}$ followed by this isomorphism. Note that 
\[
\dia{\gamma_i}=y_i, \qquad \dia{\sigma_i}=\prod_{j=0}^r y_j^{\alpha_j^i}.
\]

Let $R^\red_\fl(\kcyc)$ (resp.\ $R^\red_\ord(\kcyc)$) be the quotient of the finite-flat global deformation ring $R_\fl$ (resp.\ ordinary global deformation ring $R_\ord$) defined in \S\ref{sssec:comp of Bfl and Cfl} (resp.\ \S\ref{sssec:comp of Bord and Cord}) by the ideal generated by the reducibility ideal along with $\{D_\Db(\gamma) - \kcyc(\gamma) : \gamma \in G_{\Q,S}\}$. That is, we are insisting that the determinant is $\kcyc$. 
\begin{lem}
	\label{lem:calculate R-red-flat}
	The surjection $R_\ord \rsurj R_\fl$ induces an isomorphism $R_{\ord}^\red(\kcyc) \risom R_\fl^\red(\kcyc)$. Moreover, they are both isomorphic as rings to 
	\[
	\Z_p[y_0,\dots,y_r]/(y_i^{p^{v_i}}-1)
	\]
	and the universal reducible pseudorepresentation pulls back to $D^\red=\psi(\kcyc \dia{-}^{-1} \oplus \dia{-})$ via these isomorphisms.
\end{lem}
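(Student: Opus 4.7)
My plan is to identify both $R^\red_\ord(\kcyc)$ and $R^\red_\fl(\kcyc)$ with a common universal character-deformation ring, and then to compute the latter by class field theory. Because $\Db = \psi(\omega \oplus 1)$ is multiplicity-free and we have imposed the reducibility ideal, any reducible pseudodeformation $D : G_{\Q,S} \ra A$ of $\Db$ with determinant $\kcyc$ has the form $D = \psi(\kcyc\chi^{-1} \oplus \chi)$ for a unique character $\chi : G_{\Q,S} \ra A^\times$ deforming the trivial character, where the ordering of summands is pinned down by the convention of Definition \ref{def:GMA structure}.

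I would then translate the local condition at $p$ into a condition on $\chi$. Lemma \ref{lem:flat psrep form} says that $D$ is finite-flat at $p$ exactly when $\chi\vert_{I_p}$ is trivial, and Corollary \ref{cor:ord to flat compare} gives the same characterization in the ordinary case. Consequently, both $R^\red_\ord(\kcyc)$ and $R^\red_\fl(\kcyc)$ pro-represent the same functor, namely ``continuous characters of $G_{\Q,S}$ deforming $1 : G_{\Q,S} \ra \F_p^\times$ and unramified at $p$''. This forces the surjection $R_\ord \rsurj R_\fl$ to descend to an isomorphism $R^\red_\ord(\kcyc) \isoto R^\red_\fl(\kcyc)$.

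For the explicit computation I would use Kronecker--Weber. A continuous character of $G_{\Q,S}$ deforming the trivial character takes values in $1 + \m_A$, a pro-$p$ group, so it factors through the maximal pro-$p$ abelian quotient of $G_{\Q,S}$; requiring unramified at $p$ means further quotienting by the image of $I_p$. By Kronecker--Weber, this Galois group equals $\prod_{i : \ell_i \ne p} \Z_p/(\ell_i - 1)\Z_p$, which we can write uniformly as $\prod_{i=0}^r \Z_p/(\ell_i - 1)\Z_p$ (the factor at $\ell_i = p$ is trivial since $v_p(p-1) = 0$). The universal such character therefore takes values in the completed group algebra $\Z_p\lb \prod_i \Z_p/(\ell_i-1)\Z_p\rb = \Z_p[y_0,\dots,y_r]/(y_i^{p^{v_i}}-1)$, and by construction coincides with the character $\langle-\rangle$ defined just above the lemma statement (the normalization $\langle \gamma_i \rangle = y_i$ is built into the setup). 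Substituting, the universal reducible pseudodeformation becomes $\psi(\kcyc \langle-\rangle^{-1} \oplus \langle-\rangle)$, as claimed. The main subtlety I would need to verify is that the universal pseudodeformations produced by the reducibility-with-determinant-$\kcyc$ quotients of the Cayley--Hamilton global deformation rings $R_\ord$ and $R_\fl$ genuinely split as sums of characters of the expected shape; this follows from the general theory of the reducibility ideal for multiplicity-free pseudorepresentations developed in \cite{WWE4}, but it is the one place where one has to be careful not to mix up the Cayley--Hamilton picture with the character-deformation picture.
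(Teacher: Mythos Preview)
Your proposal is correct and follows essentially the same approach as the paper's proof. Both arguments reduce to showing that reducible pseudodeformations of $\Db$ with determinant $\kcyc$ are of the form $\psi(\kcyc\chi^{-1}\oplus\chi)$, that the ordinary and finite-flat conditions on such pseudodeformations coincide (via Corollary~\ref{cor:ord to flat compare}), and then compute the universal unramified-at-$p$ character deformation ring; the only difference is that the paper outsources the explicit class-field-theory computation to \cite[Lem.~5.1.1]{WWE3} while you spell it out directly.
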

\begin{proof} 
	The quotient map $R_\ord \rsurj R_\fl$ comes from the first part of Corollary \ref{cor:ord to flat compare}, and the two rings differ only in the local condition at $p$. After imposing the reducibility and determinant conditions, the universal pseudodeformations both have the form $\psi(\kcyc \chi^{-1} \oplus \chi)$ for a character $\chi$ that deforms the trivial character. By the latter parts of the corollary, the finite-flat and ordinary conditions on such pseudodeformations are identical. The last statement is proven just as in \cite[Lem.\ 5.1.1]{WWE3}.
\end{proof}

Let $Y_i=y_i-1$.
\begin{lem}
\label{lem:computation of Rred}
There is an isomorphism
\[
(R_N^\epsilon)^\red \cong \Z_p[Y_0,\dots,Y_r]/\mathfrak{a}
\]
where $\mathfrak{a}$ is the ideal generated by the elements 
\[
Y_i^2,(\ell_i-1)Y_i,(\epsilon_i+1)Y_i, Y_i\left(\prod_{j=0}^r(1-\tilde{\alpha}_i^jY_j)-1\right),
\]
for $i=0,\dots,r$, where $\tilde{\alpha}_i^j \in \Z_p$ is any lift of ${\alpha_i^j}\in \Z_p/(\ell_j-1)\Z_p$ (note that $\mathfrak{a}$ is independent of the choice of this lift).
\end{lem}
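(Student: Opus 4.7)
The plan is to identify $(R_N^\epsilon)^\red$ as the universal ring for reducible $\mathrm{US}^\epsilon_N$ pseudodeformations of $\Db$. By Lemma \ref{lem:calculate R-red-flat} together with Proposition \ref{prop:global US det is kcyc}, this amounts to taking the quotient of $\Z_p[y_0, \ldots, y_r]/(y_i^{p^{v_i}} - 1)$ (which classifies pseudodefs of the form $\psi(\kcyc\dia{-}^{-1} \oplus \dia{-})$) by the relations needed for the $\mathrm{US}^{\epsilon_i}_{\ell_i}$ condition at each $\ell_i \neq p$ dividing $N$. Since any reducible pseudodef admits its diagonal as a Cayley-Hamilton lift, the $\mathrm{US}^{\epsilon_i}_{\ell_i}$ condition reduces for each $k \in \{1,2\}$ to
\[
(\chi_k(\sigma) - \lambda(-\epsilon_i)\kcyc(\sigma))(\chi_k(\tau) - \lambda(-\epsilon_i)(\tau)) = 0
\]
whenever $\sigma$ or $\tau$ lies in $I_{\ell_i}$.

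Setting $Y_i = y_i - 1$, I would translate these Steinberg conditions to ring relations by working through the generating pairs $(\sigma, \tau) \in \{\gamma_i, \sigma_i\}^2$. The choice $\sigma = \tau = \gamma_i$ yields $Y_i^2 = 0$; the pair $(\gamma_i, \sigma_i)$ yields $Y_i(\prod_j y_j^{\alpha_j^i} + \epsilon_i) = 0$ from $\chi_2$ and (using $y_i^{-1} - 1 = -Y_i$ modulo $Y_i^2$) $Y_i(\ell_i \prod_j y_j^{-\alpha_j^i} + \epsilon_i) = 0$ from $\chi_1$; comparing the relation from $(\sigma_i, \gamma_i)$ with that from $(\gamma_i, \sigma_i)$ on $\chi_2$ produces $(\ell_i - 1) Y_i = 0$. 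Together with $Y_i^2 = 0$, the relation $(\ell_i - 1)Y_i = 0$ forces the group-ring relation $(1+Y_i)^{p^{v_i}} - 1 = 0$, which justifies presenting the quotient over $\Z_p[Y_0, \ldots, Y_r]$ directly.

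The delicate step is establishing $(\epsilon_i + 1)Y_i = 0$. Using $Y_j^2 = 0$ to expand $y_j^{\tilde\alpha_j^i} = 1 + \tilde\alpha_j^i Y_j$, the Steinberg relation on $\chi_2$ takes the shape $((1 + \epsilon_i) + \beta)Y_i = 0$ for some $\beta \in \m$. If $\epsilon_i = -1$ this is trivial; if $\epsilon_i = +1$, then $(1 + \epsilon_i) + \beta = 2 + \beta$ is a unit in the local ring $(R_N^\epsilon)^\red$ (since $p$ is odd), so $Y_i = 0$, and in particular $(\epsilon_i + 1)Y_i = 0$. Equipped with $(\epsilon_i + 1)Y_i = 0$ and $(\ell_i - 1)Y_i = 0$, one substitutes $\epsilon_i Y_i = -Y_i$ and $\ell_i Y_i = Y_i$ inside the Steinberg relations to recast them in the stated form $Y_i(\prod_j(1 \pm \tilde\alpha_i^j Y_j) - 1) = 0$.

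For the converse direction (that these relations suffice), I would construct a character $\chi : G_{\Q,S} \to (\Z_p[Y_0, \ldots, Y_r]/\mathfrak{a})^\times$ by $\chi(\gamma_i) = 1+Y_i$ and $\chi(\sigma_i) = \prod_j(1+\tilde\alpha_i^j Y_j)$, use the relations in $\mathfrak{a}$ to check it factors through $\Gal(\Q(\zeta_N)/\Q)^{p\text{-part}}$, and then verify directly that $\psi(\kcyc\chi^{-1} \oplus \chi)$ is $\mathrm{US}^\epsilon_N$, which exhibits the inverse map. The principal anticipated obstacle is the Nakayama-style deduction of $(\epsilon_i + 1)Y_i = 0$; the remaining simplifications are formal manipulations with the Steinberg identities and the group-ring structure.
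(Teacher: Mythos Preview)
Your proposal is correct and follows essentially the same approach as the paper's proof: both establish that $R_\fl^\red(\kcyc) \to (R_N^\epsilon)^\red$ factors through $\Z_p[Y_0,\dots,Y_r]/\mathfrak{a}$ by reading off the diagonal entries of the Steinberg relations $V^{\epsilon_i}_{\rho^\red}(\gamma_i,\gamma_i)=V^{\epsilon_i}_{\rho^\red}(\gamma_i,\sigma_i)=0$ on the GMA $\rho^\red$ (which, since the reducibility ideal vanishes, is exactly your diagonal computation for $\chi_1,\chi_2$), and then verify the converse by exhibiting the $\mathrm{US}^\epsilon_N$ pseudorepresentation over $\Z_p[Y_0,\dots,Y_r]/\mathfrak{a}$. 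The only cosmetic difference is that the paper obtains $(\epsilon_i+1)Y_i=0$ when $\epsilon_i=+1$ by invoking Lemma~\ref{lem:epsilon=1 and unram}, whereas your observation that $2+\beta\in (R_N^\epsilon)^{\red,\times}$ reaches the same conclusion directly.
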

\begin{proof}
We consider $(E_N^\epsilon)^\red =E_N^\epsilon \otimes_{R_N^\epsilon} (R_N^\epsilon)^\red$. We write the base-change of $\rho_N^\epsilon$ to this algebra as $\rho^\red$, for simplicity. Write $\odia{-}: G_{\Q,S} \to ((R_N^\epsilon)^\red)^\times$ for the composite of $\dia{-}$ with the quotient $R_\fl^\red(\kcyc) \to (R_N^\epsilon)^\red$, which exists by Proposition \ref{prop:global US det is kcyc}. (We use $R_\fl^\red(\kcyc)$ even in the ordinary case, in light of Lemma \ref{lem:calculate R-red-flat}.) 

First we show that the map $R_\fl^\red(\kcyc) \to (R_N^\epsilon)^\red$ factors through $\Z_p[Y_0,\dots,Y_r]/\mathfrak{a}$. We can write $\rho^\red$ in GMA notation as
\[
\rho^\red = \ttmat{\kcyc\odia{-}^{-1}}{*}{*}{\odia{-}}.
\]
Since $V^{\epsilon_i}_{\rho^\red}(\gamma_i, \gamma_i) = (\rho^\red(\gamma_i)-1)^2=0$ in $(E_N^\epsilon)^\red$, we see that $Y_i^2=0$ in $(R_N^\epsilon)^\red$. Since $(1+Y_i)^{p^{v_i}}-1=0$, this implies that $p^{v_i}Y_i=0$ in $(R_N^\epsilon)^\red$. Moreover, by Lemma \ref{lem:epsilon=1 and unram}, if $\epsilon_i=+1$ and $v_i>0$, then $\rho^\red(\gamma_i)=1$; for such $i$, this implies that $Y_i=0$ in $(R_N^\epsilon)^\red$. We can rephrase this as $(\epsilon_i+1)Y_i=0$ for all $i$. 

From now on, consider $i$ such that $\epsilon_i =-1$. Already, we see that
\[
\odia{\sigma_i} = \prod_{j=0}^r y_j^{\alpha_j^i} = \prod_{j=0}^r (1+\tilde{\alpha}_j^iY_j)
\]
Since $V^{\epsilon_i}_{\rho^\red}(\gamma_i, \sigma_i) = (\rho^\red(\gamma_i)-1)(\rho^\red(\sigma_i)-1)=0$ in $(E_N^\epsilon)^\red$, we obtain 
\[
(\odia{\gamma_i}^{-1}-1)(\ell_i\odia{\sigma_i}^{-1}-1)=0, \quad (\odia{\gamma_i}-1)(\odia{\sigma_i}-1)=0.
\]
These imply
\[
0=Y_i\left(\prod_{j=0}^r(1-\tilde{\alpha}_i^jY_j)-1\right)=Y_i\left(\prod_{j=0}^r(1+\tilde{\alpha}_i^jY_j)-1\right).
\]
However, this last equation is redundant because
\[
\left(-\prod_{j=0}^r(1+\tilde{\alpha}_i^jY_j)\right) \left(\prod_{j=0}^r(1-\tilde{\alpha}_i^jY_j)-1\right) \equiv \left(\prod_{j=0}^r(1+\tilde{\alpha}_i^jY_j)-1\right) \pmod{Y_0^2,\dots,Y_r^2}.
\]

This shows that $R_\fl^\red(\kcyc) \to (R_N^\epsilon)^\red$ factors through $\Z_p[Y_0,\dots,Y_r]/\mathfrak{a}$. It remains to verify that the pseudorepresentation $D:G_{\Q,S} \to \Z_p[Y_0,\dots,Y_r]/\mathfrak{a}$ defined by $\psi(\kcyc\odia{-}^{-1}\oplus \odia{-})$ is $\mathrm{US}_N^\epsilon$. This is checked easily.
\end{proof}

\section{The case $\epsilon=(-1,1,1,\dots,1)$}
\label{sec:case1}

In this section, we consider the case where $\epsilon_0=-1$ and $\epsilon_i=1$ for $0 <  i \leq r$. Without loss of generality, we can and do, for this section, assume that the primes $\{\ell_i\}_{i=0}^r$ are ordered so that $\ell_i \equiv -1 \pmod{p}$ for $i=1,\dots,s$ and $\ell_i \not \equiv -1 \pmod{p}$ for $s < i \leq r$. Here $s$ is an integer, $0 \leq s \leq r$. The most interesting case is $s=r$, and, in fact, we immediately reduce to this case.

\subsection{Reduction to the case $s=r$} 
Let $N(s)=\prod_{i=0}^s \ell_i$ and $\epsilon(s) \in \{ \pm 1\}^{s+1}$ be defined by $\epsilon(s)_0=-1$ and $\epsilon(s)_i=1$ for $0 < i \leq s$. There is a natural map $\bT_N^\epsilon \onto \bT_{N(s)}^{\epsilon(s)}$ by restricting to the space of forms that are old at $\ell_i$ for $s < i \leq r$. There is also a natural surjection $R_N^\epsilon \onto R_{N(s)}^{\epsilon(s)}$, since $\rho_{N(s)}^{\epsilon(s)}$ is unramified (resp.\ finite-flat) at $\ell_i$ when $\ell_i \neq p$ (resp.\ $\ell_i = p$) and $s < i \leq r$. 

\begin{lem}
\label{lem:reduction to lower level}
The natural map $R_N^\epsilon \onto R_{N(s)}^{\epsilon(s)}$ is an isomorphism. Moreover, if the map $R_{N(s)}^{\epsilon(s)} \rsurj \bT_{N(s)}^{\epsilon(s)}$ is an isomorphism, then the surjections $R_{N}^{\epsilon} \rsurj \bT_{N}^{\epsilon}$ and $\bT_N^\epsilon \onto \bT_{N(s)}^{\epsilon(s)}$ of Proposition \ref{prop:R to T} are isomorphisms. 
\end{lem}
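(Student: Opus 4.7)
The plan is to prove each statement in turn. For the first, I would construct an inverse to the natural surjection $R_N^\epsilon \onto R_{N(s)}^{\epsilon(s)}$ using the universal property of $R_{N(s)}^{\epsilon(s)}$. The key observation is that Lemma \ref{lem:epsilon=1 and unram} applies to each prime $\ell_i$ with $s < i \leq r$ and $\ell_i \neq p$: for such $i$ we have $\epsilon_i = +1$ and $\ell_i \not\equiv -1, 0 \pmod{p}$, so the universal $\mathrm{US}_{\ell_i}^{+1}$ Cayley-Hamilton representation---and hence the universal pseudodeformation $D_N^\epsilon$---is unramified at $\ell_i$. Consequently $D_N^\epsilon$ factors through the quotient $G_{\Q,S} \onto G_{\Q,S'}$, where $S' \subseteq S$ is the set of places supporting $N(s)p\infty$. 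I would then verify that the induced pseudodeformation of $G_{\Q,S'}$ is $\mathrm{US}_{N(s)}^{\epsilon(s)}$: at the primes $\ell_0, \dots, \ell_s$ the local conditions are identical by construction, and at $p$ the two conditions also agree in every case---if $p \nmid N$ or $\epsilon_p = +1$, both sides require finite-flat; if $p \mid N$ and $\epsilon_p = -1$, both sides require ordinary (see Definition \ref{defn:E_p^epsilon}). This produces the desired inverse $R_{N(s)}^{\epsilon(s)} \to R_N^\epsilon$.

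For the second statement, I would contemplate the commutative diagram of surjections
\[
\xymatrix{
R_N^\epsilon \ar@{->>}[r] \ar@{->>}[d] & \bT_N^\epsilon \ar@{->>}[d] \\
R_{N(s)}^{\epsilon(s)} \ar@{->>}[r] & \bT_{N(s)}^{\epsilon(s)}
}
\]
whose horizontal arrows come from Proposition \ref{prop:R to T}, whose left vertical is the isomorphism just established, and whose right vertical is the natural projection induced by the inclusion of oldforms at $\ell_{s+1}, \dots, \ell_r$. By hypothesis the bottom row is an isomorphism as well, so the composite $R_N^\epsilon \to \bT_{N(s)}^{\epsilon(s)}$ (along either route) is an isomorphism. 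Since a composition of two surjective ring homomorphisms can only be an isomorphism when both factors are, we conclude that both $R_N^\epsilon \to \bT_N^\epsilon$ and $\bT_N^\epsilon \to \bT_{N(s)}^{\epsilon(s)}$ are isomorphisms.

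No serious obstacle stands out: the first part is essentially a bookkeeping exercise comparing local deformation conditions, with the substantive content supplied by Lemma \ref{lem:epsilon=1 and unram}, and the second part is a purely formal diagram chase. The only place that warrants care is the matching of conditions at $p$ when $p \mid N$, but the definitions have been arranged precisely so that this comparison is trivial.
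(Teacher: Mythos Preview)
Your proposal is correct and follows essentially the same approach as the paper. The paper's proof is terser---it simply observes that the isomorphy amounts to $\rho_N^\epsilon$ being unramified (resp.\ finite-flat) at $\ell_i$ for $s < i \leq r$ with $\ell_i \neq p$ (resp.\ $\ell_i = p$), citing Lemma~\ref{lem:epsilon=1 and unram} and \S\ref{subsec:epsilon_p=1}, and then draws the same commutative square you do---but the content is identical.
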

\begin{proof}
The isomorphy of $R_N^\epsilon \onto R_{N(s)}^{\epsilon(s)}$ can be rephrased as saying that, for all $s < i \leq r$, $\rho_N^\epsilon$ is unramified (resp.\ finite-flat) at $\ell_i$ if $\ell_i \neq p$ (resp.\ if $\ell_i = p$). This follows from Lemma \ref{lem:epsilon=1 and unram} and \S\ref{subsec:epsilon_p=1}. For the second statement, consider the commutative diagram of surjective ring homomorphisms
\begin{align*}
\begin{split}
\xymatrix{
R_N^\epsilon \ar[r] \ar[d]^(.4)\wr & \bT_{N}^{\epsilon} \ar[d] \\
R_{N(s)}^{\epsilon(s)} \ar[r] & \bT_{N(s)}^{\epsilon(s)}. 
}
\end{split}
\qedhere
\end{align*} 
\end{proof}

\subsection{The case $s=r$} 
\label{subsec:case1 r=s}

Now we assume that $s=r$ (i.e.\ that $\ell_i \equiv -1 \pmod{p}$ for $i=1,\dots, r$). We write $\Jm \subset R_{N}^{\epsilon}$ for the augmentation ideal, and $J^\red = \ker(R_{N}^{\epsilon} \onto (R_{N}^{\epsilon})^\red)$. We have the following consequence of Wiles's numerical criterion \cite[Appendix]{wiles1995}.

\begin{prop}
\label{prop:numerical crit}
The surjection $R_{N}^{\epsilon} \rsurj \bT_{N}^{\epsilon}$ is a isomorphism of complete intersection rings if and only if
\[
\# \Jm/\Jm^2  \le p^{v_p(\ell_0-1)} \cdot \prod_{i=1}^r p^{v_p(\ell_i+1)}.
\]
If this is the case, then equality holds.
\end{prop}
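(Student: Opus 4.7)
I plan to derive the result from Wiles's numerical criterion \cite[Appendix]{wiles1995} applied to the surjection $\phi: R_N^\epsilon \onto \bT_N^\epsilon$ of Proposition \ref{prop:R to T}, viewed as augmented $\Z_p$-algebras. To invoke the criterion I would first note that $\bT_N^\epsilon$ is a finite, flat, reduced $\Z_p$-algebra: finiteness and flatness are visible from the duality \eqref{eq:M and T duality}, while reducedness follows from the embedding $\bT_N^\epsilon \rinj \Z_p \oplus \bigoplus_{f} \sO_f$ of \eqref{eq:bT in product}, whose target is a product of domains.

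\textbf{Computation of the congruence ideal.} The key input is the congruence ideal $\eta_{\bT} := \phi_{\bT}(\Ann_{\bT_N^\epsilon}(I^\epsilon)) \subset \Z_p$ for the Eisenstein augmentation $\phi_{\bT}: \bT_N^\epsilon \onto \Z_p$. By Lemma \ref{lem:bT is a pull-back}, $\Ann_{\bT_N^\epsilon}(I^\epsilon) = \ker(\bT_N^\epsilon \ra \bT_N^{0,\epsilon})$, and reading off this kernel from the pullback decomposition $\bT_N^\epsilon \isoto \bT_N^{0,\epsilon} \times_{\Z_p/a_0(E^\epsilon_{2,N}) \Z_p} \Z_p$ identifies $\eta_{\bT} = a_0(E^\epsilon_{2,N}) \Z_p$. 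For our $\epsilon$ with $s = r$, formula \eqref{eq:constant term} together with $p > 3$ (so $24 \in \Z_p^\times$) gives
\[
\#\bigl(\Z_p / \eta_{\bT}\bigr) = p^{v_p(\ell_0-1)} \cdot \prod_{i=1}^r p^{v_p(\ell_i+1)},
\]
which matches the right-hand side of the stated inequality.

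\textbf{Chaining the inequalities.} I would then combine three inequalities. First, Wiles's criterion itself asserts
\[
\#(\Z_p/\eta_{\bT}) \leq \#(I^\epsilon/(I^\epsilon)^2),
\]
with equality exactly when $\bT_N^\epsilon$ is a complete intersection. Second, the surjection $\phi$ restricts to a surjection $\Jm \onto I^\epsilon$ of augmentation ideals, inducing a surjection of cotangent spaces and hence the inequality $\#(I^\epsilon/(I^\epsilon)^2) \leq \#(\Jm/\Jm^2)$, with equality iff $\phi$ is an isomorphism. Under the proposition's hypothesis $\#(\Jm/\Jm^2) \leq \#(\Z_p/\eta_{\bT})$, all three quantities coincide; the equality cases simultaneously give that $\bT_N^\epsilon$ is a complete intersection and that $R_N^\epsilon \risom \bT_N^\epsilon$, so $R_N^\epsilon$ is complete intersection too. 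Conversely, when $R_N^\epsilon \risom \bT_N^\epsilon$ is a complete intersection, the equality cases of Wiles's criterion yield $\#(\Jm/\Jm^2) = \#(\Z_p/\eta_{\bT})$, giving the second assertion.

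\textbf{Main obstacle.} No serious obstacle arises in the proof of this proposition itself: everything is packaged inside Wiles's criterion. The genuine difficulty that this proposition sets up for the remainder of \S\ref{sec:case1} is producing the matching upper bound $\#(\Jm/\Jm^2) \leq p^{v_p(\ell_0-1)} \prod_{i=1}^r p^{v_p(\ell_i+1)}$ on the cotangent space of $R_N^\epsilon$, which will require carefully controlling the GMA data \eqref{eq:GMA} via the Galois-cohomological descriptions of $B_N^{\epsilon,\min}$ and $C_N^{\epsilon,\min}$ collected in Lemma \ref{lem:info about B and C}.
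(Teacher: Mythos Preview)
Your proposal is correct and follows essentially the same approach as the paper: both reduce to Wiles's numerical criterion after identifying the congruence ideal $\eta_\bT$ with $a_0(E^\epsilon_{2,N})\Z_p$ via Lemma~\ref{lem:bT is a pull-back} and computing its order from \eqref{eq:constant term}. The paper's proof is simply a terse citation of Theorem~\ref{thm:congruence number} and the numerical criterion (deferring details to \cite[Thm.~7.1.1]{WWE3}), so your write-up is a faithful expansion of it.

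One small point of phrasing: the assertion that ``$\#(I^\epsilon/(I^\epsilon)^2) \leq \#(\Jm/\Jm^2)$, with equality iff $\phi$ is an isomorphism'' is not an elementary fact about surjections of local rings---a surjection inducing an isomorphism on augmentation cotangents need not be an isomorphism. What makes it true here is precisely Wiles's criterion applied to $R_N^\epsilon \rsurj \bT_N^\epsilon$ (once you already know $\bT_N^\epsilon$ is complete intersection, the equality $\#(\Jm/\Jm^2)=\#(\Z_p/\eta_\bT)$ forces the isomorphism). It is cleaner, and closer to the paper's formulation, to invoke the criterion directly in the form $\#(\Z_p/\eta_\bT) \leq \#(\Jm/\Jm^2)$ with equality iff $R_N^\epsilon \risom \bT_N^\epsilon$ is complete intersection, rather than factoring through the intermediate $\#(I^\epsilon/(I^\epsilon)^2)$.
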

\begin{proof}
The surjection comes from Proposition \ref{prop:R to T}.  Note that 
\[
p^{v_p(\ell_0-1)} \cdot \prod_{i=1}^r p^{v_p(\ell_i+1)} = \# \Z_p/a_0(E^\epsilon)\Z_p.
\]
The proposition follows from Theorem \ref{thm:congruence number} and the numerical criterion, as in \cite[Thm.\ 7.1.1]{WWE3}.
\end{proof}

\begin{lem}
\label{lem:Jm/Jred}
There is an isomorphism 
\[
\Jm/J^\red \cong \Z_p/(\ell_0-1)\Z_p
\]
sending $d_{\gamma_0}-1$ to $1$, and $\Jm^2 \subset J^\red$.
\end{lem}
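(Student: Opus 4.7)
The plan is to derive both statements by specializing the explicit presentation of $(R_N^\epsilon)^\red$ furnished by Lemma \ref{lem:computation of Rred}. In the case at hand ($\epsilon_0=-1$, $\epsilon_i=+1$ for $1\le i\le r$, and $s=r$), the generator $(\epsilon_i+1)Y_i=2Y_i$ of $\mathfrak{a}$ forces $Y_i=0$ in the quotient for each $i\ge 1$, since $p>3$ implies $2\in\Z_p^\times$. With $Y_i\equiv 0$ for $i\ge 1$ and using the convention $\alpha_0^0=0$, the remaining generators involving $Y_0$ (both the $\prod(1-\tilde\alpha_0^jY_j)$ and $\prod(1+\tilde\alpha_0^jY_j)$ expressions) reduce to $0$, leaving only $Y_0^2$ and $(\ell_0-1)Y_0$. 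Thus I would conclude
\[
(R_N^\epsilon)^\red \;\cong\; \Z_p[Y_0]/(Y_0^2,\,(\ell_0-1)Y_0).
\]

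Next, I would identify $\Jm/J^\red$ with the augmentation ideal of $(R_N^\epsilon)^\red$. The augmentation $R_N^\epsilon\twoheadrightarrow\Z_p$ corresponds to $\psi(\Z_p(1)\oplus\Z_p)$, which is reducible, so it factors through $(R_N^\epsilon)^\red$; in particular $J^\red\subset\Jm$, and $\Jm/J^\red$ is the augmentation ideal of the reducible quotient. Under the presentation above, the augmentation sends $Y_0\mapsto 0$, so this ideal is $(Y_0)$, which as a $\Z_p$-module is cyclic of order $\ell_0-1$, i.e.\ isomorphic to $\Z_p/(\ell_0-1)\Z_p$. With the GMA structure fixed before Lemma \ref{lem:computation of Rred}, the $(2,2)$-entry satisfies $d_\tau\mapsto \odia{\tau}$ in $(R_N^\epsilon)^\red$, so the image of $d_{\gamma_0}-1$ is $\odia{\gamma_0}-1=y_0-1=Y_0$, which is precisely the generator corresponding to $1\in\Z_p/(\ell_0-1)\Z_p$.

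Finally, the inclusion $\Jm^2\subset J^\red$ is equivalent to $(\Jm/J^\red)^2=0$ inside $(R_N^\epsilon)^\red$, and this is immediate from $(Y_0)^2=(Y_0^2)=0$ in the explicit presentation. I do not expect a significant obstacle: the hard work was already done in Lemma \ref{lem:computation of Rred}, and the present lemma amounts to the observation that only the $Y_0$-coordinate survives once the relations at primes $\ell_i$ with $\epsilon_i=+1$ are imposed.
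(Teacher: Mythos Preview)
Your proof is correct and follows the same approach as the paper: both arguments specialize the presentation of $(R_N^\epsilon)^\red$ from Lemma~\ref{lem:computation of Rred} to obtain $(R_N^\epsilon)^\red \cong \Z_p[Y_0]/(Y_0^2,(\ell_0-1)Y_0)$, then read off the augmentation ideal and the vanishing of its square. You have simply supplied more detail than the paper's terse version, in particular the reduction $Y_i=0$ for $i\geq 1$ and the identification $d_{\gamma_0}-1\mapsto Y_0$ via the $(2,2)$-entry of $\rho^\red$.
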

\begin{proof}
By Lemma \ref{lem:computation of Rred}, we have 
\[
(R_N^\epsilon)^\red = \Z_p[Y_0]/((\ell_0-1)Y_0,Y_0^2),
\]
and we can easily see that $d_{\gamma_0}-1$ maps to $Y_0$ and generates the image of $\Jm$. Since $Y_0^2=0$, we have the second statement. 
\end{proof}

\begin{lem}
\label{lem:Jred/JmJred}
There is a surjection
\[
\Z_p/(\ell_0-1)\Z_p \oplus \left(\bigoplus_{i=1}^r \Z_p/(\ell_i+1)\Z_p \right) \onto J^\red/\Jm J^\red
\]
given by $e_i \mapsto b_{\gamma_0}c_{\gamma_i}$.
\end{lem}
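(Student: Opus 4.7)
The plan is to deduce the surjection from the general description of the reducibility ideal via the GMA structure, combined with the explicit bounds on $B_N^{\epsilon,\min}$ and $C_N^{\epsilon,\min}$ already obtained in Lemma~\ref{lem:info about B and C}. The starting observation is that, in the GMA presentation \eqref{eq:GMA} of $E_N^\epsilon$, the reducibility ideal $J^\red$ is precisely the image of the multiplication map $m : B_N^\epsilon \otimes_{R_N^\epsilon} C_N^\epsilon \to R_N^\epsilon$, $b \otimes c \mapsto bc$.

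Reducing modulo $\Jm$ and using $R_N^\epsilon/\Jm = \Z_p$, one has $(B_N^\epsilon \otimes_{R_N^\epsilon} C_N^\epsilon)/\Jm = B_N^{\epsilon,\min} \otimes_{\Z_p} C_N^{\epsilon,\min}$, so $m$ induces a surjection
\[
B_N^{\epsilon,\min} \otimes_{\Z_p} C_N^{\epsilon,\min} \twoheadrightarrow J^\red/\Jm J^\red, \qquad b \otimes c \mapsto bc.
\]
This reduces the statement to understanding the two factors on the left.

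Next I would specialize Lemma~\ref{lem:info about B and C} to the present case $\epsilon=(-1,1,\dots,1)$. Its final surjection gives
\[
\bigoplus_{i=0}^r \Z_p/(\epsilon_i+1)\Z_p \twoheadrightarrow B_N^{\epsilon,\min}, \qquad e_i \mapsto b_{\gamma_i}.
\]
Since $p\neq 2$, the factor $\Z_p/(\epsilon_i+1)\Z_p$ vanishes for each $i \geq 1$, so $B_N^{\epsilon,\min}$ is generated by $b_{\gamma_0}$ and is a quotient of $\Z_p$. Similarly, the same lemma gives a surjection
\[
\Z_p/(\ell_0-1)\Z_p \oplus \bigoplus_{i=1}^r \Z_p/(\ell_i+1)\Z_p \twoheadrightarrow C_N^{\epsilon,\min}, \qquad e_i \mapsto c_{\gamma_i},
\]
since $\ell_i + \epsilon_i = \ell_0 - 1$ when $i=0$ and $\ell_i + 1$ otherwise.

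Combining these, the tensor product $B_N^{\epsilon,\min} \otimes_{\Z_p} C_N^{\epsilon,\min}$ is a quotient of $\Z_p/(\ell_0-1)\Z_p \oplus \bigoplus_{i=1}^r \Z_p/(\ell_i+1)\Z_p$, with $e_i$ corresponding to $b_{\gamma_0} \otimes c_{\gamma_i}$. Composing with the surjection $B_N^{\epsilon,\min} \otimes_{\Z_p} C_N^{\epsilon,\min} \twoheadrightarrow J^\red/\Jm J^\red$ from the first paragraph yields the claimed surjection, sending $e_i$ to $b_{\gamma_0} c_{\gamma_i}$. There is no substantive obstacle here: the only thing to verify is that the reducibility ideal is indeed the image of $m$, which is the standard property of the GMA structure, so the lemma is essentially a bookkeeping consequence of Lemma~\ref{lem:info about B and C}.
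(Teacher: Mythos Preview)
Your proposal is correct and follows essentially the same approach as the paper: specialize Lemma~\ref{lem:info about B and C} to $\epsilon=(-1,1,\dots,1)$ to get surjections onto $B_N^{\epsilon,\min}$ and $C_N^{\epsilon,\min}$, invoke the general GMA fact that $B_N^{\epsilon,\min}\otimes C_N^{\epsilon,\min}\twoheadrightarrow J^\red/\Jm J^\red$ via $b\otimes c\mapsto bc$, and compose. The paper simply cites \cite[Prop.\ 1.5.1]{BC2009} for this GMA surjection rather than deriving it from the description of $J^\red$ as the image of $m$, but the argument is the same.
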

\begin{proof}
By Lemma \ref{lem:info about B and C}, we have surjections
\[
\Z_p \onto B_N^{\epsilon,{\min}}, \quad 1 \mapsto b_{\gamma_0}
\]
and 
\[
\Z_p/(\ell_0-1)\Z_p \oplus \left(\bigoplus_{i=1}^r \Z_p/(\ell_i+1)\Z_p \right) \onto C_N^{\epsilon,{\min}}, \quad e_i \mapsto c_{\gamma_i}.
\]
By \cite[Prop.\ 1.5.1]{BC2009}, in any $A$-GMA $E=\sm{A}{B}{C}{A}$, the structure map $B \otimes_A C \to A$ has image equal to the reducibility ideal of $E$.  Applying this to the $R^\epsilon_N$-GMA $E^\epsilon_N$ of \eqref{eq:GMA} we have an $R^\epsilon_N$-module surjection $B_N^{\epsilon} \otimes_{R^\epsilon_N}  C_N^{\epsilon} \onto J^\red$. Tensoring this by $R^\epsilon_N/\Jm=\Z_p$, we have a surjection
\begin{equation}
\label{eq:GMA cotangent control}
B_N^{\epsilon,{\min}} \otimes_{\Z_p}  C_N^{\epsilon,{\min}} \onto J^\red/\Jm J^\red, \quad b \otimes c \mapsto bc.
\end{equation}
Combining these, we have the lemma.
\end{proof}

\begin{lem}
\label{lem:bc in Jm^2}
The element $b_{\gamma_0}c_{\gamma_0} \in R_N^\epsilon$ is in $\Jm^2$. 
\end{lem}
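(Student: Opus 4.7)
The plan is to exploit the determinant and trace conditions that the $\mathrm{US}^\epsilon_N$ condition imposes on inertia at $\ell_0$, and thereby express $b_{\gamma_0}c_{\gamma_0}$ as a manifest square in $\mathfrak{J}^{\min}$.

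First I would recall that in the GMA structure \eqref{eq:GMA}, the multiplication $b_{\gamma_0} c_{\gamma_0}$ denotes the image of $b_{\gamma_0}\otimes c_{\gamma_0}$ under the structure map $B^\epsilon_N\otimes C^\epsilon_N \to R^\epsilon_N$, and the induced pseudorepresentation satisfies
\[
D^\epsilon_N(\gamma_0) \;=\; a_{\gamma_0}\, d_{\gamma_0} \;-\; b_{\gamma_0}\, c_{\gamma_0},\qquad \mathrm{Tr}_{D^\epsilon_N}(\gamma_0) \;=\; a_{\gamma_0}+d_{\gamma_0}.
\]

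Next I would apply the two local inputs. Since $\gamma_0\in I_{\ell_0}$ with $\ell_0\ne p$, Proposition~\ref{prop:global US det is kcyc} gives $D^\epsilon_N(\gamma_0)=\kcyc(\gamma_0)=1$, and Lemma~\ref{lem:USell implies pseudo-unram} (applied with $\epsilon_\ell=\epsilon_0=-1$) gives $\mathrm{Tr}_{D^\epsilon_N}(\gamma_0)=2$. Substituting $d_{\gamma_0}=2-a_{\gamma_0}$ into the determinant equation yields
\[
b_{\gamma_0}\, c_{\gamma_0} \;=\; a_{\gamma_0}\,d_{\gamma_0}-1 \;=\; a_{\gamma_0}(2-a_{\gamma_0})-1 \;=\; -(a_{\gamma_0}-1)^{2}.
\]

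Finally, since $a_{\gamma_0}\bmod \mathfrak{J}^{\min}$ is the evaluation of $\omega$ at $\gamma_0\in I_{\ell_0}$ with $\ell_0\ne p$, one has $a_{\gamma_0}\equiv 1\pmod{\mathfrak{J}^{\min}}$, so $a_{\gamma_0}-1\in \mathfrak{J}^{\min}$, and hence $b_{\gamma_0}c_{\gamma_0}=-(a_{\gamma_0}-1)^2\in (\mathfrak{J}^{\min})^{2}$. The only thing to double-check is that the GMA diagonal normalization from Definition~\ref{def:GMA structure} really forces $a_{\gamma_0}\bmod\mathfrak{J}^{\min}$ to be $\omega(\gamma_0)=1$ rather than $1$ in a trivial way; this is routine given the chosen lift of the residual diagonal. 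I do not foresee a serious obstacle here: the entire argument is local at $\ell_0$ and is a direct consequence of the trace/determinant normalizations already established.
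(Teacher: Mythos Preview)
Your proof is correct and lands on exactly the same identity as the paper: $b_{\gamma_0}c_{\gamma_0}=-(a_{\gamma_0}-1)^2$ with $a_{\gamma_0}-1\in\Jm$. The paper obtains this in one stroke by reading off the $(1,1)$-entry of the relation $V^{\epsilon_0}_{\rho^\epsilon_N}(\gamma_0,\gamma_0)=(\rho^\epsilon_N(\gamma_0)-1)^2=0$ coming directly from the $\mathrm{US}^{-1}_{\ell_0}$ condition, whereas you route through the trace and determinant consequences (Lemma~\ref{lem:USell implies pseudo-unram} and Proposition~\ref{prop:global US det is kcyc}), which were themselves derived from that same relation; so the difference is cosmetic.

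One small imprecision: in your last paragraph you say $a_{\gamma_0}\bmod\Jm$ is $\omega(\gamma_0)$, but $R^\epsilon_N/\Jm=\Z_p$, so the correct statement is that $a_{\gamma_0}\bmod\Jm=\kcyc(\gamma_0)$ (cf.\ the display $\rho_N^{\epsilon,\min}=\sm{\kcyc}{b}{c}{1}$ in the proof of Lemma~\ref{lem:info about B and C}). Since $\gamma_0\in I_{\ell_0}$ with $\ell_0\neq p$, both equal $1$, so your conclusion stands.
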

\begin{proof}
Since $V^{\epsilon_0}_{\rho^\epsilon_N}(\gamma_0, \gamma_0) = (\rho_N^\epsilon(\gamma_0)-1)^2=0$, we see that $(a_{\gamma_0}-1)^2+b_{\gamma_0}c_{\gamma_0} =0$. Since $a_{\gamma_0}-1 \in \Jm$, we have the lemma.
\end{proof}

We have arrived at the main theorem. 
\begin{thm}
\label{thm:star main}
Let $N=\ell_0\ell_1 \dotsm \ell_r$ and $\epsilon=(-1,1,\dots,1)$. Then the map $R_{N}^{\epsilon} \rsurj \bT_{N}^{\epsilon}$ is a isomorphism of augmented $\Z_p$-algebras, and both rings are complete intersection. The ideal $J^{\min}$ is generated by the elements $b_{\gamma_0}c_{\gamma_i}$ for $i=1,\dots,r$ together with $d_{\gamma_0}-1$. There is an exact sequence
\begin{equation}
\label{eq:star ses}
0 \to \bigoplus_{i=1}^r \Z_p/(\ell_i+1)\Z_p  \to I^\epsilon/{I^\epsilon}^2 \to \Z_p/(\ell_0-1)\Z_p \to 0.
\end{equation}
\end{thm}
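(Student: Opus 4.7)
The plan is to combine Lemmas \ref{lem:Jm/Jred}, \ref{lem:Jred/JmJred}, and \ref{lem:bc in Jm^2} to bound $\#\Jm/\Jm^2$ from above, and then to invoke Wiles's numerical criterion (Proposition \ref{prop:numerical crit}) to conclude simultaneously that $R_N^\epsilon \isoto \bT_N^\epsilon$ and that both rings are complete intersection. Once equality holds in the numerical criterion, the surjections produced along the way automatically become isomorphisms, delivering the claimed short exact sequence; Nakayama's lemma then promotes the resulting generators of $\Jm/\Jm^2$ to generators of $J^{\min}$ as an ideal.

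To execute the bound: Lemma \ref{lem:Jm/Jred} provides the two-step filtration $\Jm^2 \subset J^\red \subset \Jm$ together with the identification $\Jm/J^\red \cong \Z_p/(\ell_0-1)\Z_p$ via $d_{\gamma_0}-1$, yielding an exact sequence
\[
0 \to J^\red/\Jm^2 \to \Jm/\Jm^2 \to \Z_p/(\ell_0-1)\Z_p \to 0.
\]
Since $J^\red \subset \Jm$ implies $\Jm J^\red \subset \Jm^2$, there is a surjection $J^\red/\Jm J^\red \onto J^\red/\Jm^2$. Composing with the surjection of Lemma \ref{lem:Jred/JmJred} and using Lemma \ref{lem:bc in Jm^2} to kill the summand generated by $b_{\gamma_0}c_{\gamma_0}$, I obtain
\[
\bigoplus_{i=1}^r \Z_p/(\ell_i+1)\Z_p \onto J^\red/\Jm^2,
\]
whence $\#\Jm/\Jm^2 \le p^{v_p(\ell_0-1)} \prod_{i=1}^r p^{v_p(\ell_i+1)}$. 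This is exactly the inequality hypothesized by Proposition \ref{prop:numerical crit}.

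Applying the numerical criterion yields the isomorphism $R_N^\epsilon \isoto \bT_N^\epsilon$ of augmented $\Z_p$-algebras together with the complete intersection statement, and forces equality in the cardinality bound. Equality makes both surjections above isomorphisms; in particular $J^\red/\Jm^2 \cong \bigoplus_{i=1}^r \Z_p/(\ell_i+1)\Z_p$, and the short exact sequence \eqref{eq:star ses} then falls out after identifying $\Jm$ with $I^\epsilon$. The classes of $d_{\gamma_0}-1$ and $b_{\gamma_0}c_{\gamma_1}, \dots, b_{\gamma_0}c_{\gamma_r}$ span $\Jm/\Jm^2$ by construction, so by Nakayama they generate $J^{\min}$. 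The genuine difficulty has already been resolved upstream: the cotangent computations supporting Lemmas \ref{lem:Jm/Jred}--\ref{lem:bc in Jm^2} and Ohta's congruence number theorem feeding Proposition \ref{prop:numerical crit} are the essential inputs. At the level of this theorem, the only delicate bookkeeping is tracking the triple filtration $\Jm^2 \subset J^\red \subset \Jm$ and the observation that $b_{\gamma_0}c_{\gamma_0}$ is absorbed by $\Jm^2$ rather than contributing to $J^\red/\Jm^2$.
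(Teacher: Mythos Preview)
Your argument is correct and follows essentially the same route as the paper's proof: bound $\#\Jm/\Jm^2$ via the filtration $\Jm^2 \subset J^\red \subset \Jm$ using Lemmas \ref{lem:Jm/Jred}, \ref{lem:Jred/JmJred}, \ref{lem:bc in Jm^2}, then invoke Proposition \ref{prop:numerical crit} to turn the inequality into an equality and extract the exact sequence and generators. Your explicit mention of the surjection $J^\red/\Jm J^\red \onto J^\red/\Jm^2$ (from $\Jm J^\red \subset \Jm^2$) is a helpful clarification of a step the paper leaves implicit.
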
 
\begin{proof}
By Lemma \ref{lem:Jm/Jred}, there is an exact sequence
\begin{equation}
\label{eq:proof ses}
0 \to J^\red/\Jm^2 \to \Jm/\Jm^2 \to \Z_p/(\ell_0-1)\Z_p \to 0
\end{equation}
Combining Lemmas \ref{lem:Jred/JmJred} and \ref{lem:bc in Jm^2}, we see that there is a surjection
\begin{equation}
\label{eq:onto Jred/Jm^2}
\bigoplus_{i=1}^r \Z_p/(\ell_i+1)\Z_p \onto J^\red/\Jm^2
\end{equation}
given by $e_i \mapsto b_{\gamma_0}c_{\gamma_i}$. This shows that
\[
\# \Jm/\Jm^2  \le p^{v_p(\ell_0-1)} \cdot \prod_{i=1}^r p^{v_p(\ell_i+1)}.
\]
By Proposition \ref{prop:numerical crit}, this shows that $R_{N}^{\epsilon} \rsurj \bT_{N}^{\epsilon}$ is an isomorphism of complete intersection rings, and that this inequality is actually equality. This implies that \eqref{eq:onto Jred/Jm^2} is an isomorphism. Using Lemma \ref{lem:Jm/Jred} and Nakayama's lemma, this shows that $J^{\min}$ is generated by the stated elements. Since $\Jm$ maps isomorphically onto $I^\epsilon$, the desired sequence follows from \eqref{eq:proof ses}.
\end{proof}

\section{The case $\epsilon=(-1,-1)$}
\label{sec:ep=(-1,-1)}

In this section, we assume that $r=1$ and also that $\epsilon=(-1,-1)$.

\subsection{No interesting primes} If $\ell_i \not \equiv 1 \pmod{p}$ for $i=0,1$, then there are no cusp forms congruent to the Eisenstein series.

\begin{thm}
If $\ell_i \not \equiv 1 \pmod{p}$ for $i=0,1$, then $\bT_N^{\epsilon}= \Z_p$ and $\bT_N^{\epsilon,0}=0$.
\end{thm}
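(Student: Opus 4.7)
The plan is to prove $\bT_N^\epsilon = \Z_p$ via the $R = \bT$ strategy, but in this degenerate situation we do not need Wiles's numerical criterion — instead, we will show directly that $R_N^\epsilon = \Z_p$ by exhibiting that the universal pseudodeformation is forced to be the trivial reducible one $\psi(\kcyc \oplus 1)$. The conclusion $\bT_N^{\epsilon,0} = 0$ will then follow from Lemma \ref{lem:bT is a pull-back}.

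The first step is to observe that $C_N^\epsilon = 0$. Indeed, since $\epsilon_0 = \epsilon_1 = -1$, the surjection of Lemma \ref{lem:info about B and C} reads
\[
\Z_p/(\ell_0-1)\Z_p \oplus \Z_p/(\ell_1-1)\Z_p \onto C_N^{\epsilon,\min}.
\]
By hypothesis, $\ell_0 - 1$ and $\ell_1 - 1$ are units in $\Z_p$, so each summand vanishes and $C_N^{\epsilon,\min} = 0$. Nakayama's lemma applied to the finitely generated $R_N^\epsilon$-module $C_N^\epsilon$ then gives $C_N^\epsilon = 0$.

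Next, since the reducibility ideal of $D_N^\epsilon$ is by definition the image of the multiplication map $B_N^\epsilon \otimes_{R_N^\epsilon} C_N^\epsilon \to R_N^\epsilon$ in the GMA structure \eqref{eq:GMA}, this ideal vanishes and hence $R_N^\epsilon = (R_N^\epsilon)^\red$. Using the presentation of Lemma \ref{lem:computation of Rred}, the defining ideal $\mathfrak{a}$ contains $(\ell_i - 1)Y_i$ for $i=0,1$; since $(\ell_i - 1) \in \Z_p^\times$, these relations force $Y_0 = Y_1 = 0$, so $(R_N^\epsilon)^\red = \Z_p$. Thus $R_N^\epsilon = \Z_p$.

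Finally, by Proposition \ref{prop:R to T}, $\bT_N^\epsilon$ is a quotient of $R_N^\epsilon = \Z_p$; since the augmentation map $\bT_N^\epsilon \to \Z_p$ of \S\ref{subsubsec:mf} is a surjection to the $p$-torsion-free ring $\Z_p$, we conclude $\bT_N^\epsilon = \Z_p$. Lastly, we have $a_0(E^\epsilon_{2,N}) = -\tfrac{1}{24}(\ell_0-1)(\ell_1-1) \in \Z_p^\times$ by \eqref{eq:constant term}, so the pull-back description of Lemma \ref{lem:bT is a pull-back} becomes $\Z_p = \bT_N^\epsilon \cong \bT_N^{\epsilon,0} \times \Z_p$, which forces $\bT_N^{\epsilon,0} = 0$. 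There is no real obstacle here; the content is simply to notice that the vanishing of the relevant Galois-cohomological obstruction module $C_N^{\epsilon,\min}$ is immediate from the hypothesis, after which the rest is bookkeeping with the constructions of \S\ref{sec:pseudodef ring} and \S\ref{sec:modular}.
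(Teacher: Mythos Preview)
Your proof is correct and follows essentially the same approach as the paper: show $C_N^\epsilon = 0$ via Lemma \ref{lem:info about B and C} and Nakayama, conclude $J^\red = 0$, then use Lemma \ref{lem:computation of Rred} to get $(R_N^\epsilon)^\red = \Z_p$, hence $R_N^\epsilon = \Z_p$. You have filled in a few details the paper leaves implicit (the Nakayama step for $C_N^\epsilon$, and the derivation of $\bT_N^{\epsilon,0} = 0$ from the fiber-product description), but the argument is the same.
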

\begin{proof}
It is enough to show that $R_N^\epsilon=\Z_p$. By Lemma \ref{lem:computation of Rred}, we have $R_N^{\epsilon,\red} = \Z_p$ and by Lemma \ref{lem:info about B and C} we have $C_N^{\epsilon}=0$, so $J^\red=0$. This implies $R_N^\epsilon=\Z_p$.
\end{proof}

\subsection{Generators of $B_N^\epsilon$} Since nothing interesting happens if there are no interesting primes, we now assume that $\ell_0 \equiv 1 \pmod{p}$. We emphasize that, in this section, we do not assume that $\ell_1 \ne p$. Recall the notation $a_\tau,b_\tau,c_\tau, d_\tau$ for $\tau \in G_{\Q,S}$ from \eqref{eq:GMA} and the elements $\gamma_i, \sigma_i \in G_{\Q,S}$ from \S \ref{subsec:notation}.

\begin{lem}
\label{lem:b_sig and b_gam generate}
Assume that $\ell_1$ is not a $p$-th power modulo $\ell_0$. Then the subset $\{b_{\gamma_0},b_{\sigma_0}\} \subset B_N^\epsilon$ generates $B_N^\epsilon$ as a $R_N^\epsilon$-module.
\end{lem}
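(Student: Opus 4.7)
My plan is to reduce the claim to a standard Kummer-theoretic calculation. First, because $B_N^\epsilon$ is a finitely generated $R_N^\epsilon$-module, Nakayama's lemma reduces the claim to showing that $b_{\gamma_0}$ and $b_{\sigma_0}$ generate the $\F_p$-vector space $B_N^\epsilon/\mathfrak{m}_{R_N^\epsilon} B_N^\epsilon = B_N^{\epsilon,{\min}}/p B_N^{\epsilon,{\min}}$. By Lemma \ref{lem:info about B and C} applied to $r=1$ and $\epsilon = (-1,-1)$, this quotient is a surjective image of $B_\star \otimes_{R_\star} \F_p$, where $\star = \fl$ if $\ell_1 \neq p$ and $\star = \ord$ if $\ell_1 = p$; by Lemmas \ref{lem:B_fl and C_fl} and \ref{lem:B_ord and C_ord}, the source is $2$-dimensional over $\F_p$ with basis $\{b_{\star, \gamma_0}, b_{\star, \gamma_1}\}$. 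It thus suffices to prove that $\{b_{\star, \gamma_0}, b_{\star, \sigma_0}\}$ is also a basis of $B_\star \otimes_{R_\star} \F_p$.

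Second, I invoke the perfect pairing of Lemma \ref{lem:dual B C and H1} (resp.\ its $\ord$ counterpart coming from \eqref{eq:BC and exts ord}) between $B_\star \otimes_{R_\star} \F_p$ and the appropriate $H^1$ of $\Z[1/Np]$ with coefficients in $\F_p(1)$, the latter having basis $\{b_0, b_1\}$. The pair $\{b_{\star, \gamma_0}, b_{\star, \sigma_0}\}$ is a basis precisely when the $2 \times 2$ matrix $\bigl(b_i(\tau)\bigr)_{i \in \{0,1\},\, \tau \in \{\gamma_0, \sigma_0\}}$ is invertible. Since $\ell_0 \equiv 1 \pmod p$ yields $\omega(\sigma_0) = 1$, all four entries are well-defined in $\F_p$ independently of cocycle choice. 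Moreover, $b_0(\gamma_0) = 1$ by normalization, and $b_1(\gamma_0) = 0$ because $b_1$ is unramified away from $\{\ell_1, p\}$ and $\ell_0$ lies in neither set. The determinant therefore reduces to $b_1(\sigma_0)$, and it remains to prove $b_1(\sigma_0) \neq 0$.

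Third, I compute $b_1(\sigma_0)$ via Kummer theory. Since $\ell_1 \neq \ell_0$, the rational integer $\ell_1$ is a unit at $\ell_0$, so the restriction of $b_1$ to $G_{\ell_0}$ is unramified. Under the Kummer identification $H^1(G_{\ell_0}, \F_p(1)) \cong \Q_{\ell_0}^\times/(\Q_{\ell_0}^\times)^p$, this restriction corresponds to $\ell_1$, and evaluation on the Frobenius lift $\sigma_0$ gives $b_1(\sigma_0) = \overline{\ell_1}^{(\ell_0-1)/p} \in \mu_p \subset \F_{\ell_0}^\times$, i.e.\ the $p$-th power residue symbol of $\ell_1$ modulo $\ell_0$. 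This equals the identity of $\mu_p$ if and only if $\overline{\ell_1}$ is a $p$-th power in $\F_{\ell_0}^\times$, which is excluded by hypothesis.

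The main technical step is this last Kummer computation: the preceding reductions are formal consequences of the GMA structure and the duality between $B_\star$ and Galois cohomology already established in the paper, while the identification of $b_1(\sigma_0)$ with the $p$-th power residue symbol requires care in tracking the normalizations built into the definitions of $b_1$ (Definition \ref{defn:bc in H1}) and $\sigma_0$ (\S\ref{subsec:notation}).
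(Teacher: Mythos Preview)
Your proof is correct and follows essentially the same approach as the paper: reduce by Nakayama to the two-dimensional space $B_\star \otimes_{R_\star} \F_p$, use the duality with $H^1(\Z[1/Np],\F_p(1))$ and the known basis $\{b_0,b_1\}$ to reduce to $b_1(\sigma_0) \neq 0$, and then identify this value with the $p$-th power residue symbol of $\ell_1$ modulo $\ell_0$ via Kummer theory. The paper phrases the last step simply as ``class field theory,'' while you spell out the residue-symbol computation explicitly; also, the paper invokes the dual-basis relation $b_1(b_{\star,\gamma_j}) = \partial_{1j}$ directly rather than writing out the $2\times 2$ determinant, but these are the same argument.
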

\begin{proof}
We give the proof in the case $\ell_1=p$; the case $\ell_1 \ne p$ is exactly analogous, changing `ordinary' to `finite-flat' everywhere. Because $B^{\min}_{\ord}$ surjects onto $B^\epsilon_N$ and by Nakayama's lemma, it is enough to show that the images $\bar b_{\ord,\gamma_0},\bar b_{\ord,\sigma_0}$ of $b_{\ord,\gamma_0}, b_{\ord, \sigma_0}$ in $B^{\min}_\ord/pB^{\min}_\ord$ generate $B^{\min}_\ord/pB^{\min}_\ord$.

Using $b_i$, $\tilde b_i$ defined in \S\ref{subsec:label} and the lemmas there, we know that $\{\bar b_{\ord,\gamma_0}, \bar b_{\ord,\gamma_1}\}$ is a basis for $B^{\min}_\ord/pB^{\min}_\ord$ and $b_1(\bar b_{\ord,\gamma_j}) = \partial_{1j}$ for $j = 0,1$. Hence it is enough to show that $b_1(\bar b_{\ord,\sigma_0}) \neq 0$. As in the proof of Lemma \ref{lem:dual B C and H1}, the fact that $\omega(\sigma_0)=1$ implies that $b_1(\bar b_{\ord,\sigma_0})=\tilde{b}_1(\sigma_0)$. Because $\ell_1$ is not a $p$-th power modulo $\ell_0$, class field theory implies that $\tilde{b}_1(\sigma_0) \neq 0$. 
\end{proof}

\begin{prop}
\label{prop:bcs in Jm^2}
Assume that $\ell_1$ is not a $p$-th power modulo $\ell_0$. Then 
\[
b_{\gamma_0}c_{\gamma_0},\ b_{\gamma_1}c_{\gamma_1}, \ b_{\gamma_1}c_{\gamma_0} \in \Jm^2.
\] 
If, in addition, $\ell_1 \equiv 1 \pmod{p}$ and $\ell_0$ is not a $p$-th power modulo $\ell_1$, then $b_{\gamma_0}c_{\gamma_1} \in \Jm^2$ as well.
\end{prop}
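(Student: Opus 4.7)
My plan is to exploit two sources of relations: the unramified-or-$(-1)$-Steinberg identities
\[
V^{-1}_{\rho^\epsilon_N}(\sigma,\tau) = (\rho^\epsilon_N(\sigma) - \kcyc(\sigma))(\rho^\epsilon_N(\tau) - 1) = 0
\]
at the local primes $\ell_0$ and $\ell_1$, read off on the $(1,1)$-entry of the GMA \eqref{eq:GMA}; and Lemma \ref{lem:b_sig and b_gam generate}, which lets me rewrite $b_{\gamma_1}$ as an $R^\epsilon_N$-combination of $b_{\gamma_0}$ and $b_{\sigma_0}$. The observation that makes the first source bear fruit is that the augmentation $R^\epsilon_N \to \Z_p$ sends $a_\tau \mapsto \kcyc(\tau)$ and $d_\tau \mapsto 1$, so $a_\sigma - \kcyc(\sigma) \in \Jm$ for every $\sigma$, and $a_\tau - 1 \in \Jm$ whenever $\kcyc(\tau) = 1$ (for example $\tau = \gamma_0$, or $\tau = \gamma_1$ if $\ell_1 \neq p$).

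Concretely, applying $V^{-1}_{\rho^\epsilon_N}(\gamma_0,\gamma_0) = 0$ and reading the $(1,1)$-entry reproduces $b_{\gamma_0} c_{\gamma_0} = -(a_{\gamma_0} - 1)^2 \in \Jm^2$, as already noted in Lemma \ref{lem:bc in Jm^2}. Applying $V^{-1}_{\rho^\epsilon_N}(\sigma_0, \gamma_0) = 0$ the same way gives $b_{\sigma_0} c_{\gamma_0} = -(a_{\sigma_0} - \kcyc(\sigma_0))(a_{\gamma_0} - 1) \in \Jm^2$. The analogous identity at $\ell_1$ yields $b_{\gamma_1} c_{\gamma_1} = -(a_{\gamma_1} - 1)^2 \in \Jm^2$ when $\ell_1 \neq p$; when $\ell_1 = p$, I instead invoke Proposition \ref{prop:ord C-H form}, which forces $\rho^\epsilon_N\vert_{G_p}$ to be upper triangular in our GMA, so $c_{\gamma_1} = 0$ and the product vanishes for trivial reasons.

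The hypothesis that $\ell_1$ is not a $p$-th power modulo $\ell_0$ enters only through Lemma \ref{lem:b_sig and b_gam generate}: it produces $\alpha, \beta \in R^\epsilon_N$ with $b_{\gamma_1} = \alpha b_{\gamma_0} + \beta b_{\sigma_0}$, and multiplying by $c_{\gamma_0}$ then places $b_{\gamma_1} c_{\gamma_0} = \alpha(b_{\gamma_0} c_{\gamma_0}) + \beta(b_{\sigma_0} c_{\gamma_0})$ in $\Jm^2$. For the final assertion, the added assumptions $\ell_1 \equiv 1 \pmod p$ (which in particular forces $\ell_1 \neq p$) and ``$\ell_0$ is not a $p$-th power modulo $\ell_1$'' are precisely what is needed so that the proof of Lemma \ref{lem:b_sig and b_gam generate}, with the roles of $\ell_0$ and $\ell_1$ interchanged, yields $b_{\gamma_0} = \alpha' b_{\gamma_1} + \beta' b_{\sigma_1}$. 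Multiplying by $c_{\gamma_1}$ then reduces to $b_{\gamma_1} c_{\gamma_1}$ (handled above) and $b_{\sigma_1} c_{\gamma_1}$, the latter being controlled by the $\ell_1$-version of the $b_{\sigma_0} c_{\gamma_0}$ identity.

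The only delicate point I anticipate is the case $\ell_1 = p$ in the first assertion: the naive attempt to write $b_{\gamma_1}c_{\gamma_1} = -(a_{\gamma_1}-\kcyc(\gamma_1))(a_{\gamma_1}-1)$ and read off membership in $\Jm^2$ fails, because although $a_{\gamma_p} - 1$ lies in $\m$ (since $\omega(\gamma_p) = 1$), $\kcyc(\gamma_p) - 1$ is typically a nonzero element of $p\Z_p$, so $a_{\gamma_p} - 1 \notin \Jm$. This is precisely why one must instead appeal to the upper-triangularity of ordinary Cayley-Hamilton representations supplied by Proposition \ref{prop:ord C-H form}. Once this is noted, everything else is bookkeeping combining local US relations in the GMA with the global generation statement of Lemma \ref{lem:b_sig and b_gam generate} and its $\ell_0 \leftrightarrow \ell_1$ swap.
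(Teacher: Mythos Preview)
Your proof is correct and follows essentially the same route as the paper: use the $(1,1)$-entry of the $\mathrm{US}^{-1}$ identities $V^{-1}_{\rho^\epsilon_N}(\gamma_i,\gamma_i)=0$ and $V^{-1}_{\rho^\epsilon_N}(\sigma_0,\gamma_0)=0$ to place $b_{\gamma_i}c_{\gamma_i}$ and $b_{\sigma_0}c_{\gamma_0}$ in $\Jm^2$, then invoke Lemma~\ref{lem:b_sig and b_gam generate} to handle $b_{\gamma_1}c_{\gamma_0}$, and finally swap $\ell_0 \leftrightarrow \ell_1$ for $b_{\gamma_0}c_{\gamma_1}$. Your treatment of $b_{\gamma_1}c_{\gamma_1}$ in the case $\ell_1 = p$ via the upper-triangularity of Proposition~\ref{prop:ord C-H form} (giving $c_{\gamma_p}=0$) is in fact more careful than the paper's bare reference to Lemma~\ref{lem:bc in Jm^2}, whose literal argument uses $\kcyc(\gamma_i)=1$.
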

\begin{proof}
The proof for $b_{\gamma_0}c_{\gamma_0},b_{\gamma_1}c_{\gamma_1}$ is just as in Lemma \ref{lem:bc in Jm^2}. If we prove that $b_{\gamma_1}c_{\gamma_0} \in \Jm^2$, then we get $b_{\gamma_0}c_{\gamma_1} \in \Jm^2$ in the second statement by symmetry. So it suffices to prove $b_{\gamma_1}c_{\gamma_0} \in \Jm^2$.

Let $X=a_{\sigma_0}-\ell_0$ and $W = a_{\gamma_0}-1$, and note that $X,W \in \Jm$. From the $(1,1)$-coordinate of the equation $V^{\epsilon_{\ell_0}}_{\rho^\epsilon_N}(\sigma_0, \gamma_0) = 0$ defined in \eqref{eq:US test elts}, we see that $XW+b_{\sigma_0}c_{\gamma_0}=0$. In particular, $b_{\sigma_0}c_{\gamma_0} \in \Jm^2$.

By Lemma \ref{lem:b_sig and b_gam generate}, we know that $b_{\gamma_1}$ is in the $R^\epsilon_N$-linear span of $b_{\sigma_0}$ and $b_{\gamma_0}$. Because both $b_{\sigma_0}c_{\gamma_0}$ and $b_{\gamma_0}c_{\gamma_0}$ lie in $\Jm^2$, so does $b_{\gamma_1}c_{\gamma_0}$. 
\end{proof}

\subsection{One interesting prime} We assume that $\ell_0 \equiv 1 \pmod{p}$ and $\ell_1 \not \equiv 1 \pmod{p}$ (including the possibility that $\ell_1=p$). There is a natural surjective homomorphism $\bT_N^\epsilon \onto \bT_{\ell_0}$ by restricting to forms that are old at $\ell_1$.

\begin{thm}
\label{thm:one interesting prime}
Assume that $\ell_0 \equiv 1 \pmod{p}$, that $\ell_1 \not \equiv 1 \pmod{p}$, and that $\ell_1$ is not a $p$-th power modulo $\ell_0$. Then the natural map $\bT_N^\epsilon \rsurj \bT_{\ell_0}$ is an isomorphism. In particular, $I^\epsilon$ is principal, $\bT_N^\epsilon$ and $\bT_N^{\epsilon,0}$ are complete intersections, and there are no newforms in $S_2(N)_{\rm Eis}^\epsilon$.
\end{thm}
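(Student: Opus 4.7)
The plan is to apply the numerical criterion (as used in Proposition \ref{prop:numerical crit}, following Wiles) to the composite surjection
\[
R_N^\epsilon \twoheadrightarrow \bT_N^\epsilon \twoheadrightarrow \bT_{\ell_0}
\]
of augmented $\Z_p$-algebras, the second arrow being restriction to oldforms at $\ell_1$. Mazur's prime-level theorem (the $\ell=\ell_0$ case of Theorem \ref{thm:congruence number}) identifies the congruence ideal of $\bT_{\ell_0}$ as $((\ell_0-1)/12)\Z_p=(\ell_0-1)\Z_p$, the two agreeing because $p>3$. If I can establish the cotangent bound $\#\Jm/\Jm^2\le p^{v_p(\ell_0-1)}$, the criterion will force the composite to be an isomorphism of complete intersection $\Z_p$-algebras; since a composite of surjections is an isomorphism only when both factors are, this single application will deliver every conclusion of the theorem simultaneously.

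To produce the cotangent bound, first invoke Lemma \ref{lem:computation of Rred} to describe $(R_N^\epsilon)^\red$. Because $\ell_1\not\equiv 1\pmod p$ makes $(\ell_1-1)$ a unit in $\Z_p$, the relation $(\ell_1-1)Y_1=0$ kills $Y_1$ and collapses the presentation to
\[
(R_N^\epsilon)^\red\cong \Z_p[Y_0]/(Y_0^2,(\ell_0-1)Y_0).
\]
Consequently $\Jm/J^\red\cong\Z_p/(\ell_0-1)\Z_p$, generated by the image of $d_{\gamma_0}-1$, and $\Jm^2\subset J^\red$. Next, the GMA cotangent surjection \eqref{eq:GMA cotangent control} combined with Lemma \ref{lem:info about B and C} gives a surjection $B_N^{\epsilon,{\min}}\otimes_{\Z_p}C_N^{\epsilon,{\min}}\twoheadrightarrow J^\red/\Jm J^\red$; since $(\ell_1-1)$ is a unit, $C_N^{\epsilon,{\min}}$ is cyclic, generated by $c_{\gamma_0}$. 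The target is therefore spanned by the classes of $b_{\gamma_0}c_{\gamma_0}$ and $b_{\gamma_1}c_{\gamma_0}$, and Proposition \ref{prop:bcs in Jm^2} places both of these in $\Jm^2$ under our hypotheses. Nakayama's lemma applied to the finitely generated $R_N^\epsilon$-module $J^\red/\Jm^2$ then yields $J^\red=\Jm^2$, so $\Jm/\Jm^2\cong\Z_p/(\ell_0-1)\Z_p$ has order $p^{v_p(\ell_0-1)}$, as required.

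Applying the numerical criterion now produces the isomorphism $R_N^\epsilon\isoto\bT_{\ell_0}$ of complete intersections, forcing the two intermediate surjections to be isomorphisms. From this I read off that $\bT_N^\epsilon$ is a complete intersection, that $I^\epsilon$ is principal (since $\Jm/\Jm^2$ is cyclic), that $\bT_N^{\epsilon,0}$ is a complete intersection as the quotient of $\bT_N^\epsilon$ by the annihilator of $I^\epsilon$, and that $\bT_N^\epsilon\cong\bT_{\ell_0}$, from which every eigenform in $S_2(N)_{\rm Eis}^\epsilon$ is an $\ell_1$-stabilization of one in $S_2(\ell_0)_{\rm Eis}^{(-1)}$, ruling out newforms.

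The hard step is establishing $b_{\gamma_1}c_{\gamma_0}\in\Jm^2$: this is the sole point where the class-field-theoretic hypothesis that $\ell_1$ is not a $p$-th power modulo $\ell_0$ really enters, via Lemma \ref{lem:b_sig and b_gam generate}, which uses this hypothesis to show that $\{b_{\gamma_0},b_{\sigma_0}\}$ already generates $B_N^\epsilon$, allowing $b_{\gamma_1}$ to be absorbed into a $R_N^\epsilon$-linear combination of products that lie in $\Jm^2$ by the Steinberg relation at $\ell_0$. Without this hypothesis the cotangent could be strictly larger, permitting newforms and the failure of the isomorphism $\bT_N^\epsilon\cong\bT_{\ell_0}$.
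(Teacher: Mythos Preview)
Your proposal is correct and follows essentially the same route as the paper's proof: compute $(R_N^\epsilon)^\red$ via Lemma~\ref{lem:computation of Rred} to get $\Jm/J^\red\cong\Z_p/(\ell_0-1)\Z_p$ and $\Jm^2\subset J^\red$, use Lemma~\ref{lem:info about B and C} together with \eqref{eq:GMA cotangent control} to see that $J^\red$ is generated by $b_{\gamma_0}c_{\gamma_0}$ and $b_{\gamma_1}c_{\gamma_0}$, invoke Proposition~\ref{prop:bcs in Jm^2} to place these in $\Jm^2$, and conclude $J^\red=\Jm^2$ and $\#\Jm/\Jm^2=p^{v_p(\ell_0-1)}$, then apply the numerical criterion to the composite $R_N^\epsilon\twoheadrightarrow\bT_{\ell_0}$. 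Your added remarks on where the arithmetic hypothesis enters (via Lemma~\ref{lem:b_sig and b_gam generate}) are accurate; the only loose phrase is the justification that $\bT_N^{\epsilon,0}$ is complete intersection ``as the quotient of $\bT_N^\epsilon$ by the annihilator of $I^\epsilon$,'' which is not a general principle---the clean argument is that the isomorphism $\bT_N^\epsilon\cong\bT_{\ell_0}$ identifies $\bT_N^{\epsilon,0}$ with $\bT_{\ell_0}^0$, which is complete intersection by the prime-level theory.
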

\begin{proof}
Just as in the proof of Lemma \ref{lem:reduction to lower level}, it suffices to show that the map $R_N^\epsilon \onto \bT_{\ell_0}$ is an isomorphism. By Lemma \ref{lem:computation of Rred}, there is an isomorphism
\[
R_N^{\epsilon,\red} \cong \Z_p[Y_0]/(Y_0^2,(\ell_0-1)Y_0),
\]
where the image of $\Jm$ is the ideal generated by $Y_0$. This implies that $\Jm^2 \subset J^\red$ and that there is an isomorphism 
\[
\Z_p/(\ell_0-1)\Z_p \lrisom \Jm/J^\red, \quad 1 \mapsto Y_0.
\]
On the other hand, we know that $J^\red$ is generated by the set $\{b_{\gamma_0}c_{\gamma_0},b_{\gamma_1}c_{\gamma_0}\}$ by Lemma \ref{lem:info about B and C} and the surjection \eqref{eq:GMA cotangent control}. By Proposition \ref{prop:bcs in Jm^2}, we see that this set is contained in $\Jm^2$. Hence $J^\red \subset \Jm^2$, and so $J^\red= \Jm^2$. 

Now we have $\#\Jm/\Jm^2 = p^{v_p(\ell_0-1)}$ and, by the numerical criterion (Proposition \ref{prop:numerical crit}), $R_N^\epsilon \onto \bT_{\ell_0}$ is an isomorphism.
\end{proof}

\begin{rem}
The assumption that $\ell_1$ is not a $p$-th power modulo $\ell_0$ is necessary: see the examples in \S\ref{subsubsec:2 primes no new examples}.
\end{rem}

\subsection{Two interesting primes} 
\label{subsec:two int primes}

We consider the case $\ell_i \equiv 1 \pmod{p}$ for $i=0,1$.

\begin{thm}
\label{thm:thm2}
Let $N=\ell_0\ell_1$ and $\epsilon=(-1,-1)$. Assume that $\ell_i \equiv 1 \pmod{p}$ for $i=0,1$ and assume that neither prime is a $p$-th power modulo the other. Then the map $R_{N}^{\epsilon} \rsurj \bT_{N}^{\epsilon}$ is an isomorphism of complete intersection rings augmented over $\Z_p$, and there is an isomorphism 
\[
I^\epsilon/{I^\epsilon}^2 \cong \Z_p/(\ell_0-1)\Z_p \oplus \Z_p/(\ell_1-1)\Z_p.
\]
\end{thm}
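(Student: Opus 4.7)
The plan is to apply Wiles's numerical criterion, following the template of Theorem \ref{thm:star main}. By Theorem \ref{thm:congruence number} and \eqref{eq:constant term}, the relevant congruence number is
\[
\#\big(\Z_p/a_0(E^\epsilon_{2,N})\Z_p\big) = p^{v_p(\ell_0-1) + v_p(\ell_1-1)},
\]
since $\epsilon = (-1,-1)$ and $p > 3$. The variant of the numerical criterion used in Proposition \ref{prop:numerical crit} applies verbatim in our setting (as in \cite[Thm.~7.1.1]{WWE3}), so it suffices to prove that $\#\Jm/\Jm^2 \leq p^{v_p(\ell_0-1) + v_p(\ell_1-1)}$. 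I would achieve this by showing $\Jm^2 = J^\red$ and computing both sides.

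For the inclusion $\Jm^2 \subset J^\red$, I would use Lemma \ref{lem:computation of Rred}. For $r=1$ and $\epsilon=(-1,-1)$, the relations $(\epsilon_i+1)Y_i = 0$ are vacuous; what remain are $Y_i^2$ and $(\ell_i-1)Y_i$, together with $Y_i\big(\prod_j(1 \pm \tilde\alpha_i^j Y_j)-1\big)$. Expanding and reducing modulo $Y_i^2=0$ and $\alpha_i^i = 0$, these last relations reduce to $\pm \tilde\alpha_j^i\, Y_0 Y_1 = 0$ (with $\{i,j\}=\{0,1\}$). The assumption that $\ell_0, \ell_1$ are not $p$-th powers modulo each other translates, via Remark \ref{rem:log tame}, to $\tilde\alpha_0^1, \tilde\alpha_1^0 \in \Z_p^\times$, forcing $Y_0 Y_1 = 0$ in $(R_N^\epsilon)^\red$. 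Hence
\[
(R_N^\epsilon)^\red \cong \Z_p[Y_0,Y_1]/\big((\ell_0-1)Y_0,(\ell_1-1)Y_1,Y_0^2,Y_1^2, Y_0Y_1\big),
\]
whose maximal ideal squares to zero. This gives $\Jm^2 \subset J^\red$ and the isomorphism $\Jm/J^\red \cong \Z_p/(\ell_0-1)\Z_p \oplus \Z_p/(\ell_1-1)\Z_p$.

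For the reverse inclusion $J^\red \subset \Jm^2$, the GMA-cotangent surjection \eqref{eq:GMA cotangent control} combined with Lemma \ref{lem:info about B and C} shows that $J^\red/(\Jm J^\red)$ is generated, as a $\Z_p$-module, by the four products $b_{\gamma_i}c_{\gamma_j}$ for $i,j \in \{0,1\}$. Here is the main step: under our symmetric hypothesis that neither $\ell_0$ nor $\ell_1$ is a $p$-th power modulo the other, Proposition \ref{prop:bcs in Jm^2} applies in both directions and places all four products $b_{\gamma_i}c_{\gamma_j}$ inside $\Jm^2$. Nakayama's lemma then gives $J^\red \subset \Jm^2$, so $J^\red = \Jm^2$ and $\Jm/\Jm^2 \cong \Z_p/(\ell_0-1)\Z_p \oplus \Z_p/(\ell_1-1)\Z_p$ has order exactly $p^{v_p(\ell_0-1)+v_p(\ell_1-1)}$.

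The numerical criterion then upgrades the surjection $R_N^\epsilon \onto \bT_N^\epsilon$ of Proposition \ref{prop:R to T} to an isomorphism of complete intersection rings augmented over $\Z_p$, and transports the computation of $\Jm/\Jm^2$ to the stated isomorphism $I^\epsilon/{I^\epsilon}^2 \cong \Z_p/(\ell_0-1)\Z_p \oplus \Z_p/(\ell_1-1)\Z_p$. The hard part of this argument is not the bookkeeping with the numerical criterion, which is now routine, but rather securing $J^\red \subset \Jm^2$ via Proposition \ref{prop:bcs in Jm^2}: the symmetric non-$p$-th-power hypothesis is used in an essential (and symmetric) way there, via Lemma \ref{lem:b_sig and b_gam generate} and its analogue with the roles of $\ell_0$ and $\ell_1$ swapped, to force each cross term $b_{\gamma_i}c_{\gamma_j}$ into $\Jm^2$.
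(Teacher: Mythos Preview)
Your proof is correct and follows essentially the same approach as the paper's own proof: compute $(R_N^\epsilon)^\red$ via Lemma \ref{lem:computation of Rred} to get $\Jm^2 \subset J^\red$, invoke Proposition \ref{prop:bcs in Jm^2} together with Lemma \ref{lem:info about B and C} and \eqref{eq:GMA cotangent control} to obtain $J^\red \subset \Jm^2$, and then apply the numerical criterion. In fact you spell out one detail the paper leaves implicit, namely that the relation $Y_0Y_1 = 0$ in $(R_N^\epsilon)^\red$ comes from the non-$p$-th-power hypothesis (via the units $\tilde\alpha_j^i$), whereas the paper simply asserts the resulting presentation.
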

\begin{proof}
By Lemma \ref{lem:computation of Rred}, we see that there is an isomorphism 
\[
R_N^{\epsilon,\red} \cong \Z_p[Y_0,Y_1]/(Y_0^2,Y_0Y_1,Y_1^2,(\ell_0-1)Y_0,(\ell_1-1)Y_1)
\]
and that the image of $\Jm$ is the ideal generated by $(Y_0,Y_1)$. In particular $\Jm^2 \subset J^\red$ and
\[
\Jm/J^\red \cong  \Z_p/(\ell_0-1)\Z_p \oplus \Z_p/(\ell_1-1)\Z_p.
\]
Moreover, by Proposition \ref{prop:bcs in Jm^2} and Lemma \ref{lem:info about B and C}, we see that $J^\red \subset \Jm^2$ so we have
\[
\Jm/\Jm^2 = \Jm/J^\red  \cong \Z_p/(\ell_0-1)\Z_p \oplus \Z_p/(\ell_1-1)\Z_p.
\]
In particular, $\#\Jm/\Jm^2 = p^{v_p(\ell_0-1)+v_p(\ell_1-1)}$.

Now the numerical criterion of Proposition \ref{prop:numerical crit} implies that $R_{N}^{\epsilon} \rsurj \bT_{N}^{\epsilon}$ is a isomorphism of complete intersection augmented $\Z_p$-algebras. It follows that $I^\epsilon = \Jm$, and so the description of $I^\epsilon/{I^\epsilon}^2$ also follows.
\end{proof}

\begin{rem}
Again, the assumptions are necessary. See the examples in \S\ref{subsubsec:2 primes new examples}. 
\end{rem}

\begin{defn}
\label{defn:no newforms} We say there are \emph{no newforms in $M_2(N;\Z_p)_{\mathrm{Eis}}^\epsilon$} if 
\[
M_2(N;\Z_p)_{\mathrm{Eis}}^\epsilon = M_2(\ell_0;\Z_p)_{\mathrm{Eis}} + M_2(\ell_1;\Z_p)_{\mathrm{Eis}},
\]
where the later are considered submodules of the former via the stabilizations in \S \ref{subsub:stabilizations}. Otherwise, we say there   are \emph{newforms in $M_2(N;\Z_p)_{\mathrm{Eis}}^\epsilon$}.
\end{defn}

\begin{thm}
\label{thm:newforms in case 2}
Let $N=\ell_0\ell_1$ and $\epsilon=(-1,-1)$ and assume that $\ell_i \equiv 1 \pmod{p}$ for $i=0,1$. If there are no newforms in $M_2(N;\Z_p)_{\mathrm{Eis}}^\epsilon$, then $\bT_N^\epsilon$ is not Gorenstein. In particular, if neither prime $\ell_i$ is a $p$-th power modulo the other, then there are newforms in $M_2(N;\Z_p)_{\mathrm{Eis}}^\epsilon$.
\end{thm}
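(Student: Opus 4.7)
The plan is to prove the first implication by realizing $\bT_N^\epsilon$ as a fiber product of the prime-level Hecke algebras and then exhibiting two independent socle elements in $\bT_N^\epsilon/p$; the ``in particular'' statement then follows by combining with Theorem \ref{thm:thm2}.

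Assume there are no newforms in $M_2(N;\Z_p)_\mathrm{Eis}^\epsilon$. The stabilization maps of \S\ref{subsub:stabilizations} embed $M_2(\ell_i;\Z_p)_\mathrm{Eis} \rinj M_2(N;\Z_p)_\mathrm{Eis}^\epsilon$, and the no-newforms hypothesis makes their sum surjective. Their intersection inside $M_2(N;\Z_p)_\mathrm{Eis}^\epsilon$ is the saturated $\Z_p$-line generated by $E_{2,N}^\epsilon$: a common element is a form at level $N$ old at both $\ell_0$ and $\ell_1$, hence a double stabilization of a weight-$2$ form at level $1$, and the only candidate (up to scalar) giving the correct Atkin-Lehner signs is $E_{2,N}^\epsilon$ itself (there being no cusp forms of weight $2$ at level $1$). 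This yields a $\bT_N^\epsilon$-equivariant short exact sequence
\[
0 \to \Z_p \to M_2(\ell_0;\Z_p)_\mathrm{Eis} \oplus M_2(\ell_1;\Z_p)_\mathrm{Eis} \to M_2(N;\Z_p)_\mathrm{Eis}^\epsilon \to 0,
\]
where $\bT_N^\epsilon$ acts on $M_2(\ell_i;\Z_p)_\mathrm{Eis}$ through the natural surjection $\bT_N^\epsilon \onto \bT_{\ell_i}$ (using $w_{\ell_j}=\epsilon_j=-1$ for $j \ne i$) and on $\Z_p$ by the augmentation. Applying $\Hom_{\Z_p}(-,\Z_p)$ (exact since all modules are $\Z_p$-free) and using \eqref{eq:M and T duality} to identify $M_2(N;\Z_p)_\mathrm{Eis}^{\epsilon,\vee} \cong \bT_N^\epsilon$ and $M_2(\ell_i;\Z_p)_\mathrm{Eis}^\vee \cong \bT_{\ell_i}$, I would obtain
\[
0 \to \bT_N^\epsilon \to \bT_{\ell_0} \oplus \bT_{\ell_1} \to \Z_p \to 0,
\]
whose right-hand map computes to $(t_0,t_1) \mapsto \mathrm{aug}(t_0) - \mathrm{aug}(t_1)$ via the identity $a_1(t_i E_{2,\ell_i}) = \mathrm{aug}(t_i)$. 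Thus $\bT_N^\epsilon \cong \bT_{\ell_0} \times_{\Z_p} \bT_{\ell_1}$ as augmented $\Z_p$-algebras.

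Reducing mod $p$ via flatness gives $\bT_N^\epsilon/p \cong \bar{\bT}_0 \times_{\F_p} \bar{\bT}_1$ with $\bar{\bT}_i := \bT_{\ell_i}/p$. Since $\ell_i \equiv 1 \pmod p$, Mazur's theorems ensure that $\bT_{\ell_i}$ is Gorenstein and has nontrivial cuspidal quotient, so each $\bar{\bT}_i$ is local Artinian Gorenstein over $\F_p$ with $1$-dimensional socle spanned by a nonzero $s_i$ lying in the maximal ideal $\bar{I}_i := I_{\ell_i}/(p)$. The maximal ideal of the fiber product is $\bar{I}_0 \times \bar{I}_1$, and both $(s_0,0)$ and $(0,s_1)$ belong to $\bT_N^\epsilon/p$ (their coordinates vanish under the augmentations) and are annihilated by it. These two elements are $\F_p$-linearly independent, so $\dim_{\F_p}\Soc(\bT_N^\epsilon/p) \ge 2$ and $\bT_N^\epsilon$ is not Gorenstein. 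The final statement of the theorem then follows by contrapositive from Theorem \ref{thm:thm2}, which gives $\bT_N^\epsilon$ complete intersection (in particular Gorenstein) under the hypothesis that neither $\ell_i$ is a $p$-th power modulo the other. The main subtlety will be pinning down the intersection of the two stabilization images as exactly $\Z_p \cdot E_{2,N}^\epsilon$ (rather than some index-$p$ enlargement) and verifying the Hecke equivariance and sign conventions so that the dualized sequence truly realizes $\bT_N^\epsilon$ as the stated fiber product over the augmentations.
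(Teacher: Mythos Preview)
Your approach is correct and essentially the same as the paper's: both establish the fiber-product description $\bT_N^\epsilon \cong \bT_{\ell_0} \times_{\Z_p} \bT_{\ell_1}$ under the no-newforms hypothesis and then exhibit a two-dimensional socle in $\bT_N^\epsilon/p$. The differences are cosmetic. The paper obtains the fiber product via Lemma~\ref{lem:fiber prods}, showing the kernel map $\mathfrak{a}_1 \to I_0$ is an isomorphism by rank-counting plus surjectivity of the dual map $M_2(\ell_0;\Z_p)_\mathrm{Eis}/\Z_p E_{2,\ell_0} \to M_2(N;\Z_p)_\mathrm{Eis}^\epsilon/M_2(\ell_1;\Z_p)_\mathrm{Eis}$; you instead dualize the entire short exact sequence at once. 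For the socle, the paper writes down an explicit presentation $\F_p[y_0,y_1]/(y_0^{e_0+1},y_1^{e_1+1},y_0y_1)$ using Mazur's results, while your argument uses only that each $\bar{\bT}_i$ is Gorenstein with nonzero augmentation ideal.

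The subtlety you flag is not a real obstacle: after tensoring with $\Q_p$ the intersection is one-dimensional (there are no weight-$2$ level-$1$ cusp forms), and since $a_1(E_{2,N}^\epsilon)=1$ the element $E_{2,N}^\epsilon$ is primitive in the free $\Z_p$-module $M_2(N;\Z_p)_\mathrm{Eis}^\epsilon$, so the integral intersection is exactly $\Z_p\cdot E_{2,N}^\epsilon$. The Hecke-equivariance of the stabilization maps ensures that the dualized map $\bT_N^\epsilon \to \bT_{\ell_i}$ is the natural algebra surjection (it is a $\bT_N^\epsilon$-module map sending $1$ to $1$).
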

\begin{proof}
The second statement follows from the first statement by Theorem \ref{thm:thm2}. Now assume that there are no newforms in $M_2(N;\Z_p)_{\mathrm{Eis}}^\epsilon$. We count that 
\[\mathrm{rank}_{\Z_p}( M_2(N; \Z_p)_{\rm Eis}^{\epsilon}) = \mathrm{rank}_{\Z_p}(M_2(\ell_0;\Z_p)_{\rm Eis})+ \mathrm{rank}_{\Z_p}(M_2(\ell_1;\Z_p)_{\rm Eis})-1
\]
(by Lemma \ref{lem:normalization of bT0}, for example).

We claim that, under this assumption, we have an isomorphism $\bT_N^{\epsilon} \isoto \bT_{\ell_0} \times_{\Z_p} \bT_{\ell_1}$. To see this, consider the commutative diagram of free $\Z_p$-modules, where the right square consists of canonical surjective homomorphisms of commutative $\Z_p$-algebras and the rows are exact: 
\[\xymatrix{
0 \ar[r] & \mathfrak{a}_1 \ar[r] \ar[d] & \bT_N^{\epsilon}  \ar@{->>}[r] \ar@{->>}[d] & \bT_{\ell_1} \ar[r] \ar@{->>}[d] & 0 \\
0 \ar[r] & I_0 \ar[r] & \bT_{\ell_0}  \ar@{->>}[r]  & \Z_p \ar[r]  & 0.
}\]
By Lemma \ref{lem:fiber prods}, it is enough to show that $\mathfrak{a}_1 \to I_0$ is an isomorphism. From this diagram and the above rank count, we see that $\mathrm{rank}_{\Z_p}(\mathfrak{a}_1) = \mathrm{rank}_{\Z_p}( I_0)$. Thus it suffices to show that the $\Z_p$-dual map is surjective. By duality \eqref{eq:M and T duality}, the dual map is identified with the map
\[
M_2(\ell_0;\Z_p)_{\rm Eis}/\Z_p E_{2,\ell_0} \to M_2(N;\Z_p)_{\rm Eis}^{\epsilon}/M_2(\ell_1;\Z_p)_{\rm Eis}
\]
induced by stabilization, which is surjective by our assumption $M_2(N;\Z_p)_{\mathrm{Eis}}^\epsilon = M_2(\ell_0;\Z_p)_{\mathrm{Eis}} + M_2(\ell_1;\Z_p)_{\mathrm{Eis}}$. This proves that $\mathfrak{a}_1 \to I_0$ is an isomorphism.

Using this isomorphism $\bT_N^{\epsilon} \isoto \bT_{\ell_0} \times_{\Z_p} \bT_{\ell_1}$ and Mazur's results (\S\ref{subsec:Mazur results}) on the structure of $\bT_{\ell_i}$, it is then a simple computation to see that
\[
\bT_N^{\epsilon}/p\bT_N^{\epsilon} \cong \F_p[y_0,y_1]/(y_0^{e_0+1},y_1^{e_1+1},y_0y_1), \quad \text{for some } e_0,e_1 >0.
\]
Thus $\Soc(\bT_N^{\epsilon}/p\bT_N^{\epsilon})=\F_p y_0^{e_0} \oplus \F_p y_1^{e_1}$. By Lemma \ref{lem:Gorenstein defects equality}, $\bT_N^{\epsilon}$ is not Gorenstein.
\end{proof}

\section{Generators of the Eisenstein ideal}
\label{sec:GP}

In this section, we prove Part (4) of Theorem \ref{thm:main r primes} about the number of generators of the Eisenstein ideal, as well as Theorems \ref{thm:main good primes r} and \ref{thm:main good primes 2}, about specific generators.

\subsection{Determining the number of generators of $I^\epsilon$ when $\varep = (-1,1,\dotsc, 1)$}
\label{subsec:splitting}

In this subsection, we prove Part (4) of Theorem \ref{thm:main r primes}. Assume we are in the setting of that theorem, so $\epsilon=(-1,1,\dots,1)$. Recall the fields $K_i$ of Definition \ref{defn:K_i}.

\begin{thm}
	\label{thm:star split}
Assume that $\ell_i \equiv -1 \pmod{p}$ for $i=1,\dots, r$. The minimal number of generators of $I^\epsilon$ is $r+\delta$ where
\begin{equation}
\label{eq:delta}
\delta =\left\{
\begin{array}{lc}
1 & \text{ if } \ell_0 \text{ splits completely in } K_i \text{ for } i=1,\dots,r \\
0 & \text{ otherwise.}
\end{array}\right.
\end{equation}
\end{thm}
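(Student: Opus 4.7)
The minimal number of generators of $I^\epsilon$ in the local ring $\bT_N^\epsilon$ equals $\dim_{\F_p}(I^\epsilon/\m^\epsilon I^\epsilon) = \dim_{\F_p}(\Jm/(p\Jm + \Jm^2))$ via the isomorphism $I^\epsilon \cong \Jm$ of Theorem \ref{thm:star main}. The plan is to apply the snake lemma for multiplication by $p$ to the short exact sequence \eqref{eq:star ses}, yielding
\[
0 \to A[p] \to (\Jm/\Jm^2)[p] \to C[p] \xrightarrow{\delta} A/pA \to \Jm/(p\Jm + \Jm^2) \to C/pC \to 0
\]
with $A = \bigoplus_{i=1}^r \Z_p/(\ell_i+1)\Z_p$ and $C = \Z_p/(\ell_0-1)\Z_p$. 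Since $\ell_i \equiv -1 \pmod p$ for $i \geq 1$, $\dim_{\F_p} A/pA = r$, while $\dim_{\F_p} C/pC \in \{0,1\}$ according to whether $\ell_0 \not\equiv 1$ or $\ell_0 \equiv 1 \pmod p$. If $\ell_0 \not\equiv 1 \pmod p$, then $C = 0$, the count is $r$, and this matches $r+\delta = r$ since $\ell_0$ cannot split in $\Q(\zeta_p) \subset K_i$. So assume $\ell_0 \equiv 1 \pmod p$: the count is $r + 1 - \dim_{\F_p} \mathrm{im}(\delta)$, and it remains to show $\mathrm{im}(\delta) = 0$ precisely when $\ell_0$ splits completely in every $K_i$.

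The crux is the identity
\[
(\ell_0 - 1)(d_{\gamma_0} - 1) \equiv b_{\gamma_0} c_{\sigma_0} \pmod{\Jm^2}
\]
in $\Jm/\Jm^2$. To derive it, I apply $\rho_N^\epsilon$ to the tame Galois relation $\sigma_0 \gamma_0 = \gamma_0^{\ell_0} \sigma_0$ in $G_{\ell_0}$, use the Cayley-Hamilton expansion $\rho_N^\epsilon(\gamma_0)^{\ell_0} = \alpha\, \rho_N^\epsilon(\gamma_0) - \beta$ by scalars satisfying $\alpha \equiv \ell_0$ and $\alpha - \beta \equiv 1 \pmod{\Tr(\gamma_0) - 2}$, and take the $(2,2)$-entry of the resulting matrix identity. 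Using $d_{\sigma_0} \equiv 1 \pmod{\Jm}$ together with the vanishing $\Tr(\gamma_0) - 2 \equiv b_{\gamma_0} c_{\gamma_0} \equiv 0 \pmod{\Jm^2}$ (the first via $\det \rho_N^\epsilon(\gamma_0) = 1$, the second by Lemma \ref{lem:bc in Jm^2}), the calculation simplifies to $(\ell_0 - 1)(d_{\gamma_0} - 1) \equiv b_{\gamma_0} c_{\sigma_0} - \ell_0 b_{\sigma_0} c_{\gamma_0} \pmod{\Jm^2}$. The argument of Proposition \ref{prop:bcs in Jm^2}, reading off the $(1,1)$-entry of $V_{\rho_N^\epsilon}^{-1}(\sigma_0, \gamma_0) = 0$, shows $b_{\sigma_0} c_{\gamma_0} \in \Jm^2$, yielding the claimed identity.

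Write $\ell_0 - 1 = p^v u$ with $u \in \Z_p^\times$ and $v = v_p(\ell_0 - 1)$. A generator of $C[p]$ is $p^{v-1}(d_{\gamma_0} - 1)$, and $\delta$ sends it to $p^v(d_{\gamma_0} - 1) = u^{-1} b_{\gamma_0} c_{\sigma_0} \in A/pA$. By the perfect pairing of Lemma \ref{lem:dual B C and H1}(3), the image of $c_{\sigma_0}$ in $C_\fl^{\min}/p$ is $\sum_{i \in T} \tilde c_i(\sigma_0) c_{\gamma_i}$ in the basis dual to $\{c_i\}_{i \in T}$. Multiplying by $b_{\gamma_0}$ and using that $b_{\gamma_0} c_{\gamma_0} = 0$ in $A$, we obtain
\[
\mathrm{im}(\delta) = \F_p \cdot \sum_{i=1}^r \tilde c_i(\sigma_0) \, b_{\gamma_0} c_{\gamma_i} \subset A/pA \cong \bigoplus_{i=1}^r \F_p.
\]
Since $\omega(\sigma_0) = 1$, the restriction $\tilde c_i|_{G_{\Q(\zeta_p), S}}$ is the homomorphism cutting out $K_i$, and $\tilde c_i(\sigma_0) \in \F_p$ encodes the image of Frobenius at $\ell_0$ in $\Gal(K_i/\Q(\zeta_p))$. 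Hence $\tilde c_i(\sigma_0) = 0$ iff $\ell_0$ splits completely in $K_i$, so $\mathrm{im}(\delta) = 0$ exactly in the case $\delta = 1$ of \eqref{eq:delta}, producing the count $r + \delta$ as required.

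The hardest step is producing the key identity: one must carefully use Cayley-Hamilton to control $\rho_N^\epsilon(\gamma_0)^{\ell_0}$ modulo $\Jm^2$ and verify that the term of order $\Tr(\gamma_0) - 2$ actually lives in $\Jm^2$ rather than merely $\Jm$. Once this is in hand, the reduction to the splitting conditions on the $K_i$ is a routine duality computation using the GMA structure.
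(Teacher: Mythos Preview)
Your argument is correct and takes a genuinely different route from the paper's. The paper proceeds by (i) translating the splitting condition into the vanishing of the cup products $b_0 \cup c_i$ (Proposition~\ref{prop:bc cup}), (ii) invoking Bella\"iche's tangent-space sequence to show that $r+1$ generators forces all these cup products to vanish (Proposition~\ref{prop:r+1 generators implies delta=1}), and (iii) constructing by hand an explicit first-order Cayley--Hamilton representation $\rho_M$ valued in $\F_p[M]$ with $\dim M = r+\delta$, then verifying $\mathrm{US}^\epsilon_N$ for $\rho_M$ case-by-case (Construction~\ref{constr:C and F}, Lemma~\ref{lem:rho_M US}, Proposition~\ref{prop:first-order ps}). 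You instead apply the snake lemma for multiplication by $p$ directly to the sequence \eqref{eq:star ses}, and identify the connecting map by extracting the $(2,2)$-entry of the tame relation $\rho^\epsilon_N(\sigma_0)\rho^\epsilon_N(\gamma_0)=\rho^\epsilon_N(\gamma_0)^{\ell_0}\rho^\epsilon_N(\sigma_0)$, controlled via Cayley--Hamilton modulo $\Jm^2$; the resulting formula $\delta(\text{gen}) \sim \sum_{i=1}^r \tilde c_i(\sigma_0)\,b_{\gamma_0}c_{\gamma_i}$ makes the splitting criterion transparent. Your approach is shorter and more elementary for this statement, avoiding both the cup-product machinery and the delicate local verifications of Lemma~\ref{lem:rho_M US}. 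The trade-off is that the paper's construction of $\rho_M$ is not wasted effort: it is reused wholesale in \S\ref{subsec:good primes for r primes} to prove the good-prime criterion (Theorem~\ref{thm:good1}), where one needs to evaluate $T_q-(q+1)$ on an explicit model of $R^\epsilon_N/(p,\m^2)$. Your snake-lemma computation determines the extension class but does not by itself produce such a model.
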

This immediately implies Part (4) of Theorem \ref{thm:main r primes} by Lemma \ref{lem:reduction to lower level}. For the rest of \S \ref{subsec:splitting}, we assume that $r>0$ and $\ell_i \equiv -1\pmod{p}$ for $i=1,\dots, r$, and we use $\delta$ to refer to the integer \eqref{eq:delta}.

\subsubsection{Outline of the proof} By Theorem \ref{thm:star main}, we see that $R_N^\epsilon \to \bT_N^\epsilon$ is an isomorphism and that it induces an isomorphism $\Jm \isoto I^\epsilon$. Hence we are reduced to computing the number of generators of $\Jm$. Moreover, \eqref{eq:star ses} implies that this number of generators is either $r$ or $r+1$, and we are reduced to showing that it is $r+1$ if and only if the splitting condition in \eqref{eq:delta} holds. 

We do some initial reductions in \S\ref{subsubsect:first reductions}. We use class field theory to show that the splitting condition in \eqref{eq:delta} is equivalent to the vanishing of certain cup products in Galois cohomology. The number of generators of $\Jm$ is the same as the dimension of the tangent space of $R_N^\epsilon/pR_N^\epsilon$, and this is related to cup products. Explicitly, Bella\"iche proved in \cite[Thm.\ A]{bellaiche2012} that the the tangent space $\frt_\Db$ of $R_\Db/pR_\Db$ (where $R_\Db$ is the unrestricted pseudodeformation ring of $\Db$ -- without any local conditions) fits in an exact sequence
\[\xymatrix{
 \frt_\Db \ar[r]^-{\iota} & H^1(\F_p(1)) \otimes_{\F_p} H^1(\F_p(-1)) \ar[rrr]^-{b' \otimes c' \mapsto (b' \cup c', c' \cup b')} & & & H^2(\F_p) \oplus H^2(\F_p).
}\]
In other words, the tangent space is larger as more cup products vanish. We show that this immediately implies one implication of Theorem \ref{thm:star split}: if the number of generators of $\Jm$ is $r+1$, this forces the tangent space of $\frt_\Db$ to be large, which can only happen if the cup products vanish.

The other implication is more delicate. If we assume that the cup products vanish, then Bella\"{i}che's theory only tells us that the tangent space of the unrestricted deformation ring is large. We have to show that these first-order deformations can be made to satisfy the right local conditions at primes dividing $Np$. To do this, we construct in \S \ref{sssec:constr rho_M} a particular GMA representation $\rho_M$ that realizes Bella\"{i}che's tangent space computation, and show that $\rho_M$ satisfies the $\mathrm{US}^\epsilon_N$ local conditions.  In \S\ref{sssect:end of proof}, we show that $\rho_M$ indeed realizes the tangent space of $R_N^\epsilon/pR_N^\epsilon$, and this completes the proof.

\subsubsection{First reductions}
\label{subsubsect:first reductions} Note that because $\m=\Jm+pR_N^\epsilon \subset R_N^\epsilon$ is the maximal ideal, we have
\[
\Jm/\m\Jm \cong \m/(p,\m^2).
\]
By Nakayama's lemma, the minimal cardinality of a generating subset of $\Jm$ is $\dim_{\F_p}\m/(p,\m^2)$. By Theorem \ref{thm:star main} we have $I^\epsilon \cong \Jm$, so, to prove Theorem \ref{thm:star split}, it suffices to show that $\dim_{\F_p}\m/(p,\m^2) =r+\delta$, and this is what we will prove.

Recall the notation of \S\ref{subsec:label}, in particular, the class $b_0 \in H^1(\Z[1/Np],\F_p(1))$ and the representing cocycle $\tilde{b}_0$, as well as the classes $c_0,\dots,c_r \in  H^1_{(p)}(\Z[1/Np],\F_p(-1))$ (note that $c_0$ is only defined if $\ell_0 \equiv \pm 1 \pmod{p}$). The starting point is the following proposition, which is proven in Appendix \ref{appendix:cohomology}. 
\begin{prop}
\label{prop:bc cup}
Let $i\in\{1,\dots,r\}$. Then $\ell_0$ splits completely in $K_i$ if and only if $\ell_0 \equiv 1 \pmod{p}$ and $b_0 \cup c_i$ vanishes in $H^2(\Z[1/Np],\F_p)$. 
\end{prop}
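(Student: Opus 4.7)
The plan is to reduce to a computation of a Hilbert symbol in the Brauer group of $\Q(\zeta_p)$. First, if $\ell_0 \not\equiv 1 \pmod p$, then $\ell_0$ fails to split completely in $\Q(\zeta_p) \subset K_i$, so both sides of the asserted equivalence are false; hence we may assume $\ell_0 \equiv 1 \pmod p$ throughout. In this case $\ell_0$ splits completely in $\Q(\zeta_p)$ and $\sigma_0 \in G_{\Q(\zeta_p),S}$ since $\omega(\sigma_0)=1$; class field theory applied to the cyclic degree-$p$ extension $K_i/\Q(\zeta_p)$, which is unramified at the primes above $\ell_0$, identifies ``$\ell_0$ splits completely in $K_i$'' with the condition $\tilde c_i(\sigma_0) = 0 \in \F_p$.

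To analyze $b_0 \cup c_i$, I would pass to $\Q(\zeta_p)$ via the inflation isomorphism
\[
H^2(\Z[1/Np], \F_p) \isoto H^2(\Z[1/Np,\zeta_p], \F_p)^{\Gal(\Q(\zeta_p)/\Q)},
\]
valid because $p \nmid p-1$, and fix an identification $\F_p \cong \mu_p$ by choosing $\zeta_p$. Under Kummer theory, $b_0|_{\Q(\zeta_p)}$ becomes the class of $\ell_0$ in $\Q(\zeta_p)^\times/p$, and $c_i|_{\Q(\zeta_p)}$ becomes the class of some $\alpha_i \in \Q(\zeta_p)^\times$ with $K_i = \Q(\zeta_p)(\alpha_i^{1/p})$. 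Their cup product then corresponds to the Hilbert symbol $\{\ell_0, \alpha_i\} \in \mathrm{Br}(\Q(\zeta_p))[p]$. Since both classes are unramified outside the primes of $\Q(\zeta_p)$ above $Np$, this symbol lies in $H^2(\Z[1/Np,\zeta_p], \mu_p)$, which by Brauer--Hasse--Noether injects into the sum of the local Brauer groups at those primes. So it suffices to check local vanishing at each such place.

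The main content is verifying the local vanishings at places \emph{not} above $\ell_0$. For $v$ above $Np$ but not above $\ell_0 \ell_i p$ the claim is clear since both classes are unramified. For $v \mid p$, $c_i|_{G_{\Q_p}}$ vanishes by the defining condition of $H^1_{(p)}$, so the local symbol is zero. For $v \mid \ell_i$, the element $\ell_0$ is a unit and the tame symbol takes the form $\bar\ell_0^{v(\alpha_i)(q-1)/p} \in \mu_p$, where $q = \ell_i^2$ is the size of the residue field; the crucial observation is that $\ell_i \equiv -1 \pmod p$ forces $p \nmid \ell_i - 1$, so $\bar\ell_0 \in \F_{\ell_i}^\times \subset \F_{\ell_i^2}^\times$ lies in a subgroup of order prime to $p$ and is automatically a $p$-th power, making the symbol trivial. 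Finally, at $v \mid \ell_0$ the tame symbol is $\bar\alpha_i^{(\ell_0-1)/p}$, which vanishes if and only if $\alpha_i$ is a local $p$-th power, if and only if $v$ splits in $K_i$; since all primes of $\Q(\zeta_p)$ above $\ell_0$ are $\Gal(\Q(\zeta_p)/\Q)$-conjugate, this is equivalent to $\ell_0$ splitting completely in $K_i$, matching the class-field-theoretic description from the first paragraph. The main obstacle is the tame symbol computation at $v \mid \ell_i$: without the hypothesis $\ell_i \equiv -1 \pmod p$ (as opposed to $\ell_i \equiv 1 \pmod p$) the symbol could be nonzero and obstruct the equivalence.
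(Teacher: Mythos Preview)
Your argument is sound in outline and the local computations are correct, but the route is genuinely different from the paper's, and there is one step that needs more justification.

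The paper never leaves $\Q$. Having reduced to $\ell_0 \equiv 1 \pmod p$ and identified splitting with $c_i\vert_{G_{\ell_0}} = 0$, it uses two short facts: first, an Euler-characteristic computation shows that the localization map $H^2(\Z[1/Np],\F_p) \to \bigoplus_{j} H^2(\Q_{\ell_j},\F_p)$ is an \emph{isomorphism}, and since $H^2(\Q_\ell,\F_p)=0$ for $\ell\not\equiv 1\pmod p$, only the component at $\ell_0$ survives; second, local Tate duality at $\ell_0$ is non-degenerate symplectic, so pairing the unramified class $c_i\vert_{G_{\ell_0}}$ against the ramified class $b_0\vert_{G_{\ell_0}}$ detects exactly whether $c_i\vert_{G_{\ell_0}}=0$. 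That is the entire proof. Your approach, by contrast, passes to $K=\Q(\zeta_p)$ and computes tame Hilbert symbols explicitly; the payoff is a concrete reason for the vanishing at $v\mid\ell_i$ (namely $\bar\ell_0\in\F_{\ell_i}^\times$ lies in a subgroup of $\F_{\ell_i^2}^\times$ of order prime to $p$, using $\ell_i\equiv -1\pmod p$), which the paper's argument hides inside the vanishing $H^2(\Q_{\ell_i},\F_p)=0$.

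Two corrections. First, a terminological slip: the map $H^2(\Z[1/Np],\F_p)\isoto H^2(\Z[1/Np,\zeta_p],\F_p)^{\Gal(\Q(\zeta_p)/\Q)}$ is \emph{restriction}, not inflation. Second, and more substantively, your appeal to Brauer--Hasse--Noether does not give the injection you claim. That theorem says $\mathrm{Br}(K)\hookrightarrow\bigoplus_v\mathrm{Br}(K_v)$, but you are working in $H^2(\cO_{K,S},\mu_p)$, and the natural map from there to $\mathrm{Br}(K)[p]$ has kernel $\Pic(\cO_{K,S})/p$, which need not vanish when $p$ is irregular. What saves you is that your class lies in the $\Gal(\Q(\zeta_p)/\Q)$-invariant part of $H^2(\cO_{K,S},\F_p)$, which after the twist $\F_p\cong\mu_p$ corresponds to the $\omega$-eigenspace of $H^2(\cO_{K,S},\mu_p)$; and the $\omega$-eigenspace of $\Cl(\Q(\zeta_p))\otimes\F_p$ is known to vanish. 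You should either say this, or---more simply---carry out the local-to-global step over $\Q$, where the analogous injectivity is an easy consequence of Poitou--Tate duality (its obstruction is dual to the group of classes in $\Q^\times/(\Q^\times)^p$ that are everywhere locally trivial, which is visibly zero), and only descend to $\Q(\zeta_p)$ for the explicit local symbol computations.
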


We can now prove one implication of Theorem \ref{thm:star split}.
\begin{prop}
\label{prop:r+1 generators implies delta=1}
Suppose that the minimal number of generators of $I^\epsilon$ is $r+1$. Then $\delta=1$.
\end{prop}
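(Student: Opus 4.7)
The plan is to reformulate the hypothesis ``minimal number of generators of $I^\epsilon$ equals $r+1$'' as the vanishing of a specific element in $A \otimes_{\Z_p}\F_p$, where $A = J^\red/\Jm^2$; to compute this element using the tame inertia relation at $\ell_0$; and to identify the resulting vanishing condition with $\delta = 1$. First, I would analyze the short exact sequence \eqref{eq:star ses} from Theorem~\ref{thm:star main}, namely
\[
0 \to A = \bigoplus_{i=1}^r \Z_p/(\ell_i+1)\Z_p \to \Jm/\Jm^2 \to C = \Z_p/(\ell_0-1)\Z_p \to 0
\]
(using $\Jm \cong I^\epsilon$), tensor with $\F_p$, and read off the minimal number of generators as $r + \dim_{\F_p}(C \otimes \F_p) - \dim_{\F_p}\operatorname{im}(\partial)$, where $\partial\colon \mathrm{Tor}_1^{\Z_p}(C,\F_p) \to A \otimes \F_p$ is the connecting map. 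The hypothesis therefore forces $\ell_0 \equiv 1 \pmod{p}$ and $\partial = 0$; unwinding the Tor connecting map, the latter is equivalent to $(\ell_0-1)(d_{\gamma_0}-1)$ lying in $pJ^\red + \Jm^2$, i.e.\ to its image in $A/pA = \bigoplus_{i=1}^r \F_p\cdot b_{\gamma_0}c_{\gamma_i}$ being zero.

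Next, I would extract a formula for $(\ell_0-1)(d_{\gamma_0}-1)$ modulo $\Jm^2$ from the tame relation $\sigma_0 \gamma_0 \sigma_0^{-1} = \gamma_0^{\ell_0}$ in $G_{\ell_0}$. Since $\rho_N^\epsilon$ is $\mathrm{US}_{\ell_0}^{-1}$, the choice $\sigma = \tau = \gamma_0$ in Definition~\ref{defn:US-CH ell} gives $(\rho_N^\epsilon(\gamma_0)-1)^2 = 0$, so $\rho_N^\epsilon(\gamma_0)^{\ell_0} = 1 + \ell_0\bigl(\rho_N^\epsilon(\gamma_0)-1\bigr)$, and the tame relation becomes the commutation
\[
\rho_N^\epsilon(\sigma_0)\bigl(\rho_N^\epsilon(\gamma_0)-1\bigr) = \ell_0\,\bigl(\rho_N^\epsilon(\gamma_0)-1\bigr)\rho_N^\epsilon(\sigma_0).
\]
Comparing $(1,1)$-entries in the GMA notation of \eqref{eq:GMA}, invoking the Cayley-Hamilton identity $a_{\gamma_0}+d_{\gamma_0}=2$ (valid because $\kcyc(\gamma_0)=1$ when $\ell_0 \ne p$), and reducing modulo $\Jm^2$ using $a_{\sigma_0} \equiv \ell_0 \pmod{\Jm}$, I expect to arrive at
\[
(\ell_0-1)(d_{\gamma_0}-1) \equiv b_{\gamma_0}c_{\sigma_0} - \ell_0^{-1}\, b_{\sigma_0}c_{\gamma_0} \pmod{\Jm^2}.
\]

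To finish, I would simplify this modulo $pJ^\red + \Jm^2$ via the Galois-cohomological descriptions of $B_N^{\epsilon,{\min}}$ and $C_N^{\epsilon,{\min}}$. Note that $\ell_0 \equiv 1\pmod p$ and $\ell_i \equiv -1\pmod p$ for $i \geq 1$ force $p \nmid N$ (as $p > 2$). Lemma~\ref{lem:info about B and C} then gives $B_N^{\epsilon,{\min}} \otimes \F_p = \F_p\cdot \bar b_{\gamma_0}$, so $b_{\sigma_0}c_{\gamma_0}$ is a scalar multiple of $b_{\gamma_0}c_{\gamma_0}$ modulo $pJ^\red + \Jm^2$, and this vanishes by Lemma~\ref{lem:bc in Jm^2}. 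For the $c_{\sigma_0}$ term, the pairing of Lemma~\ref{lem:dual B C and H1} combined with $\omega(\sigma_0)=1$ yields $c_{\sigma_0} \equiv \sum_{i \in T} \tilde c_i(\sigma_0)\bar c_{\gamma_i}$ in $C_N^{\epsilon,{\min}}/p$; after absorbing the $i=0$ term via Lemma~\ref{lem:bc in Jm^2}, I obtain
\[
(\ell_0-1)(d_{\gamma_0}-1) \equiv \sum_{i=1}^r \tilde c_i(\sigma_0)\, b_{\gamma_0}c_{\gamma_i} \pmod{pJ^\red + \Jm^2}.
\]
The hypothesis forces $\tilde c_i(\sigma_0) = 0$ for all $i=1,\dots,r$. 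Since each $c_i$ is unramified at $\ell_0 \ne \ell_i$ and $\omega|_{G_{\ell_0}} = 1$, this is equivalent to $c_i|_{G_{\ell_0}} = 0$, which by Definition~\ref{defn:K_i} is precisely the condition that $\ell_0$ splits completely in $K_i$; therefore $\delta = 1$.

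The main obstacle will be the $(1,1)$-entry computation in the second step: scalars in $R_N^\epsilon$ commute through the GMA but the $b$ and $c$ entries do not, so the non-commutativity must be tracked carefully, and the identity $a_{\gamma_0}+d_{\gamma_0}=2$ is the key input that collapses the diagonal cross-terms and produces a clean formula modulo $\Jm^2$. Everything else amounts to Galois-cohomology bookkeeping with the dualities already set up in the paper.
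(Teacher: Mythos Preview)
Your argument is correct and takes a genuinely different route from the paper's. The paper reformulates the hypothesis as ``each $b_{\gamma_0}c_{\gamma_i}$ is nonzero in $\m/(p,\m^2)$,'' then for each $i$ builds a first-order pseudodeformation valued in $\F_p[\varep]$ using $\tilde b_0$ and $\tilde c_i$ as off-diagonal entries, and appeals to Bella\"iche's tangent-space exact sequence \cite[Thm.~A]{bellaiche2012} to conclude that $b_0 \cup c_i = 0$ in $H^2(\Z[1/Np],\F_p)$; Proposition~\ref{prop:bc cup} then converts this cup-product vanishing into the splitting of $\ell_0$ in $K_i$.

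Your approach instead stays entirely inside $R^\epsilon_N$ and its GMA. You identify the extension class of \eqref{eq:star ses} with the image of $(\ell_0-1)(d_{\gamma_0}-1)$ in $A \otimes \F_p$, compute that image via the tame relation $\rho^\epsilon_N(\sigma_0)\rho^\epsilon_N(\gamma_0)\rho^\epsilon_N(\sigma_0)^{-1} = \rho^\epsilon_N(\gamma_0)^{\ell_0}$ (valid because $\rho^\epsilon_N\vert_{I_{\ell_0}}$ factors through the pro-$p$ quotient of $I_{\ell_0}$), and read off the coefficients $\tilde c_i(\sigma_0)$ directly. This bypasses both Bella\"iche's obstruction machinery and Proposition~\ref{prop:bc cup}: you never form the cup product $b_0 \cup c_i$, arriving instead at $c_i\vert_{G_{\ell_0}} = 0$ straight from the definition of $K_i$. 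What you gain is a more elementary and self-contained argument that also makes the extension class of \eqref{eq:star ses} completely explicit; what the paper's approach buys is a cleaner conceptual link to cup products, which fits naturally with the cohomological framework used in the construction of $\rho_M$ in \S\ref{sssec:constr rho_M}.
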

\begin{proof}
By Theorem \ref{thm:star main}, we see that minimal number of generators of $I^\epsilon$ is $r+1$ if and only if $\ell_0 \equiv 1 \pmod{p}$ and the images of the elements $b_{\gamma_0}c_{\gamma_i}$ for $i=1,\dots,r$ in $\m/(p,\m^2)$ are linearly independent. In particular, for each $i$, the image of $b_{\gamma_0}c_{\gamma_i}$ in $\m/(p,\m^2)$ is non-zero. Fix such an $i$, and let (writing $\F_p[\varep]$ for $\F_p[\varep]/(\varep^2)$)
\[
\alpha: R_N^\epsilon/(p,\m^2) \to \F_p[\varep]
\]
be a ring homomorphism sending $b_{\gamma_0}c_{\gamma_i}$ to $\varep$.

Let $E=\sm{\F_p[\varep]}{\F_p}{\F_p}{\F_p[\varep]}$ be the $\F_p[\varep]$-GMA with data $(\F_p,\F_p,m)$ where $m:\F_p \times \F_p \to \F_p[\varep]$ is the map $(x,y) \mapsto xy\varep$. By Lemma \ref{lem:dual B C and H1}, we have a homomorphism of GMAs
$A:E_N^\epsilon \to E$ given by 
\[
A=\ttmat{\alpha}{\tilde b_0}{\tilde c_i}{\alpha}.
\]
Let $D_A=\psi(A \circ \rho_N^\epsilon):G_{\Q,S} \to \F_p[\varep]$ be the corresponding deformation of $\Db$. Then $D_A$ contributes a non-zero element to the tangent space $\mathfrak{t}_\Db$ of $R_\Db/pR_\Db$. Examining \cite{bellaiche2012}, the image of $D_A$ under $\iota$ in the exact sequence of \cite[Thm.\ A]{bellaiche2012}
\[\xymatrix{
 \frt_\Db \ar[r]^-{\iota} & H^1(\F_p(1)) \otimes_{\F_p} H^1(\F_p(-1)) \ar[rrr]^-{b' \otimes c' \mapsto (b' \cup c', c' \cup b')} & & & H^2(\F_p) \oplus H^2(\F_p)
}\]
is $b_0 \otimes c_i$, and hence $b_0 \cup c_i=0$. Since this is true for all $i$, Proposition \ref{prop:bc cup} implies that $\delta=1$.
\end{proof}

The remainder of the proof of Theorem \ref{thm:star split} relies on the following construction. 

\subsubsection{Construction of a maximal first-order pseudodeformation}
\label{sssec:constr rho_M}

Let $H$ be the kernel of the map
\begin{align*}
\begin{split}
H^1_{(p)}(\Z[1/Np],\F_p(-1)) &\lra H^2(\Z[1/Np],\F_p) \oplus H^1(I_{\ell_0}, \F_p(-1)), \\
 x &\mapsto (b_0 \cup x, \ x\vert_{I_{\ell_0}}).
\end{split}
\end{align*}

\begin{lem}
\label{lem:basis of H}
If $\ell_0 \equiv 1 \pmod{p}$ and $\delta=0$, then $b_0 \cup c_i  \ne 0$ for some $i$. In that case, there are elements $\alpha_j \in \F_p$ such that the set $\{c_j-\alpha_jc_i\}$ for $j \in \{1,\dots,r\}\setminus\{i\}$ is a basis for $H$. Otherwise, the set $\{c_1,\dots,c_r\}$ is a basis for $H$.
\end{lem}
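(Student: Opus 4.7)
The plan is to analyze $H$ by treating the two coordinates of the defining map separately, then combining the results via a local-to-global analysis of the cup product $b_0 \cup -$.

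First I would dispatch the second coordinate $x \mapsto x|_{I_{\ell_0}}$. By the characterization of the classes $c_i$ in \S\ref{subsec:label}, each $c_j$ with $j \in \{1,\dotsc,r\}$ is ramified only at $\ell_j \ne \ell_0$, and hence $c_j|_{I_{\ell_0}} = 0$. If instead $\ell_0 \equiv \pm 1 \pmod{p}$ (so that $c_0$ is defined and enters the basis of $H^1_{(p)}(\Z[1/Np],\F_p(-1))$), then $c_0$ is ramified at $\ell_0$, giving $c_0|_{I_{\ell_0}} \ne 0$. Consequently the kernel of the second coordinate is exactly the $r$-dimensional subspace $V := \mathrm{span}_{\F_p}\{c_1,\dotsc,c_r\}$, so computing $H$ reduces to computing $\ker(b_0 \cup -)$ on $V$.

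Next I would analyze $b_0 \cup - \colon V \to H^2(\Z[1/Np],\F_p)$ place by place, invoking the Poitou-Tate injection $H^2(\Z[1/Np],\F_p) \hookrightarrow \bigoplus_{v \in S} H^2(\Q_v,\F_p)$ (valid since Kummer theory gives $H^1(\Z[1/Np],\F_p(1)) \cong \Z[1/Np]^\times/(\Z[1/Np]^\times)^p$, whose basis elements are each detected at a place, so the relevant Shafarevich-Tate group vanishes). For $v \nmid Np$, both classes are unramified and $H^2(G_v/I_v,\F_p) = 0$. For $v = \ell_j$ with $j \ge 1$ or for $v = p$, the entire group $H^2(\Q_v,\F_p)$ vanishes by local duality, using $\ell_j \equiv -1 \not\equiv 1 \pmod p$ and $\mu_p \not\subset \Q_p$ for odd $p$. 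For $v = \ell_0$, $H^2(\Q_{\ell_0},\F_p) \cong \F_p$ if $\ell_0 \equiv 1 \pmod{p}$, and is zero otherwise. Hence $b_0 \cup -$ on $V$ factors through a space of dimension at most one.

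Finally I would assemble the two cases of the lemma. If $\ell_0 \not\equiv 1 \pmod{p}$, the target is zero and $H = V$; if $\ell_0 \equiv 1 \pmod{p}$ with $\delta = 1$, Proposition \ref{prop:bc cup} gives $b_0 \cup c_j = 0$ for all $j$, so again $H = V$. Together these subcases form the ``otherwise'' branch. The remaining case has $\ell_0 \equiv 1 \pmod{p}$ and $\delta = 0$: Proposition \ref{prop:bc cup} furnishes an index $i$ with $b_0 \cup c_i \ne 0$, and writing $\beta \colon V \to H^2(\Q_{\ell_0},\F_p) \cong \F_p$ for the induced linear form, the scalars $\alpha_j := \beta(c_j)/\beta(c_i)$ place each $c_j - \alpha_j c_i$ in $\ker \beta = H$ for $j \in \{1,\dotsc,r\}\setminus\{i\}$; since $c_i \notin H$, these $r-1$ elements form a basis. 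The main obstacle is not any single step but rather the case bookkeeping, especially recognizing that the ``otherwise'' branch subsumes two genuinely distinct subcases (one where $H^2(\Q_{\ell_0},\F_p) = 0$, another where it is nonzero but $b_0 \cup -$ is identically zero on $V$), and handling the $c_0$-present-versus-absent dichotomy so as not to disturb $V$.
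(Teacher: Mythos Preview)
Your proof is correct and follows essentially the same approach as the paper. Both arguments first use the ramification characterization of the $c_j$ to reduce to the span $V$ of $\{c_1,\dotsc,c_r\}$, then control the map $b_0 \cup -$ on $V$ by the dimension of $H^2(\Z[1/Np],\F_p)$ (at most one), and finally invoke Proposition~\ref{prop:bc cup} for the first assertion. The only difference is that the paper simply quotes the global dimension formula $\dim_{\F_p} H^2(\Z[1/Np],\F_p) \in \{0,1\}$ (established in their appendix via Lemma~\ref{lem:H2 calculation}), whereas you re-derive it by localizing place-by-place; your explicit construction of the $\alpha_j$ is also spelled out more fully than the paper's ``the lemma follows.''
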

\begin{proof}
The first statement follows from Proposition \ref{prop:bc cup}. Recall that $c_i$ is ramified at $\ell_0$ if and only if $i = 0$, so $H$ is contained in the span of the linearly independent set $\{c_1,\dots,c_r\}$. Since 
\[
\dim_{\F_p} H^2(\Z[1/Np],\F_p) = \left\{
\begin{array}{ll}1 &\text{ when } \ell_0 \equiv 1 \pmod{p} \\
0 &\text{ when } \ell_0 \not\equiv 1 \pmod{p},
\end{array}\right.
\]
the lemma follows.
\end{proof}

\begin{lem}
\label{lem:H cocycles trivial on ell0}
If $\ell_0 \ne p$ and $h \in H$, the image $h\vert_{G_{\ell_0}} \in H^1(\Q_{\ell_0}, \F_p(-1))$ is zero.
\end{lem}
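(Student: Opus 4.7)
The plan is to split the argument according to the residue class of $\ell_0$ modulo $p$. In both sub-cases, the defining condition $h\vert_{I_{\ell_0}} = 0$ for $h \in H$ forces $h\vert_{G_{\ell_0}}$ to lie in the unramified subspace $H^1_{\mathrm{unr}}(\Q_{\ell_0},\F_p(-1)) := H^1(G_{\ell_0}/I_{\ell_0}, \F_p(-1))$, so the content of the lemma is that this unramified piece is forced to vanish.

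When $\ell_0 \not\equiv 1 \pmod{p}$, the Frobenius $\sigma_{\ell_0}$ acts non-trivially on $\F_p(-1)$ (via $\omega^{-1}(\sigma_{\ell_0}) = \ell_0^{-1}$), so $H^1_{\mathrm{unr}}(\Q_{\ell_0},\F_p(-1)) = (\F_p(-1))_{\sigma_{\ell_0}} = 0$ and the conclusion is automatic. Note that the condition $b_0 \cup h = 0$ plays no role here.

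When $\ell_0 \equiv 1 \pmod{p}$, a standard local Euler-characteristic computation gives $\dim_{\F_p} H^1(\Q_{\ell_0},\F_p(-1)) = 2$ with $\dim_{\F_p} H^1_{\mathrm{unr}}(\Q_{\ell_0},\F_p(-1)) = 1$, and also $H^2(\Q_{\ell_0},\F_p) \cong \F_p$. The cup-product pairing
\[
H^1(\Q_{\ell_0},\F_p(1)) \otimes_{\F_p} H^1(\Q_{\ell_0},\F_p(-1)) \lra H^2(\Q_{\ell_0},\F_p)
\]
coming from the natural identification $\F_p(1) \otimes \F_p(-1) \isoto \F_p$ is then perfect: after choosing $\zeta_p \in \Q_{\ell_0}$ to identify $\F_p \cong \mu_p$ it is exactly the Hilbert symbol, and in particular the unramified subspaces on the two sides annihilate one another. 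Now $b_0\vert_{G_{\ell_0}}$ is the Kummer class of the uniformizer $\ell_0$, which is ramified, so non-degeneracy forces the map
\[
H^1_{\mathrm{unr}}(\Q_{\ell_0},\F_p(-1)) \lra H^2(\Q_{\ell_0},\F_p), \qquad x \longmapsto b_0\vert_{G_{\ell_0}} \cup x,
\]
to be injective.

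To conclude, we invoke the defining condition $b_0 \cup h = 0$ in $H^2(\Z[1/Np],\F_p)$; functoriality of the cup product under the restriction $H^2(\Z[1/Np],\F_p) \to H^2(\Q_{\ell_0},\F_p)$ then yields $b_0\vert_{G_{\ell_0}} \cup h\vert_{G_{\ell_0}} = 0$, and the injectivity just established forces $h\vert_{G_{\ell_0}} = 0$. The main technical point will be matching the displayed cup-product pairing with the Hilbert symbol so as to borrow the orthogonality of unramified subspaces from local Tate duality; both ingredients are classical and the needed compatibility is obtained by transporting $\F_p(1) \otimes \F_p(-1) \isoto \F_p$ across the chosen splitting $\F_p \cong \mu_p$ of $G_{\ell_0}$-modules.
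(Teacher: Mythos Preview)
Your proof is correct and follows essentially the same strategy as the paper: reduce to the unramified subspace via $h\vert_{I_{\ell_0}}=0$, and in the case $\ell_0\equiv 1\pmod p$ kill the unramified class using the non-degenerate Tate pairing against the ramified class $b_0\vert_{G_{\ell_0}}$ (the paper packages this step as Lemma~\ref{lem:local tate desc}). The only cosmetic difference is in the case $\ell_0\not\equiv 1\pmod p$: you handle it uniformly by noting $H^1_{\mathrm{unr}}(\Q_{\ell_0},\F_p(-1))=0$ since Frobenius acts nontrivially, whereas the paper splits further into $\ell_0\equiv -1\pmod p$ (invoking Lemma~\ref{lem:Kummer injective} after identifying $\F_p(-1)\cong\F_p(1)$) and $\ell_0\not\equiv\pm1\pmod p$ (where the full $H^1$ vanishes); your treatment here is slightly more streamlined.
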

\begin{proof}
If $\ell_0 \not \equiv \pm 1 \pmod{p}$, then $H^1(\Q_{\ell_0}, \F_p(-1))=0$. If $\ell_0 \equiv -1 \pmod{p}$, then $H^1(\Q_{\ell_0}, \F_p(-1))=H^1(\Q_{\ell_0}, \F_p(1))$, and so this follows from Lemma \ref{lem:Kummer injective}. Now assume $\ell_0 \equiv 1 \pmod{p}$. Then, since $h \cup b_0=0$ in $H^2(\Z[1/Np],\F_p)$, $b_0$ is ramified at $\ell_0$, and $h$ is unramified at $\ell_0$, Lemma \ref{lem:local tate desc} implies that $h\vert_{G_{\ell_0}} = 0$. \end{proof}

\begin{constr}
\label{constr:C and F}
We construct a cocycle $C: G_{\Q,S} \to H^*(-1)$, where $H^*=\Hom_{\F_p}(H,\F_p)$ with trivial $G_{\Q,S}$-action, and a cochain $F:G_{\Q,S} \to H^*$ such that:
\begin{enumerate}
\item $C|_{G_p}=0$,
\item if $\ell_0 \ne p$, then $C|_{G_{\ell_0}}$ is a coboundary,
\item $dF = \tilde{b}_0 \smile C$,
\item $F|_{I_p}= 0$,
\item For any cocycle $\tilde{h}$ whose cohomology class $h$ is in $H$, and any $\tau \in G_{\Q,S}$ with $\omega(\tau)=1$, we have $C(\tau)(h)=\tilde{h}(\tau)$.
\end{enumerate}
\end{constr}
\begin{proof}
For any $G_{\Q,S}$-module $X$, let 
\[
Z^1_{(p)}(\Z[1/Np],X) =\{(a,x) \in Z^1(\Z[1/Np],X) \times X  \ | \ a(\tau)=(\tau-1)x, \ \forall\,  \tau \in G_p\}.
\]
There is a surjection $Z^1_{(p)}(\Z[1/Np],\F_p(-1)) \onto H^1_{(p)}(\Z[1/Np],\F_p(-1))$ sending $(a,x)$ to the class of $a$. Choose a linear section $s: H \rinj  Z^1_{(p)}(\Z[1/Np],\F_p(-1))$, and write $s(h)=(s(h)_1,s(h)_2) \in Z^1(\Z[1/Np],\F_p(-1)) \times \F_p(-1)$. 

Define an element $(C',x) \in C^1(\Z[1/Np],H^*(-1)) \times H^*(-1)$ by $C'(\tau)(h)=s(h)_1(\tau)$ and $x(h)=s(h)_2$ for $h \in H$. One observes $(C',x)\in Z^1_{(p)}(\Z[1/Np],H^*(-1))$. Then let $C=C'-dx$, so that $C|_{G_p}=0$ and (1) holds. We also see that (5) holds, since the value $\tilde{h}(\tau)$ is independent of the choice of cocycle. Computing with dual vector spaces, it is easy to see that $b_0 \cup C = 0$ in $H^2(\Z[1/Np],H^*)$ and that Lemma \ref{lem:H cocycles trivial on ell0} implies (2).

Finally, to see (3) and (4), let $y$ be any cochain such that $dy=\tilde{b}_0 \smile C$. Note that the restriction map 
\[
H^1(\Z[1/Np],H^*)\to H^1(I_p,H^*)
\]
is surjective, and that, since $H^*$ has trivial action, we may and do identify a cohomology class with its representing cocycle. Since $C|_{I_p}=0$ and $dy=\tilde{b}_0 \smile C$, we see that $y|_{I_p} \in H^1(I_p,H^*)$. Hence there is a cocycle $y' \in H^1(\Z[1/Np],H^*)$ with $y'|_{I_p}=y|_{I_p}$. Letting $F=y-y'$, we have $dF=dy=\tilde{b}_0 \smile C$ and $F|_{I_p}=0$. 
\end{proof}

Let $M = H^* \oplus \Z/(p, \ell_0-1)$, and let $\F_p[M]$ be the vector space $\F_p \oplus M$ thought of as a local $\F_p$-algebra with square-zero maximal ideal $M$. We write elements of $\F_p[M]$ as triples $(x,y,z)$ with $x \in \F_p$, $y \in H^*$ and $z \in  \Z/(p, \ell_0-1)\Z$.

Let $E_M$ be the $\F_p[M]$-GMA given by the data $(\F_p,H^*,m)$ where $m$ is the homomorphism
\[
m: \F_p \otimes_{\F_p} H^* \cong H^* \risom H^* \oplus \{0\} \subset M \rinj \F_p[M].
\]
Let $\rho_M: G_{\Q,S} \lra E_M^\times$ be the function
\begin{equation}
\label{eq:rho_M}
\rho_M(\tau) = \ttmat{\omega(\tau)(1,F(\tau),\log_{\ell_0}(\tau))}{\tilde b_0(\tau)}{\omega(\tau) C(\tau)} {(1,\tilde{b}_0(\tau)C(\tau)-F(\tau),-\log_{\ell_0}(\tau))}.
\end{equation}
Then $\rho_M$ is a homomorphism by Construction \ref{constr:C and F}. Let $D_M : G_{\Q,S} \ra \F_p[M]$ denote the pseudorepresentation $D_M := \psi(\rho_M)$. 

\begin{lem}
\label{lem:rho_M US}
$\rho_M$ satisfies $\mathrm{US}^\epsilon_N$. 
\end{lem}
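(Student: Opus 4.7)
The plan is to verify that the Cayley-Hamilton representation $\rho_M$ satisfies the local $\mathrm{US}^{\epsilon_\ell}_\ell$ condition at every prime $\ell \mid Np$, following Definition \ref{defn:US global}. The key structural simplification is that the underlying ring $\F_p[M]$ has square-zero maximal ideal $M$, so products of two entries both lying in $M$ vanish automatically; this reduces most verifications to cocycle-level identities, leveraging properties (1)--(5) of $C$ and $F$ from Construction \ref{constr:C and F}.

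First I would dispatch the prime $p$. Property (1), $C\vert_{G_p}=0$, immediately puts $\rho_M\vert_{G_p}$ into upper-triangular form with diagonal characters $\omega\cdot(1,F,\log_{\ell_0})\vert_{G_p}$ and $(1,-F,-\log_{\ell_0})\vert_{G_p}$. Property (4), $F\vert_{I_p}=0$, combined with the unramifiedness of $\log_{\ell_0}$ at $p$ (either because $p\ne\ell_0$ or because the $\log_{\ell_0}$ term is absent), shows these diagonal characters restrict on $I_p$ to $\omega$ and the trivial character. If $p\nmid N$, Lemma \ref{lem:flat psrep form} certifies finite-flatness; if $p=\ell_0$ with $\epsilon_p=-1$, Proposition \ref{prop:ord C-H form} certifies ordinariness. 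The assumption $\ell_i\equiv -1\pmod p$ for $i\ge 1$ rules out the remaining possibility $p=\ell_i$ for $i\ge 1$.

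Next I would verify $\mathrm{US}^{-1}_{\ell_0}$. Property (2) asserts $C\vert_{G_{\ell_0}}$ is a coboundary; since $I_{\ell_0}$ acts trivially on $H^*(-1)$ through the unramified character $\omega^{-1}$ (as $\ell_0\ne p$), the coboundary vanishes on $I_{\ell_0}$, so $C\vert_{I_{\ell_0}}=0$, and after a further cochain adjustment we may assume $C\vert_{G_{\ell_0}}=0$ outright. Then $\rho_M\vert_{G_{\ell_0}}$ is upper triangular and the test element $V^{-1}_{\rho_M}(\sigma,\tau)=(\rho_M(\sigma)-\kcyc(\sigma))(\rho_M(\tau)-1)$ expands as an upper-triangular product whose $(1,1)$- and $(2,2)$-entries lie in $M\cdot M=0$ whenever $\sigma$ or $\tau$ belongs to $I_{\ell_0}$. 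The off-diagonal $(1,2)$-entry vanishes by an antisymmetry: on the pro-$p$-cyclic group $I_{\ell_0}^{\mathrm{pro}\text{-}p}$ the three homomorphisms $\tilde b_0$, $F\vert_{G_{\ell_0}}$ (a cocycle since $dF\vert_{G_{\ell_0}}=0$), and $\log_{\ell_0}$ are each determined by their value at $\gamma_0$, producing a cancelling combination.

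The most delicate step, and the main obstacle, is $\mathrm{US}^{+1}_{\ell_i}$ for $1\le i\le r$. Here $\ell_i\equiv -1\pmod p$ forces $\lambda(-1)\equiv\omega\vert_{G_{\ell_i}}$ in $\F_p$, so the test element simplifies to $(\rho_M(\sigma)-1)(\rho_M(\tau)-\omega(\tau))$. One exploits that $\tilde b_0\vert_{I_{\ell_i}}=0$ (the Kummer cocycle of $\ell_0\ne\ell_i$ is unramified at $\ell_i$) and $\log_{\ell_0}\vert_{I_{\ell_i}}=0$; consequently $dF\vert_{I_{\ell_i}}=\tilde b_0\vert_{I_{\ell_i}}\smile C\vert_{I_{\ell_i}}=0$, so $F\vert_{I_{\ell_i}}$ is a homomorphism into $H^*$ which either vanishes or can be subsumed by a suitable cocycle adjustment in the construction. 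Unlike the setting of Lemma \ref{lem:epsilon=1 and unram}, one cannot conclude that $\rho_M\vert_{G_{\ell_i}}$ is unramified, since $C$ is genuinely ramified there via the $c_i$-component; the Steinberg-type vanishing hinges on the numerological coincidence $\omega\equiv\lambda(-1)\pmod p$ at $G_{\ell_i}$, which aligns the residual eigenvalues of $\rho_M$ with the Steinberg parameters and makes all surviving contributions multiply into $M\cdot M=0$.
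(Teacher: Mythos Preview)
Your proposal has the right overall shape but contains genuine gaps at each local step.

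\textbf{At $p$ (finite-flat case).} You invoke Lemma~\ref{lem:flat psrep form}, but that lemma characterizes when a \emph{pseudodeformation} is finite-flat. What the statement requires (Definition~\ref{defn:US global}) is that the Cayley--Hamilton representation $\rho_M\vert_{G_p}$ be finite-flat, i.e.\ that $E_M$ as a $G_p$-module via left multiplication arise from a finite flat group scheme. Your verification shows only that $\rho_M\vert_{G_p}$ is ordinary in the sense of Proposition~\ref{prop:ord C-H form}; Corollary~\ref{cor:ord to flat compare} says finite-flat $\Rightarrow$ ordinary, not the converse. The paper closes this gap by explicitly building a finite-flat $\F_p[M][G_p]$-module $\cE_{\tilde b_0,\omega,1}$ by pushout and pullback from the Kummer extension $\cE_{\tilde b_0}$, and then identifying it as a faithful Cayley--Hamilton module of $\rho_M\vert_{G_p}$ via \cite[Thm.~2.6.3]{WWE4}.

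\textbf{At $\ell_0$ when $\ell_0\not\equiv 1\pmod p$.} You say one may assume $C\vert_{G_{\ell_0}}=0$ ``after a further cochain adjustment.'' But $C$ is a global cocycle fixed by Construction~\ref{constr:C and F}; replacing $C$ by $C-dz$ destroys the global constraints $(1)$ and $(4)$ (e.g.\ $C\vert_{G_p}$ would no longer vanish). The paper's device is to leave $C$ alone and instead conjugate $\rho_M$ by $\sm{1}{0}{z}{1}\in E_M^\times$, producing a representation $\rho_M'$ whose $(2,1)$-entry vanishes on $G_{\ell_0}$; since $V^{-1}_{\rho_M}$ is conjugation-invariant, one may compute with $\rho_M'$. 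Your appeal to ``antisymmetry'' for the $(1,2)$-entry is also not a proof; the paper instead observes that the diagonal entries of $\rho_M'$ become unramified homomorphisms (since $\ell_0\not\equiv 0,1\pmod p$), which forces the relevant factors into $M$.

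\textbf{At $\ell_i$ for $i\ge 1$.} Here your sketch is too vague to constitute an argument. The paper uses the same conjugation idea in mirror form: since $b_0\vert_{I_{\ell_i}}=0$ and $\ell_i\equiv -1\pmod p$, Lemma~\ref{lem:Kummer injective} gives $b_0\vert_{G_{\ell_i}}=0$, so $\tilde b_0\vert_{G_{\ell_i}}$ is a coboundary and conjugation by $\sm{1}{z}{0}{1}$ kills the $(1,2)$-entry on $G_{\ell_i}$. The vanishing of $V^{+1}_{\rho_M}$ then follows exactly as in the $\ell_0$ case by swapping roles.
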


\begin{proof} 
As per Definition \ref{defn:US global}, we verify $\mathrm{US}^\epsilon_N$ by proving that $\rho_M\vert_{G_p}$ is finite-flat if $\ell_0 \ne p$, and that $\rho_M\vert_{G_\ell}$ satisfies condition $\mathrm{US}^{\epsilon_\ell}_\ell$ for all $\ell \mid N$. 

\noindent
\textbf{If $\ell_0 \ne p$, $\rho_M\vert_{G_p}$ is finite-flat:} For this, we will make frequent use of the 
notion of a Cayley--Hamilton module, developed in \cite[\S2.6]{WWE4}.

Let $E_M'$ be the $\F_p[M]$-sub-GMA of $E_M$ given by $E'_M=\sm{\F_p[M]}{\F_p}{0}{\F_p[M]}$. Since $C|_{G_p}=0$, we see that the action of $G_p$ on $E_M$ via $\rho_M$ factors through $E_M'$. Hence $(\rho_M|_{G_p} : G_p \ra E_M^{\prime\times}, E'_M, D_{E'_M} : E'_M \ra \F_p[M])$, which we denote by $\rho'_{M,p}$ for convenience, is a Cayley--Hamilton representation of $G_p$. Then $E_M$ is a faithful Cayley--Hamilton module of $\rho'_{M,p}$; by \cite[Thm.\ 2.6.3]{WWE4}, it is enough to show that $\rho'_{M,p}$ is finite-flat.

Consider the extension $\cE_{\tilde{b}_0}$ defined by $\tilde{b}_0$:
\[
0 \lra \F_p(1) \lra \cE_{\tilde{b}_0} \lra \F_p \lra 0,
\]
which is finite-flat by Kummer theory. Let $W_\omega=\F_p[M]$ and $W_1=\F_p[M]$ with $G_p$ acting by the characters $\omega(1,F,\log_{\ell_0})$ and $(1,-F,-\log_{\ell_0})$, respectively. Since $F|_{I_p}$ and $\log_{\ell_0}|_{I_p}$ are zero, $W_\omega$ and $W_1$ are finite-flat. We have exact sequences of $\F_p[M][G_p]$-modules
\[
0 \to M(1) \to W_\omega \to \F_p(1) \to 0, \qquad 0 \to M \to W_1 \to \F_p \to 0.
\]
Let $l: \F_p \rinj M$ be an injective linear map. This induces a injection $\F_p(1) \rinj W_\omega$ of $\F_p[M][G_p]$-modules. Taking the pushout of $\cE_{\tilde{b}_0}$ by this injection, we obtain an exact sequence
\[
0 \lra W_\omega \lra \cE_{\tilde{b}_0,\omega} \lra \F_p \lra 0.
\]
Pulling back this sequence by $W_1 \rsurj \F_p$, we obtain an exact sequence
\[
0 \lra W_\omega \lra \cE_{\tilde{b}_0,\omega,1} \lra W_1 \lra 0.
\]
Following \cite[App.\ C]{WWE3}, we see that $\cE_{\tilde{b}_0,\omega,1}$ is finite-flat and that there is an isomorphism $\cE_{\tilde{b}_0,\omega,1} \cong \F_p[M]^{\oplus 2}$ under which the action of $G_p$ is given by
\begin{equation}
\label{eq:action on V}
\left.\ttmat{\omega(1,F,\log_{\ell_0})}{(0,\tilde{b}_0 \cdot l(1)) }{0} {(1,-F,-\log_{\ell_0})}\right\vert_{G_p} : G_p \ra \GL_2(\F_p[M]).
\end{equation}
We now use this isomorphism $\cE_{\tilde{b}_0,\omega,1} \cong \F_p[M]^{\oplus 2}$ as an identification.

We have an injective $\F_p[M]$-GMA homomorphism $l': E_M' \to \End_{\F_p[M]}(\cE_{\tilde{b}_0,\omega,1})=M_{2\times 2}(\F_p[M]))$ given by
\[
l'=\ttmat{\mathrm{id}_{\F_p[M]}}{l}{0}{\mathrm{id}_{\F_p[M]}} .
\]
By \eqref{eq:action on V}, we see that action of $G_p$-action on $\cE_{\tilde{b}_0,\omega,1}$ factors through $l'$. In other words, $\cE_{\tilde{b}_0,\omega,1}$ is a faithful Cayley--Hamilton module of $\rho'_{M,p}$. Since $\cE_{\tilde{b}_0,\omega,1}$ is finite-flat, $\rho'_{M,p}$ is finite-flat by \cite[Thm.\ 2.6.3]{WWE4}.

\noindent
\textbf{If $\ell_0 = p$, then $\rho_M|_{G_p}$ is ordinary:} This follows from Proposition \ref{prop:ord C-H form} and Construction \ref{constr:C and F}.

\noindent
\textbf{If $\ell_0 \equiv 1 \pmod{p}$, then $\rho_M|_{G_{\ell_0}}$ is $\mathrm{US}_{\ell_0}^{-1}$:} Since $\ell_0 \equiv 1 \pmod{p}$, $\omega|_{G_{\ell_0}}=1$. By Construction \ref{constr:C and F}, we have $C|_{G_{\ell_0}}=0$. Then, for any $\sigma, \tau \in G_{\ell_0}$, we have
\[
V^{-1}_{\rho_M}(\sigma, \tau) := (\rho_M(\sigma) - \omega(\sigma))(\rho(\tau) - 1) = \ttmat{\varep_1}{\tilde b_0(\sigma)}{0}{\varep_2}\ttmat{\varep_3}{\tilde b_0(\tau)}{0}{\varep_4} = 0,
\]
where $\varep_i \in M \subset \F_p[M]$.

\noindent
\textbf{If $\ell_0 \not \equiv 0,1 \pmod{p}$, then $\rho_M|_{G_{\ell_0}}$ is $\mathrm{US}_{\ell_0}^{-1}$:} 
By assumption, we have $M=H^*$ and $\log_{\ell_0}=0$, so  we write elements of $\F_p[M]$ as pairs $(x,y)$ with $x\in \F_p$ and $y \in H^*$. Since $C|_{G_{\ell_0}}$ is a coboundary, there exists $z \in H^*$ such that $C(\tau)=(\omega^{-1}(\tau)-1)z$ for all $\tau \in G_{\ell_0}$. 

Let $\rho_M':G_{\Q,S} \to E_M^\times$ be the composite of $\rho_M$ with the automorphism $E_M \risom E_M$ given by conjugation by $\sm{1}{0}{z}{1} \in E_M^\times$. By explicit computation, we see that 
\[
\rho_M' = \ttmat{\omega(1,F_a)}{\tilde b_0}{\omega(C-(\omega^{-1}-1)z)}{(1,F_d)},
\]
where $F_a=F-\omega^{-1}\tilde{b}_0z$ and $F_d=\tilde{b}_0C-F+\omega\tilde{b}_0 z$; in particular, the $(2,1)$-coordinate of $\rho_M'|_{G_{\ell_0}}$ is zero. This implies that $F_a\vert_{G_{\ell_0}}, F_d\vert_{G_{\ell_0}} : G_{\ell_0} \ra H^*$ are homomorphisms. Because $\ell_0 \not\equiv 0,1 \pmod{p}$ and $H^*$ has exponent $p$, they are unramified. 

For any $(\sigma,\tau) \in G_{\ell_0}\times I_{\ell_0}$, we compute that
\[
V^{-1}_{\rho'_M}(\sigma,\tau) = 
\ttmat{\varep}{*}{0}{*}
\ttmat{0}{*}{0}{0} = 0
\]
where $\varep \in M$. Equivalently, $V^{-1}_{\rho_M} = 0$. A similar computation shows that $V^{-1}_{\rho_M}(\sigma,\tau)=0$ for $(\sigma,\tau) \in I_{\ell_0}\times G_{\ell_0}$.

\noindent
\textbf{If $\ell \mid N$ and $\ell \ne \ell_0$, then $\rho_M|_{G_\ell}$ is $\mathrm{US}_{\ell}^{+1}$:}
In this case we have $\ell \equiv -1 \pmod{p}$, and hence $\omega|_{G_\ell}=\lambda(-1)$. Since $\ell \ne \ell_0$, we have $b_0|_{I_\ell}=0$, so $b_0|_{G_\ell}=0$ by Lemma \ref{lem:Kummer injective}. Hence there exists $z \in \F_p$ such that $\tilde{b}_0(\tau)=(\omega(\tau)-1)z$ for all $\tau \in G_\ell$. Exactly as in the previous case, we can show that $V^{+1}_{\rho_M}(\sigma,\tau)=0$ for all $(\sigma,\tau) \in G_{\ell}\times I_\ell \cup I_{\ell}\times G_\ell$ by conjugating $\rho_M$ by $\sm{1}{z}{0}{1} \in E_M^\times$. 
\end{proof}

\subsubsection{End of the proof} 
\label{sssect:end of proof}We will show that $D_M$ is, in a sense, the universal $\mathrm{US}_N^\epsilon$ first-order deformation of $\Db$.

\begin{prop}
\label{prop:first-order ps}
The pseudodeformation $D_M$ of $\Db$ induces an isomorphism $R^\epsilon_N/(p, \m^2) \risom \F_p[M]$.
\end{prop}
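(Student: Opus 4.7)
The map $\phi : R^\epsilon_N \to \F_p[M]$ comes from the pseudodeformation $D_M = \psi(\rho_M)$ via the universal property of $R^\epsilon_N$, applicable by Lemma \ref{lem:rho_M US}. Because $\F_p[M]$ has characteristic $p$ and the maximal ideal $M$ satisfies $M^2 = 0$, $\phi$ kills $p$ and $\m^2$, so it factors through $\bar\phi : R^\epsilon_N/(p,\m^2) \to \F_p[M]$. The plan is to prove $\bar\phi$ is surjective by a direct Galois-theoretic calculation, and then to invoke the upper bound
\[
\dim_{\F_p} R^\epsilon_N/(p,\m^2) \le 1 + r + \delta = \dim_{\F_p} \F_p[M]
\]
coming from Theorem \ref{thm:star main} together with Proposition \ref{prop:r+1 generators implies delta=1}. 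The former bounds the minimal number of generators of $I^\epsilon$ by $r+1$ (and by $r$ if $\ell_0 \not\equiv 1 \pmod p$), while the latter rules out the value $r+1$ whenever $\delta = 0$; combined with surjectivity, the dimension comparison forces $\bar\phi$ to be an isomorphism.

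The key computation for surjectivity is that, for every $\tau \in G_{\Q,S}$,
\[
\phi\big(\Tr_{D^\epsilon_N}(\tau) - \omega(\tau) - 1\big) = \big(0,\ (\omega(\tau)-1)F(\tau) + \tilde b_0(\tau)\,C(\tau),\ (\omega(\tau)-1)\log_{\ell_0}(\tau)\big) \in M,
\]
obtained by summing the diagonal entries of $\rho_M(\tau)$ in \eqref{eq:rho_M} and subtracting the residual trace $\omega(\tau)+1$. Restricted to $\tau \in \ker\omega = G_{\Q(\zeta_p),S}$, the right-hand side collapses to $(0,\tilde b_0(\tau)\,C(\tau),0)$, which lies in the $H^*$-summand of $M$. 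By Construction \ref{constr:C and F}(5), $C(\tau)(h) = \tilde h(\tau)$ for any $h \in H$, so the functional $\tilde b_0(\tau)\,C(\tau) \in H^*$ evaluates at $h$ to the product $\tilde b_0(\tau)\tilde h(\tau)$. Surjectivity onto $H^*$ thus reduces to producing, for every non-zero $h \in H$, an element $\tau \in G_{\Q(\zeta_p),S}$ with $\tilde b_0(\tau)\tilde h(\tau) \ne 0$. After restriction, $\tilde b_0$ and $\tilde h$ are $\F_p$-valued homomorphisms lying in the $\omega$- and $\omega^{-1}$-isotypic parts of $H^1(G_{\Q(\zeta_p),S},\F_p)$ respectively; since $p > 3$ implies $\omega \ne \omega^{-1}$, they cut out linearly disjoint degree-$p$ extensions of $\Q(\zeta_p)$, so Chebotarev density supplies $\tau$ realizing any prescribed pair of values.

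To hit the second summand of $M$, non-trivial only when $\ell_0 \equiv 1 \pmod p$, take $\tau = \Fr_q$ for a prime $q \nmid Np$ satisfying $q \not\equiv 1 \pmod p$ and such that $q$ is not a $p$-th power modulo $\ell_0$ (existence by Chebotarev applied to $\Gal(\Q(\zeta_{p\ell_0})/\Q)$). Then the third coordinate of $\phi\big(\Tr_{D^\epsilon_N}(\tau)-\omega(\tau)-1\big)$ equals $(q-1)\log_{\ell_0}(q) \in \F_p^\times$. Since the $H^*$-summand is already in the image of $\bar\phi$ by the previous step, one subtracts off the $H^*$-contribution to exhibit $(0,0,1) \in M$ in the image, completing surjectivity onto $M$ and hence onto $\F_p[M]$.

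The principal technical obstacle will be the linear-disjointness input used to hit $H^*$: verifying that $\Q(\zeta_p,\ell_0^{1/p})$ is linearly disjoint over $\Q(\zeta_p)$ from the compositum of the fields $K_j$ of Definition \ref{defn:K_i}, which rests on carefully tracking the $\omega$- versus $\omega^{-1}$-isotypic decomposition of the relevant $H^1$-groups. Once that independence is established, the trace calculation plus Chebotarev yields surjectivity of $\bar\phi$, and the dimension bound from Theorem \ref{thm:star main} and Proposition \ref{prop:r+1 generators implies delta=1} upgrades it to an isomorphism.
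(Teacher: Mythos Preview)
Your argument is correct, but the paper proceeds more directly. Rather than working with trace elements $\Tr_{D^\epsilon_N}(\tau) - (\omega(\tau)+1)$ and invoking Chebotarev density together with the linear disjointness of $\Q(\zeta_p,\ell_0^{1/p})$ from the compositum of the $K_j$, the paper exploits the GMA map $E^\epsilon_N \to E_M$ directly. After choosing a compatible GMA structure, Theorem~\ref{thm:star main} already supplies explicit generators $\{b_{\gamma_0}c_{\gamma_i}\}_{i=1}^r \cup \{d_{\gamma_0}-1\}$ of $\m/(p,\m^2)$, and one simply reads off their images: $f(b_{\gamma_0}c_{\gamma_i}) = (0,\tilde b_0(\gamma_0)\,C(\gamma_i),0) = (0,C(\gamma_i),0)$ and $f(d_{\gamma_0}-1) = (0,0,-\log_{\ell_0}(\gamma_0))$. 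Surjectivity onto $H^*$ then follows from Lemma~\ref{lem:C spans H*} applied with $\tau_i = \gamma_i$ (which satisfy $\tilde c_j(\gamma_i) = \partial_{ij}$ by construction), and surjectivity onto the $\Z/(p,\ell_0-1)$-summand is immediate. The dimension count you give is identical to the paper's.

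Your route has the advantage of staying on the level of pseudorepresentations (only traces are used), but it costs you a separate density/disjointness argument that the paper avoids entirely by working with the inertial elements $\gamma_i$ already singled out in \S\ref{subsec:notation}. The ``principal technical obstacle'' you flag---the $\omega$- versus $\omega^{-1}$-isotypic decomposition forcing disjointness when $p>3$---is sound, but it is extra work compared to the paper's approach, where Lemma~\ref{lem:C spans H*} does the corresponding job with no field-theoretic input beyond the definition of the $c_i$.
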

\begin{proof}
By Lemma \ref{lem:rho_M US}, $\rho_M$ is $\mathrm{US}_N^\epsilon$, so $D_M$ is also $\mathrm{US}_N^\epsilon$ by Definition \ref{defn:US global}, and there is an induced map $E_N^\epsilon \to E_M$. This gives us a local homomorphism $R^\epsilon_N \ra \F_p[M]$, and any such map factors through $R^\epsilon_N/(p, \m^2) \to \F_p[M]$. Let $f$ denote the restriction $\m/(p, \m^2) \to M$. It suffices to show that $f$ is an isomorphism.

Assume that the GMA structure on $E_N^\epsilon$ is chosen so that $E_N^\epsilon \to E_M$ is a morphism of GMAs (such a GMA structure is known to exist by \cite[Thm.\ 3.2.2]{WWE4}). By Theorem \ref{thm:star main}, we see that the elements $b_{\gamma_0}c_{\gamma_i}$ for $i=1,\dots,r$ together with the element $d_{\gamma_0}-1$ generate $\m/(p,\m^2)$, and, moreover, if $\ell_0\not \equiv 1 \pmod{p}$, the elements  $b_{\gamma_0}c_{\gamma_i}$ for $i=1,\dots,r$ are a basis.

By construction, we see that $f(b_{\gamma_0}c_{\gamma_i})=(0,\tilde{b}_0(\gamma_0)C(\gamma_i),0)=(0,C(\gamma_i),0)$, and that $f(d_{\gamma_0}-1)=(0,0,-\log_{\ell_0}(\gamma_0))$ (which is non-zero if $\ell_0\equiv 1 \pmod{p}$). By Lemma \ref{lem:C spans H*} below, $f$ is surjective.

Now we count dimensions. By Theorem \ref{thm:star main} and Proposition \ref{prop:r+1 generators implies delta=1}, we have
\[
\dim_{\F_p}(\m/(p, \m^2)) = \left\{ \begin{array}{ll}
r & \text{ if } \delta=0 \\
r \text{ or } r+1 & \text{ if } \delta=1.
\end{array}\right.
\]
By Lemma \ref{lem:basis of H}, we have
\begin{equation}
\label{eq:dim of M}
\dim_{\F_p}(M) = \left\{ \begin{array}{ll}
r & \text{ if }  \delta=0 \\
r+1 & \text{ if } \delta=1.
\end{array}\right.
\end{equation}
Since $f$ is surjective, this implies that $f$ is an isomorphism in all cases.
\end{proof}

\begin{lem}
\label{lem:C spans H*}
Let $\tau_1,\dots,\tau_r \in G_{\Q,S}$ be any elements such that:
\begin{itemize}
\item $\omega(\tau_i)=1$ for $i=1,\dots, r$, and
\item $\tilde{c}_j(\tau_i)=\partial_{ij}$ for all $1 \le i,j \le r$.
\end{itemize} 
If $\delta=1$ or $\ell_0 \not \equiv 1 \pmod{p}$, then the set $\{C(\tau_i) \ : \ i=1,\dots,r\}$ is a basis for $H^*$. Otherwise $b_0 \cup c_j \neq 0$ for some $j$ and the set $\{C(\tau_i) \ : \ i=1,\dots,r, i \ne j\}$ is a basis for $H^*$.
\end{lem}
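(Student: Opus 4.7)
The plan is to apply property (5) of Construction \ref{constr:C and F}, which says that for any $h \in H$ represented by a cocycle $\tilde h$ and any $\tau \in G_{\Q,S}$ with $\omega(\tau)=1$, we have $C(\tau)(h) = \tilde h(\tau)$. Combined with the hypotheses $\omega(\tau_i)=1$ and $\tilde c_j(\tau_i) = \partial_{ij}$, this reduces the lemma to a straightforward linear-algebra check: the $C(\tau_i)$ evaluate as a dual basis on the explicit basis of $H$ furnished by Lemma \ref{lem:basis of H}. (Note that whenever $\omega(\tau)=1$, $\tilde h(\tau)$ is independent of the choice of cocycle $\tilde h$, so property (5) is unambiguous.) The argument then splits along the same cases as Lemma \ref{lem:basis of H}.

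First suppose we are in the ``generic'' case where Lemma \ref{lem:basis of H} gives $\{c_1,\dots,c_r\}$ as a basis of $H$; by Proposition \ref{prop:bc cup} this includes both of the situations $\ell_0 \not\equiv 1\pmod p$, and $\ell_0 \equiv 1\pmod p$ with $\delta=1$. Applying property (5) yields
\[
C(\tau_i)(c_k) = \tilde c_k(\tau_i) = \partial_{ik},
\]
so $\{C(\tau_i)\}_{i=1}^r$ is literally the dual basis of $\{c_1,\dots,c_r\}$, and in particular a basis of $H^*$. Its cardinality $r$ matches $\dim_{\F_p} H^*$ as recorded in \eqref{eq:dim of M} or Lemma \ref{lem:basis of H}.

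Now suppose $\ell_0 \equiv 1\pmod p$ and $\delta=0$. Then by Proposition \ref{prop:bc cup} there exists $j \in \{1,\dots,r\}$ with $b_0 \cup c_j \neq 0$, and Lemma \ref{lem:basis of H} gives scalars $\alpha_k \in \F_p$ for $k \neq j$ such that the elements $c_k - \alpha_k c_j$ form a basis of $H$ as $k$ ranges over $\{1,\dots,r\} \setminus \{j\}$. For $i \neq j$, each $c_k - \alpha_k c_j$ is represented by the cocycle $\tilde c_k - \alpha_k \tilde c_j$, so property (5) and bilinearity give
\[
C(\tau_i)(c_k - \alpha_k c_j) = \tilde c_k(\tau_i) - \alpha_k \tilde c_j(\tau_i) = \partial_{ik} - \alpha_k \partial_{ij} = \partial_{ik},
\]
using $i \neq j$ in the last equality. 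Hence $\{C(\tau_i)\}_{i \neq j}$ is the dual basis of $\{c_k - \alpha_k c_j\}_{k \neq j}$, and so a basis of $H^*$ of dimension $r-1$.

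Since every step is a direct application of either property (5) of $C$ or Lemma \ref{lem:basis of H}, no real obstacle arises; the only care needed is tracking which case of Lemma \ref{lem:basis of H} applies and verifying that the chosen $\F_p$-linear combinations of the $\tilde c_k$ still represent classes in $H$, which is immediate because $H$ is an $\F_p$-subspace.
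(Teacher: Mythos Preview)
Your proof is correct and takes essentially the same approach as the paper: both apply property (5) of Construction \ref{constr:C and F} to evaluate $C(\tau_i)$ on the explicit basis of $H$ from Lemma \ref{lem:basis of H}, obtaining a dual basis. The paper's proof is simply terser, writing the single computation $C(\tau_i)(c_j-\alpha c_k) = \partial_{ij}-\alpha\partial_{ik}$ and leaving the case split implicit.
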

\begin{proof}
Indeed, if $c_j-\alpha c_k \in H$ for some $\alpha \in \F_p$ and $j,k\in \{1,\dots,r\}$, then by Construction \ref{constr:C and F}(5) we have
\[
C(\tau_i)(c_j-\alpha c_k) = \tilde{c}_j(\tau_i)-\alpha \tilde{c}_k(\tau_i) = \partial_{ij}-\alpha\partial_{ik}.
\]
Using the explicit basis of $H$ constructed in Lemma \ref{lem:basis of H}, the lemma follows.
\end{proof}

\begin{proof}[Proof of Theorem \ref{thm:star split}]
By Proposition \ref{prop:first-order ps}, we have $\m/(p, \m^2) \risom M$, and the dimension of $M$ is given by \eqref{eq:dim of M}. This completes the proof.
\end{proof}

\subsection{Good sets of primes in the case $\epsilon = (-1, 1, \dotsc, 1)$}
\label{subsec:good primes for r primes}

In this section, we prove Theorem \ref{thm:main good primes r}. Recall Definition \ref{defn:good primes} for the meaning of the set of good primes $\cQ$.

\begin{proof}[Proof of Theorem \ref{thm:main good primes r}]
We freely refer to $\rho_M$ and related objects in this proof (see \eqref{eq:rho_M}). Let $J$ be the index set of $\cQ$ (i.e.~ $J=\{0,\dots,s\}$, $J=\{0,\dots,s\}\setminus\{j\}$ or $J=\{1,\dots,s\}$ in the three cases of Definition \ref{defn:good primes}, respectively).

By Theorem \ref{thm:star main}, Proposition \ref{prop:first-order ps}, and Nakayama's lemma, it suffices to show that the projection $\Upsilon(q)$ of $T_q - (q+1)$ under $\bT^\epsilon_N \risom R^\epsilon_N \rsurj \F_p[M]$ comprise a basis $\{\Upsilon(q)\}_{q \in \cQ}$ of $M$. The conditions (1)-(6) on $\cQ$ have been chosen so that:
\begin{enumerate}[label=(\roman*), leftmargin=2em]
\item If $0 \in J$ and $q_0 \neq p$, then $\omega(\Fr_{q_0}) \neq 1$ and $\log_{\ell_0}(\Fr_{q_0}) \neq 0$. This follows from (1) and (2).

\item $\omega(\Fr_{q_i}) = 1$ for $i \in J$ with $i>0$. This follows from condition (3).

\item $\tilde b_0(\Fr_{q_i}) \neq 0$ for $i \in J$ with $i>0$. This follows from (4) by class field theory.

\item $\{C(\Fr_{q_i}) : i\in J, i>0\}$ is a basis for $H^*$. This follows from Lemma \ref{lem:C spans H*} by (ii), (5), and (6).
\end{enumerate}

When $q_i \neq p$, it is clear that $\Upsilon(q_i) = \Tr\rho_M(\Fr_{q_i}) - (q_i + 1)$, and we calculate: 
\begin{enumerate}[label=(\alph*), leftmargin=2em]
\item By (ii), $\Upsilon(q_i) = (0,\tilde b_0(\Fr_{q_i})\cdot C(\Fr_{q_i}),0) \in \F_p[M]$ for $i \in J$ with $i>0$. By (iii) and (iv), these elements form a basis of $H^*$.
\item If $0 \in J$ and $q_0 \neq p$, then $\Upsilon(q_0) \in \F_p[M]$ lies in $M$ and projects via $M \rsurj \Z/(p,\ell_0-1)$ to $(\omega(\Fr_{q_0}) - 1)\log_{\ell_0}(\Fr_{q_0})$. This is non-zero, by (i).
\item If $0 \in J$ and $q_0 = p$, we claim that $\Upsilon(p) \in \F_p[M]$ lies in $M$ and  maps to $\log_{\ell_0} p \neq 0$ under the summand projection $M \rsurj \Z/(p, \ell_0-1)$. This follows from the same argument as in Case $q_0 = p$ of the proof in \S\ref{subsec:prove thm good primes 2}, but is simpler. \qedhere
\end{enumerate}
\end{proof}

\begin{rem}
The reader will note that, in this proof, our conditions are used to ensure that a certain matrix is diagonal with non-zero diagonal entries. Of course, the necessary and sufficient condition is simply that this same matrix is invertible.
\end{rem}

\subsection{Good pairs of primes in the case $\epsilon = (-1,-1)$} 
\label{subsec:prove thm good primes 2}

In this section, we prove Theorem \ref{thm:main good primes 2}. We assume we are in the setting of Theorem \ref{thm:thm2}.

\begin{proof}[Proof of Theorem \ref{thm:main good primes 2}]
By Theorem \ref{thm:thm2} and Nakayama's lemma, $\bT^\epsilon_N$ is generated by $\{T_{q_i} - (q_i+1)\}_{i=0,1}$ if and only if their images $\{\Upsilon(q_i)\}_{i=0,1}$ via $\bT^\epsilon_N \risom R^\epsilon_N \rsurj R^\epsilon_N/(p, \m^2)$ are a basis of $\m/(p,\m^2)$. We see in the proof of Theorem \ref{thm:thm2} that $J^\red =\Jm^2$. In particular, as $\m = \Jm + (p) \subset R^\epsilon_N$,  there are isomorphisms
\[
R_N^\epsilon/(p,\m^2) \risom R_N^{\epsilon,\red}/(p) \cong \F_p[Y_0,Y_1]/(Y_0^2,Y_0Y_1,Y_1^2), \quad \m/(p,\m^2) \risom (Y_0, Y_1), 
\]
which we use as identifications. Then $D_N^\epsilon$ pulls back to the pseudorepresentation $D=\psi(\omega\dia{-}^{-1}\oplus \dia{-}) : G_{\Q,S} \ra R^{\epsilon,\red}_N/(p)$, where, for particular choices of $\log_{\ell_i}$, 
\[
G_{\Q,S} \ni \tau \mapsto \dia{\tau} := 1+\log_{\ell_0}(\tau)Y_0+\log_{\ell_1}(\tau)Y_1 \in (R_N^{\epsilon,\red}/(p))^\times.
\]
We see that if $q_i \neq p$, then $\Upsilon(q_i) = \Tr_D(\Fr_{q_i}) - (q_i+1)$. 

\noindent 
\textbf{Case $q_0,q_1 \neq p$.} One computes that the matrix expressing $\{\Upsilon(q_0), \Upsilon(q_1)\}$ in the basis $\{Y_0,Y_1\}$ of $\m/(p,\m^2) \cong (Y_0, Y_1)$ is
\[
\ttmat{(q_0-1)\log_{\ell_0} q_0}{(q_1-1)\log_{\ell_0} q_1}{(q_0-1)\log_{\ell_1} q_0}{(q_1-1)\log_{\ell_1} q_1} \in M_2(\F_p),
\]
which completes the proof. 

\noindent 
\textbf{Case $q_0 = p$.} We note that the images of $T_p-(p+1)$ and $U_p-1$ in $I^\epsilon/\m I^\epsilon$ are equal, so we may replace $T_p-(p+1)$ by $U_p-1$ in the statement. We recall from Step 3 of the proof of Proposition \ref{prop:R to T} that $U_p$ is the image under $R_N^\epsilon \risom \bT^\epsilon_N$ of $\frac{1}{x-1}(x\Tr(\rho_N^\epsilon)(\sigma_p)-\Tr(\rho_N^\epsilon)(\tau\sigma_p))$, where $\tau \in I_p$ is such that $\omega(\tau) \ne 1$ and $x=\kcyc(\tau)$. We compute that
\[
\Upsilon(p) = \frac{1}{x-1}\big(x\Tr_D(\sigma_p)-\Tr_D(\tau\sigma_p)\big) - 1 = \log_{\ell_0}(p)Y_0+\log_{\ell_1}(p)Y_1.
\]
Thus, the matrix expressing $\{\Upsilon(p), \Upsilon(q_1)\}$ in the basis $\{Y_0,Y_1\}$ of $\m/(p,\m^2)$ is
\[
\ttmat{\log_{\ell_0} p}{(q_1-1)\log_{\ell_0} q_1}{\log_{\ell_1} p}{(q_1-1)\log_{\ell_1} q_1} \in M_2(\F_p). \qedhere
\]
\end{proof}

\appendix

\section{Comparison with the Hecke algebra containing $U_\ell$}
\label{appendix:U and w}

In order to compare our results with existing results and conjectures, in this appendix we consider a Hecke algebra that contains the $U_\ell$ operators rather than the $w_\ell$ operators. We prove comparison results between Eisenstein completions of this algebra and the  Eisenstein completions $\bT_N^\epsilon$ studied in this paper. Throughout this appendix, we drop the subscripts `$N$' on all Hecke algebras to avoid cumbersome notation.

Recall that we have the normalization map of Lemma \ref{lem:normalization of bT0} 
\[
\bT^\epsilon \rinj \Z_p \oplus \left(\bigoplus_{f \in \Sigma} \sO_f\right),
\]
where $\Sigma, \sO_f$ were defined there. For each $f \in \Sigma$, there is a unique pair $(N_f,\tf)$ of a divisor $N_f$ of $N$ and a newform $\tf$ of level $N_f$ such that $a_q(f)=a_q(\tf)$ for all primes $q$ not dividing $N_f$ and $a_\ell(\tilde{f})=-\epsilon_\ell$ for primes $\ell$ dividing $N_f$. For this $\tf$, we have $a_q(\tf) \equiv 1+q \pmod{\m_f}$ for all $q \nmid N_f$.

\subsection{Oldforms and stabilizations}
\label{subsec:U stabilizations} Just as in \S \ref{subsub:stabilizations}, if $\ell \mid N$ and $f \in S_2(N/\ell; \Z_p)$ is an eigenform for all $T_n$ with $(n,N/\ell)=1$, then there are two ways to stabilize $f$ to be a $U_\ell$-eigenform in $S_2(N; \Z_p)$. Let $\alpha_\ell(f), \beta_\ell(f)$ denote the roots of $x^2-a_\ell(f)x+\ell$. Then $f_{\alpha_\ell}(z) = f(z)-\beta_\ell(f)f(\ell z)$ and $f_{\beta_\ell}(z) = f(z)-\alpha_\ell(f)f(\ell z)$
 satisfy $U_\ell f_{\alpha_\ell} = \alpha_\ell(f) f_{\alpha_\ell}$ and $U_\ell f_{\beta_\ell} = \beta_\ell(f) f_{\beta_\ell}$. Note that, unlike in \S \ref{subsub:stabilizations}, it may happen that $\alpha_\ell(f) \equiv \beta_\ell(f) \pmod{p}$.

\subsection{The case $p\nmid N$} 

For this section, assume $p \nmid N$. Let $\bT_U'$ and $\bT'^0_U$ be the $\Z_p$-subalgebras of
\[
\End_{\Z_p}(M_2(N;\Z_p)) \quad \text{and} \quad \End_{\Z_p}(S_2(N;\Z_p)),
\]
respectively, generated by the Hecke operators $T_\ell$ for $\ell \nmid N$ and $U_\ell$ for $\ell \mid  N$. These are semi-simple commutative algebras (see \cite{CE1998} for the semi-simplicity).

For each $\epsilon \in \cE$ as in \S\ref{sssec:eisen series and hecke alg defn}, we let $I'^\epsilon_U \subset \bT_U'$ be the ideal generated by the set
\[
\{T_q-(q+1),\ U_\ell-\ell^\frac{\epsilon_\ell+1}{2}  : q \nmid N, \ \ell \mid N \text{ primes} \}.
\]
Note that $U_\ell -1 \in I'^\epsilon_U$ if $\epsilon_\ell =-1$ and $U_\ell - \ell \in I'^\epsilon_U$ if $\epsilon_\ell =1$, so the ideal $I'^\epsilon_U$ is the annihilator of a certain stabilization of the Eisenstein series $E_{2,1}$ (but generally not $E_{2,N}^\epsilon$). Let $\bT_U^\epsilon$ and $\bT_U^{0,\epsilon}$ denote the completions of $\bT_U'$ and $\bT'^0_U$ respectively, at the maximal ideal $(p, I'^\epsilon_U) \subset \bT_U'$. Let $\m_U^\epsilon \subset \bT_U^\epsilon$ and $\m_U^{0,\epsilon} \subset \bT_U^{0,\epsilon}$ be the maximal ideals.

\subsubsection{The normalization of $\bT_U^\epsilon$} Since $\bT_U^\epsilon$ and $\bT_U^{0,\epsilon}$ are semi-simple, the standard description of prime ideals in terms of eigenforms allows us to describe their normalizations, just as for $\bT^\epsilon$. For newforms $f$, we know that $U_\ell f = -w_\ell f$ for all $\ell \mid N$. For oldforms, we can use the stabilization formulas from \S \ref{subsub:stabilizations} and \S\ref{subsec:U stabilizations} to describe the eigenforms for $\bT_U^\epsilon$ in terms of the set $\Sigma$. We write down the result of this description explicitly in Lemma \ref{lem:normalization of bT_U}.

We require the following notation. Let $L_N = \{\ell \mid N \ : \ \ell \equiv 1 \pmod{p}\}$. For each $f \in \Sigma$ and each $\ell \mid \frac{N}{N_f}$,  let $\alpha_\ell(f)$ and $\beta_\ell(f)$ be the roots of $x^2-a_\ell(\tf)x+\ell$. Assume that $\alpha_\ell(f) \equiv \ell^{\frac{\epsilon_\ell+1}{2}} \pmod{p}$ and let $L_f =\{\ell \mid  \frac{N}{N_f} : \ell \equiv 1 \pmod{p}\}$. Let $\tilde{\sO}_f$ be the extension of $\cO_f$ generated by $\alpha_\ell(f)$ and $\beta_\ell(f)$. If $\ell \not\equiv 1 \pmod{p}$, then the congruence condition determines $\alpha_\ell(f)$ (and $\beta_\ell(f)$) uniquely, and $\tilde{\sO}_f= \sO_f$; in this case, only stabilizations of $\tilde{f}_{\alpha_\ell}$ can appear in the completion $S_2(N;\Z_p) \otimes_{\bT'^0_U} \bT_U^\epsilon$. If $\ell \equiv 1 \pmod{p}$, then we label the two roots arbitrarily (in this situation, below, we will use the two roots symmetrically), and $\tilde{\sO}_f$ may be a quadratic extension of $\sO_f$; in this case the stabilizations of both $\tilde{f}_{\alpha_\ell}$ and $\tilde{f}_{\beta_\ell}$ can appear in the completion $S_2(N;\Z_p) \otimes_{\bT'^0_U} \bT_U^\epsilon$.

\begin{lem}
\label{lem:normalization of bT_U}
The normalization of $\bT_U^\epsilon$ is the injection 
\[
\bT_U^\epsilon \rinj \left( {\bigoplus'_{L \subset L_N}} \Z_p\right) \oplus \left(\bigoplus_{f \in \Sigma} \left(\bigoplus_{L \subset L_f} \tilde{\sO}_f\right)\right),
\]
where the primed summation in the first factor indicates that we omit the subset $L=L_N$ if $\epsilon_\ell = 1$ for all $\ell \not\in L_N$. The map is given by 
\[
T_q \mapsto ((1+q)_{L \subset L_N}, a_q(f)_{f \in \Sigma,L \subset L_f}) \text{ for all } q \nmid N,
\]
and sending $U_\ell$ for $\ell \mid N$ as follows. In the factor $\bT_U^\epsilon \to \Z_p$ for $L \subset L_N$, we have
\[
U_\ell \mapsto \begin{array}{lcl}
1 & \text{if} & \left\{\begin{array}{l}
\ell \not\in L_N \text{ and }  \epsilon_\ell =-1, \text{ or}\\
\ell \in L_N-L \text{ and } \epsilon_\ell=-1, \text{ or}\\
\ell \in L \text{ and } \epsilon_\ell =1
\end{array}\right.\\
\end{array}
\]
and
\[
U_\ell \mapsto \begin{array}{lcl}
\ell & \text{if} & \left\{\begin{array}{l}
\ell \not\in L_N \text{ and }  \epsilon_\ell =1, \text{ or}\\
\ell \in L_N-L \text{ and } \epsilon_\ell=1, \text{ or}\\
\ell \in L \text{ and } \epsilon_\ell =-1
\end{array}\right.
\end{array}
\]
for all $\ell \mid N$. In the factor $\bT_U \to \tilde{\sO}_f$ corresponding to $f \in \Sigma$ and $L \subset L_f$, we have
\[
U_\ell \mapsto u_\ell(f) := 
\left\{
\begin{array}{lcl}
-\epsilon_\ell & \text{if } &\ell \mid N_f \\
\alpha_\ell(f) & \text{if  }& \ell \mid N, \ell \nmid N_f, \ell \not \in L \\
\beta_\ell(f) & \text{if }&\ell \mid N, \ell \nmid N_f, \ell \in L
\end{array} \right.
\]
for all $\ell \mid N$. 
\end{lem}
The only part of the lemma that is not completely standard is the factor $\left( \bigoplus'_{L \subset L_N} \Z_p\right)$, which corresponds to the Eisenstein series in $M_2(N)_\mathrm{Eis}^\epsilon$. For $\ell \mid N$, if $\ell \not\in L_N$, then the $U_\ell$-eigenvalue of any such Eisenstein series must be $\ell^{\frac{\epsilon_\ell+1}{2}}$, but if $\ell \in L_N$, then the possible $U_\ell$-eigenvalues $1$ and $\ell$ are congruent, and so both appear, regardless of what $\epsilon_\ell$ is. We need to omit $L=L_N$ in the case that $\epsilon_\ell=1$ for all $\ell \not\in L_N$ because that factor corresponds to the Eisenstein series with $U_\ell$-eigenvalue $\ell$ for all $\ell \mid N$, which is the non-holomorphic one.

The normalization of $\bT_U^{0,\epsilon}$ is the same, but without the Eisenstein factor $\left( \bigoplus_{L \subset L_N} \Z_p\right)$

\subsubsection{Comparisons} We now compare the algebras $\bT_U^{0,\epsilon}$ and $\bT^{0,\epsilon}$. The following proposition gives a necessary and sufficient condition for the algebras to coincide.

\begin{prop}
\label{prop:conditions for T=T_U}
Suppose that both of the following are true:
\begin{enumerate}
\item for each $f \in \Sigma$, we have $L_f = \emptyset$; and 
\item $\bT_U^{\epsilon,0}$ is generated as a $\Z_p$-algebra by $\{T_q \ : \ q \nmid Np\}$.
\end{enumerate}
Then $\bT_U^{\epsilon,0}=\bT^{\epsilon,0}$. Moreover, if one of these conditions is false, then $\bT_U^{\epsilon,0} \ne \bT^{\epsilon,0}$.
\end{prop}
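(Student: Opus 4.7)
The plan is to compare both algebras to their common normalization, leveraging Lemmas \ref{lem:normalization of bT0} and \ref{lem:normalization of bT_U}, and then distinguish them using the $\Z_p$-rank together with the set of generators.

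\textbf{Forward direction.} I would first observe that condition (1) collapses the description of Lemma \ref{lem:normalization of bT_U} down to $\bigoplus_{f \in \Sigma} \sO_f$, matching the normalization of $\bT^{\epsilon,0}$ from Lemma \ref{lem:normalization of bT0}: when $L_f = \emptyset$, each $f$ contributes a single factor with $\tilde{\sO}_f = \sO_f$, because for $\ell \not\equiv 1 \pmod p$ the unit root $\alpha_\ell(f)$ is uniquely determined by the mod $p$ congruence and lies in $\sO_f$. By Proposition \ref{prop:R to T}, $\bT^{\epsilon,0}$ is generated as a $\Z_p$-algebra by $\{T_q : q \nmid Np\}$, and by condition (2) so is $\bT_U^{\epsilon,0}$. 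Since the images of $T_q$ in the common normalization coincide (both equal $(a_q(f))_{f \in \Sigma}$), these two $\Z_p$-subalgebras of $\bigoplus_{f \in \Sigma} \sO_f$ agree, giving $\bT_U^{\epsilon,0} = \bT^{\epsilon,0}$.

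\textbf{Converse.} Suppose $\bT_U^{\epsilon,0} = \bT^{\epsilon,0}$. To derive (1), I would compare $\Z_p$-ranks. Lemmas \ref{lem:normalization of bT0} and \ref{lem:normalization of bT_U} yield
\[
\mathrm{rank}_{\Z_p}\bT^{\epsilon,0} = \sum_{f \in \Sigma} \mathrm{rank}_{\Z_p}\sO_f, \qquad \mathrm{rank}_{\Z_p}\bT_U^{\epsilon,0} = \sum_{f \in \Sigma} 2^{|L_f|}\, \mathrm{rank}_{\Z_p}\tilde{\sO}_f.
\]
Since $2^{|L_f|}\,\mathrm{rank}_{\Z_p}\tilde{\sO}_f \geq \mathrm{rank}_{\Z_p}\sO_f$ with equality exactly when $L_f = \emptyset$, the rank equality forces (1). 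Once (1) holds, both algebras embed into $\bigoplus_{f \in \Sigma} \sO_f$ as in the forward argument, and Proposition \ref{prop:R to T} identifies $\bT^{\epsilon,0}$ with the $\Z_p$-subalgebra generated by $\{T_q : q \nmid Np\}$; then $\bT_U^{\epsilon,0} = \bT^{\epsilon,0}$ forces (2).

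\textbf{Main obstacle.} The principal subtlety is giving the equality ``$\bT_U^{\epsilon,0} = \bT^{\epsilon,0}$'' a precise meaning, since the two completions start from different subalgebras of $\End_{\Z_p}(S_2(N;\Z_p))$ localized at different maximal ideals. I would address this by working inside the enlarged Hecke algebra generated by all $T_n$, $w_\ell$, and $U_\ell$, localized at the unique maximal ideal restricting to both $\m^\epsilon$ and $\m_U^\epsilon$, and comparing $\bT^{\epsilon,0}$ and $\bT_U^{\epsilon,0}$ as subrings there. Once that ambient ring is fixed, the identification with the common normalization $\bigoplus_{f \in \Sigma} \sO_f$ is canonical, and the rank and generator comparisons above complete the argument.
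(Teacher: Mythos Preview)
Your proposal is correct and follows essentially the same route as the paper: use condition (1) to identify the normalizations via Lemmas \ref{lem:normalization of bT0} and \ref{lem:normalization of bT_U}, then use condition (2) together with Proposition \ref{prop:R to T} to see that both algebras coincide with the $\Z_p$-subalgebra of $\bigoplus_{f\in\Sigma}\sO_f$ generated by $\{(a_q(f))_f : q\nmid Np\}$; for the converse, the paper simply observes that (1) is forced because otherwise the normalizations differ, and (2) is forced because it always holds for $\bT^{\epsilon,0}$. Your rank computation makes the necessity of (1) more explicit than the paper does, and your ``main obstacle'' paragraph articulates a point the paper leaves implicit, but the underlying argument is the same.
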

\begin{proof}
The first condition ensures that $\bT_U^{\epsilon,0}$ and $\bT^{\epsilon,0}$ have the same normalization, so it is certainly necessary. The second condition is true for $\bT^{\epsilon,0}$ by Proposition \ref{prop:R to T}, so it is necessary. Furthermore, if we assume (1) and (2), then $\bT_U^{\epsilon,0}$ and $\bT^{\epsilon,0}$ are identified with the subalgebra of $\bigoplus_{f \in \Sigma} \sO_f$ generated by $\{(a_q(f)_f): q \nmid Np\}$.
\end{proof}

We now verify these conditions in certain cases considered in this paper. 
\begin{prop}
\label{prop:T=T_U first case}
Assume that $\ell_i \not \equiv 1 \pmod{p}$ for $0 < i \leq r$ and assume that $\epsilon=(-1,1,\dots,1)$. Then $\bT_U^{\epsilon,0}=\bT^{\epsilon,0}$.
\end{prop}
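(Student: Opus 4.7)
The plan is to verify both hypotheses of Proposition \ref{prop:conditions for T=T_U}.

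For hypothesis (1), I would show that $N_f = \ell_0$ for every $f \in \Sigma$. Suppose $\ell_i \mid N_f$ for some $i > 0$. By Lemma \ref{lem:normalization of bT0}(3), $a_{\ell_i}(\tilde f) = -\epsilon_i = -1$ and $\rho_{\tilde f}|_{G_{\ell_i}}$ has unramified diagonal character $\lambda(-1)$; but then $\bar\rho_{\tilde f}^{\mathrm{ss}} = \omega \oplus 1$ would force $-1 \equiv 1 \pmod p$, contradicting $p > 2$. Combined with $S_2(1;\Z_p) = 0$, this forces $N_f = \ell_0$. Hence $N/N_f = \ell_1 \cdots \ell_r$, and the assumption $\ell_i \not\equiv 1 \pmod p$ for $i > 0$ gives $L_f = \emptyset$.

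For hypothesis (2), I would show that each $U_\ell$ with $\ell \mid N$ lies in the closed $\Z_p$-subalgebra $A \subseteq \bT_U^{\epsilon,0}$ generated by $\{T_q : q \nmid Np\}$. The case $\ell = \ell_0$ is immediate: since $\ell_0 \mid N_f$ for all $f$, Lemma \ref{lem:normalization of bT_U} gives $U_{\ell_0} = -\epsilon_0 = 1 \in A$. For $\ell = \ell_i$ with $i > 0$, the same lemma sends $U_{\ell_i}$ under the normalization $\bT_U^{\epsilon,0} \hookrightarrow \bigoplus_{f \in \Sigma} \sO_f$ to the tuple $(\alpha_{\ell_i}(\tilde f))_f$, the Hensel root of $x^2 - a_{\ell_i}(\tilde f)\,x + \ell_i$ congruent to $\ell_i$ modulo $\m_f$.

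Repeating the construction of Proposition \ref{prop:R to T} produces a surjection $R^\epsilon_N \rsurj A$, hence a pseudorepresentation $D : G_{\Q,S} \to A$ lifting the individual $\psi(\rho_{\tilde f})$ under the normalization, with $\Tr D(\Fr_q) = T_q$ for $q \nmid Np$. Because $\rho_{\tilde f}$ is unramified at $\ell_i$ for every $f$, the trace $\tau_{\ell_i} := \Tr D(\sigma_{\ell_i}) \in A$ is independent of the choice of Frobenius lift and has image $(a_{\ell_i}(\tilde f))_f$ in the normalization. The polynomial $x^2 - \tau_{\ell_i} x + \ell_i \in A[x]$ reduces modulo the maximal ideal of $A$ to $(x-1)(x-\ell_i)$, whose roots are distinct by $\ell_i \not\equiv 1 \pmod p$. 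Hensel's lemma in the complete local ring $A$ then produces a root $\alpha_i \in A$ with $\alpha_i \equiv \ell_i$, and by Hensel-uniqueness in each factor $\sO_f$, $\alpha_i$ matches $U_{\ell_i}$ in the normalization. Thus $U_{\ell_i} \in A$, and Proposition \ref{prop:conditions for T=T_U} concludes.

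The main obstacle is justifying that $\tau_{\ell_i}$ genuinely lies in $A$, rather than merely in the larger ring $\bT_U^{\epsilon,0}$. This requires the Chebotarev-density input recalled at the start of the proof of Proposition \ref{prop:R to T}: $R^\epsilon_N$ is topologically generated by traces at unramified Frobenii, so the trace at any $\sigma \in G_{\Q,S}$ — in particular at $\sigma_{\ell_i}$ — lies in the image subring $A$.
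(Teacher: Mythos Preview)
Your verification of hypothesis (1) contains a genuine gap. You argue that if $\ell_i \mid N_f$ for some $i > 0$, then the residual semisimplification $\omega \oplus 1$ restricted to $G_{\ell_i}$ must match $\lambda(-1)\omega \oplus \lambda(-1)$, forcing $-1 \equiv 1 \pmod p$. But this only rules out one of the two possible matchings of the Jordan--H\"older factors: the other matching requires $\lambda(-1) = \omega\vert_{G_{\ell_i}}$, i.e.\ $\ell_i \equiv -1 \pmod p$, which is permitted by the hypothesis $\ell_i \not\equiv 1 \pmod p$. In that case there is no contradiction, and indeed when $\ell_i \equiv -1 \pmod p$ it is entirely possible (and, in the interesting cases, expected) that $\ell_i \mid N_f$; compare the remark following Theorem \ref{thm:main r primes}. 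So the claim $N_f = \ell_0$ is both unproved and false in general.

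What is actually needed for (1) is only $\ell_0 \mid N_f$: since no $\ell_i$ with $i>0$ is $\equiv 1 \pmod p$, this alone gives $L_f = \emptyset$. The paper proves $\ell_0 \mid N_f$ by contradiction: if $\ell_0 \nmid N_f$, then $\tilde f$ is a newform of level $N_f \mid \ell_1\cdots\ell_r$ with $a_\ell(\tilde f) = -1$ for every $\ell \mid N_f$ and $a_q(\tilde f) \equiv q+1$ for $q \nmid N_f$, which is ruled out by a theorem of Ribet (see \cite[Thm.\ 2.3(2)]{BD2014}). Your argument for (2) then inherits the same problem, since you assume each $\rho_{\tilde f}$ is unramified at $\ell_i$ for $i>0$; the paper instead treats the cases $\ell \mid N_f$ and $\ell \nmid N_f$ separately when checking that the Hensel root in $R^\epsilon_N$ maps to $u_\ell(f)$ in each $\sO_f$.
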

\begin{proof}
We verify the conditions (1) and (2) of Proposition \ref{prop:conditions for T=T_U}.

To verify (1), assume, for a contradiction, that there is an $f \in \Sigma$ with $L_f \ne \emptyset$. By our assumptions on $\ell_i$, we must have $L_f=\{\ell_0\}$. Then the newform $\tf \in S_2(N_f; \overline{\Q}_p)$ satisfies $a_q(\tf) \equiv 1+q \pmod{p}$ for all $q \nmid N_f$ and $a_\ell(\tf)=-1$ for all $\ell \mid N_f$ (since $\ell_0 \nmid N_f$ by assumption). But this is impossible by a theorem of Ribet (see \cite[Thm.\ 2.6(ii)(b)]{BD2014}), so (1) holds.

We now turn to (2). Just as in the proof of Proposition \ref{prop:R to T}, we have a homomorphism $R_N^\epsilon \to \bT_U^{0,\epsilon}$ sending $\Tr(\rho_N^{\epsilon}(\Fr_q))$ for $q \nmid Np$ to $T_q$, and whose image is the subalgebra of $\bT_U^0$ generated by $\{T_q: q \nmid Np\}$. Note that, by (1), for each $f \in \Sigma$ we have $\ell_0 \mid N_f$, so $u_{\ell_0}(f)=1$. This implies $U_{\ell_0}=1$ in $\bT_U^0$.  Hence to verify (2), we need only show that $U_\ell$ is in the image of $R_N^\epsilon \to \bT_U^{0,\epsilon}$ for all $\ell \mid N$ with $\ell \ne \ell_0$. 

Now fix such an $\ell$ and note that, by assumption, $\ell \not \equiv 1 \pmod{p}$ and $\epsilon_\ell=1$. Let $\tilde{U} \in R_N^\epsilon$ be the root of the polynomial $x^2-\Tr(\rho_N^{\epsilon}(\sigma_\ell))x+\ell$ such that $\tilde{U}-\ell \in \m_R$; such a $\tilde{U}$ exists and is unique by Hensel's Lemma. We claim that the image of $\tilde{U}$ in $\bT^{0,\epsilon}_U$ is $U_\ell$. To prove the claim, it suffices to show that $\tilde{U} \mapsto u_\ell(f)$ under the map $R_N^\epsilon \to \sO_f$ for each $f \in \Sigma$.

First assume that $\ell \mid N_f$. By \eqref{eq:Steinberg G_ell rep}, $\Tr(\rho_f(\sigma_\ell))=-(\ell+1)$. So $\tilde{U}$ is sent to the root of
\[
x^2+(\ell+1)x+\ell = (x+1)(x+\ell)
\]
(that is, either $-1$ or $-\ell$) 
that is congruent to $\ell$ modulo $\m_f$. Since $\ell \not \equiv -\ell \pmod p$, we see that $\tilde{U}$ is sent to $-1$. (Note that this shows that if $\ell \mid N_f$, then $\ell \equiv -1 \pmod{p}$, corroborating Lemma \ref{lem:reduction to lower level}.)

Next assume that $\ell \mid N$ and $\ell \nmid N_f$, so $\Tr(\rho_f(Fr_\ell))=a_\ell(\tf)$. Then $\tilde{U}$ is sent to the root of
\[x^2-a_\ell(\tf)x+\ell\]
that is congruent to $\ell$ modulo $\m_f$, which is $\alpha_\ell(f)$ by definition. 

This shows that, for $f \in \Sigma$ the map $R_N^\epsilon \to \sO_f$ sends $\tilde{U}$ to
\[
\left\{\begin{array}{lcl}
-1 & \text{if} & \ell \mid N_f \\
\alpha_\ell(f) & \text{if} & \ell \nmid N_f
\end{array}\right.
\]
which is equal to $u_\ell(f)$. Hence $U_\ell$ is the image of $\tilde{U}$ in $\bT^{0,\epsilon}_U$ and the map 
$R_N^\epsilon \to \bT_U^{0,\epsilon}$ is surjective, verifying (2).
\end{proof}

The proof of the first part of the following proposition is almost identical, but simpler, so we leave it to the reader. The second part is an application of Theorem \ref{thm:one interesting prime}.

\begin{prop}
Assume that $N=\ell_0\ell_1$, that $\ell_1 \not \equiv 1 \pmod{p}$, and that $\epsilon=(-1,-1)$. Then $\bT^{0,\epsilon}=\bT_U^{0,\epsilon}$. If, in addition, $\ell_1$ is not a $p$-th power modulo $\ell_0$, then $\bT^{0,\epsilon}$ and $\bT_U^{0,\epsilon}$ are both identical to the Hecke algebra at level $\ell_0$ considered by Mazur.
\end{prop}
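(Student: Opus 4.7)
The plan is to mirror the proof of Proposition \ref{prop:T=T_U first case}: verify conditions (1) and (2) of Proposition \ref{prop:conditions for T=T_U} for the first assertion, and then appeal to Theorem \ref{thm:one interesting prime} for the second. First I would check condition (1), namely that $L_f = \emptyset$ for every $f \in \Sigma$. The key intermediate claim is that $\ell_0 \mid N_f$ for every $f \in \Sigma$: otherwise $N_f$ divides $\ell_1$, and since there are no weight-$2$ cusp forms of level $1$, this forces $N_f = \ell_1$, producing a newform of prime level $\ell_1$ satisfying $a_q(\tilde f) \equiv q+1 \pmod{\mathfrak{m}_f}$ for all $q \nmid \ell_1$, which contradicts Mazur's theorem at prime level because $\ell_1 \not\equiv 1 \pmod p$ (cf.\ \cite[Thm.\ 2.3(2)]{BD2014}). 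Once $\ell_0 \mid N_f$ is known, $N/N_f$ divides $\ell_1$, and the assumption $\ell_1 \not\equiv 1 \pmod p$ gives $L_f = \emptyset$ immediately.

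Next I would verify condition (2), i.e.\ that $U_{\ell_0}$ and $U_{\ell_1}$ lie in the image of the map $R_N^\epsilon \to \bT_U^{0,\epsilon}$ from Proposition \ref{prop:R to T}. Because $\ell_0 \mid N_f$ for every $f$, Lemma \ref{lem:normalization of bT_U} gives $u_{\ell_0}(f) = -\epsilon_0 = 1$ for all $f$, hence $U_{\ell_0} = 1$ in $\bT_U^{0,\epsilon}$ and the claim is trivial. For $U_{\ell_1}$: the hypothesis $\ell_1 \not\equiv 1 \pmod p$ forces the reduction $(x-1)(x-\ell_1)$ of $x^2 - \Tr(\rho_N^\epsilon(\sigma_{\ell_1}))x + \ell_1$ modulo the maximal ideal of $R_N^\epsilon$ to have distinct roots, so Hensel's lemma yields a unique root $\tilde U_1 \in R_N^\epsilon$ with $\tilde U_1 \equiv 1 \pmod{\mathfrak{m}}$. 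A two-case check verifies that $\tilde U_1$ maps to $U_{\ell_1}$ in each factor of the normalization of $\bT_U^{0,\epsilon}$: if $\ell_1 \mid N_f$ then \eqref{eq:Steinberg G_ell rep} gives $\Tr(\rho_f(\sigma_{\ell_1})) = \ell_1+1$ and $u_{\ell_1}(f) = -\epsilon_1 = 1$; if $\ell_1 \nmid N_f$ then $u_{\ell_1}(f) = \alpha_{\ell_1}(f)$, which by convention is the root congruent to $\ell_1^{(\epsilon_1+1)/2} = 1 \pmod{\mathfrak{m}_f}$. This completes the proof of $\bT^{0,\epsilon} = \bT_U^{0,\epsilon}$.

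For the second assertion, the additional hypothesis that $\ell_1$ is not a $p$-th power modulo $\ell_0$ puts us in the setting of Theorem \ref{thm:one interesting prime}, which gives an isomorphism $\bT^\epsilon \isoto \bT_{\ell_0}$ with Mazur's prime-level Hecke algebra. Passing to cuspidal quotients yields $\bT^{0,\epsilon} \cong \bT_{\ell_0}^0$, and combining with the first part identifies all three cuspidal algebras. The main substantive obstacle in this plan is the appeal to Mazur's prime-level theorem to exclude $N_f = \ell_1$ in the verification of condition (1); after that the argument is essentially bookkeeping around the same Hensel-lifting idea used in Proposition \ref{prop:T=T_U first case}, simplified because we only have two primes in $N$ to handle and one of them is forced to divide every $N_f$.
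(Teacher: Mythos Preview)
Your proposal is correct and follows exactly the strategy the paper indicates: mimic the proof of Proposition \ref{prop:T=T_U first case} (verifying conditions (1) and (2) of Proposition \ref{prop:conditions for T=T_U}) for the first assertion, and invoke Theorem \ref{thm:one interesting prime} for the second. Your substitution of Mazur's prime-level result for the Ribet-type theorem used in Proposition \ref{prop:T=T_U first case} is the right adjustment, since here the only possible ``bad'' level $N_f$ not divisible by $\ell_0$ is the prime $\ell_1$ itself.
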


\subsection{The case $p\mid N$} 
\label{subsec:p|N}

In this section, we maintain the notation of the previous section, but we assume that $\ell_0=p$ and that $\epsilon_0=-1$ (for $0< i \leq r$, $\epsilon_i$ is arbitrary).

We consider a variant $\bT_H^\epsilon$ of the Hecke algebra that is intermediate to $\bT^\epsilon$ and $\bT_U^\epsilon$. Namely, $\bT_H^\epsilon$ is the completion of the Hecke algebra generated by the $T_q$ for $q \nmid N$, together with $U_p$ and $w_{\ell_i}$ for $0 < i \leq r$, at the ideal generated by $p$, $\bT_q-(q+1)$, $U_p-1$, and $w_{\ell_i}-\epsilon_i$. Note that, as in the case of $\bT^\epsilon$, we have $w_{\ell_i}=\epsilon_i$ in $\bT_H^\epsilon$. For each $f \in \Sigma$, if $p \nmid N_f$, we let $\alpha_p(f) \in \sO_f$ be the (unique) unit root of $x^2-a_p(\tf)x+p$. 

Just as in Lemma \ref{lem:normalization of bT_U}, we can compute the normalization of $\bT^\epsilon_H$. It is the injective map 
\[
\bT_H^\epsilon \rinj \Z_p \oplus \left(\bigoplus_{f \in \Sigma} \sO_f\right)
\]
sending $T_q$ to $(1+q, a_q(f)_f)$ for $q \nmid N$ and $U_p$ as follows. The component $\bT_H^\epsilon \to \Z_p$ sends $U_p$ to $1$. The component $\bT_H^\epsilon \to \sO_f$ sends $U_p$ to $u_p(f)$ defined by
\[u_p(f):=\left\{
\begin{array}{lcl}
1 & \text{ if } & p \mid  N_f \\
\alpha_p(f) & \text{ if }& p \nmid N_f.
\end{array} \right.
\]

\begin{prop}
With the assumptions that $\ell_0=p$ and $\epsilon_0=-1$, we have $\bT_H^\epsilon=\bT^\epsilon$ as subalgebras of $\Z_p \oplus \left(\bigoplus_{f \in \Sigma} \sO_f\right)$ and $\bT_H^{\epsilon,0}=\bT^{\epsilon,0}$ as subalgebras of $\left(\bigoplus_{f \in \Sigma} \sO_f\right)$.
\end{prop}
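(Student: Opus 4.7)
The plan is to show that both $\bT^\epsilon$ and $\bT_H^\epsilon$ are generated as $\Z_p$-subalgebras of $\Z_p \oplus \bigoplus_{f \in \Sigma} \sO_f$ by the images of $\{T_q : q \nmid Np\}$. Since those images coincide (being $(q+1, a_q(f)_f)$ in both normalizations), the equality $\bT^\epsilon = \bT_H^\epsilon$ then follows, and the cuspidal equality is immediate by projecting off the Eisenstein factor $\Z_p$. For $\bT^\epsilon$ this is exactly Proposition~\ref{prop:R to T}, so the task reduces to proving the analogous generation statement for $\bT_H^\epsilon$.

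First I would mimic the three-step construction of Proposition~\ref{prop:R to T} to build a surjection $R_N^\epsilon \rsurj \bT_H^\epsilon$. Steps~1 and~2 go through unchanged: the representations $\rho_f$ attached to $f \in \Sigma$ are still $\mathrm{US}_N^\epsilon$ because $\epsilon_p = -1$ forces $\rho_f\vert_{G_p}$ to be ordinary in the sense of Definition~\ref{defn:US-CH p} (via Lemma~\ref{lem:normalization of bT0}(4b) when $f$ is new at $p$, and via Lemma~\ref{lem:normalization of bT0}(4a) together with Corollary~\ref{cor:ord to flat compare} when $f$ is old at $p$). This yields a ring homomorphism $R_N^\epsilon \to \bT_H^\epsilon$ sending $\Tr_{D_N^\epsilon}(\Fr_q)$ to $T_q$ for all $q \nmid Np$.

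For Step~3 — showing $U_p$ lies in the image of this homomorphism — I would use exactly the formula from the proof of Proposition~\ref{prop:R to T}. Pick $\tau \in I_p$ with $\omega(\tau) \neq 1$, set $x = \kcyc(\tau)$ (so $x-1 \in \Z_p^\times$), and consider the element
\[
\tilde U_p := \frac{1}{x-1}\bigl( x\,\Tr_{D_N^\epsilon}(\sigma_p) - \Tr_{D_N^\epsilon}(\tau\sigma_p)\bigr) \in R_N^\epsilon.
\]
By Corollary~\ref{cor:ord to flat compare}, for every $f \in \Sigma$ we have $\rho_f\vert_{G_p} \simeq \sm{\lambda(a_p(f)^{-1})\kcyc}{*}{0}{\lambda(a_p(f))}$ (where $a_p(f)$ is the unit root $\alpha_p(f)$ if $f$ is old at $p$, and $a_p(f)=1$ if $f$ is new at $p$); a direct trace computation then shows that $\tilde U_p$ maps to $u_p(f)$ in $\sO_f$ and to $1$ in the Eisenstein factor $\Z_p$. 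These are precisely the coordinates of $U_p$ in the normalization of $\bT_H^\epsilon$ computed in the paragraph preceding the proposition, so $\tilde U_p \mapsto U_p$.

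Combined with the fact, used in the proof of Proposition~\ref{prop:R to T}, that $R_N^\epsilon$ is topologically generated as a $\Z_p$-algebra by $\{\Tr_{D_N^\epsilon}(\Fr_q): q \nmid Np\}$ (via Chebotarev and $p \neq 2$), this shows $\bT_H^\epsilon$ is generated by $\{T_q : q \nmid Np\}$, completing the comparison. The main point of care — and the only place where $\epsilon_0 = -1$ is used essentially — is verifying that Corollary~\ref{cor:ord to flat compare} applies uniformly to both old-at-$p$ and new-at-$p$ forms so that the single formula for $\tilde U_p$ recovers $u_p(f)$ in all cases; once that is in place, the rest is formal.
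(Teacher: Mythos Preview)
Your proof is correct and follows essentially the same architecture as the paper: construct $R_N^\epsilon \to \bT_H^\epsilon$, note that $w_{\ell_i}=\epsilon_i$ so the only extra generator to account for is $U_p$, and exhibit an element of $R_N^\epsilon$ mapping to $U_p$. The only difference is in this last step: the paper takes $\tilde U$ to be the unique unit root of $x^2-\Tr(\rho_N^\epsilon(\sigma_p))x+p$ via Hensel's lemma (mirroring Proposition~\ref{prop:T=T_U first case}), whereas you use the explicit linear combination of traces from Step~3 of Proposition~\ref{prop:R to T}. Both elements work, and both techniques already appear in the paper; your choice has the mild advantage of being a closed formula, while the paper's choice makes the verification that $\tilde U\mapsto u_p(f)$ slightly more uniform (just match unit roots of the same quadratic).
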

\begin{proof}
The proof is just as in the proof of Proposition \ref{prop:T=T_U first case}, so we will be brief. We have a map $R_N^\epsilon \to \bT_H^\epsilon$ and we need only show that $U_p$ is in the image of this map. Now choose $\sigma_p\in G_p$ to be a Frobenius element such that $\omega(\sigma_p)=1$, and let $\tilde{U} \in R_N^\epsilon$ be the unique unit root of $x^2-\Tr(\rho_N^{\epsilon}(\sigma_p))x+p$. We see that $\tilde{U}$ maps to $U_p$.
\end{proof}

\begin{cor}
Let $N=p\ell$ with $\ell \equiv 1 \pmod{p}$ and $\epsilon=(-1,-1)$. Assume that $p$ is not a $p$-th power modulo $\ell$. Then the Eisenstein ideal of $\bT_H^\epsilon$ is generated by $U_p-1$. In particular, $\bT_H^\epsilon$ and $\bT_H^{0,\epsilon}$ are Gorenstein.
\end{cor}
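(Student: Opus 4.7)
The strategy is to reduce the corollary to an instance of Theorem~\ref{thm:one interesting prime} via the identification $\bT_H^\epsilon = \bT^\epsilon$, and then to translate the generator of $I^\epsilon$ given by that theorem into the element $U_p-1$.

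By the proposition immediately preceding, there are identifications $\bT_H^\epsilon = \bT^\epsilon$ and $\bT_H^{0,\epsilon} = \bT^{0,\epsilon}$ as subalgebras of $\Z_p \oplus \bigoplus_{f\in\Sigma}\sO_f$. Under these identifications the Eisenstein ideal $I_H^\epsilon$ coincides with $I^\epsilon$ (both are the kernel of the augmentation onto the $\Z_p$-factor). Moreover, the construction in the proof of that proposition shows that $U_p \in \bT_H^\epsilon = \bT^\epsilon$ is the image of the Hensel unit root $\tilde U \in R_N^\epsilon$ of $x^2-\Tr(\rho_N^\epsilon(\sigma_p))x+p$; in particular, in $\bT^\epsilon$ the element $U_p$ is a unit with $U_p \equiv 1 \pmod{\m^\epsilon}$, and $U_p^2 - T_pU_p + p = 0$, so $T_p = U_p + pU_p^{-1}$.

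Next, I would apply Theorem~\ref{thm:one interesting prime} with the relabeling $\ell_0 := \ell$ and $\ell_1 := p$, keeping $\epsilon = (-1,-1)$. The three hypotheses of that theorem hold: $\ell_0 \equiv 1 \pmod p$ by assumption; $\ell_1 = p \not\equiv 1 \pmod p$ because $p>3$; and ``$\ell_1$ is not a $p$-th power modulo $\ell_0$'' is precisely the standing hypothesis that $p$ is not a $p$-th power modulo $\ell$. The conclusion of Theorem~\ref{thm:one interesting prime} then gives that $\bT^\epsilon$ and $\bT^{0,\epsilon}$ are complete intersections --- hence Gorenstein, yielding the ``in particular'' part of the corollary --- and that $I^\epsilon$ is principal, generated by $T_q - (q+1)$ for any prime $q$ that is good (in Mazur's sense) for $(\ell,p)$. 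The prime $p$ itself satisfies all three conditions to be such a good prime: $p\neq\ell$, $p\not\equiv 1\pmod p$, and $p$ is not a $p$-th power modulo $\ell$. Therefore $T_p - (p+1)$ generates $I^\epsilon$.

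Finally, the factorization
\[
T_p - (p+1) = (U_p - 1) + p(U_p^{-1}-1) = (U_p-1)\bigl(1 - pU_p^{-1}\bigr)
\]
holds in $\bT^\epsilon$, and $1 - pU_p^{-1} \equiv 1 \pmod{\m^\epsilon}$ is a unit. Consequently $(U_p-1) = (T_p - (p+1))$ as ideals of $\bT^\epsilon$, so $U_p - 1$ generates $I^\epsilon = I_H^\epsilon$.

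\textbf{Main obstacle.} There is no deep obstacle; the only potentially confusing point is bookkeeping of indices, since the convention of \S\ref{subsec:p|N} places $p$ in the $\ell_0$-slot while Theorem~\ref{thm:one interesting prime} requires the $\ell_0$-slot to hold a prime congruent to $1$ modulo $p$. One simply applies that theorem after swapping the labels, which does not affect its hypotheses or conclusions. The rest is a direct computation with the Hensel root $U_p$ and Mazur's good-prime criterion packaged inside Theorem~\ref{thm:one interesting prime}.
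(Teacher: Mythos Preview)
Your argument is correct and follows the same approach as the paper's one-line proof (``Combine the previous proposition with Theorem~\ref{thm:one interesting prime} and Mazur's good prime criterion''). You have simply spelled out the details: the index swap needed to match the hypotheses of Theorem~\ref{thm:one interesting prime}, the verification that $p$ itself is a good prime for $(\ell,p)$, and the unit factorization $T_p-(p+1)=(U_p-1)(1-pU_p^{-1})$ in $\bT_H^\epsilon=\bT^\epsilon$ that converts Mazur's generator into $U_p-1$.
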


\begin{proof}
Combine the previous proposition with Theorem \ref{thm:one interesting prime} and Mazur's good prime criterion (\S\ref{subsec:Mazur results}). 
\end{proof}

\section{Computation of some cup products}
\label{appendix:cohomology}
\subsection{Cohomology calculations}
\begin{lem}
\label{lem:Kummer injective}
If $\ell \not \equiv 0,1 \pmod{p}$, then the restriction map
\[
H^1(\Q_\ell,\F_p(1)) \to H^1(I_\ell,\F_p(1))
\]
is injective.
\end{lem}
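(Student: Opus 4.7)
The plan is to apply the inflation-restriction exact sequence for the normal subgroup $I_\ell \subset G_\ell$:
\[
0 \lra H^1(G_\ell/I_\ell, \F_p(1)^{I_\ell}) \lra H^1(\Q_\ell, \F_p(1)) \lra H^1(I_\ell, \F_p(1)).
\]
It therefore suffices to show that the left-hand term vanishes under the hypothesis $\ell \not\equiv 0, 1 \pmod p$.

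Since $\ell \neq p$, the module $\F_p(1) = \mu_p$ is unramified at $\ell$, so $\F_p(1)^{I_\ell} = \F_p(1)$ as a $G_\ell/I_\ell$-module. The quotient $G_\ell/I_\ell$ is topologically generated by the Frobenius $\sigma_\ell$, which acts on $\F_p(1)$ by multiplication by $\ell \bmod p$. For continuous cohomology of a procyclic group with topological generator $\phi$ acting on a finite module $M$, we have
\[
H^1(G_\ell/I_\ell, M) \cong M/(\phi - 1)M.
\]
Applied to $M = \F_p(1)$, the operator $\sigma_\ell - 1$ is multiplication by $\ell - 1$ on $\F_p$. The hypothesis $\ell \not\equiv 1 \pmod p$ makes $\ell - 1$ a unit in $\F_p$, so $(\sigma_\ell - 1)$ is surjective and $H^1(G_\ell/I_\ell, \F_p(1)) = 0$.

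There is no real obstacle here; the only thing to check is the standard identification of $H^1$ of a procyclic group with coinvariants, together with the unramifiedness of $\mu_p$ at $\ell \ne p$. The condition $\ell \not\equiv 0 \pmod p$ is just to ensure $\ell \neq p$ so that $\mu_p$ is unramified at $\ell$, and the condition $\ell \not\equiv 1 \pmod p$ is exactly what makes $\ell - 1$ invertible modulo $p$.
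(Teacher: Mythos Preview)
Your proof is correct. The paper takes a different route: it translates the restriction map via Kummer theory into the map $\Q_\ell^\times \otimes \F_p \to (\Q_\ell^{\mathrm{ur}})^\times \otimes \F_p$, observes that under the hypothesis $\ell \not\equiv 0,1 \pmod p$ the source is one-dimensional and generated by the class of the uniformizer $\ell$, and checks that this class survives in the target because $\ell$ is not a $p$-th power in $\Q_\ell^{\mathrm{ur}}$. Your inflation--restriction argument is more uniform and avoids the explicit Kummer computation; the paper's argument is more concrete in that it identifies the single generator explicitly. Both rely on the same two hypotheses in the same way: $\ell \neq p$ to make $\mu_p$ unramified (equivalently, to kill the $1$-units mod $p$-th powers), and $\ell \not\equiv 1 \pmod p$ to kill the Frobenius coinvariants (equivalently, the $(\ell-1)$-st roots of unity mod $p$-th powers).
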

\begin{proof}
Under the isomorphisms of Kummer theory, this map corresponds to the map $\Q_\ell^\times \otimes \F_p \to {\Q_\ell^{\mathrm{ur}}}^\times \otimes \F_p$ induced by the inclusion. Since $\ell \not \equiv 0,1 \pmod{p}$, $\Q_\ell^\times \otimes \F_p$ is generated by the class of $\ell$, which maps the class of $\ell$ in  ${\Q_\ell^{\mathrm{ur}}}^\times \otimes \F_p$, which is nonzero.
\end{proof}

\begin{lem}
\label{lem:H2 calculation}
Let $N=\ell_0 \cdots \ell_r$ be squarefree and assume $p \nmid N$. Let $V=\{i \ : \ p\mid (\ell_i-1)\}$. The local restriction maps induce an isomorphism
\[
H^2(\Z[1/Np],\F_p) \lrisom \bigoplus_{i=1}^r H^2(\Q_{\ell_i},\F_p) \cong \bigoplus_{i\in V} H^2(\Q_{\ell_i},\F_p).
\]
of vector spaces of dimension $\# V$.
\end{lem}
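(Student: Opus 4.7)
The plan is to apply the Poitou-Tate nine-term exact sequence to the $G_{\Q,S}$-module $\F_p$ (whose Pontryagin dual is $\mu_p$), with $S = \{p, \ell_0, \dotsc, \ell_r, \infty\}$, and to read off the desired isomorphism from its right-hand end.

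First I would analyze the local targets. By local Tate duality $H^2(\Q_v, \F_p) \cong H^0(\Q_v, \mu_p)^\vee$, which vanishes at $v = \infty$ (archimedean, $p$ odd), at $v = p$ (since $\mu_p \not\subset \Q_p$ for $p$ odd), and at $v = \ell_i$ whenever $\ell_i \not\equiv 1 \pmod p$. Hence
\[
\bigoplus_{v \in S} H^2(\Q_v, \F_p) \cong \bigoplus_{i \in V} H^2(\Q_{\ell_i}, \F_p),
\]
an $\F_p$-vector space of dimension $\# V$, which establishes the second isomorphism of the lemma. Moreover, the rightmost term of the nine-term sequence is $H^0(\Z[1/Np], \mu_p)^\vee = \mu_p(\Q)^\vee = 0$ (as $p$ is odd), so the localization map $H^2(\Z[1/Np], \F_p) \to \bigoplus_{v \in S} H^2(\Q_v, \F_p)$ is already surjective from Poitou-Tate.

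The remaining task is to show that the connecting homomorphism $\delta : H^1(\Z[1/Np], \mu_p)^\vee \to H^2(\Z[1/Np], \F_p)$ preceding it in the sequence is zero. The plan is to dualize: using $H^1(\Q_v, \F_p) \cong H^1(\Q_v, \mu_p)^\vee$ (local Tate duality), $\delta$ vanishes if and only if the restriction map
\[
H^1(\Z[1/Np], \mu_p) \lra \bigoplus_{v \in S} H^1(\Q_v, \mu_p)
\]
is injective. Because $\Pic(\Z[1/Np]) = 0$ (as $\Z$ has class number one), Kummer theory identifies this map with $\Z[1/Np]^\times / (\Z[1/Np]^\times)^p \to \bigoplus_v \Q_v^\times / (\Q_v^\times)^p$. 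If $x = \pm p^{a_{-1}} \prod_{i=0}^{r} \ell_i^{a_i}$ is a $p$-th power in $\Q_v^\times$ for every $v \in S$, then reading off $v$-adic valuations at $v = p$ and $v = \ell_i$ forces $p \mid a_{-1}, a_0, \dotsc, a_r$, so $x = \pm y^p$ for some $y \in \Z[1/Np]^\times$; the identity $-1 = (-1)^p$ (available since $p > 2$) then gives $x \in (\Z[1/Np]^\times)^p$.

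The main step is the Kummer-theoretic injectivity in the last paragraph, but it is essentially elementary once one notes $\Pic(\Z[1/Np]) = 0$; the bulk of the work is simply the bookkeeping of the Poitou-Tate nine-term sequence and the local Tate duality computations.
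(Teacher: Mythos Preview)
Your argument is correct. Both you and the paper obtain surjectivity of the localization map from the vanishing $H^0(\Z[1/Np],\mu_p)=0$ via Poitou--Tate, and both compute the local $H^2$ groups by Tate duality. The difference lies in how the remaining step is handled: the paper finishes by a dimension count, invoking a global Euler characteristic formula (cited from \cite{WWE3}) to show that $\dim_{\F_p} H^2(\Z[1/Np],\F_p)=\#V$, whereas you prove injectivity of the localization map directly by dualizing to the restriction $H^1(\Z[1/Np],\mu_p)\to\bigoplus_v H^1(\Q_v,\mu_p)$ and verifying injectivity by hand via Kummer theory and the triviality of $\Pic(\Z[1/Np])$. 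Your route is slightly more self-contained, avoiding the external Euler characteristic citation at the cost of a small explicit computation; the paper's route is quicker if one is willing to quote the Euler characteristic result.
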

\begin{proof}
Just as in \cite[Lem.\ 12.1.1]{WWE3}, we know that 
\[
H^2(\Z[1/Np],\F_p) \ra H^2(\Q_p,\F_p) \oplus \bigoplus_{i=1}^r H^2(\Q_{\ell_i},\F_p)
\]
is a surjection because $H^3_{(c)}(\Z[1/Np],\F_p) \cong H^0(\Z[1/Np],\F_p(1))^* = 0$. By Tate duality, $H^2(\Q_{\ell},\F_p)=H^0(\Q_{\ell},\F_p(1))^*$, which is one-dimensional if $\ell \equiv 1\pmod{p}$ and zero otherwise. It remains to verify that $H^2(\Z[1/Np],\F_p)$ has the same dimension. This follows from the global Tate Euler characteristic computation of \cite[Lem.\ 12.1.1]{WWE3}. 
\end{proof}

The following is a consequence of Tate duality.
\begin{lem}
\label{lem:local tate desc}
Assume that $\ell \equiv 1 \pmod{p}$ is prime. Then $H^1(\Q_\ell,\F_p(-1)) = H^1(\Q_\ell,\F_p(1))$. This cohomology group is 2-dimensional, the unramified subspace is 1-dimensional, and the cup product pairing 
\[
\cup : H^1(\Q_\ell,\F_p(-1)) \times H^1(\Q_\ell,\F_p(1)) \lra H^2(\Q_\ell,\F_p)
\]
is non-degenerate and symplectic. In particular, the cup product of two unramified classes vanishes, and the cup product of a ramified class with a non-trivial unramified class does not vanish. 
\end{lem}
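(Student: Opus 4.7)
The plan is to reduce everything to the Tate-local-duality machinery, using that $\F_p(1)\cong\F_p\cong\F_p(-1)$ as $G_\ell$-modules when $\ell\equiv 1\pmod p$. First, since $\ell\equiv 1\pmod p$, the mod $p$ cyclotomic character $\omega\vert_{G_\ell}$ is trivial, so all three $G_\ell$-modules $\F_p(\pm 1)$ and $\F_p$ are canonically identified. This immediately yields the equality $H^1(\Q_\ell,\F_p(-1))=H^1(\Q_\ell,\F_p(1))$ and identifies the target $H^2(\Q_\ell,\F_p)$ of the pairing with the dualizing module $H^2(\Q_\ell,\F_p(1))\cong\F_p$ appearing in local Tate duality.

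Next, I would compute dimensions using the local Euler--Poincar\'e formula (for $\ell\neq p$, the Euler characteristic of $\F_p$ is $0$) together with $H^0(\Q_\ell,\F_p)=\F_p$ and $H^2(\Q_\ell,\F_p(1))=\F_p$ (by local duality applied to $H^0$). This gives $\dim_{\F_p}H^1(\Q_\ell,\F_p)=2$. The unramified subspace is $H^1(\Gal(\Q_\ell^{\mathrm{ur}}/\Q_\ell),\F_p)\cong\Hom(\widehat{\Z},\F_p)\cong\F_p$, hence is $1$-dimensional.

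For non-degeneracy of the cup product, apply local Tate duality directly: after the identifications above, the pairing $H^1(\Q_\ell,\F_p)\times H^1(\Q_\ell,\F_p)\to H^2(\Q_\ell,\F_p)\cong\F_p$ is precisely the Tate duality pairing, which is perfect. For the symplectic property, I would invoke graded-commutativity of cup product: for two degree-one classes $x,y$ with coefficients in $\F_p$, one has $x\cup y=-y\cup x$, so the form is alternating (here $p>2$ is implicit, and is guaranteed by our standing assumption $p>3$).

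Finally, the last two sentences of the statement follow formally from the structural properties just established. The unramified subspace $U\subset H^1(\Q_\ell,\F_p)$ is isotropic because cup product of two unramified classes factors through $H^2(\Gal(\Q_\ell^{\mathrm{ur}}/\Q_\ell),\F_p)$, which vanishes as $\widehat{\Z}$ has cohomological dimension $1$. Since $U$ is $1$-dimensional inside a $2$-dimensional non-degenerate symplectic space, $U$ is a maximal isotropic (Lagrangian) subspace; in particular its orthogonal complement equals $U$ itself. Therefore any class not in $U$ (i.e.\ any ramified class) pairs non-trivially with every non-zero element of $U$, as claimed. There is no real obstacle here: the only point that requires minor care is keeping track of the identification $\F_p(1)\cong\F_p\cong\F_p(-1)$ so that the Tate-duality pairing $H^1(\F_p)\times H^1(\F_p(1))\to H^2(\F_p(1))$ is correctly transported to the stated pairing.
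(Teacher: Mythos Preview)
Your proof is correct and matches the paper's approach: the paper simply states that the lemma is ``a consequence of Tate duality'' and gives no further argument, so you have filled in exactly the details (local Euler--Poincar\'e, the identification $\F_p(\pm 1)\cong\F_p$, graded-commutativity, and the Lagrangian nature of the unramified line) that the paper leaves implicit.
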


\subsection{Proof of Proposition \ref{prop:bc cup}} By the description of the number fields $K_i$ in Definition \ref{defn:K_i}, $\ell_0$ splits completely in $K_i$ if and only if $\ell_0 \equiv 1 \pmod{p}$ and the image $c_i|_{G_{\ell_0}}$ of $c_i$ in $H^1(\Q_{\ell_0},\F_p(-1))$ is zero. Since $b_0\vert_{G_{\ell_0}}$ is ramified and $c_i|_{G_{\ell_0}}$ is unramified, Lemma \ref{lem:local tate desc} implies that $c_i\vert_{G_{\ell_0}}=0$ if and only if $b_0\vert_{G_{\ell_0}} \cup c_i|_{G_{\ell_0}}=0$. By Lemma \ref{lem:H2 calculation}, this happens if and only if $b_0 \cup c_i=0$.

\section{Algebra}
\subsection{Some comments about Gorenstein defect}
\label{subsec:Gorenstein defect} Let $(A, \m_A, k)$ be a regular Noetherian local ring, and let $(R,\m_R)$ be a finite, flat, local $A$-algebra. 

More generally, for an $A$-module $M$, let $M^\vee = \Hom_A(M,A)$. Also, let $\bar{M}=M/\m_A M$. For a $k$-vector space $M$, let $M^* = \Hom_k(M,k)$. For an $R$-module $M$, give $M^\vee$ the $R$-module structure given by $(r \cdot f)(x)= f(rx)$ for $f \in M^\vee$ and $r \in R$, and let $g_R(M) = \dim_k(M/\m_R M)$ be the minimal number of generators of $M$. The assumptions on $A$ and $R$ imply that $R$ is a Cohen--Macaulay ring with dualizing module $R^\vee$. 

Define the \emph{Gorenstein defect} $\delta(R)$ of $R$ to be the integer $\delta(R) = g_R(R^\vee)-1$. Then $R$ is Gorenstein if and only if $\delta(R)=0$ \cite[Thm.\ 3.3.7,\ pg.\ 111]{BH1993}. If $R$ is complete intersection, then $R$ is Gorenstein \cite[Prop.\ 3.2.1, pg.\ 95]{BH1993}. Kilford and Wiese \cite[Defn.\ 1.4]{KW2008} define the Gorenstein defect of $R$ to be $\dim_k\Soc(\bar{R})-1$, where $\Soc(\bar{R})=\Ann_{\bar{R}}(\m_{\bar{R}})$. 
Our goal is Lemma \ref{lem:Gorenstein defects equality}: these definitions amount to the same thing. The proofs of the following lemmas are elementary, but we include them for completeness.

\begin{lem}
\label{lem:gorenstein defect in fin dim case}
Assume that $A=k$. Then the canonical pairing $R \times R^\vee \to k$ induces a perfect pairing $\Ann_R(\m_R) \times R^\vee/\m_R R^\vee \to k$. In particular, $\delta(R) = \dim_k(\Ann_R(\m_R))-1$.
\end{lem}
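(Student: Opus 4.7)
The plan is to identify the pairing $\Ann_R(\m_R) \times R^\vee/\m_R R^\vee \to k$ as a special case of the standard $k$-linear double-duality for finite-dimensional vector spaces. Since $A = k$, $R$ is a finite-dimensional local $k$-algebra, so both $R$ and $R^\vee$ are finite-dimensional $k$-vector spaces, and the natural map $R \to R^{\vee\vee}$, $r \mapsto \hat r$ with $\hat r(f) = f(r)$, is an isomorphism of $k$-vector spaces (and in fact of $R$-modules, using the $R$-module structure $(r\cdot f)(x) = f(rx)$ on $R^\vee$).

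First I would check that the evaluation pairing $R \times R^\vee \to k$, $(r,f) \mapsto f(r)$, does factor through $\Ann_R(\m_R) \times R^\vee/\m_R R^\vee$: if $r \in \Ann_R(\m_R)$ and $f = m \cdot g$ with $m \in \m_R$ and $g \in R^\vee$, then $f(r) = g(mr) = g(0) = 0$. Thus the pairing is well-defined.

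Next I would verify non-degeneracy by identifying the left kernel with $\Ann_R(\m_R R^\vee)$ inside $R \cong R^{\vee\vee}$. Under the double-duality isomorphism, a class in $(R^\vee/\m_R R^\vee)^* \subset R^{\vee\vee}$ corresponds to an element $r \in R$ such that $\hat r$ vanishes on $\m_R R^\vee$; that is, $g(mr) = \hat r(mg) = 0$ for every $g \in R^\vee$ and every $m \in \m_R$. Since $R^\vee$ separates points of $R$, this is equivalent to $mr = 0$ for all $m \in \m_R$, i.e.\ $r \in \Ann_R(\m_R)$. Therefore the induced map $\Ann_R(\m_R) \to (R^\vee/\m_R R^\vee)^*$ is an isomorphism, which is precisely the assertion that the pairing is perfect.

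Finally, the dimension statement is immediate: by the perfect pairing, $\dim_k \Ann_R(\m_R) = \dim_k R^\vee/\m_R R^\vee$, and by Nakayama's lemma the latter equals $g_R(R^\vee)$, so $\delta(R) = g_R(R^\vee) - 1 = \dim_k \Ann_R(\m_R) - 1$. There is no real obstacle here; the only point requiring a touch of care is the compatibility of the $R$-module structure on $R^\vee$ with the double-duality identification $R \cong R^{\vee\vee}$, but this is a routine check from the definition of the action $(r \cdot f)(x) = f(rx)$.
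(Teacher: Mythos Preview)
Your proof is correct and is precisely the standard linear-algebra argument the paper has in mind; the paper itself omits the proof entirely, remarking only that Lemmas \ref{lem:gorenstein defect in fin dim case} and \ref{lem:comparing bars and duals} ``are standard linear algebra.'' Your identification of $(R^\vee/\m_R R^\vee)^*$ with $\Ann_R(\m_R)$ via double duality is exactly the expected route, and the finite-dimensionality ensures that perfectness in one slot implies perfectness in both.
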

\begin{proof} 
By restriction, there is a surjective homomorphism of $R$-modules
\[
R^\vee \onto \Ann_R(\m_R)^\vee
\]
which is easily seen to factor through $R^\vee/\m_R R^\vee$. This gives the pairing. To show it is perfect, it is enough to show that the dual map $\Ann_R(\m_R) \to (R^\vee/\m_R R^\vee)^\vee$ is surjective as well. This map is induced by the canonical isomorphism $R \to R^{\vee\vee}$ given by $x \mapsto \mathrm{ev}_x$, where $\mathrm{ev}_x(f)=f(x)$ for $f \in R^\vee$.

Let $g \in (R^\vee/\m_R R^\vee)^\vee$ be an arbitrary element. Then $g$ is induced by a $R$-module homomorphism $\tilde{g}: R^\vee \to k$ such that $\tilde{g}(r.f) = 0$ for all $r \in \m_R$ and $f \in R^\vee$. By duality, we have $\tilde{g}=\mathrm{ev}_x$ for some $x \in R$. Then we have
\[
0=\tilde{g}(r.f)=\mathrm{ev}_x(r.f) =f(rx)
\]
for all $r \in \m_R$ and $f \in R^\vee$. This implies that $rx=0$ for all $r \in \m_R$, so $x \in \Ann_R(\m_R)$. Hence $g$ is in the image of $\Ann_R(\m_R) \to (R^\vee/\m_R R^\vee)^\vee$, so this map is surjective and the pairing is perfect.
\end{proof}

\begin{lem}
\label{lem:comparing bars and duals}
There is a canonical isomorphism of $\bar{R}$-modules $\overline{R^\vee} \cong \bar{R}^*$.
\end{lem}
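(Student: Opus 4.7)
The plan is to construct the isomorphism as a composition of two canonical maps, both arising from the fact that $R$ is finitely generated and free as an $A$-module. First, since $A$ is a Noetherian local ring and $R$ is finite and flat over $A$, $R$ is finitely generated projective, hence free, as an $A$-module. In particular $R$ is finitely presented, so the base change map
\[
R^\vee \otimes_A k \ = \ \Hom_A(R,A) \otimes_A k \ \lra \ \Hom_A(R,\, A \otimes_A k) \ = \ \Hom_A(R,k)
\]
is an isomorphism. (For free $A$-modules this is evident from the identification $\Hom_A(A^{\oplus n},A) \cong A^{\oplus n}$ and the fact that $(-)\otimes_A k$ commutes with finite direct sums.)

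Next I would apply the tensor-Hom adjunction (or equivalently the universal property of the quotient $R \onto \bar R$): any $A$-linear map $R \to k$ factors uniquely through $\bar R = R/\m_A R$, giving a canonical isomorphism
\[
\Hom_A(R, k) \ \lrisom \ \Hom_k(R \otimes_A k,\, k) \ = \ \bar R^*.
\]
Composing the two identifications yields a canonical $k$-linear isomorphism $\overline{R^\vee} \risom \bar R^*$.

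The only point requiring real verification is that this composite is $\bar R$-linear, where $\overline{R^\vee}$ carries the $\bar R$-module structure induced from the $R$-action on $R^\vee$ given by $(r \cdot f)(x) = f(rx)$, and $\bar R^*$ carries the analogously defined $\bar R$-action. I would check this by unwinding the maps on a pure tensor $f \otimes 1 \in R^\vee \otimes_A k$: it is sent to the composite $R \xrightarrow{f} A \twoheadrightarrow k$, and hence to the induced map $\bar R \to k$, $\bar x \mapsto f(x) \bmod \m_A$. Multiplying $f$ by $r \in R$ before transport and multiplying the resulting element of $\bar R^*$ by $\bar r$ afterwards both produce the functional $\bar x \mapsto f(rx) \bmod \m_A$, giving equivariance.

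The main (and essentially only) subtlety will be this equivariance check, but it is routine once the two canonical maps are written down explicitly; the flatness of $R$ over $A$ does all the real work by making the base-change map an isomorphism.
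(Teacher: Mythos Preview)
Your proof is correct and complete. The paper does not actually supply a proof of this lemma; it simply declares that Lemmas \ref{lem:gorenstein defect in fin dim case} and \ref{lem:comparing bars and duals} ``are standard linear algebra,'' so your argument is precisely the standard justification the authors omit.
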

\begin{proof}
Since $R$ is projective as an $A$-module, the map
\[
R^\vee = \Hom_A(R,A) \to \Hom_A(R,k) \cong \Hom_k(\bar{R},k) =  \bar{R}^*
\]
is a surjective morphism of $R$-modules. Since $\m_A$ annihilates the image, this map must factor through $\overline{R^\vee}$. Since $\overline{R^\vee}$ and $\bar{R}^*$ both have $k$-dimension equal to $\mathrm{rank}_A(R)$, the map $\overline{R^\vee} \to \bar{R}^*$ is an isomorphism.
\end{proof}
\begin{lem}
\label{lem:Gorenstein defects equality}
We have $\delta(R) = \delta(\bar{R})=\dim_k\Soc(\bar{R})-1$.
\end{lem}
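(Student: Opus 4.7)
The plan is to reduce the lemma to the already-established Lemmas \ref{lem:gorenstein defect in fin dim case} and \ref{lem:comparing bars and duals} via a two-step argument: first show $\delta(R) = \delta(\bar R)$ by comparing minimal numbers of generators, then apply the field case to $\bar R$.

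For the first equality $\delta(R)=\delta(\bar R)$, I would compute $g_R(R^\vee)$ by Nakayama. Since $\m_A \subset \m_R$, there is a canonical identification
\[
R^\vee/\m_R R^\vee \;\cong\; \overline{R^\vee}\big/\m_{\bar R}\,\overline{R^\vee},
\]
so $g_R(R^\vee) = g_{\bar R}(\overline{R^\vee})$. By Lemma \ref{lem:comparing bars and duals}, $\overline{R^\vee}\cong \bar R^*$ as $\bar R$-modules, hence $g_R(R^\vee)=g_{\bar R}(\bar R^*)$. Since $A=k$ forces the Matlis-style dual $(\bar R)^\vee := \Hom_k(\bar R, k)$ to coincide with $\bar R^*$, the definition of Gorenstein defect yields
\[
\delta(R) \;=\; g_R(R^\vee)-1 \;=\; g_{\bar R}(\bar R^*)-1 \;=\; \delta(\bar R).
\]

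For the second equality $\delta(\bar R)=\dim_k\Soc(\bar R)-1$, I would simply invoke Lemma \ref{lem:gorenstein defect in fin dim case} applied to the finite-dimensional local $k$-algebra $\bar R$ (finite over $k$ because $R$ is finite over $A$): the perfect pairing between $\Ann_{\bar R}(\m_{\bar R})$ and $\bar R^*/\m_{\bar R}\bar R^*$ gives
\[
\delta(\bar R) \;=\; \dim_k \Ann_{\bar R}(\m_{\bar R})-1 \;=\; \dim_k \Soc(\bar R)-1.
\]

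There is no real obstacle here; the content is entirely packaged into the two preceding lemmas. The only point that deserves care is the bookkeeping in the Nakayama step — specifically, verifying that passing to $\m_R$-coinvariants factors through $\m_A$-coinvariants and that this is compatible with the $R$-module (equivalently, $\bar R$-module) structure on $R^\vee$ used in the definition of $\delta$. Once those compatibilities are noted, the chain of identifications $g_R(R^\vee)=g_{\bar R}(\overline{R^\vee})=g_{\bar R}(\bar R^*)$ is immediate.
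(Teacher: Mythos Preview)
Your proof is correct and is exactly the approach the paper takes: the paper's proof reads in its entirety ``This follows from Lemmas \ref{lem:gorenstein defect in fin dim case} and \ref{lem:comparing bars and duals}, which are standard linear algebra,'' and your argument simply unpacks how those two lemmas combine.
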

\begin{proof}
We have
\[
\overline{R^\vee} \otimes_{\bar{R}} \bar{R}/\m_{\bar{R}} = (R^\vee \otimes_{R} \bar{R}) \otimes_{\bar{R}} \bar{R}/\m_{\bar{R}} = R^\vee \otimes_{R} R/\m_R
\]
so $g_{R}(R^\vee) = g_{\bar{R}}(\overline{R^\vee})$. By Lemma \ref{lem:comparing bars and duals}, we have 
\[
1+\delta(R) = g_{R}(R^\vee) = g_{\bar{R}}(\overline{R^\vee})=g_{\bar{R}}(\bar{R}^*) = 1 +\delta(\bar{R}).
\]
This shows that $\delta(R)=\delta(\bar{R})$. The equality $\delta(\bar{R})=\dim_k\Soc(\bar{R})-1$ follows from Lemma \ref{lem:gorenstein defect in fin dim case}.
\end{proof}

\subsection{Fiber products of commutative rings} Note that the category of commutative rings has all limits. The underlying set of the limit of a diagram of commutative rings is the limit of the diagram of underlying sets.

\begin{lem}
\label{lem:fiber prods}
Consider a commutative diagram
\[
\xymatrix{
A \ar[r]^-{\pi_B} \ar[d]_-{\pi_C} & B \ar[d]^-{\phi_B} \\
C \ar[r]^-{\phi_C} & D
}
\]
in the category of commutative rings. Assume that all the maps are surjective and that the map $\ker(\pi_B)\to \ker(\phi_C)$ induced by $\pi_C$ is an isomorphism. Then the canonical map $A \to B \times_D C$ is an isomorphism.
\end{lem}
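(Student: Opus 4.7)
The plan is a direct diagram chase showing the canonical map $\alpha: A \to B \times_D C$, $a \mapsto (\pi_B(a), \pi_C(a))$, is both injective and surjective. I would first note that $\alpha$ is well-defined because the original square commutes. Then I would dispense with the two directions separately, using the hypothesis that $\pi_C$ restricts to an isomorphism $\ker(\pi_B) \xrightarrow{\sim} \ker(\phi_C)$.

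For injectivity, suppose $\alpha(a) = (0,0)$. Then $a \in \ker(\pi_B)$, and $\pi_C(a) = 0$. Under the isomorphism $\ker(\pi_B) \xrightarrow{\sim} \ker(\phi_C)$ (which is the restriction of $\pi_C$), $a$ maps to $0$, so $a = 0$.

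For surjectivity, take $(b,c) \in B \times_D C$, so $\phi_B(b) = \phi_C(c)$. Using surjectivity of $\pi_B$, lift $b$ to some $a_0 \in A$ with $\pi_B(a_0) = b$. Then
\[
\phi_C(\pi_C(a_0)) = \phi_B(\pi_B(a_0)) = \phi_B(b) = \phi_C(c),
\]
so $c - \pi_C(a_0) \in \ker(\phi_C)$. By the hypothesis, there is a unique $k \in \ker(\pi_B)$ with $\pi_C(k) = c - \pi_C(a_0)$. Setting $a = a_0 + k$ gives $\pi_B(a) = \pi_B(a_0) = b$ (since $k \in \ker(\pi_B)$) and $\pi_C(a) = \pi_C(a_0) + (c - \pi_C(a_0)) = c$, as required.

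There is no real obstacle here; this is a completely formal statement about fiber products in any abelian-like setting, and I expect a compiled writeup to be only a few lines. The only place to be slightly careful is in keeping track of which map is being used to identify $\ker(\pi_B)$ with $\ker(\phi_C)$, namely $\pi_C$ restricted to $\ker(\pi_B)$, so that the computation $\pi_C(a_0 + k) = c$ works on the nose rather than up to some identification.
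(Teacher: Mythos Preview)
Your proof is correct and is precisely the diagram chase the paper has in mind; the paper's own proof consists of the single line ``The proof is a diagram chase,'' and you have supplied the details faithfully.
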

The proof is a diagram chase.

\bibliographystyle{alpha}
\bibliography{CWEbib-2020-PG}

\end{document}